\DeclarePairedDelimiterX{\Iintv}[1]{\llbracket}{\rrbracket}{\iintvargs{#1}}
\NewDocumentCommand{\iintvargs}{>{\SplitArgument{1}{,}}m}
{\iintvargsaux#1} %
\NewDocumentCommand{\iintvargsaux}{mm} {#1\mkern1.5mu..\mkern1.5mu#2}
\newcommand{\field}[1]{\mathbb{#1}}
\newcommand{\dd}{\mathrm{d}}
\newcommand{\hd}{\mathrm{h}}
\newcommand{\zb}{\bm{z}}
\newcommand{\op}{\mathrm{op}}
\newcommand{\sfy}{\mathsf{y}}
\newcommand{\wb}{\bm{w}}
\newcommand{\gb}{\bm{g}}
\newcommand{\ab}{\mathbf{a}}
\newcommand{\Ub}{\mathbf{U}}
\newcommand{\Vb}{\mathbf{V}}
\newcommand{\OO}{\mathrm{O}}
\newcommand{\oo}{\mathrm{o}}
\newcommand{\Mt}{\mathtt{M}}
\newcommand{\p}{\field{P}}
\newcommand{\E}{\field{E}}
\newcommand{\FF}{\mathcal{F}}
\numberwithin{equation}{section}
\def\argmin{\mathop{\mbox{argmin}}}
\theoremstyle{Conjecture} \theoremstyle{example}
\theoremstyle{remark} \theoremstyle{lemma}
\theoremstyle{definition} \theoremstyle{corol}
\theoremstyle{proposition} \theoremstyle{condition}
\newtheorem{theorem}{Theorem}[section]
\newtheorem{assumption}{Assumption}[section]
\newtheorem{example}{Example}[section]
\newtheorem{remark}{Remark}[section]
\newtheorem{lemma}{Lemma}[section]
\newtheorem{definition}{Definition}[section]
\DeclarePairedDelimiter\floor{\lfloor}{\rfloor}
\begin{document}

\date{}

\title{ Simultaneous Sieve Inference for Time-Inhomogeneous Nonlinear Time Series Regression} 

{\large
\author[1]{Xiucai Ding \thanks{E-mail: xcading@ucdavis.edu. }}  
\author[2]{Zhou Zhou \thanks{E-mail: zhou@utstat.utoronto.ca}}
\affil[1]{Department of Statistics, University of California, Davis}
\affil[2]{Department of Statistical Sciences, University of Toronto}
}
\maketitle


\begin{abstract}
In this paper, we consider the time-inhomogeneous nonlinear time series regression for a general class of locally stationary time series. On one hand, we propose sieve nonparametric estimators for the time-varying regression functions which can achieve the min-max optimal rate as in \cite{MR673642}. On the other hand, we develop a unified simultaneous inferential theory which can be used to conduct both structural and exact form testings on the functions. Our proposed statistics are powerful even under locally weak alternatives.  We also propose a multiplier bootstrapping procedure for practical implementation. Our methodology and theory do not require any structural assumptions on the regression functions and we also allow the functions to be supported in an unbounded domain. We also establish sieve approximation theory for 2-D functions in unbounded domain and a Gaussian approximation result for affine and quadratic forms for high dimensional locally stationary time series, which can be of independent interest. Numerical simulations and a real financial data analysis are provided to support our results. 
\end{abstract}

\section{Introduction} \label{sec1}
\def\theequation{1.\arabic{equation}}
\setcounter{equation}{0}
\counterwithin{equation}{section}

In many scientific endeavors, researchers are interested in understanding the specific functional form of a relationship   between variables. To name but a few, the estimation of household Engle curves \cite{MR1085835}, the estimation of demand functions \cite{10.1162/REST_a_00636}, the estimation of an asset's price \cite{10.2307/30034997,10.2307/20061188,KOO2021562}, the understanding of the functional relation between  genetics  and diseases \cite{Wittkowski2010NonparametricMF} and the relationship between social and economic factors in social studies \cite{social}. There are usually two main challenges associated with these applications. First, the underlying theory does not narrow down a specific functional or parametric form for the relationship.  Second, the observations span long time horizons and constantly involving. Consequently, the relationship is unlikely to remain stable. To address these issues, a general time-inhomogeneous nonparametric regression model offers a more flexible framework.    

Consider the following time-inhomogeneous nonlinear time series regression
\begin{eqnarray}\label{eq:model}
Y_{i,n}=\sum_{j=1}^r m_j(t_i,X_{j,i,n})+\epsilon_{i,n}, \quad i=1,2,\cdots,n, 
\end{eqnarray}
where $r>0$ is some given fixed integer. Here $\{X_{j,i,n}\}$ are the covariates which can be locally stationary and depend on each other,  $t_i=i/n$ and 
$\epsilon_{i,n}$ is general locally stationary time series (c.f. Assumption \ref{assum_models}) whose covariance depend on both time and the covariates and satisfies that $\E(\epsilon_{i,n}|X_{j,i,n})=0.$ Moreover,  $m_j$'s are some smooth functions that map the time and covariates to the conditional mean.  (\ref{eq:model}) has been employed in the literature, for instance, see \cite{KARMAKAR2021,MR4244697, MV,MV1,ZWW}. It provides a more flexible and general framework and a wide range of models can fit into it. For example, (\ref{eq:model}) has been studied extensively in the literature by imposing various structural assumptions on $m_j$'s.  Assuming that $m_j$ has a multiplicative separability form in the time $t$ and the covariate, the time varying linear model has been studied in \cite{CAI2007163, MR1964455, MR1804172,10.2307/4140562, MR2168993, MR3310530,MR2758526} among others and the general multiplicative form has been recently studied in \cite{CSW, HHY,10.2307/43305634}. Moreover, when $X_{j,i,n}=Y_{i-j,n}$ in (\ref{eq:model}), it becomes  the nonparametric and nonlinear autoregressive process \cite[Section 3.1]{MV1}.       
Finally,  we point out that (\ref{eq:model}) can be regarded as a discretized version of the Langevin equation
\begin{equation*}
\mathrm{d} Y_t= \sum_{j=1}^r  m_j(t, X_{j,t}) \mathrm{d} t+\sigma(t, X_{1,t}, \cdots, X_{r,t})\mathrm{d} B(t),   
\end{equation*}
where $B(t)$ is some martingale.

In what follows, we summarize some related results in the literature in Section \ref{sec_subrelatedresults} and provide an overview of our results and state the novelties in Section \ref{sec_overview}.      

\subsection{Some related results}\label{sec_subrelatedresults}
In this subsection, we summarize some related results. In the literature, the general model setup (\ref{eq:model}) has been studied based on the nonparametric kernel estimation. In \cite{MV}, under the classical strong mixing conditions and using the locally stationary process as in \cite{DRW}, the author proposed a Nadaraya-Watson (NW) type estimator utilizing a product kernel. Moreover, the author also established the convergence rates for their estimation on any compact set and the point-wise normality for their estimators. Moreover,  the results and ideas have been used to study the structural changes problem in \cite{MV1}. Later on,  in \cite{ZWW}, the authors advanced the kernel estimation theory under a general setting using the physical  representation \cite{MR2172215}. They also applied their results to study the model selection problems. Moreover, recently, in \cite{KARMAKAR2021}, under specific model setups,  the authors considered the statistical inference for the time-varying parameters by analyzing the negative conditional Gaussian likelihoods with kernel estimations. They were able to establish the simultaneous inference results but only in the time domain without considering the covariates. 


We also point out that (\ref{eq:model}) has also been studied in the literature provided some structural assumptions are imposed. On one hand, when the smooth functions are not time-varying, (\ref{eq:model}) has been extensively studied. Especially, the kernel based estimators and pointwise  normality have been provided in the literature; see \cite{MR1964455, RePEc:pup:pbooks:8355,linton_wang_2016,10.1214/07-AOS533}  among others,  and the sieve least square estimators and asymptotic normality has also been studied; see  \cite{MR3343791,CXH, CHEN2015447, MR3306927} among others. On the other hand, when the functions are time-varying, the kernel estimators and simultaneous inference have been studied for the time-varying coefficient linear regression model when the smooth functions are of the form $X_{j,i,n} \theta_j(t);$ see \cite{CAI2007163,MR1804172, 10.2307/4140562, MR3160574, MR3310530}  among others. Furthermore, researchers have generalized the linear regression model to the setting when $m_j=\rho_j(t) g(X_{j,i,n})$ has a multiplicative separability structure. For instance, the kernel estimators and asymptotic normality have been established in \cite{CSW, PEI2018286} and the spline estimators and their asymptotic normality have been studied in \cite{10.2307/43305634} for functional data and in \cite{HHY} for locally stationary time series, among others.    


In summary, most of the relevant literature focus on providing a kernel based estimator for the smooth functions in the general setting (\ref{eq:model}). It is known that due to modeling the variation with respect to both time and covariates, the convergence rates for the estimations can be slow. Moreover, the existing works only established  convergence rates on a compact domain which may prohibit the applications when the covariates have unbounded supports. Finally, the literature has only focused on  proving the point-wise convergence rates of the general smooth functions and they may not be useful for conducting inference on the smooth functions simultaneously.

Motivated by the above challenges, in the current paper,  we study (\ref{eq:model}) in great generality and optimality. Inspired by the applications in the literature, we consider two scenarios of (\ref{eq:model}):  (1). The time-inhomogeneous nonlinear time series model, i.e.,  $X_{j,i,n}=Y_{i-j,n}$; (2). The general regression setting, i.e.,  $ X_{j,i,n}$ are some  locally stationary stationary predictors.    On one hand,  we provide  sup-norm rate optimal nonparamteric sieve estimators for the functions $m_j'$s. On the other hand, we develop simultaneously inferential theory for these functions. Our results and methodologies do not require any structural assumptions on $m_j$'s and we also allow the supports of these functions to be unbounded. We give an overview of our results in Section \ref{sec_overview}.  

\subsection{Overview of our results and novelties}\label{sec_overview}
In this subsection, we give a heuristic overview of our main results. Throughout the paper, for the locally stationary time series, we use the general form of physical representation following \cite{DZ, MR2172215, ZWW, ZW} (c.f. Assumption \ref{assum_models}) which covers a wide range of commonly used locally stationary time series (c.f. Examples \ref{exam_linear} and \ref{exam_nonlinear}). The aim of this paper is to provide a systematic and unified estimation and inferential theory for the smooth functions in (\ref{eq:model}) without imposing additional structural assumption.   

For the estimation part, we propose the sieve nonparamatric  estimators for the smooth functions. However, unlike in the standard applications of sieve expansions where the smooth functions are supported on a bounded domain \cite{MR3343791, CXH, CHEN2015447, DZ, DZ2,MR3306927}, the covariates in (\ref{eq:model}) are usually supported in $\mathbb{R}$ and hence the sieve methods and the classic approximation theory cannot be applied directly. To address this issue, we use the mapped sieve basis functions \cite{MR2486453,unboundeddomain} which can monotonically and smoothly map the unbounded domain to a compact domain. Consequently, we can construct a hierarchical sieve basis functions as in (\ref{eq_basisconstruction}) and establish the uniform approximation theory for the smooth functions on unbounded domain; see Theorem \ref{thm_approximation} for more details. Based on the approximation theory, it suffices to estimate the expansion coefficients using one ordinary least square to obtain the sieve estimators as in (\ref{eq_proposedestimator}). The estimation procedure is computationally cheap and  the proposed estimators are theoretically consistent. Especially, under mild regularity conditions, for short-range dependent locally stationary time series, our estimators achieve the min-max optimal rate as obtained by Stone in  \cite{MR673642}; see Theorem \ref{thm_consistency} and Remark \ref{rmk_minimaxoptimal} for more details. 

Once the estimation theory is established, we can develop the inferential theory. First, we provide the simultaneous confidence regions (SCR) based on our sieve estimators (c.f. (\ref{eq_scrdefinition})). Then we apply the SCR to conduct structural testings on the smooth functions. For example, we can test whether the functions are time-invariant (c.f. Example \ref{exam_stationarytest}) or have multiplicative separability structure (c.f. Example \ref{exam_seperabletest}). Second, we propose a $L_2$ statistic to test  whether the smooth functions are equal to some given functions (c.f. (\ref{eq_nullhypotheis})). The key technical input is to establish a Gaussian approximation result for high dimensional locally stationary time series for the affine forms and quadratic forms. We prove such a result in Theorem \ref{thm_gaussianapproximationcase} and it can be of independent interest. Especially, when the temporal relation of the locally stationary time series decays fast enough, the approximate rate matches that of \cite{MR3571252} which is claimed to be the best known rate. Moreover, to obtain the critical values for the SCR, we establish the maximum deviation of a Gaussian process using the device of  volume of tubes  \cite{MR1056331,KS, MR1207215}. For practical implementation, we propose a multiplier bootstrap procedure as in \cite{10.1214/13-AOS1161, MR3174655} which is both theoretically sound and empirically accurate and powerful; see Theorems \ref{thm_boostrapping} and \ref{thm_boostrapping2}. Numerical simulations and real data analysis are provided to support our results and methodologies.  

Finally, we point out that even though we focus on the object of the physical form of locally stationary time series, our methodologies and results can be applied to the locally stationary time series considered in \cite{DRW,MV} or even the general non-stationary time series in \cite{DZ2}, after some minor modifications. Before concluding this subsection, we summarize the contributions of the current paper as follows.
\begin{enumerate}
\item[(1).] We propose the sieve estimators for the time-inhomogeneous nonlinear time series regression (\ref{eq:model}) using mapped sieve basis functions. Our methodology does not require any structural assumptions on the smooth functions and we allow the functions to be supported in an unbounded domain. Moreover, our estimation procedure only needs one OLS regression and our estimator achieves the min-max optimal rates under mild assumptions. 
\item[(2).] We establish a general and unified simultaneous inferential theory for the smooth functions in  (\ref{eq:model}). Our theory can be used to conduct both structural and exact form testings on the smooth functions. Moreover, the proposed statistics are powerful even under weak alternatives.
\item[(3).] We provide a multiplier bootstrap procedure to implement our methodologies in practice. The bootstrapping statistics can asymptotically mimic the distributions of the statistics proposed in the inferential theory. Moreover,  our bootstrap  method is robust and adaptive to the data set.      
\item[(4).] We establish a Gaussian approximation result for both affine and quadratic forms of high dimensional locally stationary time series. The result matches the best known rate under mild regularity assumptions. Moreover, we also calculate the critical values of the  maxima of the Gaussian random field whose randomness arising from a locally stationary time series. 
\end{enumerate}

The paper is organized as follows. In Section \ref{sec_modelassumption}, we introduce the locally stationary time series, the concrete settings for (\ref{eq:model}) and some technical assumptions.  In Section \ref{sec_estimation}, we propose the sieve estimators for the smooth functions and prove their theoretical optimality.  In Section \ref{sec_inference}, we develop the simultaneous inferential theory based on our sieve estimators and consider several important hypothesis testing problems. For practical applications, we propose a multiplier  bootstrapping strategy. In Section \ref{sec_numerical}, we conduct extensive numerical simulations to support the usefulness of our estimators and proposed bootstrapping statistics for statistical inference. A real data analysis is provided in Section \ref{sec_realdata}.  Technical proofs are deferred to Appendix \ref{sec_techinicalproof}. Some auxiliary lemmas are collected in Appendix \ref{sec_auxililarylemma} and a list of commonly used sieve basis functions is provided in Appendix \ref{sec_sieves}. 


\section{The models and assumptions}\label{sec_modelassumption}
In this section, we introduce the models, some related notations and assumptions. In the current paper, we consider the family of locally stationary time series for ${Y_i, X_{j,i}, \epsilon_i}$ as considered in \cite{DRW, ZW}. As mentioned in Section \ref{sec1}, we are primarily interested in two scenarios. We now state the models for both cases in Assumption \ref{assum_models}. For any random variable $Z \in \mathbb{R},$ we denote its $q$-norm by 
\begin{equation*}
\| Z\|_q=\left( \mathbb{E}|X|^q \right)^{1/q}.
\end{equation*}
For simplicity, we  write $\| Z\| \equiv \|Z\|_2.$ For two sequences of real values $\{a_n\}$ and $\{b_n\},$ we write $a_n=\OO(b_n)$ if $|a_n| \leq C |b_n|$ for some constant $C$ and $a_n=\oo(b_n)$ if $|a_n| \leq \epsilon_n |b_n|$ for some positive sequence $\{\epsilon_n\}$ that $\epsilon_n \downarrow 0$ as $n \rightarrow \infty.$ Moreover, if $a_n=\OO(b_n)$ and $b_n=\OO(a_n),$ we write $a_n \asymp b_n.$ For a sequence of random variables $\{X_n\}$ and a positive sequence of $\{C_n\},$ we use the notation $X_n=\OO_{\mathbb{P}}(C_n)$ to say that $X_n/C_n$ is stochastically bounded, and  $X_n=\oo_{\mathbb{P}}(C_n)$ to say that $X_n/C_n$ converges to $0$ in probability. Throughout the paper, we omit the subscript $n$ without causing further confusion.


\begin{assumption} \label{assum_models} In this paper, we suppose the following assumptions hold for the regression (\ref{eq:model}): \\
\noindent{\bf (1). Time series regression setting: $X_{j,i}=Y_{i-j}$.} In this setting, we assume that both $\{X_i\}$ and $\{\epsilon_i\}$ are locally stationary time series such that 
\begin{equation}\label{eq_epsilons1form}
X_i=G_1\left( t_i, \mathcal{F}_i \right), \ \epsilon_i=G_2\left(t_i,\FF_i\right),
\end{equation}
where $\FF_i=(\cdots,\eta_{i-1},\eta_i)$ and $\{\eta_i\}_{i\in\mathbb{Z}}$ are i.i.d. random variables with $\E(\epsilon_{i}|\FF_{i-1})=0$. Here $G_1, G_2: \mathbb{R}^{\infty} \times \mathbb{R}^{\infty} \rightarrow \mathbb{R}$ are measurable functions such that for any fixed $t \in [0,1],$ $G_k(t, \mathcal{F}_i), k=1,2,$ are properly defined random variables. Moreover, we assume that $G_k(t, \cdot)$ are stochastic Lipschitz functions of $t$ such that for some $q>2$, some constant $C>0$ and any $s, t \in [0,1],$
\begin{equation}\label{eq_slc}
\sup_i \left\| G_k(s, \mathcal{F}_i)-G_k(t, \mathcal{F}_i) \right\|_q \leq C|s-t|, \ k=1,2. 
\end{equation}

\noindent {\bf (2). General non-stationary regression setting: $X_{j,i}, 1 \leq j \leq r,$ are  locally stationary predictors.} In this setting, we assume that all $\{Y_i\}, \{X_i\}, \{\epsilon_i\}$ are locally stationary time series such that 
\begin{equation}\label{eq_setting2}
Y_{i}=G(t_i, \mathcal{F}_i), \ X_{j,i}=H_j(t_i, \mathcal{G}_i), \ \epsilon_i=D(t_i, \mathcal{L}_i),
\end{equation}
where $\mathcal{F}_i=(\cdots, \eta_{i-1},\eta_i), \ \mathcal{G}_i=(\cdots, \gamma_{i-1}, \gamma_i)$ and $\mathcal{L}_i=(\cdots,(\eta_{i-1},\gamma_{i-1})^\top,(\eta_{i-1},\gamma_{i-1})^\top)$ with $\E(\epsilon_{i}|{\cal G}_i)=0.$ Here we assume that    $\{(\eta_i,\gamma_i)^\top\}_{i\in\mathbb{Z}}$ are i.i.d. random elements, $G, H_j, 1 \leq j \leq r, D$ are measurable functions satisfying the stochastic Lipschitz continuous condition as in (\ref{eq_slc}).

\end{assumption} 

We remark that the physical form (\ref{eq_epsilons1form}) is a very general representation in the sense that many locally stationary time series, linear or non-linear, can be written in this form. For illustration, we refer the readers to Examples \ref{exam_linear} and \ref{exam_nonlinear} below. In the current paper, we focus on the practical setting that only one time series is available, i.e., only one realization of  $\{X_i\}$ for Case (1) and $\{(Y_i, X_i)\}$ for Case (2) is observed. In this setting, it is natural to consider the time series with short-range temporal dependence. We employ the \emph{physical dependence measure} to quantify the temporal dependence of the time series.  For simplicity, we state the definition using $\{X_i\}$ as in (\ref{eq_epsilons1form}). 

\begin{definition}[Physical dependence measure] Let $\{\eta_i'\}$ be an i.i.d. copy of $\{\eta_i\}.$ Assume that for some $q>2,$ 
\begin{equation*}
\| X_i\|_q<\infty.
\end{equation*} 
Then for $j \geq 0,$ we define the physical dependence measure of $\{X_i\}$ as
\begin{equation*}
\delta_1(j,q)=\sup_t \| G_1(t, \mathcal{F}_0), G_1(t, \mathcal{F}_{0,j}) \|_q, 
\end{equation*}
where $\mathcal{F}_{0,j}=(\mathcal{F}_{-j-1}, \eta_{-j}', \eta_{-j+1}, \cdots, \eta_0).$ For convenience, we denote $\delta_1(j,q)=0$ when $j<0.$ 
\end{definition}

The physical dependence measure provides a convenient tool to study the temporal dependence of time series. On one hand, it quantifies the magnitude of change in the system's output when the input of the system $j$ steps ahead is replaced by an i.i.d. copy. On the other hand, the strong temporal dependence of the locally stationary time series can be controlled in terms of $\delta_1(j,q)$ and the concentration inequalities (c.f. Lemmas \ref{lem_concentration} and \ref{lem_mdependent}) can be established based on them. For more details on physical dependence measure, we refer the readers to \cite{MR2485027,MR3114713, MR2172215,WUAOP, MR2827528}. Moreover, for the purpose of illustration, in Examples \ref{exam_linear} and \ref{exam_nonlinear} below, we explain how the physical dependence measure can be calculated easily for the commonly used examples of locally stationary time series.  

Throughout the paper, we impose the following assumption to  ensure that the temporal dependence decays fast enough so that the locally stationary time series has short-range dependence. 
\begin{assumption}\label{assum_physical}
Suppose that there exists some constant $\tau>0$ and some constant $C>0$ such that for Case (1) of Assumption \ref{assum_models}
\begin{equation}\label{eq_physcialrequirementone}
\max\{\delta_1(j,q),\delta_2(j,q) \} \leq C j^{-\tau}, \ \text{for all} \ j \geq 1, 
\end{equation}
where $\delta_1, \delta_2$ are the physical dependence measures for $\{X_i\}$ and $\{\epsilon_i\},$ respectively. Moreover, for Case (2) of Assumption \ref{assum_models}, we assume that 
\begin{equation*}
\max_{0 \leq k \leq r+1} \delta_k(j,q) \leq C j^{-\tau}, \text{for all} \ j \geq 1,
\end{equation*}
where $\delta_0$ is the physical dependence measure of $\{Y_i\},$ $\delta_{r+1}$ is that of $\{\epsilon_i\}$ and $\delta_k, 1 \leq k \leq r,$ are those of $\{X_{k,i}\},  1 \leq k \leq r.$
\end{assumption} 

Before concluding this subsection, we provide two examples of locally stationary time series.  
\begin{example}[Locally stationary linear time series]\label{exam_linear} Let $\{\epsilon_i\}$ be i.i.d. random variables and $\{a_j(t)\}$ be continuously differentiable functions on $[0,1].$ Consider the locally stationary linear process such that 
\begin{equation*}
G(t, \mathcal{F}_i)=\sum_{k=0}^{\infty} a_k(t) \epsilon_{i-k}.
\end{equation*} 
It is easy to see that the physical dependence measure of the above linear process, denoted as $\delta(j,q)$ satisfies that
\begin{equation*}
\delta(j,q)=\OO(\sup_t |a_j(t)|). 
\end{equation*} 
Consequently, according to \cite[Example 2.4]{DZ}, (\ref{eq_slc}) and Assumption \ref{assum_physical} are satisfied if the following conditions hold 
\begin{equation*}
\sup_t|a_j(t)| \leq Cj^{-\tau}, \ \ \sum_{j=0}^{\infty} \sup_{t \in [0,1]} |a_j'(t)|^{\min\{2,q\}}<\infty. 
\end{equation*} 
\end{example}

\begin{example}[Locally stationary nonlinear time series]\label{exam_nonlinear} Let $\{\epsilon_i\}$ be i.i.d. random variables. Consider a locally stationary process as follows 
\begin{equation*}
G(t, \mathcal{F}_i)=R(t, G(t, \mathcal{F}_{i-1}), \epsilon_i),
\end{equation*}
where $R$ is some measurable function. The above expression is quite general such that many locally stationary time series can be written in the above form. For example, the threshold autoregressive model, exponential autoregressive model and bilinear autoregressive models. According to \cite[Theorem 6]{ZW}, suppose that for some $x_0,$ $\sup_t\|R(t,x_0,\epsilon_i) \|_q<\infty.$ Denote 
\begin{equation*}
\chi:=\sup_t L(t), \ \text{where} \ L(t):=\sup_{x \neq y} \frac{\|R(t,x, \epsilon_0)-R(t,y,\epsilon_0) \|_q}{|x-y|}.
\end{equation*}
Note that $\chi<1$  and $\delta(j,q)=\OO(\chi^j).$ Therefore, Assumption \ref{assum_physical} is satisfied. Moreover, by \cite[Proposition 4]{ZW} (\ref{eq_slc}) holds if 
\begin{equation*}
\sup_{t \in [0,1]} \| M(G(t, \mathcal{F}_0)) \|_q<\infty,  \ \text{where} \ M(x):=\sup_{0\leq t<s \leq 1}\frac{\|R(t,x,\epsilon_0)-R(s,x,\epsilon_0) \|_q}{|t-s|}. 
\end{equation*}


As a concrete example, we consider the time varying threshold autoregressive (TVTAR) model denoted as 
\begin{equation*}
G(t, \mathcal{F}_i)=a(t)[G(t, \mathcal{F}_{i-1})]^++b(t)[-G(t, \mathcal{F}_{i-1})]^+.
\end{equation*}
In this case, Assumption \ref{assum_physical} and (\ref{eq_slc}) will be satisfied if 
\begin{equation*}
a(t), b(t) \in \mathsf{C}^1([0,1]), \ \text{and} \ \sup_{t \in [0,1]}\left( |a(t)|+|b(t)| \right)<1. 
\end{equation*}
\end{example}

\section{Sieve estimation for time varying nonlinear regression}\label{sec_estimation}

In this section, we propose a nonparametric  approach to estimate the nonlinear functions $m_j(t,x), \ 1 \leq j \leq r. $

\subsection{Sieve least square estimation}
In this subsection, we use the sieve least square estimation approach to obtain the estimators for $m_j(t,x),  1 \leq j \leq r.$ Due to similarity, we mainly focus on explaining Case (1) of Assumption \ref{assum_models} and only briefly discuss Case (2) from time to time; see Remark \ref{rmk_case2ols} below for some discussion. Our methodology is nonparametric and utilizes    the sieve basis expansion; see \cite{CXH} for a comprehensive review.  We next provide a heuristic overview of the method and the details will be offered in Sections \ref{sec_2dsieves} and \ref{sec_olssieveestimation}.

 Since we can only observe one realization of the time series, it is natural to impose some smoothness condition on $m_j(t,x)$ (c.f. Assumption \ref{assum_smoothnessasumption}). First, under these conditions, it suffices to consider $m_{j,c,d}(t,x)$ (c.f. Theorem \ref{thm_approximation}) denoted as 
\begin{equation}\label{eq_firststeptruncation1}
m_{j, c,d}(t,x)=\sum_{\ell_1=1}^c \sum_{\ell_2=1}^d \beta_{j, \ell_1, \ell_2} b_{\ell_1, \ell_2}(t,x),
\end{equation} 
where $\{b_{\ell_1, \ell_2}(t,x)\}$ are the sieve basis functions and $\{\beta_{j, \ell_1, \ell_2}\}$ are the coefficients to be estimated. Here $c \equiv c(n), \ d \equiv d(n)$ depend on the smoothness of $m_j(t,x).$ For detailed discussion of the construction of sieve basis functions and the performance of (\ref{eq_firststeptruncation1}), we refer the readers to Section \ref{sec_2dsieves}. Second, in view of (\ref{eq_firststeptruncation1}), given a set of sieve basis functions, it suffices to estimate the coefficients. We will use the ordinary least square (OLS) to achieve this goal and obtain the estimators $\{\widehat{\beta}_{j, \ell_1, \ell_2}\}$. This aspect will be discussed in Section \ref{sec_olssieveestimation}. Based on the above calculation, we can obtain our sieve least square estimation as follows 
\begin{equation}\label{eq_firststeptruncation}
\widehat{m}_{j, c,d}(t,x)=\sum_{\ell_1=1}^c \sum_{\ell_2=1}^d \widehat{\beta}_{j, \ell_1, \ell_2} b_{\ell_1, \ell_2}(t,x). 
\end{equation}   
 
\subsubsection{Mapped hierarchical sieves for 2-D smooth function on unbounded domain}\label{sec_2dsieves}
In this subsection, we discuss how to use sieve basis functions to approximate a smooth 2-D function. Note that for $m_j(t,x), 1 \leq j \leq r,$ when $x$ is defined on a compact domain in $\mathbb{R}$, the results have been established, for example, see \cite[Section 2.3.1]{CXH} for a comprehensive review. However, in the real applications, the domain of $x$ is usually unbounded and mostly the whole real line. Therefore, we need to modify the commonly used sieves to accommodate for practical applications. 

In the literature, there exist three different approaches to deal with the unbounded domain. The first  method is to apply a finite interval method, such as Chebyshev polynomials, to an interval $x \in [-L,L]$ where $L$ is large but
finite. This method is also known as domain truncation. The second way is to use a basis that is intrinsic to the unbounded domain such as Hermite functions. The third approach is to map the infinite interval into a finite domain through a change of coordinate and then apply a finite interval method. We refer the readers to \cite{MR1874071,MR2486453,unboundeddomain} for a review.

In the current paper, we employ the third method that we will apply some suitable mappings to map the unbounded domain to a compact one. The main reason is because the first and second methods usually result in slow approximate rates so that we need to use more basis functions in order to achieve certain accuracy. The mapping idea is popular in numerical PDEs for handling the boundary value problems. For a review of the mapping strategy, we refer the readers to \cite{MR2486453,unboundeddomain}. In what follows, without loss of generality, we assume that the domain for $x$ is either $\mathbb{R}$ or $\mathbb{R}_+.$ The mappings will map the domain to a compact interval, say, $[-1,1].$ We state the definitions of the mappings as follows.  
\begin{definition}[Mappings]\label{defn_mappings}
For some positive scaling factor $s>0,$ consider a family of mappings of the form:
\begin{equation*}
x=g(y;s), \ s>0, \ y \in I:=(-1,1), \ x \in \Lambda:=\mathbb{R}_+ \ \text{or} \ \mathbb{R},
\end{equation*}
such that 
\begin{align*}
& \frac{\dd x}{\dd y}=g'(y;s)>0, \ y \in I; \\
& g(-1; s)=
\begin{cases}
0, & \Lambda=\mathbb{R}_+ \\
-\infty, & \Lambda=\mathbb{R}
\end{cases}, \ 
g(1; s)=\infty.
\end{align*}
Since the above mapping is invertible, we denote the inverse mapping by 
\begin{equation*}
y=g^{-1}(x;s)=u(x;s), \ x \in \Lambda, y \in I, s>0.
\end{equation*}
\end{definition}
Based on the above definition, it is easy to see that $\frac{1}{2}u(x;s)+\frac{1}{2}$ will map $\mathbb{R}$ or $\mathbb{R}_+$ to $[0,1].$ Before proceeding to construct the sieve basis functions, 
we pause to list a few examples of the mappings for illustration. 
\begin{example}\label{example_mappings} In this example, we provide some mappings when $\Lambda=\mathbb{R}.$ The first mapping is the following algebraic mapping 
\begin{equation}\label{eq_y}
x=\frac{sy}{\sqrt{1-y^2}}, \ y=\frac{x}{\sqrt{x^2+s^2}}.
\end{equation}
The second mapping is the following logarithmic mapping
\begin{equation}\label{eq_y1}
x=\frac{s}{2}\ln \frac{1+y}{1-y}, \  y=\tanh(s^{-1}x)=\frac{e^{s^{-1} x}-e^{-s^{-1}x}}{e^{s^{-1}x}+e^{-s^{-1}x}}.
\end{equation}
\end{example}
\begin{example} In this example, we provide some mappings when $\Lambda=\mathbb{R}_+.$ The first mapping is the following algebraic mapping 
\begin{equation*}
x=\frac{s(1+y)}{1-y}, \ y=\frac{x-s}{x+s}.
\end{equation*}
The second mapping is the following logarithmic mapping 
\begin{equation*}
x=\frac{s}{2} \ln \frac{3+y}{1-y}, \ y=1-2\tanh(s^{-1}x). 
\end{equation*}
\end{example}

Throughout the paper, without loss of generality, we assume that $m_j(t,x), 1 \leq j \leq r,$ takes value on the whole real line $\mathbb{R}$ for $x.$ Similar arguments apply when $x \in \mathbb{R}^+.$ Using the mappings in Example \ref{example_mappings}, due to  monotonicity,  we have related $m_j(t,x): [0,1] \times \mathbb{R} \rightarrow \mathbb{R} $ to 
\begin{equation}\label{eq_transformedmjty}
\widetilde{m}_j(t, y):=m_j(t, g(2y-1;s)): [0,1] \times [0,1] \rightarrow \mathbb{R}. 
\end{equation} 
Based on the above arguments, we can construct a sequence of mapped sieve basis functions following \cite{unboundeddomain}. Let $\{\phi_i(\cdot)\}$ be an orthonormal base of the smooth functions defined on $[0,1].$ Denote the mapped version of $\{\phi_i\}$ as $\{\widetilde{\phi}_i\}$ such that for $x \in \mathbb{R}$
\begin{equation}\label{eq_defnmappedbasis}
\widetilde{\phi}_i(x)=\phi_i \circ \sfy(x), \ \sfy(x):=\frac{u(x;s)+1}{2}. 
\end{equation} 
Note that $\{\widetilde{\phi_i}\}$ is a sequence of orthogonal polynomials of the functional space defined on $\mathbb{R}$ \cite{unboundeddomain}. Denote $\{\varphi_i\}$ as the orthonormal basis functions based on $\{\widetilde{\phi_i}\}.$ Armed with $\{\phi_i\}$ and $\{\varphi_i\},$ we can construct the hierarchical sieve basis functions as \cite{CXH}
\begin{equation}\label{eq_basisconstruction}
\{\phi_i(t)\} \otimes \{\varphi_j(x)\}. 
\end{equation} 
For examples of the commonly used sieve and mapped sieve basis functions and their properties, we refer the readers to  Appendix \ref{sec_sieves}. Let $\{b_{\ell_1, \ell_2}(t,x)\}$ be the collection of  sieve basis functions as in (\ref{eq_basisconstruction}). Recall (\ref{eq_firststeptruncation1}). 








\begin{assumption}\label{assum_smoothnessasumption}
For some constants $\mathsf{m}_k, k=1,2,$ denote $\mathtt{C}^{\mathsf{m}_k}([0,1])$ as the function space on $[0,1]$ of continuous functions that have continuous first $\mathsf{m}_k$ derivatives. We assume that for all $1 \leq j \leq r$ and $\widetilde{m}_j$ in (\ref{eq_transformedmjty})
\begin{equation*}
\widetilde{m}_j(t,\cdot) \in \mathtt{C}^{\mathsf{m}_1}([0,1]), \ \text{and} \  \widetilde{m}_j(\cdot, y) \in \mathtt{C}^{\mathsf{m}_2}([0,1]).  
\end{equation*}
\end{assumption}


\begin{theorem}\label{thm_approximation}
Suppose Assumption \ref{assum_smoothnessasumption} holds. Moreover, we choose $\{\phi_i(t)\}$ to be Fourier series, Jacobi polynomials and orthogonal wavelets, and $\{\varphi_j(x)\}$ to be mapped Fourier series, mapped Jacobi polynomials,  and mapped orthogonal wavelets as in Examples \ref{examplebasis_fourier}--\ref{examplebasis_wave}. For $m_{j,c,d}(t,x)$ defined in (\ref{eq_firststeptruncation1}),  we have that for all $ 1 \leq j \leq r,$
\begin{equation*}
\sup_{t \in [0,1]} \sup_{x \in \mathbb{R}}\left| m_j(t,x)-m_{j, c,d}(t,x) \right|=\OO\left( c^{-\mathsf{m}_1}+d^{-\mathsf{m}_2} \right). 
\end{equation*}
\end{theorem}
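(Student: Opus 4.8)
The plan is to establish the approximation rate by reducing the 2-D problem on the half-bounded/unbounded strip $[0,1]\times\mathbb{R}$ to the standard tensor-product approximation theory on the unit square $[0,1]^2$, which is available from the classical sieve literature (e.g. \cite{CXH}). The key observation is that the mapping $y\mapsto g(2y-1;s)$ is a fixed smooth diffeomorphism from $[0,1]$ onto $\mathbb{R}$, so by the chain rule the transformed function $\widetilde{m}_j(t,y)=m_j(t,g(2y-1;s))$ inherits the smoothness quantified by Assumption~\ref{assum_smoothnessasumption}: it lies in $\mathtt{C}^{\mathsf{m}_1}([0,1])$ in the $y$-variable and in $\mathtt{C}^{\mathsf{m}_2}([0,1])$ in the $t$-variable (this is essentially the content of how the assumption is stated, so little work is needed here). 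The construction in \eqref{eq_basisconstruction} was designed precisely so that $b_{\ell_1,\ell_2}(t,x)=\phi_{\ell_1}(t)\,\varphi_{\ell_2}(x)$ with $\varphi_{\ell_2}(x)$ the $\sfy$-composition of an orthonormal system $\{\widetilde\phi_{\ell_2}\}$ on $[0,1]$; hence approximating $m_j(t,x)$ by $m_{j,c,d}(t,x)=\sum_{\ell_1\le c}\sum_{\ell_2\le d}\beta_{j,\ell_1,\ell_2}b_{\ell_1,\ell_2}(t,x)$ uniformly over $(t,x)\in[0,1]\times\mathbb{R}$ is \emph{exactly equivalent}, via the substitution $x=g(2y-1;s)$, to approximating $\widetilde m_j(t,y)$ by a tensor-product expansion $\sum_{\ell_1\le c}\sum_{\ell_2\le d}\beta_{j,\ell_1,\ell_2}\phi_{\ell_1}(t)\,\psi_{\ell_2}(y)$ uniformly over $(t,y)\in[0,1]^2$, where $\psi_{\ell_2}$ is the orthonormalization of $\{\phi_{\ell_2}\circ(\text{affine map})\}$ on $[0,1]$.

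The core step is then the uniform (sup-norm) approximation bound for a function in the anisotropic H\"older/Sobolev-type class $\mathtt{C}^{\mathsf{m}_1}\otimes\mathtt{C}^{\mathsf{m}_2}$ on $[0,1]^2$ by a $c\times d$ tensor-product truncation in each of the three admissible one-dimensional systems (Fourier, Jacobi, orthogonal wavelets). I would handle this by the standard two-step decomposition: first approximate in the $t$-direction only, writing $\widetilde m_j - P_c^{(t)}\widetilde m_j$ where $P_c^{(t)}$ is the rank-$c$ projection/truncation in $t$, controlled in sup-norm by $\mathrm{O}(c^{-\mathsf{m}_1})$ uniformly in $y$ by the one-dimensional rate for the chosen system applied to $\widetilde m_j(\cdot,y)$ (whose $\mathsf{m}_1$-norm is uniformly bounded in $y$); then approximate $P_c^{(t)}\widetilde m_j$ in the $y$-direction by its rank-$d$ truncation, picking up $\mathrm{O}(d^{-\mathsf{m}_2})$ — here one uses that the $y$-smoothness of the partial sums $P_c^{(t)}\widetilde m_j(t,\cdot)$ is controlled uniformly in $t$ and in $c$ (Fourier partial sums / Jacobi partial sums / wavelet projections are uniformly bounded operators on the relevant smoothness classes, or one instead applies the $y$-truncation first and symmetrizes). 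Adding the two errors and invoking the triangle inequality gives the claimed $\mathrm{O}(c^{-\mathsf{m}_1}+d^{-\mathsf{m}_2})$, and reversing the substitution transfers this bound back to $\sup_{t\in[0,1]}\sup_{x\in\mathbb{R}}|m_j(t,x)-m_{j,c,d}(t,x)|$ since the substitution is a bijection of the domains.

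I expect the main obstacle to be the sup-norm (rather than $L_2$) control of the one-dimensional truncations and, in particular, the uniformity in the "non-truncated" variable when iterating the two projections — one must verify that the partial-sum operators for Fourier and Jacobi systems act boundedly (with constants independent of $c$ and $d$) on the pertinent smoothness classes so that composing the $t$-projection and $y$-projection does not lose a logarithmic or polynomial factor. For Fourier series this is the classical fact that $C^{\mathsf m}$ functions have uniformly convergent partial sums with the Jackson-type rate; for Jacobi polynomials one invokes the corresponding Jackson/Lebesgue-constant estimates on weighted classes; for orthogonal wavelets the projections are bounded in $L^\infty$ on Besov/H\"older classes with the usual rate. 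A secondary, more mechanical point is confirming that the orthonormalization steps producing $\{\varphi_i\}$ from $\{\widetilde\phi_i\}$ (and $\{\psi_i\}$ from the affinely-shifted $\{\phi_i\}$) do not alter the span of the first $d$ basis functions, so that the truncated expansion in the orthonormal basis coincides with the truncated expansion in the original (un-orthonormalized) mapped system up to a change of coefficients — this makes the reduction to classical results legitimate. I would dispatch these by citing the one-dimensional approximation results assembled in Appendix~\ref{sec_sieves} and \cite{CXH,unboundeddomain}, and carry out only the chain-rule bookkeeping and the two-step triangle-inequality argument in detail.
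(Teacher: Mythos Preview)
Your proposal is correct and takes essentially the same approach as the paper: reduce to $[0,1]^2$ via the mapping, invoke the one-dimensional sup-norm rates (collected in the paper as Lemmas~\ref{lem_deterministicapproximation} and~\ref{lem_deterministicapproximation2}), and combine them by a tensor-product/two-step argument. The paper's own proof is extremely terse---it defers the 2-D combination step to \cite[Section~5.3]{MR1262128} and the sup-norm extension to \cite[Section~6.2]{MR1176949}---whereas you spell out the iterated-projection triangle inequality explicitly and correctly flag the uniform boundedness of the partial-sum operators as the one delicate point.
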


Theorem \ref{thm_approximation} establishes the asymptotic approximation results for a 2-D function on an unbounded domain using the mapped sieve basis functions. Similar results have been established for the compact domain; see \cite[Section 2.3.1]{CXH} for a comprehensive summary. In fact, when the domain is compact, we can directly use the basis functions $\{\phi_i(t)\} \otimes \{\phi_j(x)\}$ instead of (\ref{eq_basisconstruction}).  

%
%

\subsubsection{OLS estimation for the coefficients}\label{sec_olssieveestimation}
Armed with the basis expansion in Section \ref{sec_2dsieves} and the approximation result Theorem \ref{thm_approximation}, we see that it suffices to estimate the coefficients $\beta_{\ell_1, \ell_2}$ in (\ref{eq_firststeptruncation1}).  As mentioned earlier, we focus on the discussion of Case (1) of Assumption \ref{assum_models}.

 Since $r$ is bounded, in view of Theorem \ref{thm_approximation}, we can write    
\begin{equation*}
X_i=\sum_{j=1}^r m_{j,c,d}(t_i, X_{i-j})+\epsilon_i+\oo_{\mathbb{P}}(1), 
\end{equation*}
where $m_{j,c,d}(t,x)$ is defined in (\ref{eq_firststeptruncation1}). Consequently, the estimation of $m(t,x)$ reduces to estimating the coefficients $\{\beta_{j, \ell_1, \ell_2}\}.$ We apply OLS to estimate them. Denote the vector $\bm{\beta}=(\beta_1, \cdots, \beta_{rcd})^\top \in \mathbb{R}^{rcd}$ which  collects all these coefficients $\{\beta_{j, \ell_1, \ell_2}\}$ in the order of the indices $\Iintv{1,r} \times \Iintv{1,c} \times \Iintv{1,d};$ that is to say, for $1 \leq k \leq rcd,$
\begin{equation}\label{eq_indices}
\beta_k=\beta_{l+1, \ell_1, \ell_2+1}, \  l=\floor*{\frac{k}{cd}}, \ \ell_2=\floor*{\frac{k-lcd}{c}}, \ \ell_1=k-lcd-\ell_2d, 
\end{equation} 
where  $\floor*{\cdot}$ is the floor of a given real value. Denote $W \in \mathbb{R}^{(n-r) \times rcd}$ as the design matrix whose entry satisfies that
\begin{equation}\label{eq_designmatrix}
W_{ik}=\phi_{\ell_1}(t_i) \varphi_{\ell_2+1}(X_{i-(l+1)}),
\end{equation}
where $\ell_1$ and $\ell_2$ are defined in $(\ref{eq_indices}).$ Furthermore, we denote $\bm{Y}=(Y_i)_{r+1 \leq i \leq n} \in \mathbb{R}^{n-r}$ and $\bm{\epsilon}=(\epsilon_i)_{r+1 \leq i \leq n} \in \mathbb{R}^{n-r}.$ Recall that for Case (1), $Y_i=X_i.$  Armed with these notations, the OLS estimator for $\bm{\beta}$ is denoted as
\begin{equation}\label{eq_betaolsform}
\widehat{\bm{\beta}}=(W^\top W)^{-1} W^\top \bm{Y}.
\end{equation}  

For $1 \leq j \leq r,$ denote the diagonal matrix $\mathsf{I}_j \in \mathbb{R}^{(rcd) \times (rcd)}$ such that $\mathsf{I}_j=\oplus_{k=1}^ r \delta_k(j) \mathbf{I}_{cd},$ where $\oplus$ is the direct sum, $\delta_k(j)=1$ when $k=j$ and $0$ otherwise, and $\mathbf{I}_{cd}$ is the $cd \times cd$ identity matrix. Furthermore,  let $\bm{b} \in \mathbb{R}^{cd}$ be the collection of the basis functions $\{b_{i,j}(t,x)\}_{\{1 \leq i \leq c, 1 \leq j \leq d \}}.$ Then our proposed estimator for $m_{j}(t,x)$ is
\begin{equation}\label{eq_proposedestimator}
\widehat{m}_{j,c,d}(t,x)=(\widehat{\bm{\beta}} \mathsf{I}_j)^\top (\bm{b} \otimes \mathbf{I}_r), 
\end{equation}
where $\otimes$ is the Kronecker product and $\mathbf{I}_r$ is the $r \times r$ identity matrix. 

As will be seen in Section \ref{sec_theorecticalpropertyestimator}, $\widehat{m}_{j,c,d}(t,x)$ is a consistent estimator for $m_j(t,x).$


\subsection{Theoretical properties of the proposed estimators}\label{sec_theorecticalpropertyestimator}

In this subsection, we prove the properties of the proposed estimator (\ref{eq_proposedestimator}). We first prepare some notations and assumptions. Let $\bm{w}_i=(w_{ik})_{\{1 \leq k \leq rd\}} \in \mathbb{R}^{rd}, r+1 \leq i \leq n,$ such that
\begin{equation}\label{eq_desigmatrixform}
w_{ik}=\varphi_{\ell_2(k)} (X_{i-\ell_1(k)-1}), 
\end{equation}
where $\ell_1(k)$ and $\ell_2(k)$ are defined as 
\begin{equation}\label{eq_notationindex}
\ell_1(k):=\floor*{\frac{k}{d}}, \ \ell_2(k)=k-\ell_1(k)d.  
\end{equation}
Moreover, we denote $\bm{x}_i=(x_{ik})_{\{1 \leq k \leq rd\}} \in \mathbb{R}^{rd}, r+1 \leq i \leq n,$  as 
\begin{equation}\label{eq_ddd}
x_{ik}=w_{ik} \epsilon_i.
\end{equation}
In the following lemma, we show that both the high dimensional time series $\{\bm{w}_i\}$ and $\{\bm{x}_i\}$ can be regarded as short-range dependent locally stationary stationary time series. 
\begin{lemma}\label{lem_locallystationaryform}
Suppose (1) of Assumptions \ref{assum_models} and Assumption \ref{assum_physical} hold. Then there exist measurable functions $\mathbf{V}(t, \cdot)=(V_1(t,\cdot), \cdots, V_{rd}(t, \cdot)), \ \mathbf{U}(t, \cdot)=(U_1(t,\cdot), \cdots, U_{rd}(t,\cdot)) \in \mathbb{R}^{rd}$ satisfying the stochastic Lipschitz continuity as in (\ref{eq_slc}) such that $\{\bm{x}_i\}$ is mean-zero and
\begin{equation}\label{eq_locallystationaryform}
\bm{w}_i=\Vb(t_i, \mathcal{F}_i), \ \bm{x}_i=\Ub(t_i, \mathcal{F}_i), \ t_i=\frac{i}{n}.
\end{equation} 
Moreover, denote their physical dependence measures as
\begin{equation}\label{eq_deltaxdefinition}
\delta_{w}(j,q)=\sup_{1 \leq k \leq rd} \sup_t\| V_{k}(t, \mathcal{F}_0)- V_{k}(t, \mathcal{F}_{0,j})\|_q, \ \delta_{x}(j,q)=\sup_{1 \leq k \leq rd} \sup_t\| U_{k}(t, \mathcal{F}_0)- U_{k}(t, \mathcal{F}_{0,j})\|_q.
\end{equation} 
Then we have that for some constant $C>0$
\begin{equation}\label{eq_transferbound}
\max\{\delta_w(j,q),\delta_x(j,q) \} \leq C j^{-\tau}, \ \text{for all} \ j \geq 1, 
\end{equation}
Similar results hold when (2) of Assumption \ref{assum_models} holds. 
\end{lemma}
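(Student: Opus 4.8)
The plan is to verify the two claimed conclusions in turn: first that $\{\bm{w}_i\}$ and $\{\bm{x}_i\}$ admit the physical representations in \eqref{eq_locallystationaryform} with stochastically Lipschitz filters, and second that their physical dependence measures inherit the polynomial decay rate $j^{-\tau}$ from Assumption \ref{assum_physical}. For the first part, recall from \eqref{eq_desigmatrixform} that each coordinate $w_{ik}=\varphi_{\ell_2(k)}(X_{i-\ell_1(k)-1})$ is a fixed measurable function $\varphi_{\ell_2(k)}$ applied to a lagged copy of $X_i=G_1(t_i,\FF_i)$. So I would define $V_k(t,\FF_i):=\varphi_{\ell_2(k)}\big(G_1(t-(\ell_1(k)+1)/n,\FF_{i-\ell_1(k)-1})\big)$ — the lag shift is absorbed into the argument $\FF_{i-\ell_1(k)-1}$, which is still a measurable function of $\FF_i$, and the time argument is shifted by the fixed amount $(\ell_1(k)+1)/n$. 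Strictly one should carry the $1/n$ shift; since $r$ is fixed and $\ell_1(k)\le r$, this shift is $O(1/n)$ and does not affect any of the qualitative conclusions, so I would either keep it explicitly or note that one may relabel so that $\bm w_i=\Vb(t_i,\FF_i)$ holds exactly with $t_i=i/n$. For $\bm{x}_i$ I set $U_k(t,\FF_i):=V_k(t,\FF_i)\,G_2(t,\FF_i)$, which is measurable and mean-zero once we invoke $\E(\epsilon_i\mid\FF_{i-1})=0$ together with the fact that $w_{ik}$ is $\FF_{i-1}$-measurable (it depends only on $X_{i-j}$ with $j\ge1$); hence $\E x_{ik}=\E\big(w_{ik}\,\E(\epsilon_i\mid\FF_{i-1})\big)=0$.

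Next I would check the stochastic Lipschitz condition \eqref{eq_slc} for $\Vb$ and $\Ub$. For $\Vb$ this requires controlling $\|\varphi_{\ell_2(k)}(G_1(s,\cdot))-\varphi_{\ell_2(k)}(G_1(t,\cdot))\|_q$; the sieve basis functions $\varphi_j$ used in the construction (mapped Fourier/Jacobi/wavelet bases, Appendix \ref{sec_sieves}) are bounded with bounded derivatives on $\R$ after the mapping, so each $\varphi_j$ is globally Lipschitz, and composing a Lipschitz function with a stochastically Lipschitz filter preserves \eqref{eq_slc} (with a constant depending on the Lipschitz norm of $\varphi_j$, uniformly over the finitely many $j\le d$ that occur — or, if $d\to\infty$, with a constant that is allowed to grow with $d$, which is consistent with how the lemma is used downstream). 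For $\Ub=\Vb\cdot G_2$, I would use the product rule for stochastic Lipschitz continuity: writing $U_k(s)-U_k(t)=(V_k(s)-V_k(t))G_2(s)+V_k(t)(G_2(s)-G_2(t))$, applying Cauchy–Schwarz / Hölder at exponent $q$ and using $\sup_i\|G_2(t,\FF_i)\|_{q'}<\infty$ and $\sup_t\|V_k(t,\FF_i)\|_{q'}<\infty$ for a slightly larger $q'$ (a moment assumption that follows from $\|X_i\|_q<\infty$ and boundedness of $\varphi_j$), together with \eqref{eq_slc} for $G_1$ composed with Lipschitz $\varphi_j$ and \eqref{eq_slc} for $G_2$.

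For the dependence-measure bound \eqref{eq_transferbound}, the key observation is the lag shift: coupling $\FF_0$ with $\FF_{0,j}$ and feeding both into $V_k(t,\cdot)$, which depends on coordinate $\FF_{-\ell_1(k)-1}$, effectively changes the perturbation index from $j$ to $j-\ell_1(k)-1$. Using the Lipschitz property of $\varphi_{\ell_2(k)}$ this gives $\delta_w(j,q)\le C\,\delta_1(j-\ell_1(k)-1,q)\le C\,\delta_1(j-r-1,q)$, and since $\delta_1(m,q)\le Cm^{-\tau}$ for $m\ge1$ (and for $m\le0$ by convention $\delta_1(m,q)=0$, but one bounds $j^{-\tau}\asymp(j-r-1)^{-\tau}$ for $j>r+1$ and handles $j\le r+1$ by a crude constant bound), one obtains $\delta_w(j,q)\le Cj^{-\tau}$ for all $j\ge1$ after enlarging $C$. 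For $\delta_x$ I would again split along the product structure: $U_k(t,\FF_0)-U_k(t,\FF_{0,j}) = (V_k(t,\FF_0)-V_k(t,\FF_{0,j}))G_2(t,\FF_0) + V_k(t,\FF_{0,j})(G_2(t,\FF_0)-G_2(t,\FF_{0,j}))$, take $\|\cdot\|_q$, apply Hölder with a pair of exponents both $<q$ but compatible with the $q$-th moment bounds, and bound the two factors by $\delta_w$ (hence by $\delta_1$) and by $\delta_2$ respectively; the polynomial rate $j^{-\tau}$ survives because both $\delta_1$ and $\delta_2$ decay at that rate by Assumption \ref{assum_physical}. The same argument runs verbatim in Case (2), replacing $G_1$ by $H_j$ and $\FF_i$ by $\mathcal{G}_i$ in the covariate terms, and using $\delta_0,\dots,\delta_{r+1}$ in place of $\delta_1,\delta_2$; the mean-zero property of $\bm x_i$ there uses $\E(\epsilon_i\mid\mathcal{G}_i)=0$ and measurability of $w_{ik}$ with respect to $\mathcal{G}_i$.

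The main obstacle is not any single estimate but handling the moment bookkeeping cleanly: the products $w_{ik}\epsilon_i$ force us to trade moments via Hölder, so we only get $\|x_{ik}\|_q<\infty$ if we have a bit more than the $q$-th moment on $X_i$ and $\epsilon_i$ separately (or we must be content with a reduced exponent). I would state this carefully — either noting that Assumption \ref{assum_models} with $q>2$ is to be read as providing moments of order $q$ for the building blocks so that products have moments of order $q/2>1$, or tracking a pair $q_1,q_2$ with $1/q_1+1/q_2=1/q$ — and also be explicit that the boundedness and Lipschitz constants of the mapped sieve basis functions $\varphi_j$ (which I would cite from Appendix \ref{sec_sieves}) are what let the composition step go through. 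Everything else is routine: the lag-shift identity for physical dependence measures and the product/composition rules for stochastic Lipschitz continuity are standard manipulations in the physical-representation framework of \cite{MR2172215, ZW, DZ}.
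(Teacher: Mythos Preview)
Your proposal is correct and follows essentially the same route as the paper's proof: the mean-zero property via $\E(\epsilon_i\mid\FF_{i-1})=0$, the representation $U_k(t,\FF_i)$ via composition, stochastic Lipschitz continuity from smoothness of the basis functions, and the physical-dependence bound via the same product splitting
\[
U_k(t,\FF_0)-U_k(t,\FF_{0,j})=\varphi_{\ell_2(k)}(G_1(t,\FF_0))\big(G_2(t,\FF_0)-G_2(t,\FF_{0,j})\big)+\big(\varphi_{\ell_2(k)}(G_1(t,\FF_0))-\varphi_{\ell_2(k)}(G_1(t,\FF_{0,j}))\big)G_2(t,\FF_{0,j}),
\]
bounding each piece with Assumption~\ref{assum_physical} and the smoothness of the basis. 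If anything you are more careful than the paper about the $O(1/n)$ time shift from the lag and the H\"older moment bookkeeping; the paper simply invokes ``smoothness of the basis function'' at that step without tracking exponents.
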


Next, we define the long-run covariance matrices of $\{\bm{w}_i\}$ and $\{\bm{x}_i\}$ as $\Pi(t) \in \mathbb{R}^{rd \times rd}$ and $\Omega(t) \in \mathbb{R}^{rd \times rd},$ respectively, as
\begin{equation}\label{eq_longrunwitht}
\Pi(t):=\sum_{j=-\infty}^{+\infty} \operatorname{Cov} \left( \Vb(t, \mathcal{F}_0), \Vb(t, \mathcal{F}_j) \right), \ \Omega(t):=\sum_{j=-\infty}^{+\infty} \operatorname{Cov} \left( \Ub(t, \mathcal{F}_0), \Ub(t, \mathcal{F}_j) \right). 
\end{equation}
Throughout the paper, we will use the following short-hand notation 
\begin{equation}\label{eq_defnp}
p:=rcd. 
\end{equation}
Using the above notations, denote the integrated long run covariance matrices $\Pi \in \mathbb{R}^{p \times p}$ and $\Omega \in \mathbb{R}^{p \times p}$ as
\begin{equation}\label{eq_longruncovariancematrix}
\Pi:=\int_0^1 \Pi(t) \otimes (\ab(t) \ab^\top(t)) \dd t, \ \Omega:=\int_0^1 \Omega(t) \otimes (\ab(t) \ab^\top(t)) \dd t,
\end{equation}
where $\ab(t)=(\phi_1(t), \cdots, \phi_c(t))^\top.$

In the current paper, we will need the following regularity assumption on $\Pi$ and $\Omega.$ It is frequently used in the statistics literature, for instance, see \cite{MR3476606,MR3161455,DZ,MR2719856}. 
\begin{assumption}\label{assum_updc} For $\Pi$ and $\Omega$ in (\ref{eq_longruncovariancematrix}), we assume that there exists a universal constant $\kappa>0$ such that 
\begin{equation*}
\min\{\lambda_p(\Pi), \lambda_p(\Omega) \} \geq \kappa,
\end{equation*}
where $\lambda_p(\cdot)$ is the smallest eigenvalue of the given matrix. 
\end{assumption}



Armed with the above notations and assumptions, we state the results regarding the consistency of our proposed estimator (\ref{eq_proposedestimator}). 
Denote 
\begin{equation}\label{eq_defnxic}
\xi_c:=\sup_{1 \leq i \leq c} \sup_{t \in [0,1]} |\phi_i(t)|, \ \zeta=\sup_{t,x} \| \bm{b} \|,
\end{equation}
 where $\| \bm{b} \| \equiv \| \bm{b} \|_2$ is the $L_2$ norm of $\bm{b}.$


\begin{theorem}\label{thm_consistency}
Suppose Assumptions \ref{assum_models}-- \ref{assum_updc} hold. Moreover, we assume 
\begin{equation}\label{eq_parameterassumption}
p\left( \frac{\xi^2_c}{\sqrt{n}}+\frac{\xi^2_c n^{\frac{2}{\tau+1}}}{n}\right)=\oo(1).
\end{equation}
Then we have that 
\begin{equation}\label{eq_rate}
\max_j \sup_{t \in [0,1],x \in \mathbb{R}}\left\| m_j(t,x)-\widehat{m}_{j,c,d}(t,x) \right\|=\OO\left(\xi_c \zeta \sqrt{\frac{p}{n}}+c^{-\mathsf{m}_1}+d^{-\mathsf{m}_2} \right). 
\end{equation}
\end{theorem}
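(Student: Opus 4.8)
The plan is to decompose the estimation error into an approximation (bias) part and a stochastic (variance) part, and to control each separately using, respectively, Theorem~\ref{thm_approximation} and a concentration/linear-algebra analysis of the OLS estimator. Write $\widehat{m}_{j,c,d}(t,x) - m_j(t,x) = \big(\widehat{m}_{j,c,d}(t,x) - m_{j,c,d}(t,x)\big) + \big(m_{j,c,d}(t,x) - m_j(t,x)\big)$. The second term is deterministic and bounded by $\OO(c^{-\mathsf{m}_1}+d^{-\mathsf{m}_2})$ uniformly in $t,x$ by Theorem~\ref{thm_approximation}, which accounts for the last two summands in \eqref{eq_rate}. So everything reduces to showing $\max_j \sup_{t,x}\|\widehat{m}_{j,c,d}(t,x)-m_{j,c,d}(t,x)\| = \OO(\xi_c\zeta\sqrt{p/n})$.

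For the stochastic term, I would work on the coefficient level. Writing $\bm\beta^\circ$ for the population coefficients in \eqref{eq_firststeptruncation1} and $\bm\beta^\circ_{c,d}$ for the vector that OLS would recover if the model were exactly $m_{j,c,d}$, the standard OLS identity gives $\widehat{\bm\beta} - \bm\beta^\circ = (W^\top W)^{-1}W^\top(\bm\epsilon + \bm\delta)$, where $\bm\delta$ collects the approximation residuals $\sum_j (m_j - m_{j,c,d})(t_i,X_{i-j})$, each of size $\OO(c^{-\mathsf{m}_1}+d^{-\mathsf{m}_2})$. Then $\widehat m_{j,c,d}(t,x) - m_{j,c,d}(t,x) = (\bm b(t,x)\otimes \ab(t))^\top \mathsf{I}_j (\widehat{\bm\beta}-\bm\beta^\circ)$ up to the reindexing in \eqref{eq_proposedestimator}, so by Cauchy--Schwarz the sup-norm is bounded by $\big(\sup_{t,x}\|\bm b\otimes\ab\|\big)\,\|\widehat{\bm\beta}-\bm\beta^\circ\| \le \xi_c\zeta\,\|\widehat{\bm\beta}-\bm\beta^\circ\|$, using $\|\bm b\otimes\ab\| = \|\bm b\|\,\|\ab\| \le \zeta\cdot\sqrt{c}\,\xi_c$ — here I would need to track the factor $\sqrt c$ carefully and absorb it into the definition of $\zeta$ or $\xi_c$ consistently with how the paper intends \eqref{eq_defnxic}. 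It then remains to show $\|\widehat{\bm\beta}-\bm\beta^\circ\| = \OO_{\mathbb P}(\sqrt{p/n})$, which follows from (i) $\lambda_{\min}(\tfrac1n W^\top W) \gtrsim \kappa$ with high probability, and (ii) $\|\tfrac1n W^\top \bm\epsilon\| = \OO_{\mathbb P}(\sqrt{p}/\sqrt n)$ plus a negligible contribution from $\bm\delta$. Step (i) uses Lemma~\ref{lem_locallystationaryform} to view the rows of $W$ as a locally stationary high-dimensional series, Assumption~\ref{assum_updc} for the lower bound on the integrated long-run covariance $\Pi$, and a concentration inequality (Lemmas~\ref{lem_concentration}/\ref{lem_mdependent}) to show $\|\tfrac1n W^\top W - \E[\cdot]\|_{\mathrm{op}} = \oo_{\mathbb P}(1)$; this is exactly where condition \eqref{eq_parameterassumption} enters, controlling $p\,\xi_c^2/\sqrt n$ and the $m$-dependence truncation error $p\,\xi_c^2 n^{2/(\tau+1)}/n$. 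Step (ii) similarly uses the short-range dependence bound \eqref{eq_transferbound} and $\E(\epsilon_i\mid\mathcal F_{i-1})=0$ to get the $\sqrt{p/n}$ rate for the cross term.

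The main obstacle I anticipate is step (i): establishing the operator-norm convergence $\|\tfrac1n W^\top W - \Pi_n\|_{\mathrm{op}} = \oo_{\mathbb P}(1)$ (where $\Pi_n$ is the expectation, close to the integrated $\Pi$) for a $p$-dimensional, temporally dependent, non-stationary design with $p$ growing. This requires a Nagaev- or Rosenthal-type inequality for quadratic forms of locally stationary series together with an $m$-dependent approximation whose error is polynomial in $n$ with exponent governed by $\tau$ — precisely the two terms in \eqref{eq_parameterassumption}. One has to handle that $W_{ik}$ involves $\varphi_{\ell_2}(X_{i-\ell_1-1})$ with $X$ itself locally stationary, so Lemma~\ref{lem_locallystationaryform} is essential to reduce to the abstract setting, and the uniform bound $\xi_c$ on $\{\phi_i\}$ plus boundedness/envelope control on the mapped basis $\{\varphi_j\}$ (from Appendix~\ref{sec_sieves}) is what makes the entrywise moments uniform. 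Once the matrix concentration is in place, the rest is routine: combine $\lambda_{\min}(\tfrac1n W^\top W)\gtrsim\kappa$ with the bound on $\|\tfrac1n W^\top\bm\epsilon\|$, add back the bias $\OO(c^{-\mathsf m_1}+d^{-\mathsf m_2})$, and take the maximum over the finitely many $j\in\{1,\dots,r\}$.
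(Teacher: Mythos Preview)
Your proposal is correct and follows essentially the same route as the paper: bias--variance decomposition, Theorem~\ref{thm_approximation} for the bias, then $\|\widehat{\bm\beta}-\bm\beta\|\le \lambda_{\min}(n^{-1}W^\top W)^{-1}\|n^{-1}W^\top\bm\epsilon\|$ with the two factors controlled by concentration plus Assumption~\ref{assum_updc}. Two small clarifications worth noting: in the paper's notation $\bm b$ is already the full $cd$-dimensional tensor basis $\{\phi_{\ell_1}(t)\varphi_{\ell_2}(x)\}$, so the Cauchy--Schwarz step is simply $\|\widehat m_{c,d}-m_{c,d}\|\le \zeta\,\|\widehat{\bm\beta}-\bm\beta\|$ and the factor $\xi_c$ enters instead through the entrywise bound $\|n^{-1}[W^\top\bm\epsilon]_k\|\le C\xi_c/\sqrt n$ (this resolves your bookkeeping uncertainty); and for the design-matrix step the paper does not use a matrix-level concentration or $m$-dependent approximation but rather bounds each entry of $n^{-1}W^\top W-\Pi$ separately via parts (1) and (2) of Lemma~\ref{lem_concentration} and then passes to the operator norm by Gershgorin (Lemma~\ref{lem_circle}), which is where the prefactor $p$ in \eqref{eq_parameterassumption} comes from.
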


\begin{remark}\label{rmk_minimaxoptimal}
Theorem \ref{thm_consistency} implies that our proposed sieve least square estimator (\ref{eq_proposedestimator}) is consistent under mild  conditions.  First, $\xi_c$ and $\zeta$ can be calculated for specific sieve basis functions and so does the convergence rate in (\ref{eq_rate}). For example, when $\{\phi_i(t)\}$ are chosen as the Fourier basis functions and $\{\varphi_j(t)\}$ as the mapped Fourier basis functions, then  $\xi_c=\OO(1)$ and $\zeta=\OO(\sqrt{p}). $ Additionally, if we assume that $\mathsf{m}_1=\mathsf{m}_2=\infty$ such that $m(t,x)$ is infinitely differentiable, by choosing $c=d=\OO(\log n),$ the rate on  the right-hand side of (\ref{eq_rate}) reads as $\OO(n^{-1/2} \log^2 n )$ which matches the optimal uniform (i.e. sup-norm) convergence rate as obtained in \cite{CHEN2015447,DZ, MR673642}. For the magnitudes of $\xi_c$ and $\zeta$ for more general sieves, we refer the readers to Appendix \ref{sec_sieves}.

Second, the condition (\ref{eq_parameterassumption}) ensures that $n^{-1} W^\top W$ in the OLS estimator  $\widehat{\bm{\beta}}$ in (\ref{eq_betaolsform}) will converge to the integrated long run covaraince matrix $\Pi,$ which guarantees the regularity behavior of $\widehat{\bm{\beta}}$; see (\ref{eq_consistencyconvergency}) for more details. In fact (\ref{eq_parameterassumption}) can be easily   satisfied. For example, when $\xi_c=\OO(1)$ and $p=\log^2n,$ we only require $\tau>1$ for the physical dependence measure as in (\ref{eq_physcialrequirementone}).   

\begin{remark}\label{rmk_case2ols}
We point out that the general non-stationary setting, i.e., (2) of Assumption \ref{assum_models} can be handled similarly with some minor modifications. For example, in (\ref{eq_designmatrix}), $X_{i-(l+1)}$ should be replaced by $X_{l+1, i},$ and in (\ref{eq_desigmatrixform}), $X_{i-\ell_1(k)-1}$ should be replaced by $X_{\ell_1(k)+1,i}.$ We omit the details since the main differences are just notational. 
\end{remark}

\end{remark}

\section{Statistical inference for the nonlinear regression}\label{sec_inference}

In this section, we infer the nonlinear functions $m_j(t,x), 1 \leq j \leq r,$ based on our proposed sieve least square estimator (\ref{eq_proposedestimator}). As in Section \ref{sec_estimation}, we focus our discussion on Case (1) of Assumption \ref{assum_models} and only briefly explain how to handle Case (2) without providing extra details in Remark \ref{rmk_case2testdistribution}. Throughout this section, we assume that for the error in (\ref{eq_rate})
\begin{equation}\label{eq_assumptionerrorreduce}
c^{-\mathsf{m}_1}+d^{-\mathsf{m}_2}=\OO(n^{-\gamma}), \ \gamma>\frac{1}{2}.  
\end{equation}
Note that (\ref{eq_assumptionerrorreduce}) is a mild assumption by choosing reasonably large values of $c$ and $d$. For example, if $\mathsf{m}_1=\mathsf{m}_2=\infty,$ it can be easily satisfied by choosing $c, d \asymp \log n.$ In general,  it will be satisfied if we choose 
\begin{equation}\label{eq_cdchoice}
c \asymp n^{C/\mathsf{m}_1}, \ d \asymp n^{C/\mathsf{m}_2},  
\end{equation}
for some large constant $C>\frac{1}{2}.$ This shows that we only need a relatively small amount of basis functions once $\mathsf{m}_k ,k=1,2,$ are reasonably large.

\subsection{Problems setup and proposed statistics}\label{sec_problemsetup}
In this subsection, we introduce several commonly concerned problems and provide our proposed statistics utilizing (\ref{eq_proposedestimator}). 

For $1 \leq j \leq r,$ our first goal is to construct a $(1-\alpha)$ simultaneous confidence region (SCR) of $m_j(t,x)$ based on  our estimator (\ref{eq_proposedestimator}), denoted as $\{\Upsilon_{\alpha}(t,x), 0 \leq t \leq 1, x \in \mathbb{R}\}.$ The SCR is defined as follows   
\begin{equation}\label{eq_scrdefinition}
\lim_{n \rightarrow \infty}\mathbb{P}\left\{ m_j(t,x) \in \Upsilon_\alpha(t,x), \ \forall \ 0 \leq t \leq 1, \ \forall \ x \in \mathbb{R} \right\}=1-\alpha,
\end{equation}
which generalizes the definition of simultaneous confidence band as in \cite{MR1804172, MR2758526}. For $0 \leq t \leq 1$ and $x \in \mathbb{R},$ denote 
\begin{equation}\label{eq_defhtx}
h_j(t,x)=\sqrt{\bm{l}_j^\top \Omega \bm{l}_j}, \ 1 \leq j \leq r,
\end{equation}
where $\bm{l}_j\equiv \bm{l}_j(t,x):=\Pi^{-1} (\bm{b} \otimes \mathbf{I}_r) \mathsf{I}_j$ and $\Pi, \Omega$ are defined in (\ref{eq_longruncovariancematrix}). Recall (\ref{eq_proposedestimator}).  Denote
\begin{equation}\label{eq_definitiont1}
\mathsf{T}_{1j}(t,x)=\widehat{m}_{j,c,d}(t,x)-m_j(t,x).
\end{equation}
As we will see in Theorem \ref{thm_asymptoticdistribution}, $h_j^2(t,x)$ is the asymptotic variance of $\sqrt{n}\mathsf{T}_{1j}.$ Therefore, given the nominal level $\alpha,$ the SCR can be constructed as follows 
\begin{equation}\label{eq_scrdefinition}
\hat m_{j,c,d}(t,x)\pm \frac{c_{\alpha}}{\sqrt{n}}h_j(t,x),\quad t\in[0,1], x\in\mathbb{R},
\end{equation}
where $c_{\alpha}$ is the critical value which follows that 
\begin{equation*}
\lim_{n \rightarrow \infty} \p(\sup_{t,x}\left| \frac{\sqrt{n}\mathsf{T}_{1j}}{h_j(t,x)}\right|\le c_{\alpha})= 1-\alpha. 
\end{equation*}

As discussed in \cite[Section 5.2]{YZ}, the SCR can serve as a foundation for inferring the structure of $m_j(t,x).$  The general idea can be described as follows. Under some specific structural assumptions (see Examples \ref{exam_stationarytest} and \ref{exam_seperabletest} below), the functions $m_{j}(t,x)$ can be estimated with faster convergence rates than those estimated using the general approach (\ref{eq_proposedestimator}). For example, in Example \ref{exam_stationarytest}, under the assumption that   $m_j$ is stationary (i.e., independent of time), instead of using $\OO(cd)$ hierarchical basis functions as in (\ref{eq_basisconstruction}), we only need $\OO(d)$ mapped sieve basis functions so that we will have a better rate compared to use all the basis functions as illustrated in (\ref{eq_rate}). Similarly, assuming $m_j(t,x)$ is separable in $t, x$ as  in Example \ref{exam_seperabletest}, we can estimate them separately so that we only need $\OO(c+d)$ basis functions.

Based on the above observations, if the functions $m_{j}(t,x)$ under the null hypothesis  can be estimated with faster convergence rates than those estimated using the general approach (\ref{eq_proposedestimator}), to test the structure of $m_j(t,x)$ under some specific null hypothesis, it suffices to check whether the estimated  functions under the null hypothesis (with faster convergence rates) can be embedded into the SCR constructed  generally using (\ref{eq_proposedestimator}). This general procedure achieves asymptotic power one as long as the alternatives deviate from the null hypothesis with a rate large than the order of the width of the SCR. In what follows, we present two important examples. 
 
\begin{example}[Testing time-homogeneity]\label{exam_stationarytest} We consider the structural assumption  that 
\begin{equation}\label{eq_hotestinghomogeneity}
\mathbf{H}_0: \ m_j(t,x) \equiv m_j(x), \ \ \forall  t \in [0,1],
\end{equation}
for some function $m_j(\cdot).$  Under the null assumption of (\ref{eq_hotestinghomogeneity}), we can estimate the function only using the basis $\{\varphi_j(x)\}$ instead of using (\ref{eq_basisconstruction}). Under the assumption of (\ref{eq_assumptionerrorreduce}), by an argument similar to Theorem \ref{thm_consistency}, let $\zeta_d:=\sup_x \| (\varphi_1(x), \cdots, \varphi_d(x))^* \|,$ we can obtain a convergence rate $\zeta_d (d/n)^{1/2}$ which is faster than the rate in (\ref{eq_rate}). 
\end{example}

\begin{example}[Testing separability]\label{exam_seperabletest} We consider the structural assumption that 
\begin{equation}\label{eq_hotestseparability}
\mathbf{H}_0: m_j(t,x)=f_j(t)g_j(x), \ \forall \ t \in [0,1] \ \text{and} \ x \in \mathbb{R}, 
\end{equation}
for some functions $f_j(\cdot)$ and $g_j(\cdot).$ Under the null assumption of (\ref{eq_hotestseparability}), we can estimate $f_j(\cdot)$ and $g_j(\cdot)$ separately using the basis functions $\{\phi_j(t)\}$ and $\{\varphi_j(x)\},$ respectively. Under the assumption of (\ref{eq_assumptionerrorreduce}), we can obtain a convergence rate $\zeta_c(c/n)^{1/2}+\zeta_d (d/n)^{1/2}$ which is faster than the rate in (\ref{eq_rate}).   
\end{example}

Our second task is to conduct hypothesis on the functions $m_j(t,x), 1 \leq j \leq r.$ In this problem, we want to test the exact forms of the functions instead of understanding their structure as discussed earlier. In this setting, generally, we cannot obtain an estimator with a faster convergence rate and SCR may not be applicable. Instead, we propose a $L_2$ test.  Specifically, we are interested in testing the following null hypothesis
\begin{equation}\label{eq_nullhypotheis}
\mathbf{H}_0: \ m_j(t,x) \equiv m_{j,0}(t,x),
\end{equation} 
where $m_{j,0}(t,x)$ is some pre-given function. To test (\ref{eq_nullhypotheis}), we propose the following  $L_2$ statistic
\begin{equation}\label{eq_statistic}
\mathsf{T}_{2j}=\int_{[0,1]}\int_{\mathbb{R}}[\hat m_{j,c,d}(t,x)-m_{j,0}(t,x)]^2 \dd t \dd x.
\end{equation}
Note that under the null hypothesis (\ref{eq_nullhypotheis}), $\mathsf{T}_{2j}$ should be small.

\begin{remark}\label{rmk_multiplecase}
Even though we focus our arguments for each specific $j,$ our discussion can be easily generalized to the multiple case. We take the second task as an example. Instead of considering (\ref{eq_nullhypotheis}), one may be interested in testing
\begin{equation*}
\mathbf{H}_0: \ m_j(t,x) \equiv m_{j,0}(t,x), \ \text{for all} \ 1 \leq j \leq r. 
\end{equation*}
In this setting, we shall use the statistic
\begin{equation}\label{eq_t2definition}
\mathsf{T}_{2}=\sum_{j=1}^r \int_{[0,1]}\int_{\mathbb{R}}[\hat m_{j,c,d}(t,x)-m_{j,0}(t,x)]^2 \dd t \dd x.
\end{equation}
As will be seen in Remark \ref{rmk_case2testdistribution} below, the distribution of $\mathsf{T}_2$ can be obtained in a similar fashion as $\mathsf{T}_{2j}.$ Similar arguments hold for the SCR. 
\end{remark}


\subsection{Asymptotic distributions of the statistics}
In this subsection, we provide the results of the asymptotic distributions of the statistics introduced in Section \ref{sec_problemsetup}. We prepare some notations.  
Recall (\ref{eq_defnxic}). For $\eta>0,$ denote the control parameter $\Theta(\eta)$ as
\begin{align}\label{eq_controlparameter}
\Theta(\eta)=\frac{p^{7/4}}{\sqrt{n}} \hd^3 m^2& +\left[\sqrt{p}\zeta \left(p \xi_c^2 m^{-\tau+1}+ pn \xi_c^4 \hd^{-(q-2)} \right) \right]^{\eta}+\left[p\sqrt{n} \zeta \xi_c^2 \hd^{-(q-1)}\right]^{\eta} \nonumber \\
&+\left[\zeta p^{2} \xi_c m^{-\tau+1} \right]^{\eta}+\frac{p\xi_c^2}{n} +\sqrt{p} \zeta\left(\frac{p\xi_c^2 n^{2/\tau}}{\sqrt{n}} +p\xi_c^2 n^{-1+\frac{2}{\tau+1}} \right) ,
\end{align}
where $m \equiv m(n)$ and $\hd \equiv \hd(n)$ are some large diverging values which can be chosen by the users. As can be seen in Theorem \ref{thm_gaussianapproximationcase}, $\Theta(\eta)$ is used to control the Gaussian approximation rates. For example, if $\Theta (1)=\oo(1),$ after properly scaled, $\mathsf{T}_{1j}$ in (\ref{eq_definitiont1}) will be asymptotically standard Gaussian, and if $\Theta(0.5)=\oo(1),$ $\mathsf{T}_{2j}$ will be asymptotically standard Gaussian with proper scaling.

We first state the results for the statistic $\mathtt{T}_{1j}$ and the critical value $c_{\alpha}.$ Recall (\ref{eq_betaolsform}) and (\ref{eq_proposedestimator}). Denote 
\begin{equation}\label{eq_ljdefinition}
l_j(t,x):=n^{-1}(\bm{b} \otimes \mathbf{I}_r)^\top \mathsf{I}_j \Pi^{-1}.
\end{equation} 
For $x \in \mathbb{R},$ using the mappings in Definition \ref{defn_mappings}, we write
\begin{equation}\label{eq_ll}
\widetilde{l}_j(t,\widetilde{x}):=l_j(t,g(2\widetilde{x}-1;s)) \equiv l_j(t,x), \ \widetilde{x} \in [0,1], 
\end{equation}  
and 
\begin{equation*}
\widetilde{\bm{b}} \equiv \widetilde{\bm{b}}(t, \widetilde{x})=\bm{b}(t, g(2\widetilde{x}-1);s) \equiv \bm{b}(t,x). 
\end{equation*}
Recall $\bm{\epsilon}=(\epsilon_i)_{r+1 \leq i \leq n} \in \mathbb{R}^{n-r}.$ Denote $T_j(t,\widetilde{x})$ as
\begin{equation}\label{eq_defntdefn}
T_j(t, \widetilde{x}):=\frac{\sqrt{n} \widetilde{l}_j(t,\widetilde{x}) \sqrt{\operatorname{Cov}(W^\top \bm{\epsilon})}}{\widetilde{h}_j(t, \widetilde{x})}, \  \widetilde{h}_j(t,\widetilde{x}):=h_j(t,g(2\widetilde{x}-1);s). 
\end{equation} 
Denote the manifold $\mathcal{M}_j, 1 \leq j \leq r$ as follows
\begin{equation}\label{eq_manifolddefinition}
\mathcal{M}_j=:\left\{ T_j(t, \widetilde{x}): (t,\widetilde{x}) \in [0,1] \times [0,1]  \right\}.
\end{equation}

\begin{theorem}\label{thm_asymptoticdistribution}
Suppose Assumptions \ref{assum_models}--\ref{assum_updc} and (\ref{eq_assumptionerrorreduce}) hold and $\mathsf{w}_1, \mathsf{w}_2 \geq 3$.  Recall $\Theta(\eta)$ in (\ref{eq_controlparameter}). We assume that 
\begin{equation}\label{eq_onebound}
\Theta(1)=\oo(1). 
\end{equation}
Then we have that:  \\
\noindent (1). For all $(t,x) \in [0,1] \times \mathbb{R}$   
\begin{equation}\label{eq_thmonepartone}
\frac{\sqrt{n} \mathsf{T}_{1j}(t,x)}{h_j(t,x)} \simeq \mathcal{N}(0,1),
\end{equation}
where $h_j(t,x)$ is defined in (\ref{eq_defhtx}).  \\
\noindent (2).  The critical value $c_{\alpha}$ in (\ref{eq_scrdefinition}) satisfies the following expansion
\begin{equation}\label{eq_expansionformula}
\alpha=\frac{c_{\alpha} \kappa_0}{\sqrt{2} \pi^{3/2}} \exp\left(-\frac{c_{\alpha}^2}{2} \right)+\frac{\zeta_0}{2 \pi} \exp\left(-\frac{c_{\alpha}^2}{2} \right)+2(1-\Phi(c_{\alpha}))+\oo\left( \exp\left(-\frac{c^2_{\alpha}}{2} \right)\right),
\end{equation}
where $\kappa_0, \zeta_0$ are the area and the length of the boundary of $\mathcal{M}_j$ in (\ref{eq_manifolddefinition}) and $\Phi(\cdot)$ is the cumulative distribution function of a standard Gaussian random variable. Moreover, suppose $c,d$ are chosen according to (\ref{eq_cdchoice}),  there exist some constants $\alpha_1, \alpha_2 \geq 0$ so that
\begin{equation}\label{eq_assumptionbasisderivative}
\int \frac{\| \nabla_t \widetilde{\bm{b}} \|_2}{\| \widetilde{\bm{b}} \|_2} \mathrm{d}t \dd \widetilde{x} \asymp n^{\alpha_1}, \ \int \frac{\| \nabla_{\widetilde{x}} \widetilde{\bm{b}} \|_2}{\| \widetilde{\bm{b}} \|_2} \mathrm{d}t \dd \widetilde{x} \asymp n^{\alpha_2}, \ 
\end{equation}
then we have 
\begin{equation}\label{eq_quantitiesbound}
c_{\alpha} \asymp  \log^{1/2} n.
\end{equation}

\end{theorem}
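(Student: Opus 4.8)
The plan is to decompose the statement of Theorem~\ref{thm_asymptoticdistribution} into three independent pieces: (a) the pointwise Gaussian limit \eqref{eq_thmonepartone}; (b) the tube-formula expansion \eqref{eq_expansionformula} for the critical value; and (c) the rate \eqref{eq_quantitiesbound}. For part (a), I would first recall from \eqref{eq_betaolsform} and \eqref{eq_proposedestimator} that $\sqrt{n}\,\mathsf{T}_{1j}(t,x)$ equals an affine functional of $\widehat{\bm\beta}-\bm\beta$ plus the sieve bias $\mathrm{O}(c^{-\mathsf{m}_1}+d^{-\mathsf{m}_2})=\mathrm{O}(n^{-\gamma})$, which is negligible by \eqref{eq_assumptionerrorreduce}. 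Writing $\widehat{\bm\beta}-\bm\beta=(W^\top W)^{-1}W^\top\bm\epsilon+(\text{bias})$, I would use the condition \eqref{eq_parameterassumption} (implied by $\Theta(1)=\oo(1)$) to replace $n^{-1}W^\top W$ by the integrated long-run covariance $\Pi$ with negligible error, so that $\sqrt{n}\,\mathsf{T}_{1j}(t,x)=\widetilde{l}_j(t,x)^\top W^\top\bm\epsilon/\sqrt n+\oo_{\mathbb P}(1)$ after the change of coordinates \eqref{eq_ll}--\eqref{eq_defntdefn}. This is precisely a scalar affine form of the high-dimensional locally stationary time series $\{\bm x_i\}$ identified in Lemma~\ref{lem_locallystationaryform}, so I would invoke the affine-form part of the forthcoming Gaussian approximation result (Theorem~\ref{thm_gaussianapproximationcase}); the hypothesis $\Theta(1)=\oo(1)$ is exactly what makes that approximation rate vanish, and dividing by $h_j(t,x)=\sqrt{\bm l_j^\top\Omega\bm l_j}$ normalizes the variance to one, giving \eqref{eq_thmonepartone}.

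For part (b), the object to control is $\sup_{(t,\widetilde x)\in[0,1]^2}|T_j(t,\widetilde x)|$ where $T_j$ is the Gaussian field living on the manifold $\mathcal M_j$ of \eqref{eq_manifolddefinition}. After the Gaussian approximation of part (a) upgrades the pointwise CLT to a process-level coupling (again supplied by Theorem~\ref{thm_gaussianapproximationcase}, now applied uniformly over the index set), the tail probability $\mathbb P(\sup|T_j|\le c_\alpha)$ is, up to $\oo(\exp(-c_\alpha^2/2))$, the tube/volume-of-tubes expansion of \citet{MR1056331,KS,MR1207215} for the maximum of a unit-variance Gaussian field over a $2$-dimensional manifold with boundary. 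I would identify $\kappa_0$ as the Riemannian area of $\mathcal M_j$ and $\zeta_0$ as the length of $\partial\mathcal M_j$ under the metric induced by the correlation function of $T_j$, and then \eqref{eq_expansionformula} is the standard Weyl-type expansion, with the $2(1-\Phi(c_\alpha))$ term coming from the two ``corner/endpoint'' contributions. The regularity needed (that the normalized field $T_j/\|T_j\|$ is $C^2$ in $(t,\widetilde x)$ with nondegenerate induced metric) follows from $\mathsf w_1,\mathsf w_2\ge 3$ and the smoothness of the mapped sieve basis, together with Assumption~\ref{assum_updc}.

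For part (c), I would solve \eqref{eq_expansionformula} asymptotically for $c_\alpha$: since $2(1-\Phi(c_\alpha))\asymp c_\alpha^{-1}\exp(-c_\alpha^2/2)$ and the dominant term is $c_\alpha\kappa_0\exp(-c_\alpha^2/2)$, one gets $c_\alpha^2\sim 2\log\kappa_0$ (to leading order), so everything reduces to bounding the manifold area $\kappa_0$. The area is controlled by the derivative-of-basis integrals appearing in \eqref{eq_assumptionbasisderivative}: a change-of-variables shows $\kappa_0$ is bounded by a polynomial in $n$ times $\int(\|\nabla_t\widetilde{\bm b}\|/\|\widetilde{\bm b}\|+\|\nabla_{\widetilde x}\widetilde{\bm b}\|/\|\widetilde{\bm b}\|)\,\mathrm dt\,\mathrm d\widetilde x\asymp n^{\alpha_1}+n^{\alpha_2}$, and with $c,d$ chosen as in \eqref{eq_cdchoice} these exponents are fixed constants. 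Hence $\log\kappa_0=\mathrm O(\log n)$, which yields $c_\alpha\asymp\log^{1/2}n$. I expect the main obstacle to be part (b): turning the pointwise/finite-dimensional Gaussian approximation into a coupling that is strong enough (in sup-norm over the continuum index set, with a controlled rate) to legitimately substitute $T_j$ into the volume-of-tubes formula, and verifying the nondegeneracy and $C^2$-regularity of the induced metric on $\mathcal M_j$ uniformly in $n$ so that the Weyl expansion applies with the claimed error term $\oo(\exp(-c_\alpha^2/2))$; the affine-form estimate of Theorem~\ref{thm_gaussianapproximationcase} is the key lever here, but matching its index-set-uniform version to the smoothness bookkeeping in \eqref{eq_assumptionbasisderivative} is the delicate step.
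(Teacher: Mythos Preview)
Your proposal is correct and follows essentially the same route as the paper: reduce $\sqrt n\,\mathsf T_{1j}$ to the affine form $\bm l_j^\top \bm z$ via \eqref{eq_fundementalexpression} and invoke Theorem~\ref{thm_gaussianapproximationcase} for part~(1); then, after the Gaussian substitution, apply the Sun--Loader volume-of-tubes expansion (Lemma~\ref{lem_volumeoftube}) to the manifold $\mathcal M_j$ for part~(2); and finally bound $\kappa_0$ through the basis-derivative integrals \eqref{eq_assumptionbasisderivative} for part~(3). Two small points of divergence are worth noting. First, for part~(2) the paper sidesteps the ``process-level coupling'' issue you flag by observing that the entire field $(t,\widetilde x)\mapsto \sqrt n\,\mathsf T_{1j}(t,\widetilde x)$ is a deterministic linear functional of the single $p$-dimensional vector $\bm z=n^{-1/2}W^\top\bm\epsilon$, so one can write $\bm\epsilon=\sqrt{\operatorname{Cov}(W^\top\bm\epsilon)}\,\widetilde{\bm\epsilon}$ with $\widetilde{\bm\epsilon}\sim\mathcal N(0,\mathbf I)$ and apply Lemma~\ref{lem_volumeoftube} directly in the form \eqref{eq_alphatheortrep}; no separate sup-norm coupling argument is needed. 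Second, for part~(3) your sketch only delivers $\log\kappa_0=\OO(\log n)$, hence $c_\alpha=\OO(\log^{1/2}n)$; to get the full $c_\alpha\asymp\log^{1/2}n$ you also need a matching \emph{lower} bound $\kappa_0\gtrsim p(n^{\alpha_1}+n^{\alpha_2})$, which the paper obtains by bounding $\det(AA^\top)$ from below via $\lambda_{\min}(AA^\top)$ and the eigenvalue controls from Assumption~\ref{assum_updc}.
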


We point out that (\ref{eq_assumptionbasisderivative}) is a mild assumption which is frequently used in the literature of sieve nonparametric estimation and inference. It can be easily satisfied for the commonly used sieve basis functions under the choice (\ref{eq_cdchoice}); see Assumption 4 of \cite{CHEN2015447} for more details.


Next, we investigate the distribution and power of $\mathsf{T}_{2j}.$ Denote
\begin{equation}\label{eq_defnw}
\mathsf{W}_j=\Pi^{-1} \mathsf{B}_j \Pi^{-1}, \ \mathsf{B}_j:= \int_{[0,1]} \int_{\mathbb{R}} [(\bm{b} \otimes \mathbf{I}_r)\mathsf{I}_j] [(\bm{b} \otimes \mathbf{I}_r)\mathsf{I}_j]^\top \dd t \dd x.   
\end{equation}
Moreover, for $k=1,2,$ denote
\begin{equation}\label{eq_definemoments}
\mathfrak{m}_{jk}=\left( \operatorname{Tr}[\Omega^{1/2} \mathsf{W}_j \Omega^{1/2}]^k \right)^{1/k}, \ k \in \mathbb{N}. 
\end{equation}
To examine the power of our statistic $\mathsf{T}_{2j},$ we consider the following class of local alternative, i.e., a small perturbation of $m_{j,0}(t,x)$ 
\begin{equation}\label{eq_alternative}
\mathbf{H}_a:  m_j(t,x)=m_{j,0}(t,x)+v_{j,n}(t,x),
\end{equation}
where 
$v_{j,n}(t,x)$ depends on $n$ and  is some other deterministic function satisfying Assumption \ref{assum_smoothnessasumption}. 
\begin{theorem}\label{thm_poweranalysis}
Suppose Assumptions \ref{assum_models}--\ref{assum_updc} and (\ref{eq_assumptionerrorreduce}) hold. Moreover, we assume that for $\Theta(\eta)$ in (\ref{eq_controlparameter}) that
\begin{equation}\label{eq_05bound}
\Theta(0.5)=\oo(1), 
\end{equation}
and for some constants $\kappa_1, \mathcal{K}_2>0$
\begin{equation}\label{eq_beigenvaluebound}
\kappa_1 \leq  \lambda_{cd}(\mathsf{B}_j) \leq \lambda_{\max}(\mathsf{B}_j) \leq \mathcal{K}_2. 
\end{equation}
Then for $\mathsf{T}_{2j}$ in (\ref{eq_statistic}): \\
(1). When the null hypothesis $\mathbf{H}_0$ in (\ref{eq_nullhypotheis}) holds,  we have that
\begin{equation}\label{eq_limiting}
\frac{n \mathsf{T}_{2j}-\mathfrak{m}_{j1}}{\mathfrak{m}_{j2}} \simeq \mathcal{N}(0,2).
\end{equation}
(2). When the alternative hypothesis $\mathbf{H}_a$ in (\ref{eq_alternative}) holds such that
\begin{equation}\label{eq_alternativeassumption}
\int_{[0,1]} \int_{\mathbb{R}} v^2_{jn}(t,x) \dd t \dd x >C_{\alpha}\frac{\sqrt{p}}{n},
\end{equation}
where $C_{\alpha} \equiv C_{\alpha}(n) \rightarrow \infty$ as $n \rightarrow \infty,$ we have 
\begin{equation*}
\frac{n\mathsf{T}_{2j}-\mathfrak{m}_{j1}-n\int_{[0,1]} \int_{\mathbb{R}} v_{jn}^2(t,x) \dd t \dd x}{\mathfrak{m}_{j2}} \simeq \mathcal{N}(0,2).
\end{equation*}
Consequently, the power of our test will asymptotically be 1, i.e., 
\begin{equation*}
\lim_{n \rightarrow \infty} \mathbb{P}\left( \left| \frac{n \mathsf{T}_{2j}-\mathfrak{m}_{j1}}{\mathfrak{m}_{j2}} \right| \geq \sqrt{2} \mathcal{Z}_{1-\alpha} \right)=1,
\end{equation*}
where $\mathcal{Z}_{1-\alpha}$ is the $(1-\alpha)\%$ quantile of the standard Gaussian distribution. 
\end{theorem}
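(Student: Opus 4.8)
\textbf{Proof proposal for Theorem \ref{thm_poweranalysis}.}

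The plan is to reduce both statements to a Gaussian approximation for a quadratic form in the high-dimensional locally stationary time series $\{\bm{x}_i\}$ of Lemma \ref{lem_locallystationaryform}, and then to invoke Theorem \ref{thm_gaussianapproximationcase}. First I would expand $\hat m_{j,c,d}(t,x) - m_{j,0}(t,x)$ using the OLS representation (\ref{eq_betaolsform}) and the approximation bound (\ref{eq_assumptionerrorreduce}): under $\mathbf{H}_0$ the bias term $m_j(t,x)-m_{j,c,d}(t,x)$ contributes $\OO(n^{-\gamma})$ with $\gamma>1/2$, so after squaring and integrating it is dominated by the stochastic part $\widehat{\bm{\beta}}-\bm{\beta}_{c,d}$. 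Writing $\widehat{\bm{\beta}}-\bm{\beta}_{c,d} = (W^\top W)^{-1}W^\top\bm{\epsilon} + (\text{negligible})$ and replacing $n^{-1}W^\top W$ by its limit $\Pi$ (justified exactly as in (\ref{eq_consistencyconvergency}) under the parameter conditions of Theorem \ref{thm_consistency}, which are implied by $\Theta(0.5)=\oo(1)$), the leading term of $n\mathsf{T}_{2j}$ becomes
\begin{equation*}
n\,\mathsf{T}_{2j} = \frac{1}{n}(W^\top\bm{\epsilon})^\top \mathsf{W}_j (W^\top\bm{\epsilon}) + \text{lower order},
\end{equation*}
where $\mathsf{W}_j = \Pi^{-1}\mathsf{B}_j\Pi^{-1}$ as in (\ref{eq_defnw}); here $\mathsf{B}_j$ appears because $\int\!\!\int [(\bm b\otimes\mathbf I_r)\mathsf I_j][(\bm b\otimes\mathbf I_r)\mathsf I_j]^\top\,\dd t\,\dd x$ is precisely the matrix sandwiched between the two copies of $\widehat{\bm\beta}-\bm\beta_{c,d}$ after the square-and-integrate operation on (\ref{eq_proposedestimator}). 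Since $W^\top\bm{\epsilon}=\sum_i \bm{x}_i$ up to the basis indexing, this is a quadratic form in the partial sum of $\{\bm{x}_i\}$, whose long-run covariance is $\Omega$ by (\ref{eq_longruncovariancematrix}).

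Next I would apply the quadratic-form half of Theorem \ref{thm_gaussianapproximationcase}: the condition $\Theta(0.5)=\oo(1)$ is exactly what is needed (per the discussion after (\ref{eq_controlparameter})) so that $\frac{1}{n}(\sum_i\bm x_i)^\top\mathsf W_j(\sum_i\bm x_i)$ is within $\oo(1)$, after centering by $\operatorname{Tr}(\Omega^{1/2}\mathsf W_j\Omega^{1/2})=\mathfrak{m}_{j1}$ and scaling by $(\operatorname{Tr}[\Omega^{1/2}\mathsf W_j\Omega^{1/2}]^2)^{1/2}=\mathfrak{m}_{j2}$, of a Gaussian quadratic form $\bm g^\top\Omega^{1/2}\mathsf W_j\Omega^{1/2}\bm g$ with $\bm g\sim\mathcal N(0,\mathbf I_p)$. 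The latter, by a standard CLT for weighted sums of $\chi^2_1$ variables (Lyapunov condition guaranteed by the eigenvalue control (\ref{eq_beigenvaluebound}) together with Assumption \ref{assum_updc}, which keep the ratio $\mathfrak{m}_{j2}^2/\mathfrak{m}_{j,4}\to\infty$), converges to $\mathcal N(0,2)$ after the same centering and scaling. Chaining these gives (\ref{eq_limiting}). For part (2), under $\mathbf H_a$ the estimator now targets $m_{j,0}+v_{j,n}$, so $\hat m_{j,c,d}-m_{j,0} = (\hat m_{j,c,d}-m_j) + v_{j,n}$; squaring and integrating produces the extra deterministic term $n\int\!\!\int v_{j,n}^2$ plus a cross term. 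The cross term is $2n\int\!\!\int v_{j,n}(\hat m_{j,c,d}-m_j)$, which I would bound by Cauchy--Schwarz as $\OO_{\mathbb P}\big(n\,(\int\!\!\int v_{j,n}^2)^{1/2}\cdot(\int\!\!\int(\hat m_{j,c,d}-m_j)^2)^{1/2}\big) = \OO_{\mathbb P}\big(n (\int\!\!\int v_{j,n}^2)^{1/2}\sqrt{p}/n\big)$, which is $\oo(\mathfrak{m}_{j2})$ relative to the signal precisely when (\ref{eq_alternativeassumption}) holds with $C_\alpha\to\infty$; hence the shifted statistic still satisfies the CLT, and since $n\int\!\!\int v_{j,n}^2 / \mathfrak{m}_{j2} \to\infty$ under (\ref{eq_alternativeassumption}), the unshifted $|n\mathsf T_{2j}-\mathfrak{m}_{j1}|/\mathfrak{m}_{j2}$ diverges, giving asymptotic power one.

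The main obstacle is controlling the error in replacing $n^{-1}W^\top W$ by $\Pi$ \emph{inside} the quadratic form — this is a bilinear (not linear) operation, so first-order consistency of $\widehat{\bm\beta}$ is not enough; I would need a sharp bound on $\|n^{-1}W^\top W - \Pi\|_{\mathrm{op}}$ (available from the concentration lemmas and Lemma \ref{lem_locallystationaryform} under $\Theta(0.5)=\oo(1)$) and then argue that the resulting perturbation of the quadratic form, of order $\|W^\top\bm\epsilon\|^2\cdot\|n^{-1}W^\top W-\Pi\|_{\mathrm{op}}\cdot n^{-1}$, is $\oo_{\mathbb P}(\mathfrak{m}_{j2})$; this is where the relatively strong $p^{7/4}/\sqrt n$ and $p^2$-type terms in $\Theta(0.5)$ are consumed. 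A secondary technical point is verifying that the $\oo_{\mathbb P}(1)$ approximation errors from (\ref{eq_assumptionerrorreduce}) and from dropping $\bm\beta_{c,d}$-vs-$\bm\beta$ discrepancies, once squared and cross-multiplied with the $\OO_{\mathbb P}(\sqrt p)$ stochastic magnitude of $n\mathsf T_{2j}$, remain negligible at the $\mathfrak{m}_{j2}$ scale — this uses $\gamma>1/2$ in (\ref{eq_assumptionerrorreduce}) in an essential way.
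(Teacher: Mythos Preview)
Your proposal is correct and follows essentially the same route as the paper: reduce $n\mathsf{T}_{2j}$ to the quadratic form $\bm z^\top\mathsf{W}_j\bm z$ via the OLS expansion (this is the paper's (\ref{eq_t2expansion})), invoke the quadratic-form Gaussian approximation of Theorem \ref{thm_gaussianapproximationcase} under $\Theta(0.5)=\oo(1)$, and then apply a Lindeberg-type CLT to the Gaussian quadratic form using the eigenvalue bounds from (\ref{eq_beigenvaluebound}) and Assumption \ref{assum_updc} to obtain (\ref{eq_limiting}); for part (2) the paper likewise decomposes $\hat m_{j,c,d}-m_{j,0}=(\hat m_{j,c,d}-m_j)+v_{j,n}$ and controls the cross term by Cauchy--Schwarz together with the $L_2$ rate (\ref{eq_betabound}). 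The only cosmetic difference is that the paper packages your ``main obstacle'' (replacing $n^{-1}W^\top W$ by $\Pi$ inside the quadratic form) into the $(1+\oo_{\mathbb P}(1))$ factor of (\ref{eq_fundementalexpression})--(\ref{eq_t2expansion}) rather than arguing it separately.
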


\begin{remark}\label{remark_t2distribution}
We remark that (\ref{eq_beigenvaluebound}) can be easily satisfied under certain choices of basis functions. We take $r=1$ for an example and omit the subscript $j$, if $\{\phi_i(t)\}$ is either Fourier series or orthogonal polynomials, due to orthonormality, $\mathsf{B}=\mathbf{I}.$ In these cases, for (\ref{eq_defnw}), $\mathsf{W}=\Pi^{-2}.$ Moreover, for the orthogonal wavelet, $\mathsf{B}$ can be controlled with straightforward calculations and (\ref{eq_beigenvaluebound}) can be easily satisfied. We refer the readers to \cite{DZ} for more details.     
\end{remark}

\begin{remark}\label{rmk_case2testdistribution}
As discussed earlier in Remark \ref{rmk_multiplecase}, we can easily generalize our discussions to the multiple functions testing case. For example, for the statistic (\ref{eq_t2definition}), Theorem \ref{thm_poweranalysis} still holds by replacing (\ref{eq_definemoments}) with 
\begin{equation*}
\mathfrak{m}_{k}=\left( \operatorname{Tr}[\Omega^{1/2} \mathsf{W} \Omega^{1/2}]^k \right)^{1/k}, \ k \in \mathbb{N},
\end{equation*}
where 
\begin{equation*}
\mathsf{W}=\Pi^{-1} \mathsf{B} \Pi^{-1}, \ \mathsf{B}:= \int_{[0,1]} \int_{\mathbb{R}} [(\bm{b} \otimes \mathbf{I}_r)] [(\bm{b} \otimes \mathbf{I}_r)]^\top \dd t \dd x.   
\end{equation*}

Moreover, we point out that the results can be generalized to Case (2) of Assumption \ref{assum_models} easily with some minor modifications as discussed in Remark \ref{rmk_case2ols}. 
\end{remark}

\subsection{Multiplier bootstrapping procedure and practical implementation}

In this subsection, we discuss the practical implementation for the statistical inference problems discussed in Section \ref{sec_problemsetup}. We again focus on Case (1) of Assumption \ref{assum_models}.  As we can see from Theorems \ref{thm_asymptoticdistribution} and \ref{thm_poweranalysis}, it is difficult to use them directly since the long-run covariance matrix of $\{\bm{x}_i\},$ i.e., $\Omega(t)$ defined in (\ref{eq_longrunwitht}), is hard to be estimated. To address this issue, in this subsection, we propose a practical method as in \cite{DZ2,MR3174655} which utilizes high-dimensional multiplier bootstrap statistics to mimic the distributions of $\{\bm{x}_i\}$. We first prepare some notations. Since $\{\epsilon_i\}$ cannot be observed directly, we need to use the residuals in practice, i.e., 
\begin{equation}\label{eq_defnresidual}
\widehat{\epsilon}_i=Y_i-\sum_{j=1}^r \widehat{m}_{j,c,d}(t_i, X_{j,i}),
\end{equation}   
where $\widehat{m}_{j,c,d}(\cdot,\cdot)$ is defined in (\ref{eq_proposedestimator}). Corresponding to (\ref{eq_ddd}), we set 
\begin{equation}\label{eq_xihatdefinition}
\widehat{x}_{ik}:=w_{ik} \widehat{\epsilon}_i, 
\end{equation}
and $\{\widehat{\bm{x}}_i\}$ can be defined based on the above notation. For some diverging parameter $m,$ denote
\begin{equation}\label{eq_defnstatisticXi}
\Xi:=\frac{1}{\sqrt{n-m-r} \sqrt{m}} \sum_{i=r+1}^{n-m} \left[ \left( \sum_{j=i}^{i+m} \widehat{\bm{x}}_j \right) \otimes \ab(t_i) \right] R_i,
\end{equation}
where $R_i, \ r+1 \leq i \leq n-m,$ are i.i.d. $\mathcal{N}(0,1)$ random variables. Note that $\Xi$ can be always calculated once we have the data set and the window-size (i.e., block length) parameter $m$.

First, for the SCR (\ref{eq_scrdefinition}), instead of using (\ref{eq_definitiont1}), based on the statistic (\ref{eq_defnstatisticXi}), we propose the following statistic
\begin{equation}\label{eq_defnt1k}
\widehat{\mathsf{T}}_{1j}:=\Xi^\top \widehat{\Pi}^{-1} (\bm{b} \otimes \mathbf{I}_r) \mathsf{I}_j, \  \ \widehat{\Pi}=\frac{1}{n} W^\top W.
\end{equation}
Before stating the theoretical results, we pause to heuristically discuss the motivation of using  $\widehat{\mathsf{T}}_{1j}$ to mimic the distribution of  $\mathsf{T}_{1j}$. For the ease of discussion, we consider that $r=1$ and omit the subscript $j.$ As can be seen in (\ref{eq_fundementalexpression}), $\sqrt{n} \mathsf{T}_1$ can be approximated  by 
\begin{equation*}
\bm{z}^\top \Pi^{-1}\bm{b}, \ \bm{z}:=\frac{1}{\sqrt{n}} \sum (\bm{x}_i \otimes \ab(t_i)).
\end{equation*}
According to Theorem \ref{thm_asymptoticdistribution}, the above quantity is asymptotically Gaussian. To calculate the variance, it suffices to estimate the variance of $\bm{z}$ and the value of $\Pi.$ On one hand,  as will be seen in (\ref{eq_consistencyconvergency}), $\Pi$ can be well approximated by $\widehat{\Pi}$ and so does the precision. On the other hand, $\Xi$ is a multiplier statistic based on $\bm{z}$ and can mimic the distribution of $\bm{z}$ conditional on the data, especially we have that $\operatorname{Cov}(\Xi) \approx \Omega$ where we recall that $\Omega$ is defined in (\ref{eq_longruncovariancematrix}); see Lemmas \ref{lem_stepone}--\ref{lem_residualclosepreparation} for more detailed discussions.

We now state the results formally.  Denote the control parameter $\Psi(m)$ as
\begin{equation}\label{eq_defnpsim}
\Psi(m)=d\zeta^2 \left(\frac{1}{m}+\left(\frac{m}{n} \right)^{1-\frac{1}{\tau}}+\sqrt{\frac{m}{n}} \right).
\end{equation}

\begin{theorem}\label{thm_boostrapping}
Suppose the assumptions of Theorem \ref{thm_asymptoticdistribution} hold. Moreover, we assume that 
\begin{equation}\label{eq_boothstrappingextraassumption}
\zeta \Psi(m)+ \zeta p\left( \frac{\xi^2_c}{\sqrt{n}}+\frac{\xi^2_c n^{\frac{2}{\tau+1}}}{n}\right)+\sqrt{p} \frac{d \zeta^3}{\sqrt{n}} \left[\xi_c \zeta \sqrt{\frac{p}{n}}+c^{-\mathsf{m}_1}+d^{-\mathsf{m}_2} \right]=\oo(1). 
\end{equation}
Then conditional on the data $\{X_i\},$ we have that
\begin{equation*}
\sup_{y \in \mathbb{R}} \left| \mathbb{P} \left( \frac{\widehat{\mathsf{T}}_{1j}}{h_j(t,x)} \leq y \right)-\mathbb{P}(\psi \leq y) \right|=\oo(1),
\end{equation*} 
where $\psi$ is a standard Gaussian random variable. 
\end{theorem}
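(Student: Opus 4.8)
The plan is to establish the bootstrap consistency by comparing $\widehat{\mathsf{T}}_{1j}/h_j(t,x)$, conditional on the data, to a standard Gaussian in three conceptual stages: (i) replace the residual-based quantities $\{\widehat{\bm x}_i\}$ and $\widehat\Pi$ inside $\widehat{\mathsf{T}}_{1j}$ by their population/true-error counterparts $\{\bm x_i\}$ and $\Pi$, incurring an error governed by the third term of (\ref{eq_boothstrappingextraassumption}); (ii) for the idealized multiplier statistic built from the true errors, invoke a conditional Gaussian approximation (a high-dimensional CLT for the multiplier/block-sum statistic $\Xi$ conditional on $\{X_i\}$) so that $\widehat{\mathsf{T}}_{1j}$ is, conditionally, approximately $\mathcal N(0,\widehat\sigma^2)$ with $\widehat\sigma^2 = \bm l_j^\top \widehat\Omega\,\bm l_j$ for an empirical long-run variance $\widehat\Omega = \operatorname{Cov}(\Xi\mid\text{data})$; and (iii) show $\widehat\sigma^2/h_j^2(t,x) \to 1$ in probability uniformly, i.e. the block-multiplier bootstrap consistently estimates the long-run covariance $\Omega$ in (\ref{eq_longruncovariancematrix}), with the estimation error controlled by $\Psi(m)$ in (\ref{eq_defnpsim}). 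Combining the three stages and using the anti-concentration of the Gaussian distribution to pass from a bound on the statistic to a bound on the CDF yields the claim.

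First I would make the reductions of stage (i) precise. Writing $\widehat{\mathsf{T}}_{1j} = \Xi^\top \widehat\Pi^{-1}(\bm b\otimes\mathbf I_r)\mathsf I_j$ and its idealized version $\mathsf{T}_{1j}^\dagger := \Xi_0^\top \Pi^{-1}(\bm b\otimes\mathbf I_r)\mathsf I_j$, where $\Xi_0$ is (\ref{eq_defnstatisticXi}) with $\widehat{\bm x}_j$ replaced by $\bm x_j$, I would bound $\|\widehat\Pi^{-1}-\Pi^{-1}\|$ using the concentration result behind (\ref{eq_consistencyconvergency}) (this is exactly the second term of (\ref{eq_boothstrappingextraassumption}) after multiplication by $\zeta$), and bound $\|\Xi-\Xi_0\|$ using $\widehat\epsilon_i-\epsilon_i = \sum_j (m_{j,c,d}-\widehat m_{j,c,d})(t_i,X_{j,i})$ together with the uniform consistency rate (\ref{eq_rate}) from Theorem \ref{thm_consistency}; the factor $\sqrt p\, d\zeta^3/\sqrt n$ multiplying that rate in (\ref{eq_boothstrappingextraassumption}) accounts for summing the $w_{ik}$ (bounded in aggregate by $\zeta$), the $\ell^2$-to-$\ell^\infty$ passage over the $p$ coordinates, and the $\Pi^{-1}(\bm b\otimes\mathbf I_r)$ contraction. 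Both differences are $\oo_{\mathbb P}(h_j(t,x))$ uniformly in $(t,x)$ under (\ref{eq_boothstrappingextraassumption}), so it suffices to analyze $\mathsf{T}_{1j}^\dagger$.

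For stage (ii) I would condition on $\{X_i\}$: then $\Xi_0$ is a linear combination $\sum_i \bm c_i R_i$ of independent $\mathcal N(0,1)$ variables with data-dependent vectors $\bm c_i = (n-m-r)^{-1/2} m^{-1/2}\big[(\sum_{j=i}^{i+m}\bm x_j)\otimes\ab(t_i)\big]$, hence $\mathsf{T}_{1j}^\dagger$ is \emph{exactly} conditionally Gaussian — no CLT is needed in stage (ii) after all, only the identification of its conditional variance $\widehat\sigma^2 = \sum_i (\bm l_j^\top \bm c_i)^2$. Stage (iii) is then the crux: I must show $\widehat\sigma^2 \to h_j^2(t,x) = \bm l_j^\top \Omega\,\bm l_j$ in probability, uniformly in $(t,x)$. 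This is a block-sums estimator of the long-run covariance of the locally stationary array $\{\bm x_i\}$ (which by Lemma \ref{lem_locallystationaryform} has physical dependence decaying like $j^{-\tau}$), integrated against $\ab(t)\ab^\top(t)$; the bias from finite block length $m$ is $\OO(m^{-1}+ (m/n)^{1-1/\tau})$, the variance contributes $\OO(\sqrt{m/n})$, and the nonstationarity over a block of width $m/n$ contributes the same order — these are precisely the three terms of $\Psi(m)$ in (\ref{eq_defnpsim}), and the prefactor $d\zeta^2$ reflects the dimension and the norm of $\bm b$.

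The main obstacle is stage (iii): establishing that the \emph{overlapping block multiplier} estimator consistently recovers the \emph{integrated} long-run covariance $\Omega$ uniformly over the manifold $\{(t,x)\}$, for a high-dimensional, time-varying array whose coordinates involve products $w_{ik}\epsilon_i$ of a locally stationary covariate vector with a locally stationary martingale-difference error. I would handle it by a standard decomposition of $\widehat\sigma^2 - h_j^2$ into a bias term (replace $\operatorname{Cov}(\sum_{j=i}^{i+m}\bm x_j)$ by $m$ times the local long-run covariance $\Omega(t_i)$, controlled via $\sum_{|k|>m}\|\operatorname{Cov}(\bm x_0,\bm x_k)\|$ and $\sum_{|k|\le m}|k|\,\|\cdots\|$, both summable because $\delta_x(j,q)\le C j^{-\tau}$ with $\tau>1$ implicit in the assumptions), a smoothing term (replace $\Omega(t_i)$ by its Riemann integral against $\ab\ab^\top$, using stochastic Lipschitz continuity), and a stochastic fluctuation term (an $L^q$ or fourth-moment bound on $\widehat\sigma^2 - \mathbb E[\widehat\sigma^2\mid\cdot]$ via the concentration inequalities of Lemmas \ref{lem_concentration}--\ref{lem_mdependent}), each shown to be $\oo_{\mathbb P}(h_j^2(t,x))$ under $\zeta\Psi(m)=\oo(1)$. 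Finally, writing $\widehat{\mathsf{T}}_{1j}/h_j = (\widehat\sigma/h_j)\cdot Z + \oo_{\mathbb P}(1)$ with $Z$ standard Gaussian conditional on the data, and using $\widehat\sigma/h_j = 1+\oo_{\mathbb P}(1)$ together with the uniform continuity (Lipschitz, anti-concentration) of the standard Gaussian CDF, gives $\sup_{y}|\mathbb P(\widehat{\mathsf{T}}_{1j}/h_j\le y\mid\{X_i\}) - \Phi(y)| = \oo_{\mathbb P}(1)$, which is the assertion.
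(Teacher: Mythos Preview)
Your proposal is correct and follows essentially the same route as the paper: reduce from the residual-based statistic $\widehat{\mathsf{T}}_{1j}$ to its true-error analogue (the paper's $\widetilde{\mathsf{T}}_1=\Phi^\top\widehat\Pi^{-1}\bm b$, handled via Lemma~\ref{lem_residualclosepreparation}), observe that this is \emph{exactly} Gaussian conditional on the data, and then show its conditional variance converges to $h_j^2(t,x)$ by proving $\|\Upsilon-\Omega\|_{\op}=\OO_{\mathbb P}(\Psi(m))$ through the bias/smoothing/fluctuation decomposition you describe (Lemmas~\ref{lem_stepone}--\ref{lem_stepthree} in the paper). The only cosmetic difference is that you swap $\widehat\Pi$ for $\Pi$ in stage~(i) whereas the paper keeps $\widehat\Pi$ in the intermediate statistic and absorbs the $\widehat\Pi^{-1}-\Pi^{-1}$ error into the variance comparison; either ordering works under~(\ref{eq_boothstrappingextraassumption}).
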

\begin{remark}\label{rmk_practicalone}
Together with Theorems \ref{thm_asymptoticdistribution} and \ref{thm_boostrapping}, we conclude that it suffices to utilize (\ref{eq_defnt1k}) which is adaptive to the underlying data set, as our test statistics for practical implementation.  The error term (\ref{eq_defnpsim}) is used to control the closeness  between $\operatorname{Cov}(\Xi)$ and $\Omega.$  Furthermore, we point out that $m$ is a parameter which needs to be chosen carefully. Assuming that $\tau$ is large, then (\ref{eq_boothstrappingextraassumption}) indicates that the optimal  choice of $m$ should satisfy that $m \asymp n^{1/3}.$ In Section \ref{sec_parameterchoice}, we will discuss how to choose $m$ practically. 

\end{remark}
Next, for the hypothesis testing (\ref{eq_nullhypotheis}), in practice, instead of using (\ref{eq_statistic}), we can  employ the following statistic
\begin{equation}\label{eq_defnstatt2}
\widehat{\mathsf{T}}_{2j}:=\Xi^\top \widehat{\mathsf{W}}_j \Xi, 
\end{equation}
 where $\widehat{\mathsf{W}}_j$ is defined as 
 \begin{equation}\label{eq_defnhatsfw}
 \widehat{\mathsf{W}}_j:=\widehat{\Pi}^{-1} \mathsf{B}_j \widehat{\Pi}^{-1}, \ \widehat{\Pi}=\frac{1}{n} W^\top W,
 \end{equation}
 where $W$ is the design matrix defined in (\ref{eq_designmatrix}) and $\mathsf{B}_j$ is defined in (\ref{eq_defnw}). 

\begin{theorem}\label{thm_boostrapping2}
Suppose the assumptions of Theorem \ref{thm_poweranalysis} hold. Moreover, we assume that 
\begin{equation}\label{eq_boostrappingparamterassumption}
\sqrt{p} \Psi(m)+p^{3/2}\left( \frac{\xi^2_c}{\sqrt{n}}+\frac{\xi^2_c n^{\frac{2}{\tau+1}}}{n}\right)+p \frac{d \zeta^2}{\sqrt{n}} \left[\xi_c \zeta \sqrt{\frac{p}{n}}+c^{-\mathsf{m}_1}+d^{-\mathsf{m}_2} \right]=\oo(1). 
\end{equation}
 Recall (\ref{eq_definemoments}). When $\mathbf{H}_0$ in (\ref{eq_nullhypotheis}) holds, there exists some set $\mathcal{A} \equiv \mathcal{A}_n$ such that $\mathbb{P}(\mathcal{A}_n)=1-\oo(1),$ and under the event $\mathcal{A}_n,$ we have that conditional on the data $\{X_i\}$ 
\begin{equation*}
\sup_{x \in \mathbb{R}} \left| \mathbb{P} \left( \frac{\widehat{\mathsf{T}}_{2j}-\mathfrak{m}_{j1}}{\sqrt{2} \mathfrak{m}_{j2}}  \leq x\right)-\mathbb{P}(\psi \leq x) \right|=\oo(1),
\end{equation*}
where $\psi$ is a standard Gaussian random variable. 
\end{theorem}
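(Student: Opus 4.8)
The plan is to work conditionally on the observed sample $\{X_i\}$ and to exploit that, given the data, the multiplier statistic $\Xi$ in (\ref{eq_defnstatisticXi}) is \emph{exactly} a centered Gaussian vector in $\mathbb{R}^p$, since it is linear in the i.i.d.\ standard normal multipliers $\{R_i\}$. Consequently $\widehat{\mathsf{T}}_{2j}=\Xi^\top\widehat{\mathsf{W}}_j\Xi$ has, conditionally, the same law as $\sum_{k=1}^{p}\lambda_k\chi^2_{1,k}$ with $\{\chi^2_{1,k}\}$ i.i.d.\ chi-square with one degree of freedom and $\{\lambda_k\}$ the eigenvalues of $\widehat{\Sigma}_n^{1/2}\widehat{\mathsf{W}}_j\widehat{\Sigma}_n^{1/2}$, where $\widehat{\Sigma}_n:=\operatorname{Cov}(\Xi\mid\{X_i\})$ is an explicit $p\times p$ matrix assembled from the overlapping block sums $\sum_{j=i}^{i+m}\widehat{\bm{x}}_j$ tensored with $\ab(t_i)$. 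The conditional mean and variance are then $\operatorname{Tr}(\widehat{\Sigma}_n\widehat{\mathsf{W}}_j)$ and $2\operatorname{Tr}\big((\widehat{\Sigma}_n\widehat{\mathsf{W}}_j)^2\big)$, so it suffices to (i) compare these two traces with $\mathfrak{m}_{j1}$ and $\mathfrak{m}_{j2}^2$ in (\ref{eq_definemoments}), and (ii) run a central limit theorem for the weighted chi-square sum.

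For step (i) I would define the event $\mathcal{A}_n$ as the intersection of the concentration events for the design matrix and for the residuals, and show $\mathbb{P}(\mathcal{A}_n)=1-\oo(1)$ using the physical-dependence bounds behind Lemma \ref{lem_locallystationaryform} and the residual-closeness estimates of Lemmas \ref{lem_stepone}--\ref{lem_residualclosepreparation}. On $\mathcal{A}_n$, a subsampling/batched-means computation gives $\|\widehat{\Sigma}_n-\Omega\|_{\op}=\OO(\Psi(m))+(\text{residual error})$, where $\Psi(m)$ in (\ref{eq_defnpsim}) records the block-estimator bias ($1/m$), the short-range leakage ($(m/n)^{1-1/\tau}$) and the stochastic fluctuation ($\sqrt{m/n}$), and the residual error is governed by the consistency rate (\ref{eq_rate}) through the substitution $\epsilon_i\mapsto\widehat{\epsilon}_i$ in (\ref{eq_xihatdefinition}). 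In parallel, the arguments underlying Theorem \ref{thm_consistency} yield $\|\widehat{\Pi}-\Pi\|_{\op}=\oo(1)$ on $\mathcal{A}_n$ under (\ref{eq_boostrappingparamterassumption}), hence $\widehat{\mathsf{W}}_j=\widehat{\Pi}^{-1}\mathsf{B}_j\widehat{\Pi}^{-1}\to\mathsf{W}_j$ in operator norm, using Assumption \ref{assum_updc} and the pinching (\ref{eq_beigenvaluebound}). Combining these, $\operatorname{Tr}(\widehat{\Sigma}_n\widehat{\mathsf{W}}_j)=\mathfrak{m}_{j1}(1+\oo(1))$ and $\operatorname{Tr}\big((\widehat{\Sigma}_n\widehat{\mathsf{W}}_j)^2\big)=\mathfrak{m}_{j2}^2(1+\oo(1))$, provided the perturbation terms remain small after being amplified by the appropriate powers of $p$ — which is exactly what (\ref{eq_boostrappingparamterassumption}) encodes.

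For step (ii), because $\Pi$ and $\Omega$ have condition numbers bounded by Assumption \ref{assum_updc} and $\mathsf{B}_j$ has eigenvalues pinched between $\kappa_1$ and $\mathcal{K}_2$ by (\ref{eq_beigenvaluebound}), all $cd$ eigenvalues $\lambda_k$ of $\widehat{\Sigma}_n^{1/2}\widehat{\mathsf{W}}_j\widehat{\Sigma}_n^{1/2}$ are of order one on $\mathcal{A}_n$; hence $\mathfrak{m}_{j1}\asymp cd$, $\mathfrak{m}_{j2}\asymp\sqrt{cd}$ and $\max_k\lambda_k/\mathfrak{m}_{j2}\to0$. A Lyapunov-type Berry--Esseen bound for weighted sums of independent $\chi^2_1$ variables (the same device that yields the Gaussian limit in Theorem \ref{thm_poweranalysis}, now applied to a genuinely Gaussian vector, or alternatively the quadratic-form Gaussian approximation of Theorem \ref{thm_gaussianapproximationcase}) then gives, on $\mathcal{A}_n$ and conditionally on the data, $\sup_{x}\big|\mathbb{P}\big((\widehat{\mathsf{T}}_{2j}-\operatorname{Tr}(\widehat{\Sigma}_n\widehat{\mathsf{W}}_j))/\sqrt{2\operatorname{Tr}((\widehat{\Sigma}_n\widehat{\mathsf{W}}_j)^2)}\le x\big)-\Phi(x)\big|=\oo(1)$. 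Replacing the centering by $\mathfrak{m}_{j1}$ and the scaling by $\sqrt2\,\mathfrak{m}_{j2}$ via the trace approximations of step (i) then gives the stated conditional convergence under $\mathbf{H}_0$ in (\ref{eq_nullhypotheis}).

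The hard part is the centering step inside (i): since $\mathfrak{m}_{j1}\asymp cd$ while $\mathfrak{m}_{j2}\asymp\sqrt{cd}$, it is not enough to know $\operatorname{Tr}(\widehat{\Sigma}_n\widehat{\mathsf{W}}_j)-\mathfrak{m}_{j1}=\oo(\mathfrak{m}_{j1})$; one needs the far sharper $\operatorname{Tr}(\widehat{\Sigma}_n\widehat{\mathsf{W}}_j)-\mathfrak{m}_{j1}=\oo(\mathfrak{m}_{j2})=\oo(\sqrt{cd})$, i.e.\ a relative error $\oo((cd)^{-1/2})$ in a trace over a $p$-dimensional space. A trace of a product of matrices can amplify an operator-norm perturbation by up to the dimension, which is precisely why the error terms in (\ref{eq_boostrappingparamterassumption}) appear multiplied by $p^{3/2}$ and $\sqrt p$ rather than by $\OO(1)$, and why the residual contribution from $\epsilon_i\mapsto\widehat{\epsilon}_i$ must be shown negligible at this refined scale. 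Keeping track of the overlapping-block dependence in $\widehat{\Sigma}_n$ without losing this scale, together with the near-optimal bandwidth choice $m\asymp n^{1/3}$ dictated by balancing the terms of $\Psi(m)$, is the technically heaviest portion of the argument.
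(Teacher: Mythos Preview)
Your proposal is correct and follows essentially the same strategy as the paper: conditional Gaussianity of the multiplier vector, representation of the quadratic form as a weighted $\chi^2_1$ sum, Lindeberg/Lyapunov CLT with the ratio $\max_k\lambda_k/\mathfrak{m}_{j2}\to0$, and replacement of the random centering and scaling by $\mathfrak{m}_{j1},\mathfrak{m}_{j2}$ via the covariance approximations of Lemmas~\ref{lem_stepone}--\ref{lem_residualclosepreparation} together with $\widehat{\Pi}\to\Pi$. The only organizational difference is that the paper first proves the result for the intermediate statistic $\widetilde{\mathsf{T}}_2=\Phi^\top\widehat{\mathsf{W}}_j\Phi$ built from the true errors $\{\epsilon_i\}$ (so that its conditional covariance is $\Upsilon$) and then passes to $\widehat{\mathsf{T}}_{2j}$ by bounding $\|\widehat{\mathsf{T}}_{2j}-\widetilde{\mathsf{T}}_2\|$ via Lemma~\ref{lem_residualclosepreparation}, whereas you absorb the residual error directly into $\|\widehat{\Sigma}_n-\Omega\|_{\op}$; the paper also defines $\mathcal{A}_n$ explicitly as $\{|\sum_i(\lambda_i-\mu_i)|\le\delta_{1n}\sqrt{p},\ |\sum_i(\lambda_i^2-\mu_i^2)|\le\delta_{2n}\sqrt{p}\}$ and verifies $\mathbb{P}(\mathcal{A}_n)=1-\oo(1)$ through the eigenvalue perturbation $\max_i|\lambda_i-\mu_i|=\OO_{\mathbb{P}}(\Psi(m)+p(\xi_c^2/\sqrt{n}+\xi_c^2 n^{2/(\tau+1)}/n))$, which is exactly the mechanism behind the $\sqrt{p}$ and $p^{3/2}$ factors you correctly identified in (\ref{eq_boostrappingparamterassumption}).
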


Similar to the discussions in Remark \ref{rmk_practicalone}, Theorems \ref{thm_boostrapping2} and \ref{thm_poweranalysis} imply that we can use (\ref{eq_defnstatt2}) to test $\mathbf{H}_0$ in (\ref{eq_nullhypotheis}). Before concluding this section, we summarize our methodologies in the Algorithms \ref{alg:boostrapping} and \ref{alg:boostrapping2} below. For notational simplicity, we focus on the case $r=1$ and omit the subscript $j.$ In Algorithm \ref{alg:boostrapping}, we present the detailed procedure for constructing SCR using the multiplier statistic (\ref{eq_defnt1k}). Based on the SCR, we can further consider the hypothesis testing problems as discussed in Examples \ref{exam_stationarytest} and \ref{exam_seperabletest} using the strategies discussed thereby. In Algorithm \ref{alg:boostrapping2}, we state the detailed steps to test (\ref{eq_nullhypotheis}) using (\ref{eq_defnstatt2}). Finally, we point out that the parameters $c,d$ and $m$ should be chosen before using these algorithms. This will be discussed in Section \ref{sec_parameterchoice} below.


\begin{algorithm}[!t]
\caption{\bf Multiplier Bootstrapping for constructing SCR}
\label{alg:boostrapping}
\vspace{3pt}
\normalsize
\begin{flushleft}
\noindent{\bf Inputs:}   tuning parameters $c$, $d$ and $m$ chosen by the data-driven procedure demonstrated in Section \ref{sec_parameterchoice}, time series $\{X_i\},$ and sieve basis functions.
\vspace{2pt}

\noindent{\bf Step one:} Compute $\widehat{\Pi}^{-1}$ as in (\ref{eq_defnhatsfw}), the estimate $\widehat{m}_{j,c,d}(t,x)$ as in (\ref{eq_proposedestimator}), and the residuals $\{\widehat{\epsilon}_i\}$ according to (\ref{eq_defnresidual}).
\vspace{2pt}

\noindent{\bf Step two:} For each given pair $(t,x),$ generate $B$ (say 1000) i.i.d. copies of $\{\Xi^{(k)}\}_{k=1}^B$ as in (\ref{eq_defnstatisticXi}). Compute $\widehat{\mathsf{T}}_{1,k}, k=1,2,\cdots, B,$  correspondingly as in (\ref{eq_defnstatt2}).  Calculate the sample standard deviation (s.t.d.) of $\{\widehat{\mathsf{T}}_{1,k}\}$ and denote it as $\widehat{h}(t,x).$
\vspace{2pt}

\noindent{\bf Step three:} Construct a sequence of uniform grids of $[0,1] \times [0,1],$ denoted as $(t_i, y_j)$, $1 \leq i$ $\leq C_1$, $1 \leq j \leq C_2,$ where $C_1$ and $C_2$ are some large integers (say $C_1=C_2=2,000$). Using the mappings in Definition \ref{defn_mappings} and calculate $x_j=g(2y_j-1;s).$  
\vspace{2pt}

\noindent{\bf Step four:} For each pair of $(t_i, x_j),$ calculate the associated s.t.d. following Steps one and two and denote them as $\widehat{h}(t_i, x_j).$ 
\vspace{2pt}

\noindent{\bf Step five:} For each pair of $(t_i, x_j),$  generate $M$ (say 1000) i.i.d. copies of  $\{\widehat{\mathsf{T}}_{1,k}(t_i, x_j)\}$, $1 \leq$ $k \leq M.$ For each $k,$ calculate $\mathcal{T}_k$ as follows
\begin{equation*}
\mathcal{T}_k:=\sup_{i,j}\left| \frac{\widehat{\mathsf{T}}_{1,k}(t_i, x_j)}{\widehat{h}(t_i, x_j)} \right|.
\end{equation*}
Let $\mathcal{T}_{(1)} \leq \mathcal{T}_{(2)} \leq \cdots \leq \mathcal{T}_{(M)}$ as the order statistics of $\{\mathcal{T}_k\}.$ Calculate $\widehat{c}_{\alpha}$ as
\begin{equation*}
\widehat{c}_{\alpha}=\mathcal{T}_{\lfloor M(1-\alpha) \rfloor},
\end{equation*}
where $\lfloor x \rfloor$ denotes the largest integer smaller or equal to $x.$
\vspace{2pt}

\noindent{\bf Output:} The SCR of the nominal level $\alpha$ can be represented as $\widehat{m}_{j,c,d}(t,x) \pm \frac{\widehat{c}_{\alpha}}{\sqrt{n}} \widehat{h}(t,x).$
\end{flushleft}

\end{algorithm}

\begin{algorithm}[!t]
\caption{\bf Multiplier Bootstrapping for inferring (\ref{eq_nullhypotheis})}
\label{alg:boostrapping2}
\vspace{3pt}
\normalsize
\begin{flushleft}
\noindent{\bf Inputs:}   tuning parameters $c$, $d$ and $m$ chosen by the data-driven procedure demonstrated in Section \ref{sec_parameterchoice}, time series $\{X_i\},$ and sieve basis functions.
\vspace{2pt}

\noindent{\bf Step one:} Compute $\widehat{\Pi}^{-1}$ as in (\ref{eq_defnhatsfw}) and the residuals $\{\widehat{\epsilon}_i\}$ according to (\ref{eq_defnresidual}).
\vspace{2pt}

\noindent{\bf Step two:}  Generate $B$ (say 1000) i.i.d. copies of $\{\Xi^{(k)}\}_{k=1}^B.$ Compute $\widehat{\mathsf{T}}_{2,k}, k=1,2,\cdots, B,$  correspondingly as in (\ref{eq_defnstatt2}). 
\vspace{2pt}

\noindent{\bf Step three:} Let $\widehat{\mathsf{T}}_{2,(1)} \leq \widehat{\mathsf{T}}_{2,(2)} \leq \cdots \leq \widehat{\mathsf{T}}_{2,(B)}$ be the order statistics of $\widehat{\mathsf{T}}_{2,k}, k=1,2,\cdots, B.$ Reject $\mathbf{H}_0$ in (\ref{eq_nullhypotheis}) at the level $\alpha$ if $n\mathsf{T}_2>\widehat{\mathsf{T}}_{2,(\lfloor B(1-\alpha)\rfloor)},$ where $\lfloor x \rfloor$ denotes the largest integer smaller or equal to $x.$  Let $B^*=\max\{r: \widehat{\mathsf{T}}_{2,r} \leq n\mathsf{T}_2\}.$

\noindent{\bf Output:} $p$-value of the test can be computed as $1-\frac{B^*}{B}.$
\end{flushleft}
\end{algorithm}

\section{Numerical simulations}\label{sec_numerical}
In this section, we conduct extensive numerical simulations to illustrate the usefulness of our results. As before, we focus our discussion on Case (1)  of Assumption \ref{assum_models}.  
\subsection{Parameter selection}\label{sec_parameterchoice}
In this subsection, we discuss how to choose the important parameters in practice using the data driven approach as in \cite{bishop2013pattern}.    

We first discuss how to choose $c$ and $d.$ For a given integer $l,$ say $l=\lfloor 3 \log_2 n \rfloor,$ we divide the time series into two parts: the training part $\{X_i\}_{i=1}^{n-l}$ and the validation part $\{X_i\}_{i=n-l+1}^n.$  With some preliminary initial pair $(c,d)$, we propose a sequence of candidate pairs  $(c_i, d_j), \ i=1,2,\cdots, u, \ j=1,2,\cdots, v,$ in an appropriate neighborhood of $(c,d)$ where $u, v$ are some given large integers. For each pair of the choices $(c_i, d_j),$ we estimate $\widehat{m}_{j,c,d}(t,x)$ as in (\ref{eq_proposedestimator}).  Then using the fitted model, we forecast the time series in the validation part of the time series.  Let $\widehat X_{n-l+1,ij}, \cdots, \widehat X_{n,ij}$ be the forecast of $X_{n-l+1},..., X_n,$ respectively using the parameter pair $(c_i, d_j)$. Then we choose the pair $(c_{i_0},d_{j_0})$ with the minimum sample MSE of forecast, i.e.,
 \begin{equation*} 
({i_0},{j_0}):= \argmin_{((i,j): 1 \leq i \leq u, 1 \leq j \leq v)} \frac{1}{l}\sum_{k=n-l+1}^n (X_k-\widehat X_{k,ij})^2.
 \end{equation*}

Then we discuss how to choose $m$ for practical implementation. In \cite{MR3174655}, the author used the minimum volatility (MV) method to choose the window size $m$ for the scalar covariance function. The MV method does not depend on the specific form of the underlying time series dependence structure and hence is robust to misspecification of
the latter structure \cite{politis1999subsampling}. The MV method utilizes the fact that the covariance structure of $\widehat{\Omega}$ becomes stable when the
block size $m$ is in an appropriate range, where $\widehat{\Omega}=E[\Phi\Phi^*|(X_1,\cdots,X_n)]=$ is defined as 
{ 
\begin{equation}\label{eq_widehatomega}
\widehat{\Omega}:=\frac{1}{(n-m-r+1)m} \sum_{i=b+1}^{n-m} \Big[ \Big(\sum_{j=i}^{i+m} \widehat{\bm{x}}_i \Big) \otimes \Big( \mathbf{a}(\frac{i}{n}) \Big) \Big] \times \Big[ \Big(\sum_{j=i}^{i+m} \widehat{\bm{x}}_i \Big) \otimes \Big( \mathbf{a}(\frac{i}{n}) \Big) \Big]^\top.
\end{equation}
}    Therefore, it desires to minimize the standard errors of the latter covariance structure in a suitable range of candidate $m$'s.

In detail, for a give large value $m_{n_0}$ and a neighbourhood control parameter $h_0>0,$  we can choose a sequence of window sizes $m_{-h_0+1}<\cdots<m_1< m_2<\cdots<m_{n_0}<\cdots<m_{n_0+h_0}$  and obtain $\widehat{\Omega}_{m_j}$ by replacing $m$ with $m_j$ in (\ref{eq_widehatomega}), $j=-h_0+1,2, \cdots, n_0+h_0.$ For each $m_j, j=1,2,\cdots, m_{n_0},$ we calculate the matrix norm error of $\widehat{\Omega}_{m_j}$ in the $h_0$-neighborhood, i.e., 
\begin{equation*}
\mathsf{se}(m_j):=\mathsf{se}(\{ \widehat{\Omega}_{m_{j+k}}\}_{k=-h_0}^{h_0})=\left[\frac{1}{2h_0} \sum_{k=-h_0}^{h_0} \| \overline{\widehat{\Omega}}_{m_j}-\widehat{\Omega}_{m_j+k} \|^2 \right]^{1/2},
\end{equation*}
where $\overline{\widehat{\Omega}}_{m_j}=\sum_{k=-h_0}^{h_0} \widehat{\Omega}_{m_j+k} /(2h_0+1).$
Therefore, we choose the estimate of $m$ using 
\begin{equation*}
\widehat{m}:=\argmin_{m_1 \leq m \leq m_{n_0}} \mathsf{se}(m).
\end{equation*}
Note that in \cite{MR3174655} the author used $h_0=3$ and we also use this choice in the current paper.

%

\subsection{Simulations setup}\label{sec_simulationsettup}
In this subsection, we introduce our simulation setup in the time series setting. We consider that $r=1$ and consider the following specifications of $m(t,x)$ which  have been studied in the literature. Consider that
\begin{equation*}
X_i=m(t, X_{i-1})+\sigma(t, X_{i-1})\epsilon_i.
\end{equation*}
Note that we will choose $\epsilon_i$ to be a locally stationary time series so that the variance of $\sigma(t, X_{i-1})\epsilon_i$ depends on both $X_{i-1}$ and $t.$ The settings of $m(t,x), \sigma(t,x)$ and $\{\epsilon_i\}$ are listed below.

In what follows, we consider that $m(t,x), \sigma(t,x)$ take the following forms. For some $0 \leq \delta<1,$
\begin{enumerate}
\item[(1).] $m(t,x)=5t+4 \cos (2 \pi t x), \ \sigma(t, x)=1.5 \exp(-x^2/2)\left(2+\sin(2 \pi t) \right).$
\item[(2).] $m(t,x)=( \delta \sin (2 \pi t)+1)\exp(-x^2/2),$ and $\sigma(t,x)$ is defined as  $$\sigma(t, x)=\frac{1.5 \exp(x)}{1+\exp(x)} \left(0.5 \cos(2 \pi t x)+1 \right).$$ 
\item[(3).] $m(t,x)=4t( \delta \cos(2 \pi t x)-\frac{1}{2} \exp(-x^2/2)),$ and $\sigma(t,x)$ is defined as
\begin{equation*}
\sigma(t, x):=
\begin{cases}
0.7(1+x^2), & |x| \leq 1; \\
1.4, &  |x| >1 \ \text{and} \ 0 \leq t<0.5, \\
2, & |x|>1 \ \text{and} \ 0.5 \leq t<1. 
\end{cases}
\end{equation*}
\end{enumerate}

Moreover, for the locally stationary time series $\{\epsilon_i\}$, denote
\begin{equation*}
a_1(\frac{i}{n}) \equiv 0.4, \ a_2(\frac{i}{n})=0.4 \sin(2 \pi \frac{i}{n}),  
\end{equation*}
and $\eta_i, i=1,2,\cdots,n$ are i.i.d. standard Gaussian random variables,  we consider the following settings
\begin{enumerate}
\item[(a).] Time-varying linear AR(2) model  
\begin{equation*}
\epsilon_i=\sum_{j=1}^2 a_j(\frac{i}{n}) \epsilon_{i-j}+\eta_i.
\end{equation*} 
\item[(b).] Self-exciting threshold auto-regressive (SETAR) model
\begin{equation*}
\epsilon_i=
\begin{cases}
a_1(\frac{i}{n}) \epsilon_{i-1}+\eta_i, & \epsilon_{i-1} \geq 0, \\
a_2(\frac{i}{n}) \epsilon_{i-1}+\eta_i, & \epsilon_{i-1}<0.
\end{cases}
\end{equation*}
\item[(c).] First order bilinear model 
\begin{equation*}
\epsilon_i=\left(a_1(\frac{i}{n}) \eta_{i-1}+a_2(\frac{i}{n}) \right)\epsilon_{i-1}+\eta_i.
\end{equation*}
\end{enumerate}
\subsection{Simultaneous confidence region}
In this subsection, we examine the simultaneous estimation performance of our proposed sieve estimators for models (1)--(3) of Section \ref{sec_simulationsettup} using the simulated coverage probabilities. In Models (2)--(3), we choose $\delta=1.$  We also compare our methods with the kernel based estimation methods as in \cite{MV, ZW}. Since model (2) is separable, we utilize the kernel method in \cite{CSW} which shows better performance as the general method in \cite{MV, ZW}. Moreover, since our our regressors are unbounded, to facilitate the comparison with the kernel method, we focus on the region $(t,x) \in [0,1] \times [-2000, 2000].$ For the kernel method, we use the Epanechnikov kernel and the cross-validation approach as in \cite{10.1214/18-AOS1743} to select the bandwidth. For our sieves method, the parameters $c$ and $d$ are chosen according to the discussion of Section \ref{sec_parameterchoice}. The results are reported in Table \ref{table_cp}. We conclude that our estimators achieve  reasonably high accuracy and outperforms the kernel estimators for all the commonly used sieve basis functions. Especially, our estimators have already had a good performance even the sample size is relatively small when $n=500.$ Finally, in practice, based on our simulations, we recommend to use orthogonal wavelet basis functions.

\begin{table}[!ht]
\begin{center}
\setlength\arrayrulewidth{1pt}
\renewcommand{\arraystretch}{1.5}
{\fontsize{10}{10}\selectfont 
\begin{tabular}{|c|ccc|ccc|ccc|ccc|}
\hline
& \multicolumn{6}{c|}{\large nominal level: $90\%$} & \multicolumn{6}{c|}{\large nominal level: $95\%$} \\
\hline
      & \multicolumn{3}{c|}{$n=500$}                                                                                                                       & \multicolumn{3}{c|}{$n=800$}                                                                                                                        & \multicolumn{3}{c|}{$n=500$}                                                                                                                       & \multicolumn{3}{c|}{$n=800$}                                                                                                                        \\ \hline
Model/$\epsilon_i$ & \multicolumn{1}{c|}{(a)} & \multicolumn{1}{c|}{(b)} & \multicolumn{1}{c|}{(c)} &  \multicolumn{1}{c|}{(a)} & \multicolumn{1}{c|}{(b)} & \multicolumn{1}{c|}{(c)}  & \multicolumn{1}{c|}{(a)} & \multicolumn{1}{c|}{(b)} & \multicolumn{1}{c|}{(c)} &  \multicolumn{1}{c|}{(a)} & \multicolumn{1}{c|}{(b)} & \multicolumn{1}{c|}{(c)} \\ 
\hline
     & \multicolumn{6}{c|}{Sieve estimators (Fourier basis)}                                                                                                                                                                                                                                                                                           & \multicolumn{6}{c|}{Sieve estimators (Fourier basis)}                                                                                                                                                                                                                                                                                          \\
   \hline
(1)     & 0.853             &  0.835                         &                0.862           &  0.856                          &                0.865           &  0.884                                     & 0.931 & 0.939 & 0.929 & 0.938 & 0.94 & 0.938 \\
(2)    & 0.842         &  0.848 &   0.855                    &         0.876                & 0.885                         & 0.908 & 0.929 & 0.961 & 0.935 & 0.939 & 0.958 & 0.945                \\
(3)    & 0.852  &  0.835  &   0.864                       &        0.913           &    0.874                      & 0.865 & 0.963 & 0.936 & 0.961 & 0.955 & 0.954 & 0.947   \\
\hline
      & \multicolumn{6}{c|}{Sieve estimators (Legendre basis)}                                                                                                                                                                                                                                                                                          & \multicolumn{6}{c|}{Sieve estimators (Legendre basis)}                                                                                                                                                                                                                                                                                         \\
       \hline
(1)     &   0.826            &   0.845              &         0.852                  &     0.843                      &    0.849 &                               0.857  & 0.934 & 0.962 & 0.932 & 0.948 & 0.943 & 0.953          \\
(2)     &  0.928       &  0.875   &  0.918                  &          0.921                & 0.883                          &  0.894 & 0.962 & 0.939 & 0.943 & 0.957 & 0.945 & 0.944  \\
(3)     & 0.858   &  0.876    & 0.925                        &         0.861                   & 0.857                           &       0.91   & 0.938 & 0.941 & 0.963 & 0.943 & 0.954 & 0.948                        \\
 \hline
  & \multicolumn{6}{c|}{Sieve estimators (Daubechies-9 basis)}                                                                                                                                                                                                                                                                                         & \multicolumn{6}{c|}{Sieve estimators (Daubechies-9 basis)}                                                                                                                                                                                                                                                                                         \\
       \hline
(1)     &     0.858             &   0.861               &                           0.914 & 0.883                           &   0.867 &     0.91 & 0.941 & 0.959 & 0.961 & 0.945 & 0.948 & 0.956                                   \\
(2)     & 0.867        &   0.855  &  0.859                 &       0.875                    &  0.868                         &  0.877 & 0.961 & 0.959 & 0.939 & 0.956& 0.948 & 0.943    \\
(3)     & 0.913   & 0.924   & 0.863                        &       0.89                    &    0.906                    &     0.879 & 0.937 & 0.964 & 0.958 & 0.944 & 0.956 & 0.947                              \\
 \hline
       & \multicolumn{6}{c|}{Kernel estimators}                                                                                                                                                                                                                                                                                         & \multicolumn{6}{c|}{Kernel estimators}                                                                                                                                                                                                                                                                                         \\
       \hline
(1)     &        0.756         &   0.811                &                           0.765 &            0.754           &   0.834 & 0.817                                    & 0.913 & 0.9 & 0. 886 & 0.896 & 0.922 & 0.915    \\
(2)    & 0.788      &  0.812   &  0.797                                                &        0.814                   &                       0.853 &  0.84  & 0.899 & 0.904 & 0.908 & 0.912 & 0.9 & 0.913  \\
(3)     & 0.746   &  0.798   &    0.754                      &         0.81                   &  0.807                          &           0.798   & 0.903 & 0.899 & 0.906 & 0.921 & 0.918 & 0.918                     \\
 \hline
\end{tabular}
}
\end{center}
\caption{Simulated coverage probabilities at $90\%$ and  $95\%$ nominal levels. For models (2) and (3) we set $\delta=1.$ The coverage probabilities are based on 5,000 simulations on the region $(t,x) \in [0,1] \times [-2000, 2000].$ }
\label{table_cp}
\end{table}


\subsection{Accuracy and power of multiplier bootstrap}
In this subsection, we examine the performance of our proposed multiplier bootstrap procedure for three different hypothesis testing problems using Algorithms \ref{alg:boostrapping} and \ref{alg:boostrapping2}.  In the first test, we consider Example \ref{exam_stationarytest} to test whether the smooth function is time invariant. Especially, we use model (2) of Section \ref{sec_simulationsettup} to test 
\begin{equation}\label{eq_equaivalent}
\mathbf{H}_0: \delta=0 \  \text{Vs}  \ \mathbf{H}_a: \delta>0.
\end{equation}    
In the second test, we consider Example \ref{exam_seperabletest} to test whether there exists a separable structure in the smooth function. Specifically, we use model (3) of Section \ref{sec_simulationsettup} to test (\ref{eq_equaivalent}).  In the third test, we consider (\ref{eq_nullhypotheis}) to check whether the smooth function is equal to a given function. Under the null hypothesis, we consider model (1) of Section \ref{sec_simulationsettup}. For the alternative, we consider that 
\begin{equation*}
\mathbf{H}_a: \ m(t,x)=5t+4\cos(2 \pi t x)+ \delta \sin (2 \pi t x), \ 0<\delta \leq 1. 
\end{equation*}

In Table \ref{table_typeoneerror}, we report the type one error rates of the above three tests under the null that $\delta=0.$ For the ease of statements, we call the above three tests as stationarity, separability and exam form tests, respectively. We find that our multiplier bootstrapping procedure are reasonably accurate for all of them.  Then we examine the power of our methodologies as $\delta$ increases away from zero. In Figure \ref{fig_selfcomparison}, we report the results for all the three tests using our multiplier bootstrapping method based on Daubechies-9 basis functions. It can be concluded that once $\delta$ deviates from $0$ a little bit, our method will be able to reject the null hypothesis.  Moreover, in \cite{HHY}, the authors proposed a weighted $L_2$-distance test statistic to test the separability hypothesis for $m(t,x)$ defined on compact domains. In Figure \ref{fig_jasacomparison}, we compare our multiplier bootstrapping method with the  weighted $L_2$-distance based method. It can be seen that our method has better performance when $\delta$ is small, i.e., weak alternatives.


\begin{table}[!ht]
\begin{center}
\setlength\arrayrulewidth{1pt}
\renewcommand{\arraystretch}{1.5}
{\fontsize{10}{10}\selectfont 
\begin{tabular}{|c|ccc|ccc|}
\hline
      & \multicolumn{3}{c|}{$n=500$}                                                                                                                       & \multicolumn{3}{c|}{$n=800$}                                                                                                                        \\ \hline
Testing/$\epsilon_i$ & \multicolumn{1}{c|}{(a)} & \multicolumn{1}{c|}{(b)} & \multicolumn{1}{c|}{(c)} &  \multicolumn{1}{c|}{(a)} & \multicolumn{1}{c|}{(b)} & \multicolumn{1}{c|}{(c)}  \\ 
\hline
     & \multicolumn{6}{c|}{Fourier basis}                                                                                                                                                                                                                                                                                          \\
   \hline
Stationarity     &          0.114  & 0.091 &  0.089                         &             0.11              &      0.087                     &       0.108                                  \\
Separability    &   0.118         &                        0.107   &       0.113                    & 0.105                           &     0.092                      &  0.094                 \\
Exact form   &  0.116  &  0.113   &  0.109                        & 0.108                   &             0.093             &          0.094          \\
\hline
      & \multicolumn{6}{c|}{Legendre basis}                                                                                                                                                                                                                                                                                         \\
       \hline
Stationarity       & 0.112                  & 0.11                  &                   0.088        &  0.11                         &0.107    &  0.106                                          \\
Separability     & 0.09        & 0.092    & 0.111                                           &      0.108                     &  0.109                       &  0.094  \\
Exact form    &  0.118  &  0.113   & 0.114                        &        0.11                    & 0.093                          &      0.096                            \\
 \hline
  & \multicolumn{6}{c|}{Daubechies-9 basis}                                                                                                                                                                                                                                                                                         \\
       \hline
Stationarity       &   0.112              & 0.087 & 0.091                          & 0.092                         &  0.093  &  0.107                                         \\
Separability      &  0.117      &   0.085  & 0.092                                                  & 0.109                          &  0.09                       &   0.092  \\
Exact form     &  0.118 &  0.087  & 0.115                       &            0.112               &  0.093                        &       0.11    \\
 \hline
\end{tabular}
}
\end{center}
\caption{ Simulated type one error rates under nominal level $0.1$. The results are reported based on 5,000 simulations. As mentioned earlier,  stationarity refers to the test on $m(t,x) \equiv m(x),$ separability refers to the test on $m(t,x)=\rho(t) g(x)$ for some smooth functions $\rho(\cdot)$ and $g(\cdot)$ and exact form refers to the test on $m(t,x)=m_0(t,x)$ for some pre-given function $m_0(t,x).$   
}
\label{table_typeoneerror}
\end{table}

\begin{figure}[!ht]

\begin{subfigure}{0.32\textwidth}
\includegraphics[width=6.4cm,height=5cm]{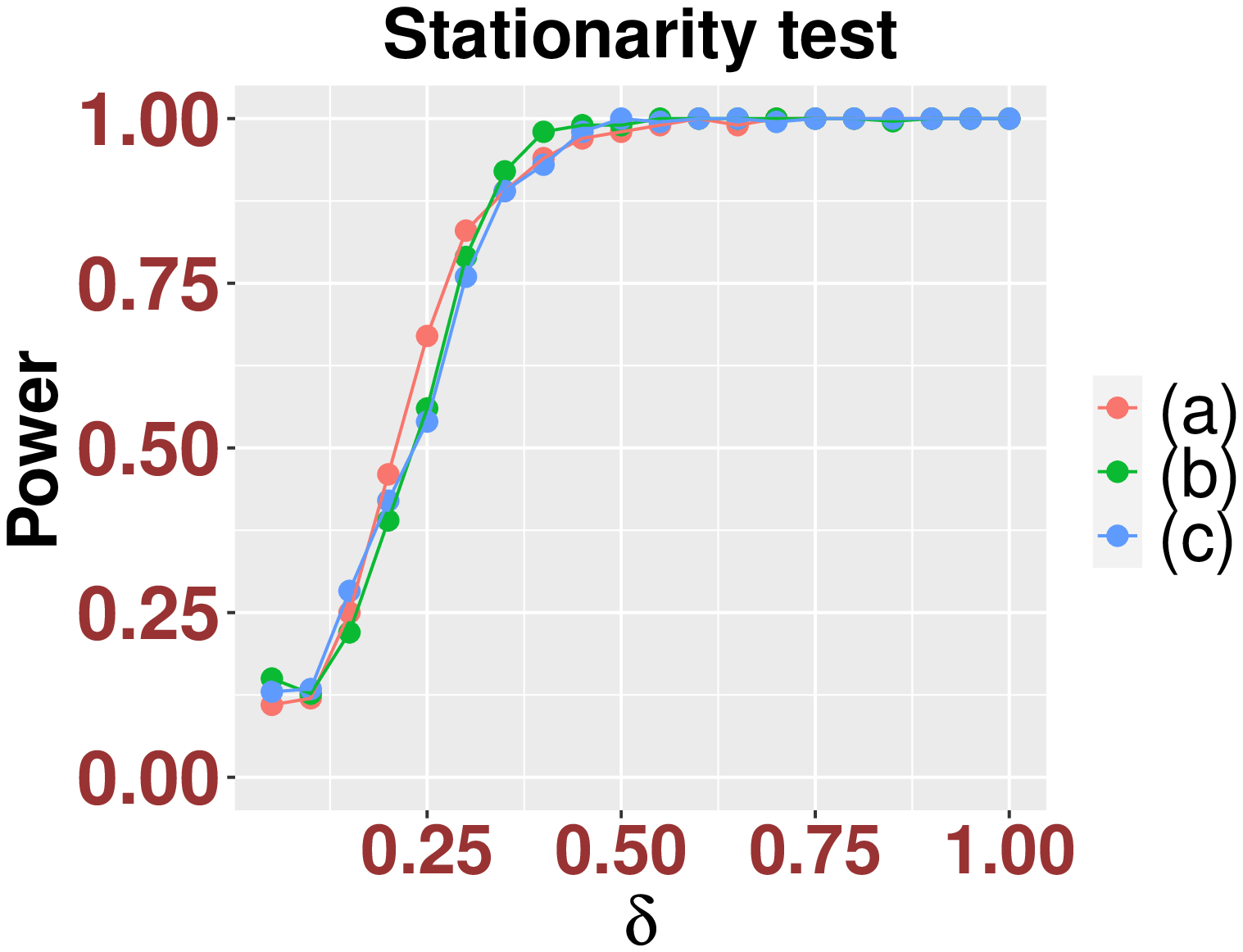}
\end{subfigure}
\begin{subfigure}{0.32\textwidth}
\includegraphics[width=6.4cm,height=5cm]{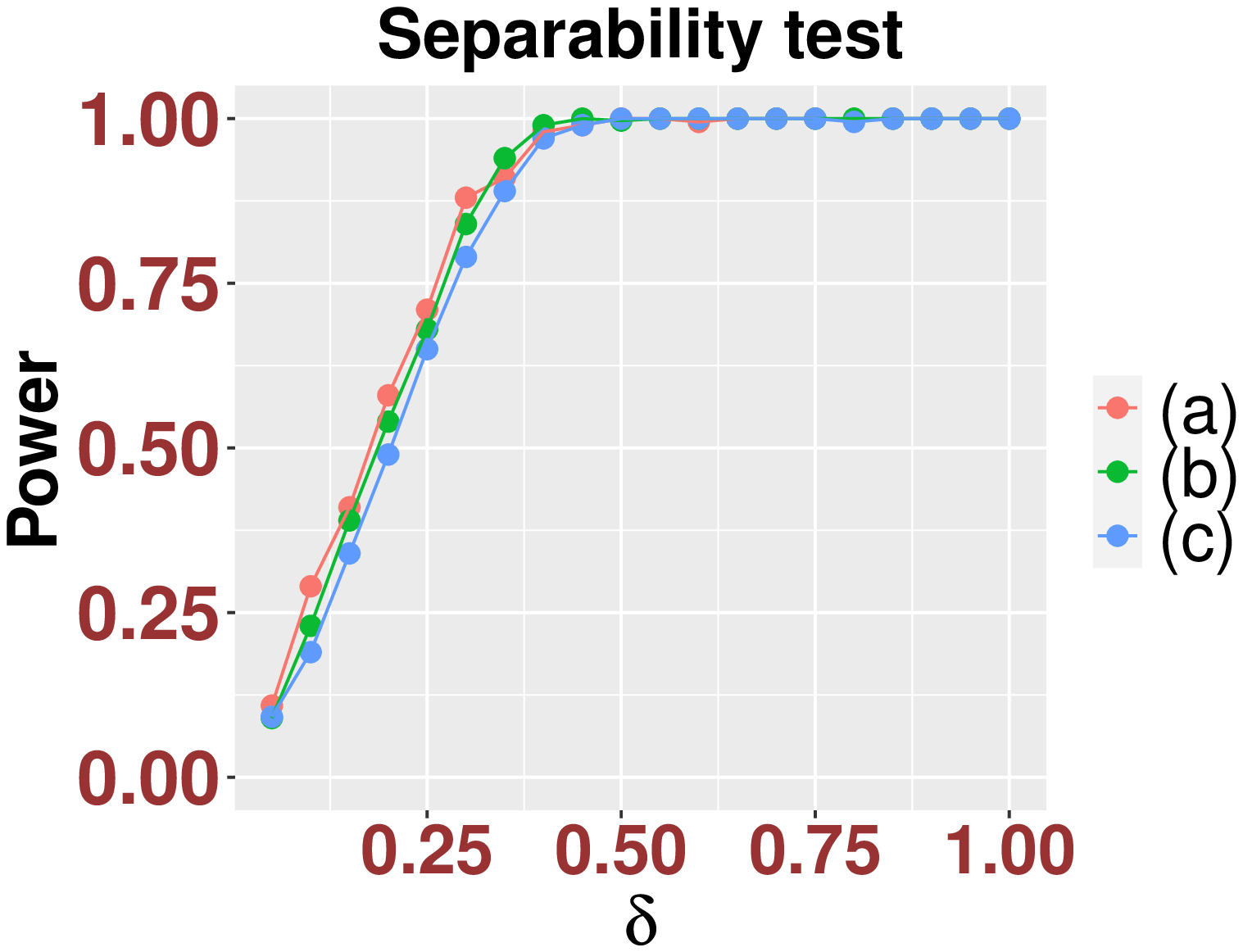}
\end{subfigure}
\begin{subfigure}{0.32\textwidth}
\includegraphics[width=6.4cm,height=5cm]{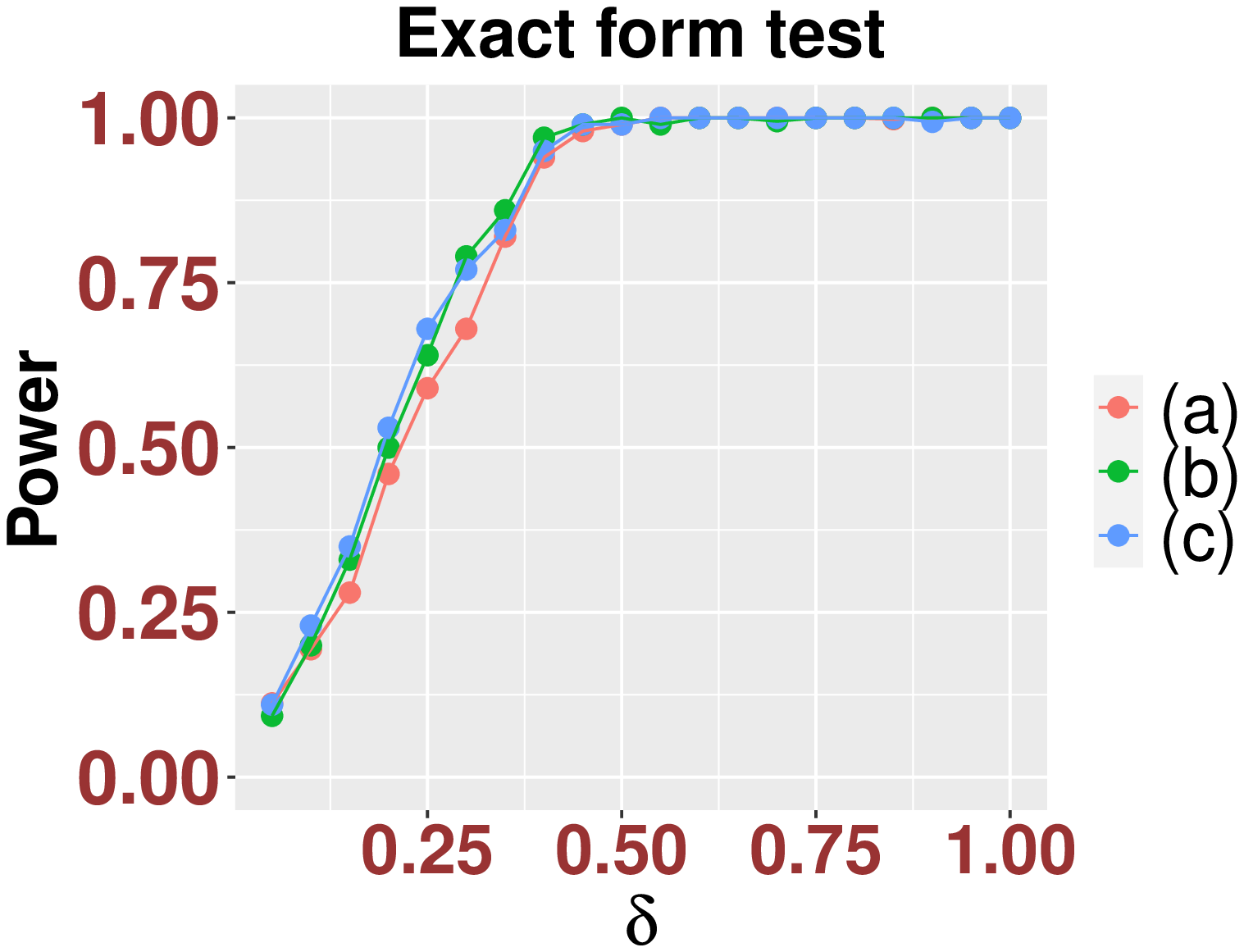}
\end{subfigure}
\caption{{ \footnotesize Simulated power for our proposed multiplier boostrapped statistics under the nominal level $0.1$. Here we used Daubechies-9 basis, $n=800$ and $(a), (b), (c)$ refer to the models for $\epsilon_i$ as in Section \ref{sec_simulationsettup}.  Our results are based on 5,000 simulations.  }  }
\label{fig_selfcomparison}
\end{figure}

\begin{figure}[!ht]
\begin{subfigure}{0.32\textwidth}
\includegraphics[width=6.4cm,height=5cm]{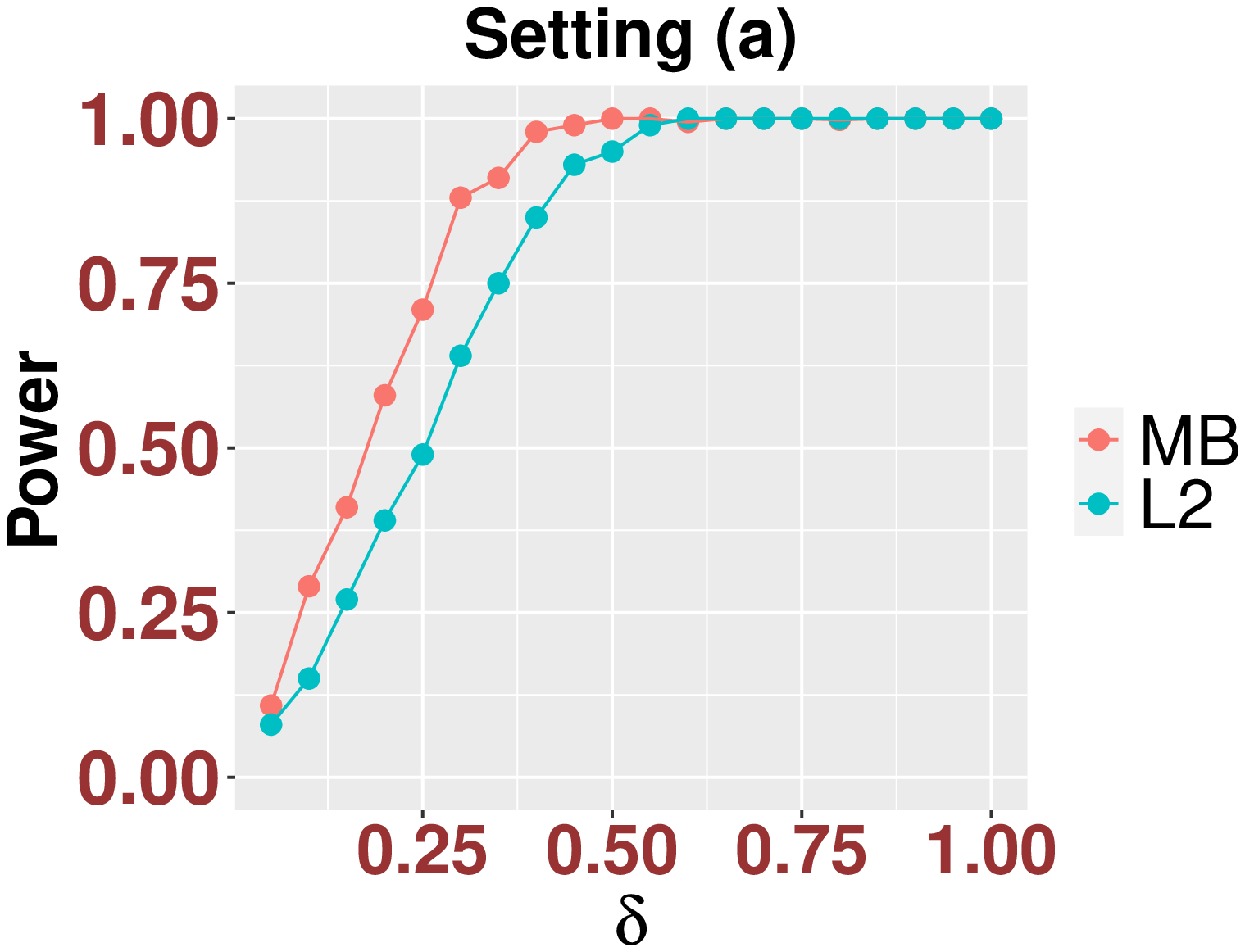}
\end{subfigure}
\begin{subfigure}{0.32\textwidth}
\includegraphics[width=6.4cm,height=5cm]{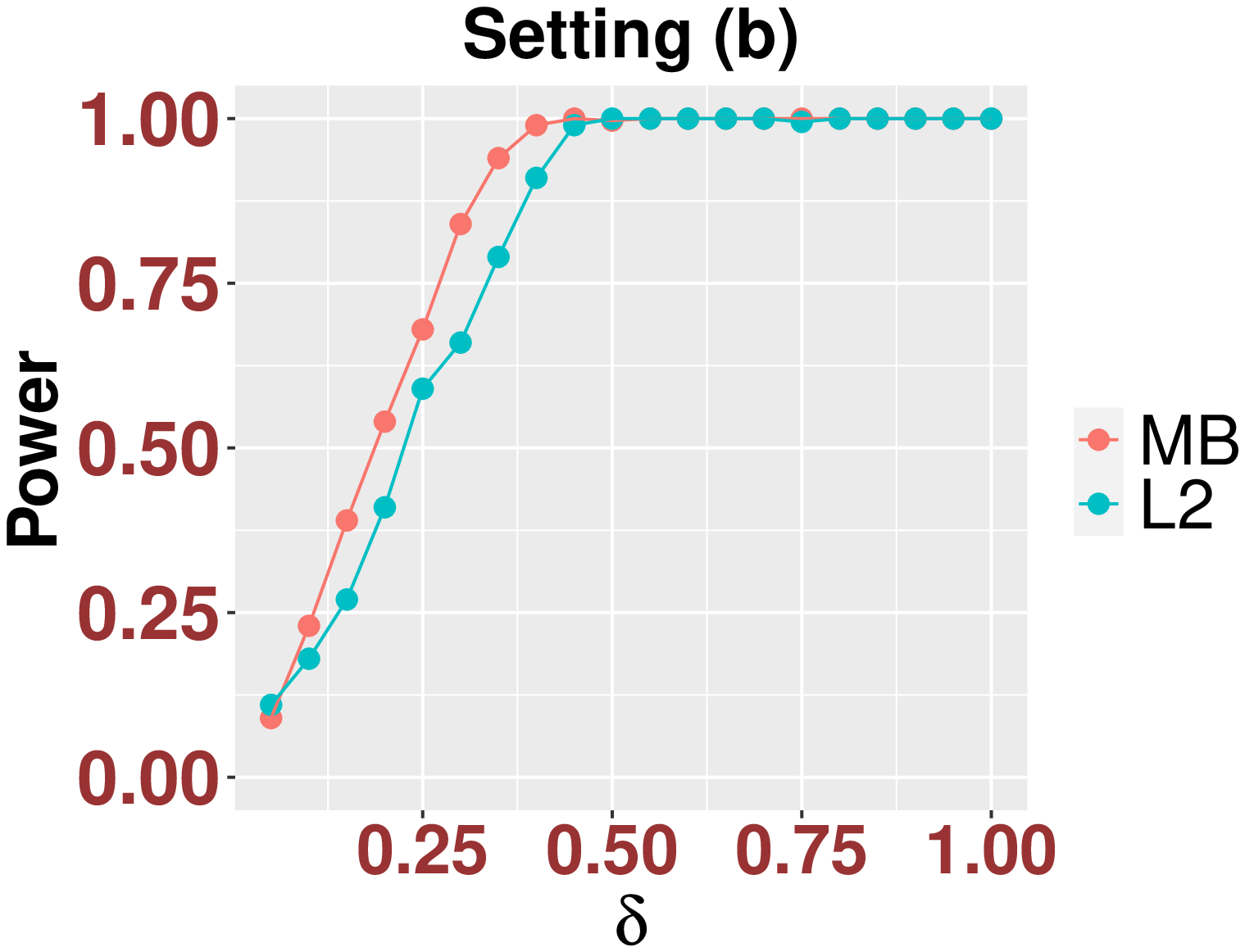}
\end{subfigure}
\begin{subfigure}{0.32\textwidth}
\includegraphics[width=6.4cm,height=5cm]{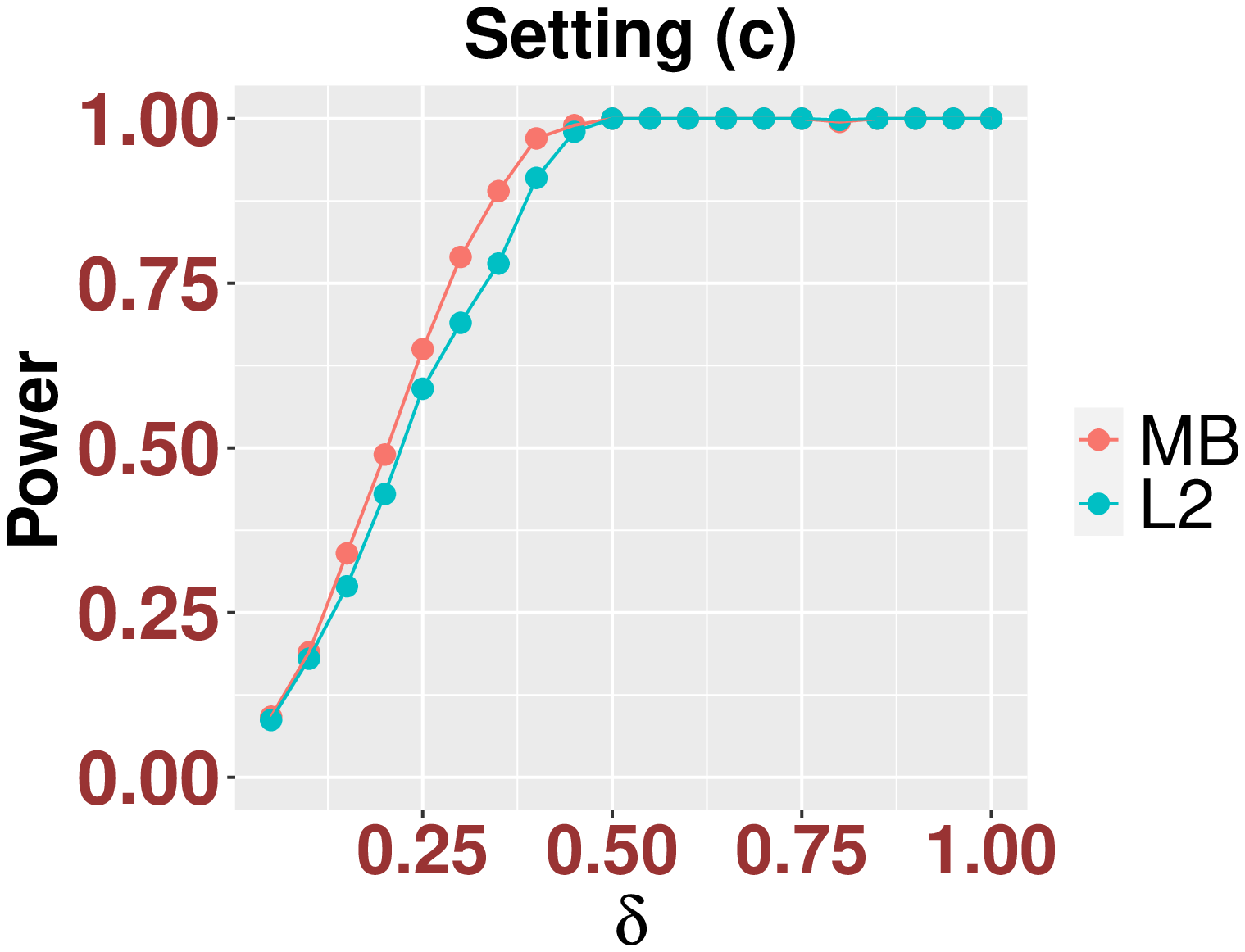}
\end{subfigure}
\caption{{ \footnotesize Comparison of the simulated power for our proposed multiplier boostrapped (MB) methd and the weighted $L_2$-distance based statistic (L2) in \cite{HHY}.  Our results are based on 5,000 simulations.  }  }
\label{fig_jasacomparison}
\end{figure}

\section{Real data analysis}\label{sec_realdata}
In this section, we apply our methodologies to study the shape of the monthly risk premium for S$\&$P 500 index as in \cite[Section 6.2]{CSW}. Let $\mu_t$ and $\sigma_t$ be the conditional mean and conditional volatility of the excess return of the market portfolio, respectively. Our goal is to uncover how $\mu_t$ relates to $\sigma_t.$ Especially, we want to understand the functional relation that $\mu_t=m(t, \sigma_t^2)$ for some unknown function $m.$ Such a problem has been studied extensively in the literature of financial economics. In the seminal work \cite{MR441271}, Merton modeled that $\mu_t=\gamma \sigma_t^2$ for some constant $\gamma$ representing the risk aversion of the agent. Recently, it has been argued in \cite{BOLLERSLEV2013409,DUKE, CHAIEB2021669, GHYSELS2014118, 10.1093/rfs/hhaa009} that a general nonlinear and time-varying function will facilitate the modeling and interpretation of the shape of the market risk premium. 

Very recently, in \cite{CSW}, the authors modeled the relationship between $\mu_t$ and $\sigma_t$ using $\mu_t=\rho(t) g(\sigma_t)$ for some unknown functions $\rho$ and $g.$ They also estimated these functions using kernel methods. Moreover, on one hand, they found their estimation were consistent with real observations empirically and statistically analysis theoretically. On the other hand, they provided some insights for the shape of the monthly risk premium for S$\&$P 500 Index based on their analysis. However, they did not justify why a separable structure was valid for modeling the risk premium. In what follows, we use our proposed multiplier bootstrap statistics to justify the separability assumption, i.e., testing (\ref{eq_hotestseparability}). 

We follow the setting of \cite[Section 6.2]{CSW}. In the notation of our model (\ref{eq:model}), $r=1$ and we set $Y_t$ to be $\mu_t=r_{mt}-r_{ft}$ which is the excess return on the S$\&$P 500 Index calculated as the difference between the monthly continuously compounded cumulative return on the index minus the monthly return on 30-day Treasury Bills. For the conditional volatility   $\sigma_t,$ we used the realized volatility (RV) measure from Oxford Man Realized Library (\url{https://realized.oxford-man.ox.ac.uk/}). To be more precise, let $RV_t$ denote the daily annualized RV during the $t$th month. Then we obtain one-month-ahead RV forecast $\mathbb{E}_{t-1}(RV_t)$ using the HAR-RV model as in \cite{10.1093/jjfinec/nbp001}, which is our $X_t.$ Our data covers the period of 31 January 2001--31 December 2018. Under the nominal level $0.05,$ using the Daubechies-9 basis functions with $c=4, d=3$ and $m=7,$ we find that the $p$-value is $0.38$ so we can conclude that the separable structural assumption is reasonable for modeling the risk premium. This supports the analysis of \cite{CSW}.   

Finally, we estimate the functions  $\rho(\cdot)$ and $g(\cdot)$ using our sieve estimators. Due to separability, as discussed in Example \ref{exam_seperabletest}, we can estimate them separability instead of using the hierarchical sieve basis functions. We also provide the simultaneous confidence bands for these functions. In Figure \ref{fig_functionsplot} blow, we report the results. We find that when the RV is fixed, the risk premium changes in a nonlinear way of time which suggests the existence of a time-varying risk aversion. Moreover, for the $g(\cdot)$ function, it is more flat and seems to have a monotone pattern.  Our findings are consistent with those in \cite{CSW}; see Figure 7 and the discussion therein. 

\begin{figure}[!ht]
\begin{subfigure}{0.5\textwidth}
\includegraphics[width=8cm,height=6.5cm]{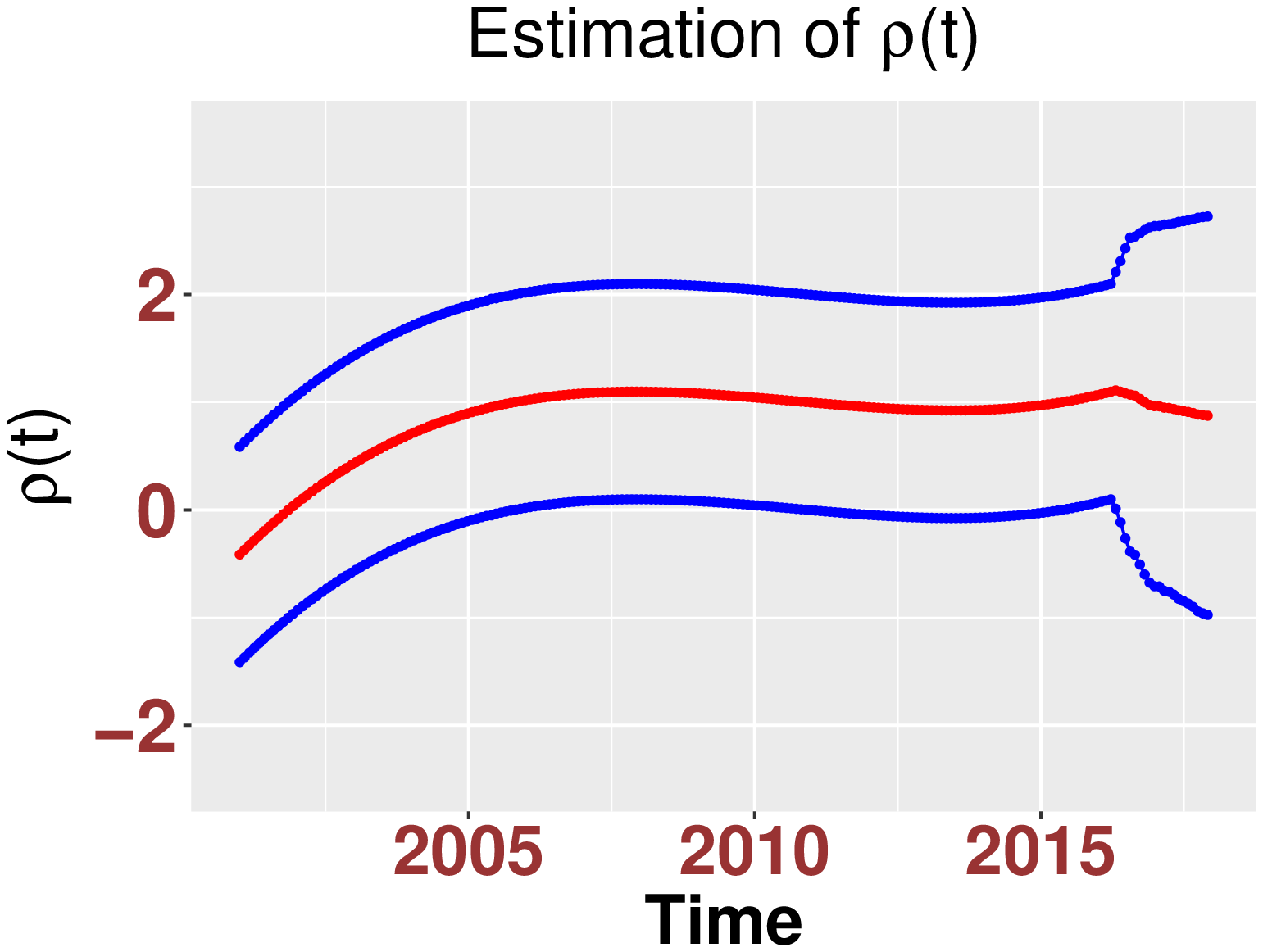}
\end{subfigure}
\begin{subfigure}{0.5\textwidth}
\includegraphics[width=8cm,height=6.5cm]{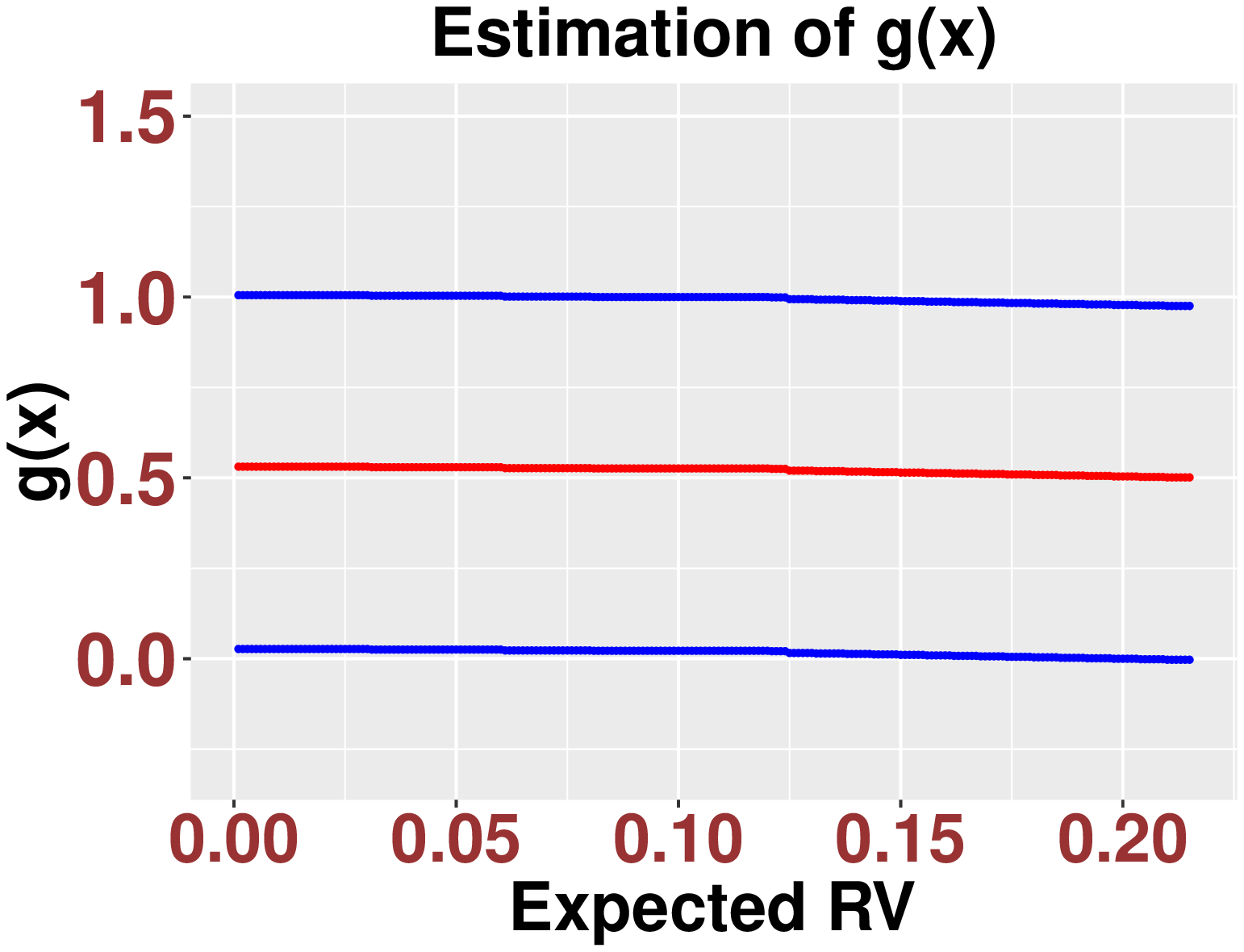}
\end{subfigure}
\caption{{ \footnotesize Estimated $\rho(t)$ and $g(x)$. The red lines are the estimated $\rho$ and $g$ and the blue lines the corresponding  simultaneous 95 $\%$ confidence bands constructed using the multiplier bootstrapping. }  }
\label{fig_functionsplot}
\end{figure}


%
%
%
%
%

\section*{Acknowledgments.} The authors are grateful to Likai Chen and Ekaterina Smetanina for providing resources of the real data analysis and many useful comments. 


\appendix
\numberwithin{equation}{section}
\section{Technical proofs}\label{sec_techinicalproof}

In this section, we provide the main technical proofs. 

\subsection{Proof of Theorem \ref{thm_approximation} and Lemma \ref{lem_locallystationaryform}}

First, the proof of Theorem \ref{thm_approximation} is standard. When $\mathbb{R}=[0,1],$ the results have been established without using the mapped basis functions; see \cite[Section 2.3.1]{CXH}. In our setting, since we are using the mapped sieves to map $\mathbb{R}$ to $[0,1],$ the proof is similar. We only point out the main routine here.

\begin{proof}[\bf Proof of Theorem \ref{thm_approximation}] For any fixed $x \in \mathbb{R},$ the approximation rate has been summarized in Lemma \ref{lem_deterministicapproximation} for $t.$ Similarly, for any fixed $t \in \mathbb{R},$ the results have been recorded in Lemma \ref{lem_deterministicapproximation2}. Based on these results, we can follow the arguments of \cite[Section 5.3]{MR1262128} to conclude the proof. We point out the \cite[Section 5.3]{MR1262128} deals with the $L_2$ norm assuming that the $L_2$ norm of the partial derivatives are bounded. However, as discussed in \cite[Section 6.2]{MR1176949}, the results can be generalized to the sup-norm under Assumption \ref{assum_smoothnessasumption}.
\end{proof}


Then we prove Lemma \ref{lem_locallystationaryform}. 

\begin{proof}[\bf Proof of Lemma \ref{lem_locallystationaryform}]
Due to similarity, we only prove the results for $\{\bm{x}_i\}$ under the setting Case (1).  First of all, under Assumption \ref{assum_models}, we notice that for any $1 \leq i \leq d, 1 \leq j \leq r,$ 
\begin{equation}\label{eq_meanzero}
\mathbb{E}(\varphi_i(X_{k-j}) \epsilon_k)=\mathbb{E}(\mathbb{E}(\varphi_i(X_{k-j}) \epsilon_k| X_{k-j}))=0. 
\end{equation}
This shows that $\{\bm{x}_i\}$ is mean zero. Second, for any $1 \leq k \leq rd,$ by (\ref{eq_ddd}) and (\ref{eq_desigmatrixform}), we have that
\begin{equation}\label{eq_exactform}
x_{ik}=\varphi_{\ell_2(k)}(X_{i-\ell_1(k)-1})\epsilon_i.
\end{equation}
Under the model setting (\ref{eq_epsilons1form}), it is clear that for some measurable function $U_k(t,\cdot),$ we can write
\begin{equation*}
\bm{x}_{ik}=U_k\left(\frac{i}{n}, \mathcal{F}_i\right).
\end{equation*} 
Third, for $s,t \in [0,1],$ using (\ref{eq_exactform}), the smoothness of the basis functions and the assumption (\ref{eq_slc}), we have that for some constant $C>0$
\begin{equation*}
\sup_i \| U_k(s, \mathcal{F}_i)-U_k(t, \mathcal{F}_i) \|_q \leq C|t-s|.
\end{equation*}
Finally, for any $1 \leq k \leq rd,$ we have that for some constant $C>0$
\begin{align*}
\sup_t \| U_k(t, \mathcal{F}_0)-U_{k}(t, \mathcal{F}_{0,j}) \|_q & \leq  \sup_t \| \varphi_{\ell_2(k)}(G_1(t,\mathcal{F}_0)) G_2(t, \mathcal{F}_0)-\varphi_{\ell_2(k)}(G_1(t,\mathcal{F}_0)) G_2(t, \mathcal{F}_{0,j}) \|_q \\
& + \sup_t \|\varphi_{\ell_2(k)}(G_1(t,\mathcal{F}_0)) G_2(t, \mathcal{F}_{0,j})-\varphi_{\ell_2(k)}(G_1(t,\mathcal{F}_{0,j})) G_2(t, \mathcal{F}_{0,j}) \|_q  \\
& \leq C j^{-\tau},
\end{align*}
where in the last inequality we used Assumption \ref{assum_physical} and the smoothness of the basis function. This finishes our proof. The other case and $\{\bm{w}_i\}$ can be proved similarly. We omit the details. 
\end{proof}

\subsection{Consistency of the proposed estimators: proof of Theorem \ref{thm_consistency}}\label{sec_consistencyproof}

In this subsection, we prove Theorem \ref{thm_consistency}. Till the end of the paper, for a positive definite $H,$ we denote $\|H \|_{\op}$ as the operator norm of $H,$ i.e., the largest eigenvalue of $H.$

\begin{proof}[\bf Proof of Theorem \ref{thm_consistency}] Since $r$ is finite, without loss of generality, we assume $r=1.$ For general $r,$ the proof can be modified verbatim with only additional notational complicatedness. In what follows, we omit the subscript $j,$ i.e., $m \equiv m_{1}$ and $\widehat{m}_{c,d} \equiv \widehat{m}_{1,c,d}.$  The starting point of our proof is the following decomposition 
\begin{align}\label{eq_initialdecomposition}
\|\widehat{m}_{c,d}(t,x)-m(t,x)\| & \leq \|\widehat{m}_{c,d}(t,x)-m_{c,d}(t,x)\|+|m_{c,d}(t,x)-m(t,x)|.  
\end{align}   
Since the second term of the right-hand side of (\ref{eq_initialdecomposition}) can be bounded using Theorem \ref{thm_approximation}, it suffices to control the first term. Moreover, using the definitions (\ref{eq_firststeptruncation1}), (\ref{eq_firststeptruncation}) and (\ref{eq_proposedestimator}), by Cauchy-Schwarz inequality, 
\begin{equation}\label{eq_ll1}
\|\widehat{m}_{c,d}(t,x)-m_{c,d}(t,x)\| \leq  \| \widehat{\bm{\beta}}-\bm{\beta} \| \zeta.  
\end{equation}
The rest of the proof leaves to control $\widehat{\bm{\beta}}-\bm{\beta}.$ Recall that
\begin{equation}\label{eq_betadifferenceexpression}
\widehat{\bm{\beta}}-\bm{\beta}=(W^\top W)^{-1} W^\top \bm{\epsilon}. 
\end{equation}
Therefore, we need to control $\| \widehat{\bm{\beta}}-\bm{\beta} \|$ which satisfies
\begin{equation}\label{eq_betabound1}
\| \widehat{\bm{\beta}}-\bm{\beta} \| \leq \left\| \frac{1}{\lambda_p((n^{-1} W^\top W))} \right\| \| n^{-1} W^\top \bm{\epsilon} \|. 
\end{equation} 


First, we establish the convergence results for the matrix $W^\top W. $ Denote $W_i$ as the $i$th column of $W.$ It suffices to analyze the terms $W_i^\top W_j.$ Without loss of generality, we focus our discussion on the terms $W_1^\top W_1$ and $W_1^\top W_2.$ The general cases of $i,j$ can be handled similarly. We start with the term $W_1^\top W_1.$ Let $W_1=(W_{12}, \cdots, W_{1n})^\top.$ Recall (\ref{eq_designmatrix}). We have 
\begin{equation*}
W_{1k}=\phi_1(t_k) \varphi_1(X_{k-1}), \ 2 \leq k \leq n.
\end{equation*}
Consequently, we have that
\begin{equation*}
\frac{1}{n} W_1^\top W_1=\frac{1}{n} \sum_{k=2}^n \phi_1(t_k)^2 \varphi_1(X_{k-1})^2.  
\end{equation*}
On one hand, by a discussion similar to Lemma \ref{lem_locallystationaryform}, we can show that $\mathfrak{h}_k:=\phi_1(t_k)^2 \varphi_1(X_{k-1})^2, 2 \leq k \leq n,$ is a sequence of locally stationary time series whose physical dependence measure satisfies that 
\begin{equation*}
\delta_{\mathfrak{h}}(j,q) \leq C \xi_c^2 j^{-\tau}, \ \text{for some constant} \ C>0. 
\end{equation*}
Together with (1) of Lemma \ref{lem_concentration}, we obtain that
\begin{equation*}
\left\|\frac{1}{n} W_1^\top W_1-\frac{1}{n}\sum_{k=2}^n \phi_1(t_k)^2 \mathbb{E} \varphi_1(X_{k-1})^2  \right\|_q \leq \frac{C \xi_c^2}{\sqrt{n}},
\end{equation*}
where we used the assumption that $q>2.$ 
Moreover, by (2) of Lemma \ref{lem_concentration}, we have that 
\begin{equation}\label{eq_cccc}
\left| \frac{1}{n}\sum_{k=2}^n \phi_1(t_k)^2 \mathbb{E} \varphi_1(X_{k-1})^2 -\frac{1}{n} \sum_{k=2}^n \phi_1(t_k)^2 \Pi_{11}(t_k) \right| \leq C \xi_c^2 n^{-1+\frac{2}{\tau+1}},
\end{equation}
where $\Pi_{11}(t)$ is the first entry of $\Pi(t)$ defined in (\ref{eq_longrunwitht}).
Further, by Lemma \ref{lem_intergralappoximation}, we 
have that
\begin{equation}\label{eq_ccc2}
\left|\frac{1}{n} \sum_{k=2}^n \phi_1(t_k)^2 \Pi_{11}(t_k)-\int_0^1 \phi_1(t)^2 \Pi_{11}(t) \dd t \right|=\OO(n^{-2}). 
\end{equation}
We point out that $\int_0^1 \phi_1(t)^2 \Pi_{11}(t)\dd t=\Pi_{11}$ which is the first entry of $\Pi$ as in (\ref{eq_longruncovariancematrix}). Combining with the above arguments, we conclude that 
\begin{equation*}
\left\| \frac{1}{n} W_1^\top W_1- \Pi_{11} \right\|_q \leq C \left( \frac{\xi^2_c}{\sqrt{n}}+\frac{\xi^2_c n^{\frac{2}{\tau+1}}}{n}\right).
\end{equation*}
Similarly, we can show that 
\begin{equation*}
\left\| \frac{1}{n} W_1^\top W_2- \Pi_{12} \right\|_q \leq C \left( \frac{\xi^2_c}{\sqrt{n}}+\frac{\xi^2_c n^{\frac{2}{\tau+1}}}{n}\right).
\end{equation*}
Together the above results with Lemma \ref{lem_circle}, we conclude that 
\begin{equation}\label{eq_consistencyconvergency}
\left\| \frac{1}{n} W^\top W-\Pi \right\|_{\op} =\OO_{\mathbb{P}} \left( p\left( \frac{\xi^2_c}{\sqrt{n}}+\frac{\xi^2_c n^{\frac{2}{\tau+1}}}{n}\right) \right),
\end{equation}
where we used the fact that $q>2$ and recall that $p$ is defined in (\ref{eq_defnp}). Together with Assumption \ref{assum_updc} and (\ref{eq_parameterassumption}), we find that 
\begin{equation}\label{eq_boundone}
\left\| \frac{1}{\lambda_p(n(W^\top W)^{-1})} \right\|=\OO(1). 
\end{equation}

Second, we control the error term $\frac{W^\top \bm{\epsilon}}{n}.$ Without loss of generality, we focus on its first entry and consider
\begin{equation}\label{eq_entrywiseexpansion}
\frac{\left[W^\top \bm{\epsilon} \right]_{11}}{n}=\frac{1}{n}\sum_{k=2}^n \phi_1(t_k) \varphi_1(X_{k-1}) \epsilon_{k}.
\end{equation} 
By Lemma \ref{lem_locallystationaryform}, (1) of Lemma \ref{lem_concentration} and $q>2$, we conclude that 
\begin{equation*}
\left \|\frac{\left[W^\top \bm{\epsilon} \right]_{11}}{n} \right \|^2 \leq \frac{C\xi_c^2}{n}.
\end{equation*} 
Consequently, we have that 
\begin{equation}\label{eq_boundtwo}
\left \|\frac{W^\top \bm{\epsilon}}{n} \right \| \leq  \frac{C \xi_c \sqrt{p}}{\sqrt{n}}.   
\end{equation}

In summary,  by (\ref{eq_boundone}) and (\ref{eq_boundtwo}), in view of (\ref{eq_betabound1}), we obtain that 
\begin{equation}\label{eq_betabound}
\|\widehat{\bm{\beta}}-\bm{\beta} \| \leq C \xi_c \sqrt{\frac{p}{n}}. 
\end{equation} 
Together with (\ref{eq_initialdecomposition}) and (\ref{eq_ll1}), we have completed the proof. 

%
%
%
 

\end{proof}

\subsection{High dimensional Gaussian approximation and proof of Theorems \ref{thm_asymptoticdistribution} and \ref{thm_poweranalysis}}\label{sec_gassuianapproximation}
In this subsection, we prove Theorems \ref{thm_asymptoticdistribution} and \ref{thm_poweranalysis}. The key ingredient is to establish the Gaussian approximation for the statistics $\mathsf{T}_{1j}$ and $\mathsf{T}_{2j}.$ The starting point is to rewrite the statistics $\mathsf{T}_{1j}$ and $\mathsf{T}_{2j}$ in more explicit forms. We first prepare some notations. Without loss of generality, we only explain the proof for $r=1$ as  discussed in the beginning of the proof of Theorem \ref{thm_consistency} and omit the subscript $j$ in the sequel. According to (\ref{eq_initialdecomposition}), under the assumption of (\ref{eq_parameterassumption}),
\begin{align} \label{eq_fundementalexpression}
\widehat{m}_{c,d}(t,x)-m_{c,d}(t,x)&= \bm{b}^\top (\widehat{\bm{\beta}}-\bm{\beta}) \nonumber \\
& =\frac{1}{\sqrt{n}}\bm{b}^\top \Pi^{-1} (\frac{1}{\sqrt{n}} W^\top \bm{\epsilon})\left(1+\oo_{\mathbb{P}}(1)  \right),
\end{align}
where we used (\ref{eq_betadifferenceexpression}) and  (\ref{eq_consistencyconvergency}). Recall that the deterministic vector $\bm{l}$ and random vector $\bm{z}$ are defined as
\begin{equation}\label{eq_originaldefinition}
\bm{l}=\Pi^{-1} \bm{b}, \ \bm{z}=\frac{1}{\sqrt{n}} W^\top \bm{\epsilon},
\end{equation}
and $\bm{x}_i$ is defined in (\ref{eq_locallystationaryform}) and (\ref{eq_ddd}). Denote $\bm{y}_i \in \mathbb{R}^p$ such that 
\begin{equation}\label{eq_decompositionkronecker}
\bm{y}_i=\bm{x}_i  \otimes \ab(t_i),
\end{equation}
where we recall $\ab(t_i)=(\phi_1(t_i), \cdots, \phi_c(t_i))^\top$ and $\otimes$ stands for the Kronecker product. Using (\ref{eq_designmatrix}), it is easy to see that  
\begin{equation}\label{eq_bmzgreatform}
\bm{z}=\frac{1}{\sqrt{n}}\sum_{i=1}^n \bm{y}_i.
\end{equation}
Therefore, in view of (\ref{eq_originaldefinition}),  the analysis of $\mathsf{T}_1$ reduces to study $\bm{l}^\top \bm{z}$ using (\ref{eq_bmzgreatform}) and (\ref{eq_decompositionkronecker}). 

Moreover, in view of (\ref{eq_fundementalexpression}), we have 
\begin{equation}\label{eq_t2expansion}
\int_{[0,1]}\int_{\mathbb{R}}[\hat m_{c,d}(t,x)-m_{0, c,d}(t,x)]^2 \dd t \dd x=\frac{1}{n} \bm{z}^\top \mathsf{W} \bm{z}(1 +\oo_{\mathbb{P}}(1)),
\end{equation}
where $\mathsf{W}$ is independent of $t$ and $x$ and defined in (\ref{eq_defnw}) and $m_{0,c,d}(t,x)$ is defined similarly to (\ref{eq_firststeptruncation1}) for $m_{0}(t,x)$ using the basis $\{b_{\ell_1, \ell_2}(t,x)\}, 1 \leq \ell_1 \leq c, 1 \leq \ell_2 \leq d$. Consequently, the analysis of $\mathsf{T}_2$ reduces to  exploring $\bm{z}^\top \mathsf{W} \bm{z}.$ 

Based on the above discussion, we have seen that it suffices to establish the Gaussian approximation theory for either the affine form $\bm{l}^\top \bm{z}$ or the quadratic form $\bm{z}^\top \mathsf{W} \bm{z}.$ Before stating the Gaussian approximation results,  we first pause to record the covariance structure of $\bm{z}$ in Lemma \ref{lem_covarianceofz}. It indicates that $\operatorname{Cov}(\bm{z})$ is close to $\Omega$ as defined in (\ref{eq_longruncovariancematrix}).

\begin{lemma}\label{lem_covarianceofz}
Denote $\operatorname{Cov}(\bm{z})$ as the covariance matrix of $\bm{z}.$ Suppose the assumptions of Theorem \ref{thm_consistency} hold. Then we have that
\begin{equation*}
\|\operatorname{Cov}(\bm{z})-\Omega \|_{\op}=\OO\left( \frac{p\xi_c^2}{n} +\frac{p\xi_c^2 n^{2/\tau}}{\sqrt{n}} +p\xi_c^2 n^{-1+\frac{2}{\tau+1}}\right).
\end{equation*}
\end{lemma}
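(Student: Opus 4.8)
The plan is to express $\operatorname{Cov}(\bm{z})$ explicitly in terms of the locally stationary sequence $\{\bm{x}_i\}$ (equivalently $\{\bm{y}_i\}$), and to compare it, block by block, with the integrated long-run covariance $\Omega$ defined in (\ref{eq_longruncovariancematrix}). Recall from (\ref{eq_decompositionkronecker})--(\ref{eq_bmzgreatform}) that $\bm{z}=n^{-1/2}\sum_{i=1}^n \bm{y}_i$ with $\bm{y}_i=\bm{x}_i\otimes \ab(t_i)$ and $\E\bm{x}_i=0$. Therefore
\begin{equation*}
\operatorname{Cov}(\bm{z})=\frac{1}{n}\sum_{i=1}^n\sum_{k=1}^n \operatorname{Cov}(\bm{x}_i,\bm{x}_k)\otimes\big(\ab(t_i)\ab^\top(t_k)\big).
\end{equation*}
First I would reindex the double sum by $j=k-i$ and split it as a sum over lags $j$ of $n^{-1}\sum_i \operatorname{Cov}(\bm{x}_i,\bm{x}_{i+j})\otimes(\ab(t_i)\ab^\top(t_{i+j}))$, treating $|j|$ up to a truncation level and the tail $|j|$ large separately.

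Next, for a fixed lag $j$ I would use the locally stationary structure (Lemma \ref{lem_locallystationaryform}): $\bm{x}_i=\Ub(t_i,\mathcal{F}_i)$, so $\operatorname{Cov}(\bm{x}_i,\bm{x}_{i+j})$ is close to $\operatorname{Cov}(\Ub(t_i,\mathcal{F}_0),\Ub(t_i,\mathcal{F}_j))$ up to a stochastic-Lipschitz error of order $|j|/n$ in each entry (this uses (\ref{eq_slc}) together with the moment bounds, and contributes the $n^{2/\tau}/\sqrt n$–type term after summing the truncated range of lags against the growth of the entrywise $\ell_2$/operator norms, which carry the $\xi_c$ and $p$ factors). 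I would then replace $\ab(t_{i+j})$ by $\ab(t_i)$, again at cost $O(|j|\xi_c^2/n)$ per term, turning the summand into $\operatorname{Cov}(\Ub(t_i,\mathcal{F}_0),\Ub(t_i,\mathcal{F}_j))\otimes(\ab(t_i)\ab^\top(t_i))$. Summing over $j\in\Z$ gives $\Omega(t_i)\otimes(\ab(t_i)\ab^\top(t_i))$, and then the Riemann-sum approximation $n^{-1}\sum_i \Omega(t_i)\otimes(\ab(t_i)\ab^\top(t_i)) \to \int_0^1 \Omega(t)\otimes(\ab(t)\ab^\top(t))\,\dd t = \Omega$ contributes the $p\xi_c^2/n$ term (via a lemma analogous to Lemma \ref{lem_intergralappoximation}, noting $\|\ab(t)\|^2\le c\xi_c^2$ and the Lipschitz continuity of $\Omega(t)$).

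For the tail $|j|$ large, I would bound the operator norm of each block $\operatorname{Cov}(\bm{x}_i,\bm{x}_{i+j})\otimes(\ab(t_i)\ab^\top(t_{i+j}))$ using the standard covariance bound in terms of the physical dependence measures — $\|\operatorname{Cov}(\bm{x}_i,\bm{x}_{i+j})\|_{\op}$ is controlled by $\delta_x(j,q)$ times a constant depending on the $q$-norms — and then invoke (\ref{eq_transferbound}), $\delta_x(j,q)\le Cj^{-\tau}$, so that $\sum_{|j|>M}\delta_x(j,q)=O(M^{-\tau+1})$ is summable (since $\tau>1$) and the whole tail is negligible relative to the stated bound once the truncation level $M$ is chosen as a suitable power of $n$ (this is where the exponent $2/(\tau+1)$ appears, by balancing the $M/n$ Lipschitz error against the $M^{-\tau+1}$ tail). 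The matrix-norm bookkeeping — converting entrywise $q$-norm bounds on the $rd\times rd$ covariance blocks into an operator-norm bound on the full $p\times p$ matrix, which costs a factor of order $p$ via a lemma like Lemma \ref{lem_circle} — is the one place where care is needed; that, together with keeping track of how $\xi_c=\sup_{i\le c,t}|\phi_i(t)|$ enters every step, is the main obstacle, but it is routine given the auxiliary lemmas already in place. Collecting the three error sources — Lipschitz-in-time replacement, Riemann-sum discretization, and the dependence tail — yields exactly the bound $\OO\big(p\xi_c^2/n + p\xi_c^2 n^{2/\tau}/\sqrt n + p\xi_c^2 n^{-1+2/(\tau+1)}\big)$.
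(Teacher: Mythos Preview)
Your overall strategy---decompose $\operatorname{Cov}(\bm z)$ by lag, freeze the time argument via stochastic Lipschitz continuity, sum over lags to reconstruct $\Omega(t_i)$, then pass to $\Omega$ by a Riemann sum, with a truncation in the lag---is sound and would prove the lemma. It is, however, a genuinely different route from the paper's. The paper argues entrywise: it splits $\operatorname{Var}(z_1)=\mathsf E_1+\mathsf E_2$ into the diagonal sum $\mathsf E_1=n^{-1}\sum_i\phi_1(t_i)^2\,\E x_{i1}^2$ and the off-diagonal $\mathsf E_2$. For $\mathsf E_1$ it invokes part (2) of Lemma~\ref{lem_concentration}, which directly yields $|\E x_{i1}^2-\Omega_{11}(t_i)|$ small on average and produces the $\xi_c^2 n^{-1+2/(\tau+1)}$ term; for $\mathsf E_2$ it truncates at $|k_1-k_2|\le n^{2/\tau}$ and then, for each fixed $k_2$, writes $\sum_{k_1}\phi_1(t_{k_1})\E x_{k_1,1}x_{k_2,1}=\E\big[x_{k_2,1}\cdot\sum_{k_1}\phi_1(t_{k_1})x_{k_1,1}\big]$ and bounds it by Cauchy--Schwarz and part (1) of Lemma~\ref{lem_concentration}, yielding the $\xi_c^2 n^{2/\tau}/\sqrt n$ term. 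Your approach reconstructs the long-run covariance by summing lags directly instead of importing it from Lemma~\ref{lem_concentration}(2), and avoids the Cauchy--Schwarz trick altogether; carried out carefully it would in fact give a bound without the (dominant) $n^{2/\tau}/\sqrt n$ term.

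That said, your attribution of the three displayed rates to your three error sources is off. The Lipschitz replacement error for $\operatorname{Cov}(\bm x_i,\bm x_{i+j})$ is $O(|j|/n)$ per entry, so summed over $|j|\le M$ it is $O(M^2/n)$, not the $n^{2/\tau}/\sqrt n$ you claim; and balancing ``$M/n$ against $M^{-\tau+1}$'' gives $M=n^{1/\tau}$ with error $n^{-1+1/\tau}$, not the $2/(\tau+1)$ exponent. The correct balance in your scheme is $M^2/n$ versus $M^{1-\tau}$, giving $M\asymp n^{1/(\tau+1)}$ and entrywise error $\xi_c^2 n^{-1+2/(\tau+1)}$ (plus the Riemann error). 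Also be careful when replacing $\ab(t_{i+j})$ by $\ab(t_i)$: this costs $|\phi_\ell'(\cdot)|\cdot|j|/n$, and $|\phi_\ell'|$ grows with $\ell$ for most bases, so the bookkeeping there needs a basis-dependent factor beyond $\xi_c$. None of this breaks your argument---the claimed bound is an upper bound and your scheme delivers something at least as good---but you should straighten out which term comes from where before writing it up.
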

\begin{proof}
Denote $\bm{z}=(z_1, \cdots, z_p)^\top.$ We control the error entrywisely and focus on $[\operatorname{Cov}(\bm{z})]_{11}=\operatorname{Var}(z_1)$. Note that 
\begin{equation}\label{eq_z1form}
z_1=\frac{1}{\sqrt{n}}\sum_{k=2}^n \phi_1(t_k) \varphi_1(X_{k-1}) \epsilon_{k}.
\end{equation}
Using the notation (\ref{eq_ddd}), we conclude that 
\begin{align}\label{eq_decompositionvariance}
\operatorname{Var}(z_1) &=\frac{1}{n} \sum_{k_1, k_2=2}^n \phi_1(t_{k_1}) \phi_1(t_{k_2}) \mathbb{E} x_{k_1 1} x_{k_2 1} \nonumber \\
&=\frac{1}{n}\sum_{i=2}^n \phi_1(t_{i})^2 \mathbb{E} x^2_{i 1}+\frac{1}{n} \sum_{k_1 \neq k_2}^n \phi_1(t_{k_1}) \phi_1(t_{k_2}) \mathbb{E} x_{k_1 1} x_{k_2 1}:=\mathsf{E}_1+\mathsf{E}_2,
\end{align}

First, by a discussion similar to (\ref{eq_cccc}) and (\ref{eq_ccc2}), we  find that
\begin{equation}\label{eq_e1finalerror}
\left| \mathsf{E}_1-\Omega_{11}\right|=\OO\left(\xi_c^2 n^{-1+\frac{2}{\tau+1}}\right).
\end{equation}
Second, for $\mathsf{E}_2,$ by Lemma \ref{lem_locallystationaryform} and (3) of Lemma \ref{lem_concentration}, we find that for some constant $C>0$ 
\begin{equation*}
|\mathbb{E} x_{k_1 1} x_{k_2 1}| \leq C|k_1-k_2|^{-\tau}. 
\end{equation*}
Consequently,we obtain that
\begin{equation}\label{eq_e2part}
\left| \mathsf{E}_2 \right| \leq \frac{\xi_c^2}{n}+\left|\frac{1}{n} \sum_{k_1=2}^n \phi_1(t_{k_1}) \sum_{|k_1-k_2| \leq n^{2/\tau}}  \phi_1(t_{k_2})  \mathbb{E} x_{k_1 1} x_{k_2 1} \right|.
\end{equation}
For each fixed $k_2,$ by a discussion similar to Lemma \ref{lem_locallystationaryform}, we see that $\{x_{k_1 1} x_{k_2 1}\}$ is a locally stationary time series whose physical dependence measure satisfies that $\delta(i,q) \leq C i^{-\tau}.$ As $q>2,$ applying (1) of Lemma \ref{lem_concentration}, we readily see that 
\begin{equation*}
\left| \frac{1}{n} \sum_{k_1=2}^n \phi_1(t_{k_1}) \mathbb{E} x_{k_1 1} x_{k_2 1} \right| \leq \frac{C \xi_c}{\sqrt{n}}.
\end{equation*}
Together with (\ref{eq_e2part}), we arrive at
\begin{equation}\label{eq_e2finalerror}
|\mathsf{E}_2| \leq C \left( \frac{\xi_c^2}{n} +\frac{\xi^2_c n^{2/\tau}}{\sqrt{n}}\right).
\end{equation} 
By (\ref{eq_decompositionvariance}), (\ref{eq_e1finalerror}) and (\ref{eq_e2finalerror}), we obtain that 
\begin{equation*}
|\operatorname{Var}(z_1)-\Omega_{11}|=\OO\left( \frac{\xi_c^2}{n} +\frac{\xi_c^2 n^{2/\tau}}{\sqrt{n}} +\xi^2_c n^{-1+\frac{2}{\tau+1}}\right).
\end{equation*}
The general term $\operatorname{Cov}(z_i, z_j)$ can be analyzed similarly. We can therefore conclude our proof using Lemma \ref{lem_circle}. 
\end{proof}

Next, we state the Gaussian approximation results for both the affine and quadratic forms. Consider a sequence of centered Gaussian random vectors $\{\bm{n}_i\}_{i=2}^n$ in $\mathbb{R}^p$ which preserve the covariance structure of $\{\bm{x}_i\}.$ Denote
\begin{equation*}
\bm{g}_i=\bm{n}_i \otimes \ab(t_i),
\end{equation*} 
and corresponding to (\ref{eq_bmzgreatform}), denote
\begin{equation}\label{eq_wgaussiandefinition}
\bm{w}=\frac{1}{\sqrt{n}} \sum_{i=2}^n \bm{g}_i. 
\end{equation}
Recall $\bm{l}$ in (\ref{eq_originaldefinition}) and $\mathsf{W}$ in (\ref{eq_defnw}). The Gaussian approximation result, Theorem \ref{thm_gaussianapproximationcase}, provides controls on the following Kolmogorov distances
\begin{equation*}
\mathcal{K}_1(\bm{z}, \bm{w}):=\sup_{x \in \mathbb{R}} \left| \mathbb{P}(\bm{z}^\top \bm{l} \leq x)-\mathbb{P}(\bm{w}^\top \bm{l} \leq x) \right|, \ \mathcal{K}_2(\bm{z}, \bm{w}):=\sup_{x \in \mathbb{R}} \left| \mathbb{P}(\bm{z}^\top \mathsf{W} \bm{z} \leq x)-\mathbb{P}(\bm{w}^\top \mathsf{W} \bm{w} \leq x) \right|. 
\end{equation*}

\begin{theorem}\label{thm_gaussianapproximationcase}
Suppose Assumptions \ref{assum_models}--\ref{assum_updc} hold. Moreover, we assume that 
for some large values $m$ and $\hd,$ $\Theta(\eta)$ in (\ref{eq_controlparameter}) satisfies 
\begin{equation}\label{eq_mhchoice}
\Theta(\eta)=\oo(1), \ \eta>0.
\end{equation}
Then we have that 
\begin{equation*}
\mathcal{K}_1(\zb, \wb)=\OO(\Theta(1)), \  \mathcal{K}_2(\zb, \wb)=\OO(\Theta(0.5)). 
\end{equation*}
\end{theorem}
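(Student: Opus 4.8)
The plan is to establish Theorem \ref{thm_gaussianapproximationcase} by a blocking-plus-$m$-dependent-approximation scheme, reducing both the affine form $\zb^\top\bm{l}$ and the quadratic form $\zb^\top\mathsf{W}\zb$ to functionals of an $m$-dependent Gaussian sequence and then invoking a Gaussian comparison. First I would truncate: replace each coordinate of $\bm{x}_i$ by its bounded version obtained by cutting at level $\hd$, which controls the contribution of heavy tails via the moment bound $\|\bm{x}_i\|_q<\infty$ and the physical dependence decay $\delta_x(j,q)\le Cj^{-\tau}$; the resulting error is of size $\OO((p\sqrt n\,\zeta\xi_c^2\hd^{-(q-1)})^\eta)$ for the affine form and the squared analogue for the quadratic form, accounting for the $\hd^{-(q-1)}$ and $\hd^{-(q-2)}$ terms in $\Theta(\eta)$. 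Second, I would approximate $\bm{x}_i$ by the $m$-dependent sequence $\bm{x}_{i,m}=\E(\bm{x}_i\mid \eta_{i-m},\dots,\eta_i)$; the tail sum $\sum_{j>m}\delta_x(j,q)=\OO(m^{-\tau+1})$ governs this step and produces the $m^{-\tau+1}$ terms (multiplied by the appropriate powers of $p,\zeta,\xi_c$). Here I would use Lemmas \ref{lem_concentration} and \ref{lem_mdependent} to bound the $L_2$ (or $L_q$) distance between $\zb$ and its $m$-dependent surrogate, and then note that $\bm{l}^\top\zb$ and $\zb^\top\mathsf{W}\zb$ are Lipschitz (resp.\ locally Lipschitz, after a standard anti-concentration argument) in $\zb$, so that closeness in probability transfers to closeness of distribution functions with a loss of a square root (hence the exponent $\eta$ appearing at $\eta=1$ for $\mathcal K_1$ and $\eta=0.5$ for $\mathcal K_2$).

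Third, for the $m$-dependent truncated sequence I would invoke a high-dimensional Gaussian approximation for sums of $m$-dependent random vectors — for the affine form this is essentially a Berry–Esseen bound for $\bm{l}^\top\sum\bm{y}_i$ after grouping into big blocks of length $\asymp\sqrt n$ and small blocks, discarding the small blocks, and applying a univariate Lindeberg/Stein argument to the big-block sums; the block-count $n/\sqrt n=\sqrt n$ and the per-block third-moment bound, which scales like $p^{3/4}\hd^3m^2$ after truncation, yield the leading term $p^{7/4}n^{-1/2}\hd^3m^2$ of $\Theta$. For the quadratic form I would instead use a Gaussian comparison for quadratic forms (e.g.\ along the lines of the Lindeberg swapping argument for $\zb^\top\mathsf{W}\zb$, or by writing the quadratic form as a sum over block pairs and controlling cross-terms), which is why the relevant scaling appears at the $\eta=1/2$ level. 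Throughout, Assumption \ref{assum_updc} guarantees the limiting variances $\bm{l}^\top\Omega\bm{l}$ and $\mathrm{Tr}[\Omega^{1/2}\mathsf{W}\Omega^{1/2}]$ are bounded below, so the anti-concentration bounds needed to pass from Kolmogorov-in-probability to Kolmogorov-in-distribution are uniform.

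Finally I would collect the error terms. The discretization-of-time and stationarization errors — replacing Riemann sums by integrals in $\operatorname{Cov}(\zb)$ and replacing $\E\varphi_i(X_{k-1})^2$ by $\Pi(t_k)$ — are exactly the content of Lemma \ref{lem_covarianceofz} and its proof, contributing the terms $\tfrac{p\xi_c^2}{n}+\sqrt p\,\zeta\big(\tfrac{p\xi_c^2 n^{2/\tau}}{\sqrt n}+p\xi_c^2 n^{-1+2/(\tau+1)}\big)$ in $\Theta(\eta)$; here one also uses Lemma \ref{lem_intergralappoximation} for the Riemann-sum error. Summing the truncation error, the $m$-dependence error, the block-discarding error, the Gaussian-comparison error, and the covariance-mismatch error, and bounding each by the corresponding monomial in $\Theta(\eta)$, gives $\mathcal K_1(\zb,\wb)=\OO(\Theta(1))$ and $\mathcal K_2(\zb,\wb)=\OO(\Theta(0.5))$, which under the hypothesis $\Theta(\eta)=\oo(1)$ are $\oo(1)$ as required.

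I expect the main obstacle to be the quadratic-form comparison: unlike the affine case, $\zb^\top\mathsf{W}\zb$ is not Lipschitz globally, so one must combine a truncation of $\zb$ on a high-probability ball (using Lemma \ref{lem_concentration} to bound $\|\zb\|$) with a careful anti-concentration estimate for quadratic forms of Gaussian vectors whose covariance is only known to have bounded-below smallest eigenvalue after multiplying by $\Omega^{1/2}\mathsf{W}\Omega^{1/2}$; getting the exponent to come out as $\Theta(0.5)$ rather than something weaker requires tracking the interplay between the $L_2$-distance $\|\zb-\zb_{m}\|$ and the operator norm of $\mathsf{W}$, and this is where the bulk of the technical work lies.
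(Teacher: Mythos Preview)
Your overall architecture---truncate at level $\hd$, pass to the $m$-dependent projection $\bm{x}_i^\Mt=\E(\bm{x}_i\mid\eta_{i-m},\dots,\eta_i)$, apply a Gaussian comparison to the bounded $m$-dependent sequence, then transfer back through the intermediate objects via anti-concentration---matches the paper's five-step triangle decomposition $\mathcal K_1(\zb,\wb)\le\mathcal K_1(\zb,\zb^\Mt)+\mathcal K_1(\zb^\Mt,\bar\zb^\Mt)+\mathcal K_1(\bar\zb^\Mt,\bar\wb^\Mt)+\mathcal K_1(\bar\wb^\Mt,\wb)$ exactly. You also correctly locate the origin of the $\eta=1$ versus $\eta=0.5$ dichotomy in the anti-concentration step: for the affine form the Gaussian density bound gives a linear estimate (\ref{eq_decompositionform}), while for the quadratic form only a square-root bound is available via the chi-squared anti-concentration Lemma~\ref{lem_Gaussianquadratic}.

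The genuine divergence is in the central Gaussian-approximation step. You propose Bernstein big-block/small-block followed by a \emph{univariate} Berry--Esseen for $\bm l^\top\zb$ and a separate Lindeberg swap for $\zb^\top\mathsf W\zb$. The paper instead treats both forms in one stroke by applying the \emph{multivariate} convex-set Gaussian comparison of Lemma~\ref{lem_mvnapp} directly to the $m$-dependent truncated vector sum: since both $\{\bm q:\bm q^\top\bm l\le x\}$ (a half-space) and $\{\bm q:\bm q^\top\mathsf W\bm q\le x\}$ (an ellipsoid, as $\mathsf W$ is positive semidefinite) are convex in $\mathbb R^p$, the same bound with dependency neighborhoods $N_i=N_{ij}=N_{ijk}=\{i-m,\dots,i\}$, $\beta=\sqrt p\,\hd$, $n_1=n_2=n_3=m$ delivers the leading term $p^{7/4}n^{-1/2}\hd^3m^2$ immediately, with no block-discarding error and no separate quadratic-form argument. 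Your blocking route can in principle be pushed through for the affine case, but the numerology you give does not close: with $\sqrt n$ blocks and a claimed per-block third moment of order $p^{3/4}\hd^3m^2$, a standard Berry--Esseen does not produce $p^{7/4}n^{-1/2}\hd^3m^2$ (the powers of $p$ and $n$ do not match). For the quadratic form, a Lindeberg swap on $\zb^\top\mathsf W\zb$ would require controlling fourth-order mixed moments of the $m$-dependent blocks, which is considerably heavier than simply observing that ellipsoids are convex; the paper sidesteps this entirely.
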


\begin{remark}
Theorem \ref{thm_gaussianapproximationcase} establishes the asymptotic Gaussian fluctuation for the affine and quadratic forms by controlling their Kolmogorov distances with their Gaussian counterparts. We point that (\ref{eq_mhchoice}) is a mild assumption and can be easily satisfied. For example, when $\tau$ is large enough, we only require $p \ll n^{2/7},$ which matches the best known dimension as in \cite{MR3571252}.  
\end{remark}

Before proving Theorem \ref{thm_gaussianapproximationcase}, we show how it implies Theorems \ref{thm_asymptoticdistribution} and \ref{thm_poweranalysis}.

\begin{proof}[\bf Proof of Theorem \ref{thm_asymptoticdistribution}] As before, without loss of generality, we focus on the case $r=1$ and omit the subscript $j.$ Under the assumption of (\ref{eq_onebound}), we find that Theorem \ref{thm_gaussianapproximationcase} holds for $\mathcal{K}_1(\bm{z}, \bm{w}).$ Together with   (\ref{eq_fundementalexpression}) and (\ref{eq_originaldefinition}), by Theorem \ref{thm_consistency} and the assumption of (\ref{eq_assumptionerrorreduce}), we find that 
\begin{equation}\label{eq_t1c}
\frac{\sqrt{n}\mathsf{T}_1}{\sqrt{\operatorname{Var}(\bm{w}^\top \bm{l})}} \simeq \mathcal{N}(0,1). 
\end{equation}
Since $\bm{w}$ is Gaussian, we have that
\begin{equation}\label{eq_t1cc}
\bm{l}^\top \bm{w} \sim \mathcal{N}(0, \bm{l}^\top \operatorname{Cov}(\bm{w}) \bm{l}). 
\end{equation}
By the construction of $\bm{w},$ the assumption (\ref{eq_onebound}) and  Lemma \ref{lem_covarianceofz}, we conclude that
\begin{equation}\label{eq_t1ccc}
\| \operatorname{Cov}(\bm{w})-\Omega \|_{\op}=\oo(1). 
\end{equation}
Recall (\ref{eq_defhtx}). We can conclude our proof of (\ref{eq_thmonepartone}) using (\ref{eq_t1c}), (\ref{eq_t1cc}) and (\ref{eq_t1ccc}).

Next, we prove (\ref{eq_expansionformula}) using Lemma \ref{lem_volumeoftube}. Since $\sqrt{n} \mathtt{T}_1$ is asymptotically a Gaussian process whose convergence rate can be controlled in Theorem \ref{thm_gaussianapproximationcase}, it suffices to focus on the Gaussian case. Recall (\ref{eq_ljdefinition}).  By (\ref{eq_betaolsform}) and (\ref{eq_proposedestimator}), we can write
\begin{equation*}
\widehat{m}_{c,d}(t,x)=\bm{b}^\top (W^\top W)^{-1} W^\top \bm{Y}=l(t,x)^\top W^\top \bm{Y}+\bm{b}^\top \left( (n^{-1}W^\top W)^{-1}-\Pi^{-1}) \right)\left[ n^{-1} W^\top \bm{Y} \right]. 
\end{equation*}
By (\ref{eq_consistencyconvergency}) and Assumption \ref{assum_updc}, we have that for some constant $C>0,$
\begin{equation*}
\left\| \bm{b}^\top \left( (n^{-1}W^\top W)^{-1}-\Pi^{-1}) \right)\left[ n^{-1} W^\top \bm{Y} \right]\right \| \leq C \zeta \sqrt{p} \left[  p\left( \frac{\xi^2_c}{\sqrt{n}}+\frac{\xi^2_c n^{\frac{2}{\tau+1}}}{n}\right) \right],
\end{equation*}
where in the second step we used a discussion similar to (\ref{eq_boundtwo}). Consequently, under the assumption of  (\ref{eq_onebound}), in view of (\ref{eq_ll}), we can further write
\begin{equation}\label{eq_rrr}
\widehat{m}_{c,d}(t,x)=\widetilde{l}(t, \widetilde{x})^\top W^\top \bm{Y}+\oo_{\mathbb{P}}(1).
\end{equation}
Since the expression (\ref{eq_alphatheortrep}) is independent of the temporal relation of $\bm{\epsilon},$ using (\ref{eq_defntdefn}) and (\ref{eq_rrr}), we can write 
\begin{equation*}
\alpha=\p\left( \sup_{x \in \mathcal{X}} \left| T(t,\widetilde{x})^\top \widetilde{\bm{\epsilon}} \right| \geq c_{\alpha}+\oo(1)  \right),
\end{equation*} 
where $\widetilde{\bm{\epsilon}} \sim \mathcal{N}(0, \mathbf{I}_{n-1})$ and $\bm{\epsilon}=\sqrt{\operatorname{Cov}(W^\top \bm{\epsilon})}\widetilde{\bm{\epsilon}}.$ By Lemma \ref{lem_volumeoftube} and the fact that $c_{\alpha}>0$, we complete the proof of (\ref{eq_expansionformula}).

Finally, we prove (\ref{eq_quantitiesbound}). According to equation (3.2) of \cite{MR1311978}, we have that
\begin{equation}\label{eq_kappodefinition}
\kappa_0=\int_0^1 \int_0^1 \sqrt{\det(A A^\top )} \dd t \dd x,
\end{equation}
where $A \in \mathbb{R}^{ 2 \times (n-1)}$ is defined as $A^\top=\left( \frac{\partial}{\partial t } T,  \frac{\partial}{\partial \widetilde{x} } T \right).$ Here we recall (\ref{eq_defntdefn}) for the definition of $T.$ By an elementary calculation, it is easy to see that 
\begin{equation*}
\frac{\partial T}{\partial t}=\frac{\sqrt{n} l_t \sqrt{\operatorname{Cov}(W^\top\bm{\epsilon})}}{\widetilde{h}(t, \widetilde{x})}-\frac{1}{2} \frac{T^\top(t,\widetilde{x})}{\widetilde{h}^2(t, \widetilde{x})} s_t, \ 
\end{equation*} 
where we use the notations that
\begin{equation*}
l_t= n^{-1} (\nabla_t \bm{\phi}(t) \otimes \bm{\varphi}(\widetilde{x}))^\top \Pi^{-1}, \ \bm{\phi}(t)=(\phi_1(t), \cdots, \phi_c(t))^\top, \ \bm{\varphi}(\widetilde{x})=(\varphi_1(\widetilde{x}), \cdots, \varphi_b(\widetilde{x}))^\top,
\end{equation*}
and 
\begin{equation*}
s_t=(\nabla_t \bm{\phi}(t) \otimes \bm{\varphi}(\widetilde{x})) \Pi^{-1} \Omega \Pi^{-1} \bm{b}+ \bm{b}^\top \Pi^{-1} \Omega \Pi^{-1} (\nabla_t \bm{\phi}(t) \otimes \bm{\varphi}(\widetilde{x})).
\end{equation*}
Similarly, we can calculate $\frac{\partial T}{\partial x}.$
For notational simplicity, we set
\begin{equation*}
\mathbf{b}_1:=\nabla_t \bm{\phi}(t) \otimes \bm{\varphi}(\widetilde{x}), \mathbf{b}_2:= \bm{\phi}(t) \otimes \nabla_{\widetilde{x}} \bm{\varphi}(\widetilde{x}).
\end{equation*}
With the above notations, we have that 
\begin{equation*}
\det(A A^\top )=\mathcal{E}_1\mathcal{E}_3-\mathcal{E}_2^2,
\end{equation*}
where $\mathcal{E}_k, k=1,2,3,$ are defined as 
\begin{equation*}
\mathcal{E}_1=\frac{\partial T}{\partial t} \frac{\partial T}{\partial t}^\top, \ \mathcal{E}_3= \frac{\partial T}{\partial \widetilde{x}} \frac{\partial T}{\partial \widetilde{x}}^\top , \ \mathcal{E}_2= \frac{\partial T}{\partial \widetilde{x}} \frac{\partial T}{\partial t}^\top.
\end{equation*}
It is easy to see that
\begin{equation*}
\widetilde{h}(t, \widetilde{x}) \asymp \|  \widetilde{\bm{b}}\|_2^2, \ n \widetilde{l}(t, \widetilde{x}) \asymp \|  \widetilde{\bm{b}}\|_2^2, \ s_t^2 \asymp  \| \mathbf{b}_1 \|_2^2.  
\end{equation*}
Moreover, by Assumptions \ref{assum_physical} and \ref{assum_updc}, using (\ref{eq_consistencyconvergency}), we have that for all $1 \leq k \leq p,$
\begin{equation*}
\lambda_{k}(\Pi^{-1}), \ \lambda_{k}(\Omega)  \asymp 1. 
\end{equation*}
With the above estimates, for $\mathcal{E}_1,$ by an elementary but tedious calculation,  we can see that   
\begin{equation}\label{eq_boundcontrol}
\frac{\|\mathbf{b}_1 \|_2^2}{\| \widetilde{\bm{b}} \|_2^2}   \lambda_{\min}(\mathbf{T}) \leq  \| \mathcal{E}_1\| \leq   \lambda_{\max}(\mathbf{T}) \frac{\|\mathbf{b}_1 \|_2^2}{\| \widetilde{\bm{b}} \|_2^2}, \ \mathbf{T}:= \operatorname{Cov}(W^\top \bm{\epsilon}). 
\end{equation}
By (\ref{eq_originaldefinition}), Lemma \ref{lem_covarianceofz} and Assumption \ref{assum_updc}, we find that 
\begin{equation*}
 \lambda_{\min}(\mathbf{T}), \lambda_{\max}(\mathbf{T}) \asymp p.
\end{equation*}
Consequently, we have that 
\begin{equation*}
 \mathcal{E}_1 \asymp p \frac{\|\mathbf{b}_1 \|_2^2}{\| \widetilde{\bm{b}} \|_2^2}. 
\end{equation*}
Similarly, we can show 
\begin{equation*}
 \mathcal{E}_3 \asymp p \frac{\|\mathbf{b}_2 \|_2^2}{\| \widetilde{\bm{b}} \|_2^2}.
\end{equation*}
Therefore, under the assumption of (\ref{eq_assumptionbasisderivative}) and using the choice of (\ref{eq_cdchoice}), we have that for some constant $C>0$
\begin{equation*}
\kappa_0 \leq \int \int \sqrt{\mathcal{E}_1 \mathcal{E}_3}  \dd t\dd \widetilde{x} \leq Cp(n^{\alpha_1}+n^{\alpha_2}).
\end{equation*}
To provide a lower bound, since $AA^\top$ is symmetric and positive semi-definite, by (\ref{eq_kappodefinition}), we have that
\begin{equation*}
\kappa_0 \geq \int \int \lambda_{\min}(AA^\top) \dd t \dd \widetilde{x}. 
\end{equation*}
Together with Lemma \ref{lem_circle}, we have  that
\begin{equation*}
\kappa_0 \geq  \int \int \sqrt{\min\left\{(|\mathcal{E}_2|-\mathcal{E}_3)^2, \ (|\mathcal{E}_2|-\mathcal{E}_1)^2 \right\}} \dd t \dd \widetilde{x}. 
\end{equation*}  
By a discussion similar to (\ref{eq_boundcontrol}), using the definitions of $\mathcal{E}_k, k=1,2,3,$ we see that for some constant $C_1>0,$
\begin{equation*}
\kappa_0 \geq C_1 p(n^{\alpha_1}+n^{\alpha_2}). 
\end{equation*}
 This yields that
\begin{equation*}
 \kappa_0 \asymp  p n^{\alpha_1}+p n^{\alpha_2}. 
\end{equation*}  
We point out that $\zeta_0$ can be analyzed similarly using the second equation on Page 1335 of \cite{MR1311978}. Together with (\ref{eq_expansionformula}), we can complete the proof of (\ref{eq_quantitiesbound}).
\end{proof}

Then we prove Theorem \ref{thm_poweranalysis}. 
\begin{proof}[\bf Proof of Theorem \ref{thm_poweranalysis}]
We again focus our discussion on the case $r=1$ and omit the subscript $j.$ We start our proof of (1). Under the assumption of (\ref{eq_05bound}), we find that Theorem \ref{thm_gaussianapproximationcase} holds for $\mathcal{K}_2(\bm{z}, \bm{w}).$ Together with (\ref{eq_t2expansion}), the assumption of (\ref{eq_assumptionerrorreduce}) and Theorem \ref{thm_approximation}, utilizing the definition of mapping basis functions in (\ref{eq_defnmappedbasis}), we find that  it suffices to investigate the asymptotic distribution of $\bm{w}^\top \mathsf{W} \bm{w}.$  Denote the eigenvalues of $\Omega^{1/2} \mathsf{W} \Omega^{1/2}$ in the decreasing order as $\{\mu_i\}.$ Recall the definition of $\mathsf{W}$ in (\ref{eq_defnw}).  Note that
\begin{equation*}
\lambda_{\min}(\Omega) \lambda^2_{\min}(\Pi^{-1}) \lambda_{\min}(\mathsf{B}) \leq \mu_i \leq \lambda_{\max}(\Omega) \lambda^2_{\max}(\Pi^{-1}) \lambda_{\max}(\mathsf{B}), \ 1 \leq i \leq \operatorname{rank}(\Omega^{1/2} \mathsf{W} \Omega^{1/2}).
\end{equation*}
 Together  with Assumption \ref{assum_updc} and (\ref{eq_beigenvaluebound}), we conclude that 
\begin{equation*}
\mu_i \asymp 1, \ \operatorname{rank}(\Omega^{1/2} \mathsf{W} \Omega^{1/2}) \asymp p. 
\end{equation*} 
Consequently, we have that
\begin{equation*}
\frac{\mu_1}{\mathfrak{m}_2} \rightarrow 0, \ n \rightarrow \infty.  
\end{equation*}
By  (\ref{eq_t1ccc}) and Lindeberg’s central limit theorem (see the proof of \cite[Theorem 4.2]{DZ} or the discussion above \cite[Theorem 4]{XZW}), we obtain that 
\begin{equation*}
\frac{\bm{w}^\top \mathsf{W} \bm{w}-\mathfrak{m}_1}{\mathfrak{m}_2} \simeq \mathcal{N}(0,2). 
\end{equation*}  
This completes the proof of (1). 

For (2), denote
\begin{equation*}
\mathsf{T}_{2,a}:=\int_{[0,1]} \int_{\mathbb{R}} (\widehat{m}_{c,d}(t,x)-m(t,x))^2 \dd t \dd x.
\end{equation*} 
According to part (1), under $\mathbf{H}_a$ in (\ref{eq_alternative}), we have that
\begin{equation*}
\frac{n \mathsf{T}_{2,a}-\mathfrak{m}_1}{\mathfrak{m}_2} \simeq \mathcal{N}(0,2). 
\end{equation*}
By an elementary computation, we have that
\begin{equation*}
n\mathsf{T}_{2,a}=n \mathsf{T}_2-n \int_{[0,1]} \int_{\mathbb{R}}  v_n^2(t,x) \dd t \dd x-2n \int_{[0,1]} \int_{\mathbb{R}} (\widehat{m}_{c,d}(t,x)-m(t,x)) v_n(t,x) \dd t \dd x.
\end{equation*}
By (\ref{eq_betabound}), (\ref{eq_fundementalexpression}) and Cauchy-Schwarz inequality, we see that
\begin{equation*}
\left\| \int_{[0,1]} \int_{\mathbb{R}} (\widehat{m}_{c,d}(t,x)-m(t,x)) v_n(t,x) \dd t \dd x \right\|=\oo\left(  \int_{[0,1]} \int_{\mathbb{R}}  v_n^2(t,x) \dd t \dd x  \right).
\end{equation*}
This concludes the proof of the first statement of part (2). The second statement of part (2) follows from $\mathfrak{m}_2 \asymp \sqrt{p}$ and the assumption (\ref{eq_alternativeassumption}).  This completes our proof. 
\end{proof}

Finally, we proceed to prove Theorem \ref{thm_gaussianapproximationcase}. Its proof relies on the device of $m$-dependent approximation, the technique of suitable truncation and Lemma \ref{lem_mvnapp} which states a Gaussian approximation result on the convex set.  We prepare  some notations in the beginning. For $\bm{y}_i, i \geq 1,$ in (\ref{eq_decompositionkronecker}) and any nonnegative integer $m \geq 0,$ we define the so-called $m$-approximation of $\bm{y}_i$ as
\begin{equation*}
\bm{y}_i^\Mt=\bm{x}_i^\Mt \otimes \ab(t_i),
\end{equation*}
where $\bm{x}_i^\Mt$ is denoted as  
\begin{equation}\label{eq_dependencesequence}
\bm{x}_i^\Mt=\mathbb{E}(\bm{x}_i|\sigma(\eta_{i-m}, \cdots, \eta_i)),  \ m \geq 0,
\end{equation} 
where $\sigma(\eta_{i-m}, \cdots, \eta_i)$ is the natural sigma-algebra generated by the sequence of random variables.  
Corresponding to (\ref{eq_bmzgreatform}), we denote
\begin{equation*}
\bm{z}^\Mt=\frac{1}{\sqrt{n}}\sum_{i=2}^n \bm{y}_i^\Mt. 
\end{equation*}
Next, for a given truncation level $\hd>0,$ we define the truncated version of $\bm{y}_i^\Mt$ and $\bm{z}^\Mt$ following 
\begin{equation}\label{eq_truncation}
\bar{\bm{x}}_i^\Mt=\bm{x}_i^\Mt \mathbf{1}(\bm{x}_i^\Mt \leq \hd), \ \bar{\bm{y}}^\Mt_i=\bar{\bm{x}}^\Mt_i \otimes \ab(t_i), \ \bar{\bm{z}}^\Mt=\frac{1}{\sqrt{n}}\sum_{i=2}^n \bar{\bm{y}}^\Mt_i, 
\end{equation}
where the operation of truncation is applied entrywisely. Moreover, define $\{\bar{\bm{g}}^{\Mt}_i\}$ as the sequence of Gaussian random vectors which preserve the covariance structure of $\{\bar{\bm{x}}_i^\Mt\}$ whose Gaussian part is the same as $\bm{w}$ as in (\ref{eq_wgaussiandefinition}) and let
\begin{equation*}
\bar{\wb}^\Mt=\frac{1}{\sqrt{n}}\sum_{i=2}^n \bar{\gb}_i^\Mt.
\end{equation*}


%
%
%

\begin{proof}[\bf Proof of Theorem \ref{thm_gaussianapproximationcase}] Due to similarity, we focus our proof on the affine form and only briefly discuss the quadratic form in the end. The starting point is the following triangle inequality 
\begin{align}\label{eq_decomposition}
\mathcal{K}_1(\zb, \wb)& =\sup_{x \in \mathbb{R}} \left| \mathbb{P}(\bm{z}^\top \bm{l} \leq x)-\mathbb{P}(\bm{w}^\top \bm{l} \leq x) \right| \nonumber \\
& \leq \mathcal{K}_1(\zb, \zb^\Mt)+\mathcal{K}_1(\zb^\Mt, \bar{\zb}^\Mt)+\mathcal{K}_1(\bar{\zb}^\Mt, \bar{\wb}^\Mt)+\mathcal{K}_1(\bar{\wb}^\Mt, \wb).
\end{align}
It suffices to control every term of the right-hand side of (\ref{eq_decomposition}). 
In what follows, we provide the detailed  arguments for the controls following a five-step strategy.  

\vspace{3pt}
\noindent{\bf Step one. \underline{Show the closeness of $\operatorname{Cov}(\bm{z})$ and $\operatorname{Cov}(\bar{\zb}^\Mt).$}} Let $\zb=(z_1, \cdots, z_p)$, $\bar{\zb}^\Mt=(\bar{z}_1, \cdots, \bar{z}_p)$ and $\zb^\Mt=(z_1^\Mt, \cdots, z_p^\Mt).$ Note that
\begin{equation}\label{eq_tridecomposition}
\left\|\operatorname{Cov}(\bm{z})-\operatorname{Cov}(\bar{\zb}^\Mt) \right\|_{\op} \leq \left\| \operatorname{Cov}(\zb)-\operatorname{Cov}(\zb^\Mt) \right\|_{\op}+ \left\| \operatorname{Cov}(\bar{\zb}^\Mt)-\operatorname{Cov}(\zb^\Mt) \right\|_{\op},
\end{equation}
where we recall again that $\|\cdot \|_{\op}$ is the operator norm of the given positive definite matrix. 

First, we control the first term of the right-hand side of (\ref{eq_tridecomposition}). Observe that for all $1 \leq i \leq p,$
\begin{equation}\label{eq_variancedecomposition}
\operatorname{Var}(z_i)-\operatorname{Var}(z^\Mt_i)=\mathbb{E}(z_i)^2-\mathbb{E}(z^\Mt_i)^2+\mathbb{E}(z_i-z^\Mt_i) \mathbb{E}(z_i+z^\Mt_i). 
\end{equation} 
Without loss of generality, we focus on the case $i=1$ and still keep the subscript $i$ without causing any further confusion. By (\ref{eq_z1form}), Lemmas \ref{lem_locallystationaryform} and  \ref{lem_mdependent}, 
we readily obtain that for some constant $C>0,$
\begin{equation*}
\mathbb{E}(|z_i-z_i^\Mt|^q)^{2/q} \leq C \xi_c^2 \Theta^2_{m,q} \leq C \xi_c^2 m^{-2\tau+2},  
\end{equation*} 
where $\Theta_{m,q}=\sum_{k=m}^{\infty} \delta_x(k,q)$ (recall (\ref{eq_deltaxdefinition})). Consequently, as $q>2,$ by (1) of Lemma \ref{lem_collectionprobineq}, we readily obtain that for some constant $C_1>0$
\begin{equation}\label{eq_control}
\mathbb{E}|z_i-z^\Mt_i| \leq C_1 \xi_c \Theta_{m,q}, \  \mathbb{E}|z_i-z^\Mt_i|^2 \leq C_1 \xi_c^2 \Theta^2_{m,q}.
\end{equation} 
Note that (\ref{eq_variancedecomposition}) implies that 
\begin{equation}\label{eq_decompositionv}
\left| \operatorname{Var}(z_i)-\operatorname{Var}(z_i^\Mt)\right| \leq \sqrt{\mathbb{E}|z_i-z_i^\Mt|^2 \mathbb{E}|z_i+z_i^\Mt|^2}+\mathbb{E}|z_i-z_i^\Mt| \mathbb{E}|z_i+z_i^\Mt|,
\end{equation}
where we used (2) of Lemma \ref{lem_collectionprobineq}. Moreover, by Lemmas \ref{lem_locallystationaryform} and \ref{lem_concentration}, using the definition (\ref{eq_z1form}), we find that for some constant $C_2>0$
\begin{equation}\label{eq_bound}
\mathbb{E}|z_i|< C_2 \xi_c, \ \mathbb{E}|z_i|^2< C_2 \xi_c^2.
\end{equation} 
Together with (\ref{eq_control}), we see that
\begin{equation*}
\mathbb{E}|z_i+z_i^\Mt| \leq C_2 \xi_c+ C_1 \xi_c \Theta_{m,q}.
\end{equation*}
Furthermore, using (\ref{eq_bound}), we see that
\begin{align*}
 \mathbb{E}|z_i+z_i^\Mt|^2 & \leq 2 \mathbb{E}|z_i|^2+2 \mathbb{E}|z_i^\Mt|^2 \\
 & \leq 2C_2 \xi_c^2+2\mathbb{E}|z_i|^2+2\mathbb{E}|z_i-z_i^\Mt| \mathbb{E} |z_i+z_i^\Mt| \\
 & \leq 4 C_2 \xi_c^2+2 C_1 \xi_c\Theta_{m,q}(C_2\xi_c+C_1 \Theta_{m,q}).
\end{align*}
Together (\ref{eq_control}) and (\ref{eq_decompositionv}), we conclude that for some constant $C>0,$
\begin{equation*}
\left| \operatorname{Var}(z_i)-\operatorname{Var}(z_i^\Mt)\right| \leq C \xi^2_c \Theta_{m,q}. 
\end{equation*}
Similarly, we can show that for all $1 \leq i,j \leq p$
\begin{equation*}
\left| \operatorname{Cov}(z_i, z_j)-\operatorname{Cov}(z_i^\Mt, z_j^\Mt) \right| \leq C \xi^2_c \Theta_{m,q}. 
\end{equation*}
Consequently, by Lemma \ref{lem_circle}, we obtain that
\begin{equation}\label{eq_partonecontrol}
\left\| \operatorname{Cov}(\zb)-\operatorname{Cov}(\zb^\Mt)  \right\|_{\op} \leq C p \xi^2_c \Theta_{m,q} \leq C_1 p \xi^2_c m^{-\tau+1}, 
\end{equation}
where in the second inequality we used (\ref{eq_transferbound}). 

Second, we control the second term on the right-hand side of (\ref{eq_tridecomposition}). Recall (\ref{eq_truncation}). We point out that $\bar{z}^\Mt_i \neq z_i^\Mt \mathbf{1}(z_i \leq \hd)$ in general. For notational simplicity, we denote $\bm{x}_i^\Mt=(x_{i1}^\Mt, \cdots, x_{i,d}^\Mt).$ Using the construction (\ref{eq_dependencesequence}), with an argument similar to Lemma \ref{lem_locallystationaryform}, we can show that $\{\bm{x}_i^\Mt\}$ is a locally stationary time series whose physical dependence measure also satisfies $\delta(j,q) \leq Cj^{-\tau},$ for some constant $C>0$. Analogous to the discussion for the first term, we focus on the case $i=1$ and keep the subscript $i.$ Observe that
\begin{equation}\label{eq_firstorder}
|\mathbb{E}z_i^\Mt-\mathbb{E}\bar{z}_i^\Mt|=\left|\frac{1}{\sqrt{n}} \sum_{k=2}^n \phi_1(t_k) \mathbb{E}(x^\Mt_{k,i} \mathbf{1}(|x_{k,i}^\Mt|>\hd)) \right|.
\end{equation} 
Moreover, using Chebyshev's inequality (c.f. (3) of Lemma \ref{lem_collectionprobineq}), a discussion similar to (\ref{eq_bound}),  and the fact that 
\begin{equation*}
\mathbf{1}(|x_{k,i}^\Mt|>\hd) \leq \frac{|x_{k,i}^\Mt|^{q-1}}{\hd^{q-1}},
\end{equation*}
we find that for some constant $C>0,$
\begin{equation*}
|\mathbb{E}(x^\Mt_{k,i} \mathbf{1}(|x_{k,i}^\Mt|>\hd))| \leq C \xi_c \hd^{-(q-1)}.  
\end{equation*}
Consequently, we see that 
\begin{equation}\label{eq_expectationcontrolone}
|\mathbb{E}z_i^\Mt-\mathbb{E}\bar{z}_i^\Mt| \leq C \sqrt{n} \xi_c^2  \hd^{-(q-1)}.
\end{equation}
Similarly, we can show that 
\begin{equation}\label{eq_secondorder}
\left| \mathbb{E}(z_1^\Mt)^2-\mathbb{E} (\bar{z}_1^\Mt)^2 \right| \leq C n \xi_c^4  \hd^{-(q-2)}.
\end{equation}
By (\ref{eq_firstorder}), (\ref{eq_secondorder}) and the fact $|\bar{x}_{k,i}^\Mt| \leq \hd,$ using a decomposition similar to (\ref{eq_variancedecomposition}), we conclude that for some
constant $C>0,$
\begin{equation*}
\left| \operatorname{Var}(\bar{z}^\Mt_i)-\operatorname{Var}(z_i^\Mt)\right| \leq C n \xi_c^4 \hd^{-(q-2)}. 
\end{equation*}
Similarly, we can show that for all $1 \leq i,j \leq p$
\begin{equation*}
\left| \operatorname{Cov}(\bar{z}^\Mt_i, \bar{z}^\Mt_j)-\operatorname{Cov}(z_i^\Mt, z_j^\Mt) \right| \leq  C n \xi_c^4 \hd^{-(q-2)}. 
\end{equation*}
Consequently, by Lemma \ref{lem_circle}, we obtain that
\begin{equation}\label{eq_parttwocontrol}
\left\| \operatorname{Cov}(\bar{\zb}^\Mt)-\operatorname{Cov}(\zb^\Mt)  \right\|_{\op} \leq C pn \xi_c^4 \hd^{-(q-2)}. 
\end{equation}

In summary, by (\ref{eq_tridecomposition}), (\ref{eq_partonecontrol}) and (\ref{eq_parttwocontrol}), we find that for some constant $C>0$
\begin{equation}\label{eq_covarianceclose}
\left\|\operatorname{Cov}(\bm{z})-\operatorname{Cov}(\bar{\zb}^\Mt) \right\|_{\op}  \leq C \left( p \xi^2_c m^{-\tau+1}+ pn \xi_c^4 \hd^{-(q-2)} \right). 
\end{equation}
This completes Step one. Furthermore, since the assumption (\ref{eq_mhchoice}) ensures that the right-hand side of (\ref{eq_covarianceclose}) is of order $\mathrm{o}(1),$ we can conclude that the covariance matrices are close. 

\vspace{3pt}
\noindent {\bf Step two. \underline{Control the third term  $\mathcal{K}_1(\bar{\zb}^\Mt, \bar{\wb}^\Mt).$}} In this step, we apply Lemma \ref{lem_mvnapp} to control the associated term $\bar{\bm{z}}^{\mathtt{M}}$. For $x \in \mathbb{R},$ denote 
\begin{equation}\label{eq_defnset}
\mathsf{A}_x:=\left\{ \bm{q} \in \mathbb{R}^p: \bm{q}^\top \bm{l} \leq x   \right\}. 
\end{equation}
It is easy to see that $\mathsf{A}_x$ is a convex set in $\mathbb{R}^p$. Denote $\mathcal{A}$ as the collection of all the convex sets in $\mathbb{R}^p.$ Then we have that 
\begin{equation*}
\mathcal{K}_1(\bar{\zb}^\Mt, \bar{\wb}^\Mt)= \sup_{\mathsf{A}_x \in \mathcal{A}} \left| \mathbb{P}(\bar{\zb}^\Mt \in \mathsf{A}_x)-\mathbb{P}(\bar{\wb}^\Mt \in \mathsf{A}_x ) \right|.
\end{equation*} 
Therefore, it suffices to bound the right-hand side of the above equation using Lemma \ref{lem_mvnapp}. 
We verify conditions of Lemma \ref{lem_mvnapp}. Denote 
\begin{equation*}
N_i=N_{ij}=N_{ijk}=\{i-m, i-m+1, \cdots, i\}.
\end{equation*}
By the constructions (\ref{eq_dependencesequence}) and (\ref{eq_truncation}), using the property of conditional expectation (c.f. \cite[Example 4.1.7]{probbook}), it is easy to see that the conditions of Lemma \ref{lem_mvnapp} are satisfied such that 
\begin{equation*}
n_1=n_2=n_3=m, \ \beta=\sqrt{p} \mathrm{h}. 
\end{equation*}
By Lemma \ref{lem_mvnapp}, we have that for some constant $C>0$
 \begin{equation*}
\mathcal{K}_1(\bar{\zb}^\Mt, \bar{\wb}^\Mt)  \leq C p^{7/4} n^{-1/2} \| \Sigma^{-1/2} \|^3 \mathrm{h}^3m\left(m+\frac{m}{p}\right),
\end{equation*}  
where $\Sigma=\operatorname{Cov}(\bar{\zb}^\Mt).$  Together with Assumption \ref{assum_updc}, Lemma \ref{lem_covarianceofz}, (\ref{eq_covarianceclose}) and the assumption of (\ref{eq_mhchoice}),  we conclude that for some constant $C>0,$ 
 \begin{equation}\label{eq_steptworesults}
\mathcal{K}_1(\bar{\zb}^\Mt, \bar{\wb}^\Mt)  \leq C p^{7/4} n^{-1/2}  \mathrm{h}^3 m^2.  
\end{equation}
Moreover, since the assumption of  (\ref{eq_mhchoice}) ensures that $p^{7/4} n^{-1/2}  \mathrm{h}^3 m^2=\mathrm{o}(1),$ we conclude that $\bar{\zb}^\Mt$ is asymptotically Gaussian. This completes Step two. 

\vspace{3pt}
\noindent {\bf Step three. \underline{ Control the fourth term  $\mathcal{K}_1(\bar{\wb}^\Mt, \wb).$}}  Decompose that 
\begin{equation}\label{eq_probabilitydecomposition}
\mathcal{K}_1(\bar{\wb}^\Mt, \bm{w})=\sup_x \left| \mathbb{P}(\wb^\top \bm{l} \leq x )-\mathbb{P} (\wb^\top \bm{l} \leq x+\mathtt{D}(\bar{\wb}^\Mt, \wb)) \right|,
\end{equation}
where $\mathtt{D}(\bar{\wb}^\Mt, \wb))$ is defined as 
\begin{equation}\label{eq_differencedefinition}
\mathtt{D}(\bar{\wb}^\Mt, \wb))=(\wb-\bar{\wb}^\Mt)^\top \bm{l}. 
\end{equation}
Let $\Sigma_0=\operatorname{Cov}(\bm{z})$ and $\Sigma=\operatorname{Cov}(\bar{\bm{z}}^\Mt).$ Moreover, let $\bm{f}$ be some $p$-dimensional standard Gaussian random vector.  By construction, we can further write 
\begin{equation*}
\mathtt{D}(\bar{\wb}^\Mt, \wb)=((\Sigma_0^{1/2}-\Sigma^{1/2})\bm{f})^\top \bm{l}. 
\end{equation*}
Using Cauchy-Schwarz inequality, we find that for some constant $C>0$
\begin{equation*}
\left\| \mathtt{D}(\bar{\wb}^\Mt, \wb) \right\| \leq C \| \Sigma_0-\Sigma \|_{\op} \left(\mathbb{E} \| \bm{f} \|^2_2  \right)^{1/2} \| \bm{l} \|_2,
\end{equation*}
where $\| \bm{l} \|_2$ is the $L_2$ norm of the vector $\bm{l}.$ By Bernstein’s concentration inequality (c.f. (4) of Lemma \ref{lem_collectionprobineq}), we find that  for some constant $C>0,$
\begin{equation}\label{oldb1}
(\mathbb{E}\| \bm{f} \|_2^2)^{1/2} \leq C \sqrt{p}. 
\end{equation}
Moreover, by the definition (\ref{eq_defnxic}) and Assumption \ref{assum_updc}, we find that for some constant $C>0$
\begin{equation}\label{oldb2}
\| \bm{l} \|_2 \leq C\zeta. 
\end{equation}
Together with (\ref{eq_covarianceclose}), we conclude that 
\begin{equation}\label{eq_bounddbarww}
\|\mathtt{D}(\bar{\wb}^\Mt, \wb)\| \leq C \sqrt{p} \zeta \left( p \xi_c^2 m^{-\tau+1}+ pn \xi_c^4 \hd^{-(q-2)} \right).
\end{equation}
As $\bm{w}$ is a Gaussian random vector, we have 
\begin{equation*}
\bm{w}^\top \bm{l} \sim \mathcal{N}(0, \bm{l}^\top \Sigma_0 \bm{l}).
\end{equation*}
By Assumption \ref{assum_updc}, Lemmas \ref{lem_covarianceofz} and \ref{lem_basisl2norm}, it is easy to see that for the considered basis functions, there exist some constants $C_1, C_2>0$ such that
\begin{equation}\label{eq_controlbounderror}
C_1 \leq \bm{l}^\top \Sigma_0 \bm{l} \leq C_2 \zeta^2.
\end{equation} 
Without loss of generality, we assume that $\mathtt{D}(\bm{w}, \bar{\bm{w}}^\Mt)$ is positive so that 
\begin{equation}\label{eq_normalformcontrol}
\mathbb{P}(\wb^\top \bm{l} \leq x )-\mathbb{P} (\wb^\top \bm{l} \leq x+\mathtt{D}(\bar{\wb}^\Mt, \wb))=\frac{1}{\sqrt{2 \pi}}\int_{x'}^{x'+\mathtt{D}'(\bm{w}, \bar{\bm{w}}^\Mt)} e^{-\frac{z^2}{2}} \dd z,
\end{equation}
where $x'=x/\sqrt{\bm{l}^\top \Sigma_0 \bm{l}}, \  \mathtt{D}'(\bm{w}, \bar{\bm{w}}^\Mt)=\mathtt{D}(\bm{w}, \bar{\bm{w}}^\Mt)/\sqrt{\bm{l}^\top \Sigma_0 \bm{l}}.$ Based on (\ref{eq_normalformcontrol}), it is easy to see that for some constants $C, C'>0$
\begin{equation}\label{eq_decompositionform}
\mathbb{P}(\wb^\top \bm{l} \leq x )-\mathbb{P} (\wb^\top \bm{l} \leq x+\mathtt{D}(\bar{\wb}^\Mt, \wb)) \leq C \frac{\mathtt{D}(\bm{w}, \bar{\bm{w}}^\Mt)}{\sqrt{\bm{l}^\top \Sigma_0 \bm{l}}} \leq C' \mathtt{D}(\bm{w}, \bar{\bm{w}}^\Mt),
\end{equation}
where in the last inequality we used (\ref{eq_controlbounderror}). Together with (\ref{eq_bounddbarww}), we conclude that for some constant $C>0$ 
\begin{equation}\label{eq_stepthreeconclusion}
\mathcal{K}_1(\bar{\wb}^\Mt, \bm{w}) \leq C \left( \sqrt{p} \zeta  \left( p \xi_c^2 m^{-\tau+1}+ pn \xi_c^4 \hd^{-(q-2)} \right) \right). 
\end{equation}
This completes the proof of Step three.

\vspace{3pt}
\noindent {\bf Step four. \underline{Control the second term  $\mathcal{K}_1(\zb^\Mt, \bar{\zb}^\Mt).$}} In this step, we apply a discussion similar to Step three except that we utilize the asymptotic normality of $\bar{\zb}^\Mt$ which has been established in Step two under the assumption of (\ref{eq_mhchoice}). Analogously to (\ref{eq_probabilitydecomposition}), we decompose that 
\begin{equation}\label{eq_dd1}
\mathcal{K}_1(\bar{\zb}^\Mt, \bm{z}^\Mt)=\sup_x \left| \mathbb{P}((\bar{\zb}^\Mt)^\top \bm{l} \leq x )-\mathbb{P} ((\bar{\zb}^\Mt)^\top \bm{l} \leq x+\mathtt{D}(\bar{\zb}^\Mt, \zb^\Mt)) \right|,
\end{equation}
where $\mathtt{D}(\bar{\zb}^\Mt, \zb^\Mt)$ is defined similarly as in (\ref{eq_differencedefinition}). Recall (\ref{eq_firstorder}). By a discussion similar to (\ref{eq_expectationcontrolone}), we have that for some constant $C>0,$
\begin{equation*}
\mathbb{E} | \bar{z}_i^\Mt-z^\Mt_i | \leq C \sqrt{n} \xi_c^2 \hd^{-(q-1)}.
\end{equation*}
Similarly, we have that 
\begin{equation*}
(\mathbb{E} \| \bar{\zb}^\Mt-\zb^\Mt \|_2^2)^{1/2} \leq C \sqrt{p n} \xi_c^2 \hd^{-(q-1)}.
\end{equation*}
Together with (\ref{oldb1}) and (\ref{oldb2}), using Cauchy-Schwarz inequality, 
we have that for some constant $C>0$
\begin{equation}\label{eq_dd2}
\|\mathtt{D}(\bar{\zb}^\Mt, \zb^\Mt)\|  \leq C p\sqrt{n} \zeta \xi_c^2 \hd^{-(q-1)}. 
\end{equation}
By (\ref{eq_stepthreeconclusion}), (\ref{eq_covarianceclose}) and an argument similar to (\ref{eq_controlbounderror}) and (\ref{eq_normalformcontrol}), we obtain that for some constant $C>0$
\begin{equation}\label{eq_stepfourconclusion}
\mathcal{K}_1(\bar{\zb}^\Mt, \bm{z}^\Mt) \leq C p\sqrt{n} \zeta \xi_c^2 \hd^{-(q-1)}
\end{equation}
This completes the proof of Step four. 

\vspace{3pt}
\noindent{\bf Step five. \underline{Control the first term  $\mathcal{K}_1(\zb, \zb^\Mt).$}} In the last step, we apply a discussion similar to Step four utilizing the asymptotic normality of $\zb^\Mt$ as established in Step four under the assumption of (\ref{eq_mhchoice}). By (\ref{eq_control}), we have that for some constant $C>0$
\begin{equation*}
\mathbb{E} \| \zb-\zb^\Mt \|_2 \leq C \sqrt{p} \sum_{i=1}^p \Theta_{m,q} \leq C p^{3/2} \xi_c m^{-\tau+1},
\end{equation*}
where in the second inequality we again used (\ref{eq_transferbound}). By a discussion similar to (\ref{eq_dd1}) and (\ref{eq_dd2}), we conclude that for some constant $C>0$
\begin{equation}\label{eq_stepfiveconclusion}
\mathcal{K}_1(\zb^\Mt, \zb) \leq C \zeta p^2 \xi_c m^{-\tau+1}.
\end{equation}
This completes the proof of Step five. 

In summary, by (\ref{eq_steptworesults}), (\ref{eq_stepthreeconclusion}), (\ref{eq_stepfourconclusion}), (\ref{eq_stepfiveconclusion}) and (\ref{eq_decomposition}), we can conclude the proof of $\mathcal{K}_1. $


Finally, we briefly discuss how to handle the quadratic form $\mathcal{K}_2.$ We only focus on explaining the differences in the five-step strategies. More specifically, Step one can be applied without modification. For step two, the analogous set of (\ref{eq_defnset}) is
\begin{equation*}
\mathtt{A}_x=\{ \bm{q} \in \mathbb{R}^p: \bm{q}^\top \mathsf{W} \bm{q} \leq x\}. 
\end{equation*} 
Since $\mathsf{W}$ is positive semi-definite, $\mathtt{A}_x$ is a convex set. Then the rest of Step two also applies. For Step three, the main difference lies in (\ref{eq_decompositionform}). In the quadratic case, the distribution of $\bm{w}^\top \mathsf{W} \bm{w}$ can be written as a summation of Chi-square random variables and (\ref{eq_decompositionform}) should be controlled using Lemma \ref{lem_Gaussianquadratic}. In particular, we have that 
\begin{equation*}
\mathbb{P}(\wb^\top \mathsf{W} \wb \leq x )-\mathbb{P} (\wb^\top \mathsf{W} \bm{w} \leq x+\mathtt{D}(\bar{\wb}^\Mt, \wb)) \leq C \sqrt{\mathtt{D}(\bar{\wb}^\Mt, \wb)},
\end{equation*}
where $\mathtt{D}(\bar{\wb}^\Mt, \wb)$ is defined as
\begin{equation*}
\mathtt{D}(\bar{\wb}^\Mt, \wb):=-\wb^\top \mathsf{W} \wb+(\bar{\wb}^\Mt)^\top \mathsf{W} \bar{\wb}^\Mt,
\end{equation*}
which can be controlled similarly as in (\ref{eq_bounddbarww}). Similar modifications should be made for Steps four and five. This completes the proof.

\end{proof}
\subsection{Consistency of bootstrapping: proof of Theorems \ref{thm_boostrapping} and \ref{thm_boostrapping2}}

In this subsection, we prove Theorems \ref{thm_boostrapping} and \ref{thm_boostrapping2} following \cite{DZ2,MR3174655}. Without loss of generality, we focus on the case $r=1$ as in Sections \ref{sec_consistencyproof} and \ref{sec_gassuianapproximation}.  Set 
\begin{equation}\label{eq_defnitionphi}
\Phi=\frac{1}{\sqrt{n-m-1} \sqrt{m}} \sum_{i=2}^{n-m} \left[ \left( \sum_{j=i}^{i+m} \bm{x}_j \right) \otimes \ab(t_i) \right] R_i,
\end{equation}
where $\{R_i\}$ are the same random variables as in  
(\ref{eq_defnstatisticXi}). We point out that $\Phi$ is a population version of $\Xi$ in (\ref{eq_defnstatisticXi}). Moreover, denote 
\begin{equation}\label{eq_defnupsilon}
\Upsilon_{i,m}=\frac{1}{\sqrt{m}} \mathtt{X}_i \otimes \ab(t_i), \ \mathtt{X}_i=\sum_{j=i}^{i+m} \bm{x}_j.  
\end{equation}
Based on the above notations, we denote that
\begin{equation}\label{eq_bigupsilon}
\Upsilon=\frac{1}{(n-m-1)} \sum_{i=2}^{n-m} \Upsilon_{i,m} \Upsilon_{i,m}^\top.
\end{equation}
Note that $\Upsilon$ is the covariance matrix of $\Phi$ when conditioned on the data. 

The proof routine contains five steps. The first three steps concern $\Phi.$ In particular, step one (c.f. Lemma \ref{lem_stepone}) aims to establish the concentration of $\Upsilon_{i,m} \Upsilon_{i,m}^\top $ and $\Upsilon \Upsilon^\top;$ step two (c.f. Lemma \ref{lem_steptwo}) focuses on constructing a stationary time series whose covariance matrix can well approximate the concentration from step one; step three (c.f. Lemma \ref{lem_stepthree}) utilizes the stationary time series from step two and  shows that its covariance matrix is close to the integrated long-run covariance matrix $\Omega;$ step four is optional, it aims to replace the deterministic quantities with their consistent estimators if there is a need, for example $\mathsf{W}$ for Theorem \ref{thm_boostrapping2}; step five (c.f. Lemma \ref{lem_residualclosepreparation}) will conclude the proof by replacing the error $\{\epsilon_i\}$ with the residuals of the sieve estimators (c.f. (\ref{eq_defnresidual})) and handle $\Xi$ to conclude the proof. 
 
Steps one, two and three focus on analyzing $\Upsilon$ and the long-run covariance matrices and are general, i.e., irrelevant of the inference problems. Combining them we can show that $\Upsilon$ is close to $\Omega.$  Therefore, we separate them and prove them in Lemmas \ref{lem_stepone}--\ref{lem_stepthree} below. Steps four and five are more specific and we will provide the details when we prove Theorems \ref{thm_boostrapping} and \ref{thm_boostrapping2}.

\begin{lemma}\label{lem_stepone}
Suppose Assumptions \ref{assum_models}--\ref{assum_updc} hold and $m=\oo(n)$. Recall (\ref{eq_defnxic}). We have that  
\begin{equation*}
\sup_{2 \leq i \leq n-m} \left\| \Upsilon_{i,m} \Upsilon_{i,m}^\top -\mathbb{E} \left( \Upsilon_{i,m} \Upsilon_{i,m}^\top\right) \right\|_{\op}=\OO_{\mathbb{P}}\left( d \zeta^2 \sqrt{m} \right). 
\end{equation*}
Similarly, we have that 
\begin{equation}\label{eq_overallproof}
\sup_{2 \leq i \leq n-m} \left\| \Upsilon-\mathbb{E} \Upsilon \right\|_{\op}=\OO_{\mathbb{P}}\left( d \zeta^2 \sqrt{\frac{m}{n}} \right). 
\end{equation}
\end{lemma}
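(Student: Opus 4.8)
The plan is to prove the two bounds in Lemma~\ref{lem_stepone} by treating $\Upsilon_{i,m}\Upsilon_{i,m}^\top$ entrywise and then lifting the entrywise control to an operator-norm bound via Lemma~\ref{lem_circle} (the Gershgorin-type device already used in Theorem~\ref{thm_consistency}). First I would record that, by Lemma~\ref{lem_locallystationaryform}, $\{\bm{x}_i\}$ is a mean-zero locally stationary time series with physical dependence measure $\delta_x(j,q)\le Cj^{-\tau}$, and that each block sum $\mathtt{X}_i=\sum_{j=i}^{i+m}\bm{x}_j$ is again locally stationary; dividing by $\sqrt m$ keeps the relevant second moments $\OO(1)$ up to basis factors, since $\|\mathtt{X}_i/\sqrt m\|_q=\OO(1)$ under short-range dependence (summability of $\delta_x(j,q)$ because $\tau>1$). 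The Kronecker factor $\ab(t_i)$ contributes the factor $\xi_c$ per coordinate, so a single entry of $\Upsilon_{i,m}\Upsilon_{i,m}^\top$ has the schematic form $\tfrac1m(\mathtt{X}_i)_{k_1}(\mathtt{X}_i)_{k_2}\phi_{a}(t_i)\phi_{b}(t_i)$.

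For the first (pointwise in $i$) bound, I would fix $i$ and control $\|(\mathtt{X}_i)_{k_1}(\mathtt{X}_i)_{k_2}-\E(\mathtt{X}_i)_{k_1}(\mathtt{X}_i)_{k_2}\|_{q/2}$. Writing $(\mathtt{X}_i)_k=\sum_{j=i}^{i+m}x_{jk}$, the product is a double sum of $m^2$ terms $x_{j_1k_1}x_{j_2k_2}$; by the Cauchy--Schwarz-type argument and the physical dependence bound from Lemma~\ref{lem_locallystationaryform}, together with Lemma~\ref{lem_concentration}, the centered product has $\| \cdot\|_{q/2}$-norm of order $m\,\zeta^2_{\text{entry}}$ roughly (the $\sqrt m$ fluctuation of a sum of $m$ weakly dependent summands, squared but with one diagonal-free cancellation), which after dividing by $m$ yields an entrywise deviation of order $\zeta^2/\sqrt m\cdot$(per-entry scale). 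Summing the $d$ columns of each block via Lemma~\ref{lem_circle} inflates this by $d$, giving the claimed $\OO_{\mathbb P}(d\zeta^2\sqrt m)$ after carefully tracking that $\sup_t\|\bm b\|=\zeta$ absorbs the $\xi_c$ and the number-of-coordinates factors. The union over $i$ is only over $n$ indices, so a crude $q/2$-moment Markov bound with $q>2$ suffices provided $m=\oo(n)$, which is assumed.

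For the averaged bound \eqref{eq_overallproof}, $\Upsilon-\E\Upsilon=\tfrac{1}{n-m-1}\sum_{i=2}^{n-m}\big(\Upsilon_{i,m}\Upsilon_{i,m}^\top-\E\Upsilon_{i,m}\Upsilon_{i,m}^\top\big)$ is itself a locally stationary average: for fixed $(k_1,k_2,a,b)$ the summands form a locally stationary sequence in $i$ (overlapping blocks introduce only $m$-range extra dependence, which is still short-range after the $\OO(j^{-\tau})$ tail, since $\tau>1$), so Lemma~\ref{lem_concentration}(1) applies and delivers a $\tfrac{1}{\sqrt n}$-type concentration for each entry, i.e. an entrywise deviation of order $\zeta^2/\sqrt n\cdot$(per-entry scale). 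Again Lemma~\ref{lem_circle} contributes the factor $d$, producing $\OO_{\mathbb P}(d\zeta^2\sqrt{m/n})$ --- the extra $\sqrt m$ relative to naive $1/\sqrt n$ comes from the block sums being normalized by $\sqrt m$ rather than $m$ (equivalently, the effective per-block scale is $\sqrt m$, giving variance inflation $m$ over $n$ blocks, hence $\sqrt{m/n}$).

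The main obstacle I expect is bookkeeping the interaction of three separate "size" parameters --- $\xi_c$ (sup of the time-basis), $\zeta$ (sup of the $L_2$-norm of the full basis vector), and $d$ (the block dimension) --- so that the final bound comes out as $d\zeta^2$ rather than, say, $d\xi_c^2\zeta^2$ or $d^2\zeta^2$. Getting the right power requires using $\zeta=\sup_{t,x}\|\bm b\|$ to absorb both the $\xi_c$ factor and one factor of $\sqrt d$ coming from the Kronecker structure, and noticing that $\mathsf B_j$-type orthogonality (or at least boundedness of $\|\bm b\|$) prevents a naive double count; the overlapping-block dependence structure in the averaged bound is a secondary technical nuisance that is handled by observing the extra dependence range $m$ still leaves $\sum_j \delta(j,q)<\infty$ so that Lemma~\ref{lem_concentration} goes through unchanged.
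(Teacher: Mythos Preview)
Your overall strategy (entrywise control via physical dependence, then lift to operator norm by Gershgorin) is the paper's strategy as well, so the approach is correct in outline. The one structural ingredient you are missing, and which dissolves precisely the ``bookkeeping'' obstacle you flag at the end, is the Kronecker factorization
\[
\Upsilon_{i,m}\Upsilon_{i,m}^\top=\frac{1}{m}\bigl(\mathtt X_i\mathtt X_i^\top\bigr)\otimes\bigl(\ab(t_i)\ab(t_i)^\top\bigr).
\]
Because $\ab(t_i)\ab(t_i)^\top$ is deterministic and rank one with $\|\ab(t_i)\ab(t_i)^\top\|_{\op}=\|\ab(t_i)\|^2\le\zeta^2$, the operator-norm deviation factors as $\frac{\zeta^2}{m}\|\mathtt X_i\mathtt X_i^\top-\E\mathtt X_i\mathtt X_i^\top\|_{\op}$. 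Now Gershgorin is applied only to the $d\times d$ random matrix $\mathtt X_i\mathtt X_i^\top$, so the row-sum inflation is exactly $d$, not $p=cd$; there is no need to invoke any $\mathsf B_j$-type orthogonality or to argue that $\zeta$ ``absorbs a $\sqrt d$''. This is the paper's route and it is both shorter and cleaner than trying to work on the full $p\times p$ grid.

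Two smaller points. First, your entrywise orders are garbled: for a fixed entry $\mathsf L_{k_1k_2}=(\mathtt X_i)_{k_1}(\mathtt X_i)_{k_2}$, viewing $\{\mathsf L_{k_1k_2}\}_i$ as a locally stationary sequence in $i$ one gets a physical dependence measure $\delta(l,q)\le C\sqrt m\sum_{j=l-m}^{l}\delta_x(j,q)$ and hence $\|\mathsf L_{k_1k_2}-\E\mathsf L_{k_1k_2}\|=\OO(m^{3/2})$; after the $1/m$ normalization this is $\OO(\sqrt m)$ per entry, not ``$\zeta^2/\sqrt m$''. Combining with the $d$ from Gershgorin and the $\zeta^2$ from the Kronecker factor gives $d\zeta^2\sqrt m$. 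Second, for \eqref{eq_overallproof} the same physical-dependence bound on $\{\mathsf L_{k_1k_2}\}_i$ lets you apply Lemma~\ref{lem_concentration}(1) to the average over $i$, yielding the extra $n^{-1/2}$ and hence $d\zeta^2\sqrt{m/n}$; this is what you describe, and it is exactly what the paper does (``proved analogously'').
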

\begin{proof}
Using the basic property of Kronecker product, we find that 
\begin{equation*}
 \Upsilon_{i,m} \Upsilon_{i,m}^\top =\frac{1}{m} \left[ \mathtt{X}_i \mathtt{X}_i^\top \right] \otimes \left[ \ab(t_i) \ab(t_i)^\top  \right]. 
\end{equation*} 
Consequently, we have that 
\begin{equation}\label{eq_boundxx}
\sup_{2 \leq i \leq n-m} \left\| \Upsilon_{i,m} \Upsilon_{i,m}^\top -\mathbb{E} \left( \Upsilon_{i,m} \Upsilon_{i,m}^\top\right) \right\|_{\op} \leq \frac{\zeta^2}{m} \sup_{2 \leq i \leq n-m} \left\| \mathtt{X}_i \mathtt{X}_i^\top-\mathbb{E} \mathtt{X}_i \mathtt{X}_i^\top \right\|_{\op}, 
\end{equation}
where we use the property of the spectrum of Kronecker product and the fact that $\ab(t_i) \ab(t_i)^\top$ is a rank-one matrix. As before, we focus on the first entry of $\mathsf{L}=\mathtt{X}_i \mathtt{X}_i^\top.$ Note that 
\begin{equation*}
\mathsf{L}_{11}=\left(\sum_{j=i}^{i+m} \varphi_1(X_{j-1}) \epsilon_j \right)^2. 
\end{equation*}
By a discussion similar to Lemma \ref{lem_locallystationaryform}, $\{\mathsf{L}_{11}\}$ can be regarded as a locally stationary time series whose physical dependence measure $\delta(l, q)$ satisfies that 
\begin{equation*}
\delta(l,q) \leq C \sqrt{m} \left( \sum_{j=l-m}^l \delta_x(j,q) \right),
\end{equation*}
where $\delta_x(j,q)$ is defined in (\ref{eq_deltaxdefinition}). Combing (\ref{eq_transferbound}) and (1) of Lemma \ref{lem_concentration}, we readily obtain that
\begin{equation*}
\| \mathsf{L}_{11}-\mathbb{E} \mathsf{L}_{11} \|=\OO(m^{3/2}). 
\end{equation*}
By (\ref{eq_boundxx}), Lemma \ref{lem_circle} and (1) of Lemma \ref{lem_collectionprobineq}, we complete our proof of the first part. The second part can be proved analogously. 
\end{proof}

Recall $\bm{x}_i$ in (\ref{eq_locallystationaryform}). We denote
\begin{equation*}
\widetilde{\bm{x}}_{i,j}=\Ub(t_i, \mathcal{F}_j), \ i \leq j \leq i+m. 
\end{equation*}
Corresponding to (\ref{eq_defnupsilon}), we define 
\begin{equation*}
\widetilde{\Upsilon}_{i,m}=\frac{1}{\sqrt{m}} \widetilde{\mathtt{X}}_i \otimes \ab(t_i), \ \widetilde{\mathtt{X}}_i=\sum_{j=i}^{i+m} \widetilde{\bm{x}}_{i,j}. 
\end{equation*}

\begin{lemma}\label{lem_steptwo}
Suppose the assumptions of Lemma \ref{lem_stepone} hold. We have that 
\begin{equation*}
\sup_{2 \leq i \leq n-m} \left\| \mathbb{E} \left( \Upsilon_{i,m} \Upsilon_{i,m}^\top \right)-\mathbb{E} \left( \widetilde{\Upsilon}_{i,m} \widetilde{\Upsilon}_{i,m}^\top \right) \right\|_{\op}=\OO\left( d \zeta^2 \left( \frac{m}{n}\right)^{1-\frac{1}{\tau}} \right). 
\end{equation*}
\end{lemma}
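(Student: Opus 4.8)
## Proof proposal for Lemma 6.3

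The plan is to reduce the operator-norm bound to an entrywise bound, using Lemma \ref{lem_circle} (the Gershgorin-type argument over a $d\times d$ block structure tensored with the rank-one matrix $\ab(t_i)\ab(t_i)^\top$) exactly as in the proof of Lemma \ref{lem_stepone}. Concretely, since
\[
\Upsilon_{i,m}\Upsilon_{i,m}^\top=\frac1m\big[\mathtt{X}_i\mathtt{X}_i^\top\big]\otimes\big[\ab(t_i)\ab(t_i)^\top\big],\qquad
\widetilde{\Upsilon}_{i,m}\widetilde{\Upsilon}_{i,m}^\top=\frac1m\big[\widetilde{\mathtt{X}}_i\widetilde{\mathtt{X}}_i^\top\big]\otimes\big[\ab(t_i)\ab(t_i)^\top\big],
\]
and $\|\ab(t_i)\ab(t_i)^\top\|_{\op}=\|\ab(t_i)\|_2^2=\OO(\zeta^2)$-type quantities are controlled by $\zeta$ via Definition \eqref{eq_defnxic}, it suffices to bound, for each pair of indices $1\le a,b\le d$,
\[
\Big|\mathbb{E}\Big[\tfrac1m\big(\mathtt{X}_i\mathtt{X}_i^\top\big)_{ab}\Big]-\mathbb{E}\Big[\tfrac1m\big(\widetilde{\mathtt{X}}_i\widetilde{\mathtt{X}}_i^\top\big)_{ab}\Big]\Big|
\]
by $\OO\big(\zeta^2\,(m/n)^{1-1/\tau}\big)$, and then invoke Lemma \ref{lem_circle} to pick up the factor $d$.

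The core is therefore the following: $\mathtt{X}_i=\sum_{j=i}^{i+m}\bm{x}_j$ is built from $\bm{x}_j=\Ub(t_j,\FF_j)$, whereas $\widetilde{\mathtt{X}}_i=\sum_{j=i}^{i+m}\widetilde{\bm{x}}_{i,j}$ is built from $\widetilde{\bm{x}}_{i,j}=\Ub(t_i,\FF_j)$, i.e. all the time arguments are frozen at the left endpoint $t_i$. By the stochastic Lipschitz continuity \eqref{eq_slc} inherited by $\Ub$ (Lemma \ref{lem_locallystationaryform}), $\|\bm{x}_j-\widetilde{\bm{x}}_{i,j}\|_q=\|U_k(t_j,\FF_j)-U_k(t_i,\FF_j)\|_q\le C|t_j-t_i|\le Cm/n$ entrywise. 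I would write the difference of the two quadratic-form expectations as a telescoping sum
\[
\mathbb{E}(\mathtt{X}_i\mathtt{X}_i^\top)_{ab}-\mathbb{E}(\widetilde{\mathtt{X}}_i\widetilde{\mathtt{X}}_i^\top)_{ab}
=\sum_{j,k}\Big(\mathbb{E}[x_{j,a}x_{k,b}]-\mathbb{E}[\widetilde{x}_{i,j,a}\widetilde{x}_{i,k,b}]\Big),
\]
and split each double sum into the "near-diagonal" region $|j-k|\le \ell_n$ and the "off-diagonal" region $|j-k|>\ell_n$ for a cutoff $\ell_n$ to be optimized. Off-diagonal terms are handled by the physical-dependence decay: by (3) of Lemma \ref{lem_concentration} together with \eqref{eq_transferbound}, both $|\mathbb{E}[x_{j,a}x_{k,b}]|$ and $|\mathbb{E}[\widetilde{x}_{i,j,a}\widetilde{x}_{i,k,b}]|$ are $\OO(|j-k|^{-\tau})$, so their total contribution over the $m$ diagonals with $|j-k|>\ell_n$ is $\OO(m\,\ell_n^{1-\tau})$. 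Near-diagonal terms are handled by the Lipschitz bound: each term contributes $\OO(m/n)$ (via Cauchy–Schwarz, $\|x_{j,a}x_{k,b}-\widetilde x_{i,j,a}\widetilde x_{i,k,b}\|\le \|x_{j,a}-\widetilde x_{i,j,a}\|_q\|x_{k,b}\|_q+\cdots=\OO(m/n)$, using $q>2$ and uniform $L_q$ boundedness), and there are $\OO(m\,\ell_n)$ such pairs, giving $\OO(m^2\ell_n/n)$. Dividing by $m$ and optimizing $m\ell_n^{1-\tau}\asymp m\ell_n/n$ yields $\ell_n\asymp (n)^{1/\tau}$ — more precisely $\ell_n\asymp n^{1/\tau}$ after balancing, so the per-entry bound is $\OO\big((m/n)^{1-1/\tau}\big)$ up to the $\zeta^2$ factors from the basis normalization. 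Folding in $\zeta^2$ and the factor $d$ from Lemma \ref{lem_circle} gives the claimed $\OO\big(d\zeta^2(m/n)^{1-1/\tau}\big)$.

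The main obstacle I anticipate is bookkeeping the endpoint-freezing uniformly in $i$ while keeping the constants independent of which diagonal one is on: the Lipschitz constant applies to $|t_j-t_i|$, which can be as large as $m/n$ for the far end of the block, so one must be careful that the "near-diagonal" estimate uses $|t_j-t_i|\le m/n$ (crude but uniform) rather than $|t_j-t_i|\le |j-k|/n$, and that the cross terms in $x_{j,a}x_{k,b}-\widetilde x_{i,j,a}\widetilde x_{i,k,b}$ are all controlled by a single uniform $L_q$ moment bound on the coordinates of $\Ub$. A secondary point is checking that Lemma \ref{lem_circle} applies with the extra Kronecker factor $\ab(t_i)\ab(t_i)^\top$; this is the same maneuver already used in Lemma \ref{lem_stepone}, so I would simply cite that reduction. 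Everything else is routine once the cutoff $\ell_n\asymp n^{1/\tau}$ is in place.
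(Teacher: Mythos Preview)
Your approach is correct and takes a genuinely different route from the paper. After the same Kronecker reduction (the analogue of \eqref{eq_boundxx}), the paper does \emph{not} expand the double covariance sum: it factors the $(1,1)$ entry of $\mathtt{X}_i\mathtt{X}_i^\top-\widetilde{\mathtt{X}}_i\widetilde{\mathtt{X}}_i^\top$ as a product $\mathsf{P}_1\mathsf{P}_2$ (difference of the two block sums times their sum), bounds $\|\mathsf{P}_2\|=\OO(\sqrt{m})$ directly via part (1) of Lemma~\ref{lem_concentration}, and handles $\mathsf{P}_1=\sum_{j=i}^{i+m}\big(U_1(t_j,\FF_j)-U_1(t_i,\FF_j)\big)$ by observing that this difference process has physical dependence at lag $\ell$ bounded by $\min\{m/n,\ell^{-\tau}\}$ (the first from stochastic Lipschitz continuity \eqref{eq_slc}, the second from \eqref{eq_transferbound}), so that part (1) of Lemma~\ref{lem_concentration} gives $\|\mathsf{P}_1\|=\OO\big(\sqrt{m}\sum_{\ell\ge 0}\min\{m/n,\ell^{-\tau}\}\big)=\OO\big(\sqrt{m}\,(m/n)^{1-1/\tau}\big)$. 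Your near/far-diagonal split is more elementary and avoids having to identify the physical dependence measure of the difference process; the paper's factorization is slicker and absorbs the cutoff optimization into the single sum $\sum_\ell\min\{m/n,\ell^{-\tau}\}$. Both land on the same entrywise bound.

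One arithmetic slip to fix: your balance equation (after dividing by $m$) should read $\ell_n^{1-\tau}\asymp m\ell_n/n$, which gives $\ell_n\asymp (n/m)^{1/\tau}$, not $n^{1/\tau}$. With $\ell_n=n^{1/\tau}$ the near-diagonal term dominates and you only get $m\,n^{1/\tau-1}$, which is worse than $(m/n)^{1-1/\tau}$ by a factor of $m^{1/\tau}$. The corrected cutoff recovers your stated per-entry bound.
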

\begin{proof}
By an argument similar to (\ref{eq_boundxx}), we find that
\begin{equation}\label{eq_xxbound2}
\sup_{2 \leq i \leq n-m}\left\| \mathbb{E} \left( \Upsilon_{i,m} \Upsilon_{i,m}^\top \right)-\mathbb{E} \left( \widetilde{\Upsilon}_{i,m} \widetilde{\Upsilon}_{i,m}^\top \right)   \right\|_{\op} \leq \frac{\zeta^2}{m} \sup_{2 \leq i \leq n-m} \left\| \mathbb{E} \left(\mathtt{X}_i \mathtt{X}_i^\top \right)-\mathbb{E} \left( \widetilde{\mathtt{X}}_i \widetilde{\mathtt{X}}_i^\top \right) \right\|_{\op}. 
\end{equation}
We again focus on the first entry of $\mathsf{M}:=\mathtt{X}_i \mathtt{X}_i^\top- \widetilde{\mathtt{X}}_i \widetilde{\mathtt{X}}_i^\top.$ Note that 
\begin{align}\label{eq_decompositionm11}
\mathsf{M}_{11}& =\left(\sum_{j=i}^{i+m} \varphi_1(X_{j-1}) \epsilon_j \right)^2-\left(\sum_{j=i}^{i+m} \varphi_1(\widetilde{X}_{j-1}) \widetilde{\epsilon}_j \right)^2 \\
& =\left[\sum_{j=i}^{i+m} \left(\varphi_1(X_{j-1}) \epsilon_j-\varphi_1(\widetilde{X}_{j-1}) \widetilde{\epsilon}_j \right)   \right] \left[\sum_{j=i}^{i+m} \left(\varphi_1(X_{j-1}) \epsilon_j+\varphi_1(\widetilde{X}_{j-1}) \widetilde{\epsilon}_j \right)   \right]:=\mathsf{P}_1\mathsf{P}_2. \nonumber 
\end{align}
By Lemma \ref{lem_locallystationaryform} and (1) of Lemma \ref{lem_concentration}, we see that 
\begin{equation*}
\| \mathsf{P}_2 \|=\OO(\sqrt{m}). 
\end{equation*}
For $\mathsf{P}_1,$ by Lemma \ref{lem_locallystationaryform}, we find that 
\begin{equation*}
\| \mathsf{P}_1 \|=\OO\left( \sum_{j=i}^{i+m} \varphi_1(X_{j-1})(\widetilde{\epsilon}_j-\epsilon_j) \right). 
\end{equation*}
Together with Lemma \ref{lem_locallystationaryform}, the stochastic continuity property (\ref{eq_slc}) and (1) of Lemma \ref{lem_concentration}, we have that
\begin{equation*}
\| \mathsf{P}_1 \|=\OO\left(\sqrt{m} \sum_{j=0}^{\infty} \min\{\frac{m}{n}, j^{-\tau}\} \right)=\OO\left( \sqrt{m} \left( \frac{m}{n} \right)^{1-1/\tau}\right).
\end{equation*} 
Combining the above arguments, we obtain that
\begin{equation*}
\| \mathsf{M}_{11} \|=\OO\left( m \left( \frac{m}{n} \right)^{1-1/\tau} \right).
\end{equation*}
Together with (\ref{eq_xxbound2}), Lemma \ref{lem_circle} and (1) of Lemma \ref{lem_collectionprobineq}, we conclude the proof.  
\end{proof}

Corresponding to (\ref{eq_bigupsilon}), denote 
\begin{equation*}
\widetilde{\Upsilon}:=\frac{1}{n-m-1} \sum_{i=2}^{n-m} \widetilde{\Upsilon}_{i,m} \widetilde{\Upsilon}_{i,m}^\top.
\end{equation*}
Recall (\ref{eq_defnpsim}). 
\begin{lemma}\label{lem_stepthree}
Suppose the assumptions of Lemma \ref{lem_stepone} hold. Then we have 
\begin{equation}\label{eq_firstproof}
\left\| \mathbb{E} \widetilde{\Upsilon}-\Omega  \right\|_{\op}=\OO\left(\frac{1}{(n-m-1)^2}+\frac{d \zeta^2}{m} \right).
\end{equation}
Consequently, we have  
\begin{equation}\label{eq_finalcovclose}
\left\| \Upsilon-\Omega \right\|_{\op}=\OO(\Psi(m)). 
\end{equation}
\end{lemma}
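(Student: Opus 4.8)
The plan is to prove Lemma~\ref{lem_stepthree} in two stages, corresponding to the two displays \eqref{eq_firstproof} and \eqref{eq_finalcovclose}. For \eqref{eq_firstproof}, the key observation is that $\widetilde{\Upsilon}_{i,m}$ is built from the \emph{stationary} approximation $\widetilde{\bm{x}}_{i,j}=\Ub(t_i,\mathcal F_j)$, so that for each fixed $i$ the block $\widetilde{\mathtt X}_i=\sum_{j=i}^{i+m}\widetilde{\bm x}_{i,j}$ is a partial sum of a stationary time series with long-run covariance $\Omega(t_i)$ (recall \eqref{eq_longrunwitht}). First I would compute $\mathbb E(\widetilde{\Upsilon}_{i,m}\widetilde{\Upsilon}_{i,m}^\top)=\frac1m\,\mathbb E(\widetilde{\mathtt X}_i\widetilde{\mathtt X}_i^\top)\otimes(\ab(t_i)\ab(t_i)^\top)$ and note that, entrywise, $\frac1m\mathbb E(\widetilde{\mathtt X}_i\widetilde{\mathtt X}_i^\top)=\sum_{|l|\le m}(1-|l|/(m+1))\operatorname{Cov}(\Ub(t_i,\mathcal F_0),\Ub(t_i,\mathcal F_l))$; using the physical dependence bound \eqref{eq_transferbound} and Assumption~\ref{assum_physical} this differs from $\Omega(t_i)$ by $\OO(d\zeta^2/m)$ in operator norm (via Lemma~\ref{lem_circle} on the $d\times d$ factor, together with $\|\ab(t_i)\ab(t_i)^\top\|_{\op}\le\xi_c^2$ absorbed into $\zeta^2$ — or more carefully one tracks the constant through the Kronecker factor). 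Then $\mathbb E\widetilde{\Upsilon}=\frac1{n-m-1}\sum_{i=2}^{n-m}\mathbb E(\widetilde{\Upsilon}_{i,m}\widetilde{\Upsilon}_{i,m}^\top)$ is a Riemann sum for $\int_0^1\Omega(t)\otimes(\ab(t)\ab^\top(t))\,\dd t=\Omega$; the smoothness of $\Omega(t)\otimes(\ab(t)\ab^\top(t))$ in $t$ (inherited from the stochastic Lipschitz condition \eqref{eq_slc} and the smoothness of the basis functions) gives a Riemann-sum error of order $1/(n-m-1)^2$ by Lemma~\ref{lem_intergralappoximation}, which accounts for the first term in \eqref{eq_firstproof}.

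For \eqref{eq_finalcovclose} I would simply chain the four bounds already available via the triangle inequality in operator norm:
\begin{align*}
\left\|\Upsilon-\Omega\right\|_{\op}
&\le \left\|\Upsilon-\mathbb E\Upsilon\right\|_{\op}
+\left\|\mathbb E\Upsilon-\mathbb E\widetilde\Upsilon\right\|_{\op}
+\left\|\mathbb E\widetilde\Upsilon-\Omega\right\|_{\op}.
\end{align*}
The first term is $\OO_{\mathbb P}(d\zeta^2\sqrt{m/n})$ by \eqref{eq_overallproof} of Lemma~\ref{lem_stepone}; the second term is $\OO(d\zeta^2(m/n)^{1-1/\tau})$ by averaging the bound of Lemma~\ref{lem_steptwo} over $i$ (the supremum bound there trivially controls the average); the third term is $\OO(1/(n-m-1)^2+d\zeta^2/m)$ by \eqref{eq_firstproof}. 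Since $m=\oo(n)$ the term $1/(n-m-1)^2$ is dominated by $d\zeta^2/m$, and collecting the surviving pieces gives
\[
\left\|\Upsilon-\Omega\right\|_{\op}=\OO\!\left(d\zeta^2\Big(\frac1m+\Big(\frac mn\Big)^{1-\frac1\tau}+\sqrt{\frac mn}\Big)\right)=\OO(\Psi(m)),
\]
which is exactly \eqref{eq_defnpsim}.

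The routine part is the chaining; the one step that needs genuine care is the truncated-sum-versus-long-run-covariance estimate inside \eqref{eq_firstproof}, i.e.\ showing $\|\frac1m\mathbb E(\widetilde{\mathtt X}_i\widetilde{\mathtt X}_i^\top)-\Omega(t_i)\|_{\op}=\OO(d\zeta^2/m)$ uniformly in $i$. The subtlety is twofold: one must verify that the tail $\sum_{|l|>m}\|\operatorname{Cov}(\Ub(t_i,\mathcal F_0),\Ub(t_i,\mathcal F_l))\|_{\op}$ and the triangular-array weight defect $\sum_{|l|\le m}\frac{|l|}{m+1}\|\operatorname{Cov}(\cdots)\|_{\op}$ are both $\OO(d\zeta^2/m)$, which requires $\tau>1$ (so the covariance sum converges) and careful bookkeeping of how the $\zeta^2$ (from the $\ab(t_i)$ factor) interacts with the $d$-dimensional inner covariance through Lemma~\ref{lem_circle}; and one must confirm the Riemann-sum smoothness hypothesis, namely that $t\mapsto\Omega(t)\otimes(\ab(t)\ab^\top(t))$ has the regularity required by Lemma~\ref{lem_intergralappoximation}, which follows from \eqref{eq_slc} applied to $\Ub$ (cf.\ Lemma~\ref{lem_locallystationaryform}) and the $\mathsf C^1$ smoothness of the chosen basis. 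Everything else is a mechanical combination of Lemmas~\ref{lem_stepone}--\ref{lem_steptwo} and Lemma~\ref{lem_circle}.
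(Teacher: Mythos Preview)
Your proposal is correct and follows essentially the same route as the paper: the paper proves \eqref{eq_firstproof} by first establishing $\sup_i\|\mathbb{E}(\widetilde{\Upsilon}_{i,m}\widetilde{\Upsilon}_{i,m}^\top)-\Omega(t_i)\otimes(\ab(t_i)\ab(t_i)^\top)\|_{\op}=\OO(d\zeta^2/m)$ (citing part (2) of Lemma~\ref{lem_concentration} and Lemma~4 of \cite{MR3174655}) and then invoking Lemma~\ref{lem_intergralappoximation} for the Riemann-sum error, while \eqref{eq_finalcovclose} is obtained exactly by your triangle-inequality chaining of Lemmas~\ref{lem_stepone}, \ref{lem_steptwo} and \eqref{eq_firstproof}. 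Your write-up is in fact more explicit than the paper's about the triangular-weight identity $\frac1m\mathbb{E}(\widetilde{\mathtt X}_i\widetilde{\mathtt X}_i^\top)$ and the tail/defect decomposition, which is helpful; just be aware that the clean $\OO(1/m)$ rate for the defect term $\frac1m\sum_{|l|\le m}|l|\cdot|l|^{-\tau}$ strictly needs $\tau\ge 2$, whereas for $1<\tau<2$ one gets $\OO(m^{1-\tau})$---the paper sidesteps this by deferring to \cite{MR3174655}, so you may wish to do the same or simply note the slightly weaker rate when $\tau<2$.
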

\begin{proof}
For (\ref{eq_firstproof}), first of all, using the definition of $\Omega(t)$ in (\ref{eq_longrunwitht}), by (2) of Lemma \ref{lem_concentration} and a discussion similar to (\ref{eq_boundxx}), we have that (also see Lemma 4 of \cite{MR3174655})
\begin{equation*}
\sup_{2 \leq i \leq n-m} \left\| \mathbb{E} \widetilde{\Upsilon}_{i,m} \widetilde{\Upsilon}_{i,m}^\top-\Omega(t_i) \right\|_{\op}=\OO\left( \frac{d \zeta^2}{m} \right).
\end{equation*}
Together with Lemma \ref{lem_intergralappoximation}, we can complete our proof of (\ref{eq_firstproof}). 

The proof of (\ref{eq_finalcovclose}) follows from (\ref{eq_firstproof}), Lemmas \ref{lem_stepone} and \ref{lem_steptwo}. 
\end{proof}

The following lemma indicates that the residual (\ref{eq_defnresidual}) is close to $\{\epsilon_i\}$ such that $\Xi$ is close to $\Phi.$ Recall $\{\widehat{\bm{x}}_i\}$ in (\ref{eq_xihatdefinition}). Denote 
\begin{equation*}
\widehat{\Upsilon}_{i,m}=\frac{1}{\sqrt{m}} \widehat{\mathtt{X}}_i \otimes \ab(t_i), \ \widehat{\mathtt{X}}_i=\sum_{j=i}^{i+m} \widehat{\bm{x}}_j.  
\end{equation*}
Accordingly, we can define $\widehat{\Upsilon}$ as in (\ref{eq_bigupsilon}) using $\widehat{\Upsilon}_{i,m}.$ Recall (\ref{eq_defnresidual}).
\begin{lemma}\label{lem_residualclosepreparation}
Suppose the assumptions of Lemma \ref{lem_stepone} and Theorem \ref{thm_consistency} hold. We have that 
\begin{equation*}
\sup_{2 \leq i \leq n-m}\| \Upsilon_{i,m} \Upsilon_{i,m}^\top-  \widehat{\Upsilon}_{i,m} \widehat{\Upsilon}_{i,m}^\top\|_{\op}=\OO_{\mathbb{P}}\left( d \zeta^2 \left[\xi_c \zeta \sqrt{\frac{p}{n}}+c^{-\mathsf{m}_1}+d^{-\mathsf{m}_2} \right] \right).
\end{equation*}
As a result, we have that 
\begin{equation*}
\left\| \Upsilon-\widehat{\Upsilon} \right\|_{\op}=\OO_{\mathbb{P}}\left( \frac{d \zeta^2}{\sqrt{n}} \left[\xi_c \zeta \sqrt{\frac{p}{n}}+c^{-\mathsf{m}_1}+d^{-\mathsf{m}_2} \right] \right).
\end{equation*}
\end{lemma}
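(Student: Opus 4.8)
The plan is to mirror the structure of Lemmas~\ref{lem_stepone} and \ref{lem_steptwo}: reduce the operator-norm bound on $\Upsilon_{i,m}\Upsilon_{i,m}^\top - \widehat{\Upsilon}_{i,m}\widehat{\Upsilon}_{i,m}^\top$ to a bound on $\mathtt{X}_i\mathtt{X}_i^\top - \widehat{\mathtt{X}}_i\widehat{\mathtt{X}}_i^\top$ via the Kronecker-product identity
\begin{equation*}
\Upsilon_{i,m}\Upsilon_{i,m}^\top - \widehat{\Upsilon}_{i,m}\widehat{\Upsilon}_{i,m}^\top = \frac{1}{m}\left[\mathtt{X}_i\mathtt{X}_i^\top - \widehat{\mathtt{X}}_i\widehat{\mathtt{X}}_i^\top\right] \otimes \left[\ab(t_i)\ab(t_i)^\top\right],
\end{equation*}
using that $\ab(t_i)\ab(t_i)^\top$ is rank one with spectral norm at most $\zeta^2$ (really $\xi_c^2$; in any case $\mathrm{O}(\zeta^2)$). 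So it suffices to control $\sup_i\|\mathtt{X}_i\mathtt{X}_i^\top - \widehat{\mathtt{X}}_i\widehat{\mathtt{X}}_i^\top\|_{\op}$ entrywise and then invoke Lemma~\ref{lem_circle} to pass to the operator norm.

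First I would write, as in \eqref{eq_decompositionm11}, the telescoping factorization for a generic entry: with $\mathsf{N} := \mathtt{X}_i\mathtt{X}_i^\top - \widehat{\mathtt{X}}_i\widehat{\mathtt{X}}_i^\top$,
\begin{equation*}
\mathsf{N}_{k\ell} = \left[\sum_{j=i}^{i+m}\bigl(\varphi_{\cdot}(X_{j-1})\epsilon_j - \varphi_{\cdot}(X_{j-1})\widehat{\epsilon}_j\bigr)\right]\cdot\left[\sum_{j=i}^{i+m}\bigl(\cdots\bigr)\right],
\end{equation*}
i.e.\ a product of a ``difference'' sum and a ``sum'' sum. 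The ``sum'' factor is $\mathrm{O}_{\mathbb{P}}(\sqrt{m})$ by Lemma~\ref{lem_locallystationaryform} and part (1) of Lemma~\ref{lem_concentration}, exactly as for $\mathsf{P}_2$ in Lemma~\ref{lem_steptwo}. For the ``difference'' factor, note $\epsilon_j - \widehat{\epsilon}_j = \sum_{l}(\widehat{m}_{l,c,d}(t_j,X_{l,j}) - m_l(t_j,X_{l,j})) - (\text{approximation remainder})$, so by Theorem~\ref{thm_consistency} and \eqref{eq_assumptionerrorreduce}-type bookkeeping, $\sup_j\|\epsilon_j - \widehat{\epsilon}_j\| = \mathrm{O}_{\mathbb{P}}(\xi_c\zeta\sqrt{p/n} + c^{-\mathsf{m}_1} + d^{-\mathsf{m}_2})$ uniformly; hence the difference sum, which is $\sum_{j=i}^{i+m}\varphi_\cdot(X_{j-1})(\epsilon_j - \widehat\epsilon_j)$, is $\mathrm{O}_{\mathbb{P}}\bigl(\sqrt m\,[\xi_c\zeta\sqrt{p/n} + c^{-\mathsf{m}_1} + d^{-\mathsf{m}_2}]\bigr)$ after a Cauchy--Schwarz / concentration step in the same spirit as the $\mathsf{P}_1$ estimate. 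Multiplying the two factors gives $\|\mathsf{N}_{k\ell}\| = \mathrm{O}_{\mathbb{P}}(m[\xi_c\zeta\sqrt{p/n} + c^{-\mathsf{m}_1} + d^{-\mathsf{m}_2}])$; dividing by $m$ and restoring the $\zeta^2$ factor from $\ab(t_i)\ab(t_i)^\top$, plus Lemma~\ref{lem_circle} for the $d\times d$ (or $rd\times rd$) block, yields the first claimed bound $\mathrm{O}_{\mathbb{P}}(d\zeta^2[\xi_c\zeta\sqrt{p/n} + c^{-\mathsf{m}_1} + d^{-\mathsf{m}_2}])$. The second (averaged) bound follows by averaging over $i=2,\dots,n-m$ and exploiting an additional $1/\sqrt{n}$ gain from cancellation/concentration of the average, exactly as the passage from the first to the second display in Lemmas~\ref{lem_stepone}--\ref{lem_steptwo}.

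The main obstacle I anticipate is the uniformity over $i$ (and over the window $j \in \llbracket i, i+m\rrbracket$) of the residual bound $\sup_j\|\epsilon_j - \widehat\epsilon_j\|$ together with the fact that $\widehat\epsilon_j$ depends on the \emph{global} estimator $\widehat{\bm\beta}$, so the summands in the difference factor are not independent of the multiplier randomness nor genuinely ``local'' — one must argue that replacing $\widehat{\bm\beta}$ by its deterministic target is legitimate inside the operator-norm sup, which is where \eqref{eq_consistencyconvergency} and Assumption~\ref{assum_updc} (and the parameter constraint \eqref{eq_parameterassumption}, already imposed via Theorem~\ref{thm_consistency}) do the work. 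The cleanest route is to first establish the deterministic-coefficient version (replacing $\widehat{m}_{l,c,d}$ by $m_{l,c,d}$), bounding the extra error $\bm{b}^\top(\widehat{\bm\beta}-\bm\beta)$ uniformly by $\zeta\|\widehat{\bm\beta}-\bm\beta\| = \mathrm{O}_{\mathbb{P}}(\xi_c\zeta\sqrt{p/n})$ from \eqref{eq_betabound}, and then fold the sieve approximation remainder $c^{-\mathsf{m}_1}+d^{-\mathsf{m}_2}$ in via Theorem~\ref{thm_approximation}; all remaining manipulations are the routine locally-stationary-time-series moment estimates already packaged in Lemmas~\ref{lem_concentration}, \ref{lem_collectionprobineq}, \ref{lem_circle}, and~\ref{lem_locallystationaryform}.
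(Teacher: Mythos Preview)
Your proposal is correct and follows essentially the same approach as the paper: the paper's proof invokes Theorem~\ref{thm_consistency} to obtain the uniform residual bound \eqref{eq_residualbound}, then appeals to the Kronecker reduction~\eqref{eq_xxbound2} and the $\mathsf{P}_1\mathsf{P}_2$ factorization~\eqref{eq_decompositionm11} exactly as you describe, with the second display obtained by the averaging argument of~\eqref{eq_overallproof}. The only cosmetic difference is that the paper phrases it as ``$\mathsf{P}_2$ is controlled using \eqref{eq_residualbound}'' (since bounding the $\widehat\epsilon$-half of the sum factor also requires the residual closeness), whereas you emphasize the role of the residual bound in the difference factor $\mathsf{P}_1$; both are needed, and your identification of the uniformity/global-$\widehat{\bm\beta}$ issue as the delicate point is apt.
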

\begin{proof}
By Theorem \ref{thm_consistency}, we find that
\begin{equation}\label{eq_residualbound} 
\sup_{2 \leq i \leq n-m}\| \epsilon_i-\widehat{\epsilon}_i\|=\OO\left( \xi_c \zeta \sqrt{\frac{p}{n}}+c^{-\mathsf{m}_1}+d^{-\mathsf{m}_2} \right).
\end{equation}
Using a discussion similar to (\ref{eq_xxbound2}) and (\ref{eq_decompositionm11}) such that $\mathsf{P}_2$ in  (\ref{eq_decompositionm11}) is controlled using (\ref{eq_residualbound}), we can conclude the proof of the first part of the results. The second part of the results follows from a discussion similar to (\ref{eq_overallproof}).  
\end{proof}

Armed with Lemmas \ref{lem_stepone}--\ref{lem_residualclosepreparation}, we proceed to finish the proof of Theorems \ref{thm_boostrapping} and \ref{thm_boostrapping2}. 

\begin{proof}[\bf Proof of Theorem \ref{thm_boostrapping}] 
As before, we focus on the case $r=1$ and omit the subscript $j.$ First, we prove the result holds for 
\begin{equation*}
\widetilde{\mathsf{T}}_1:=\Phi^\top  \widehat{\Pi}^{-1} \bm{b} . 
\end{equation*}
Conditional on the data, by the definition of $\Phi$ in (\ref{eq_defnitionphi}), $\Phi$ is a Gaussian random vector. We now write
\begin{equation}\label{eq_phigaussianrepression}
\Phi=\Lambda^{1/2} \mathbf{G},
\end{equation}
where $\mathbf{G} \sim \mathcal{N}(\mathbf{0}, \mathbf{I}_p)$ follows standard multivariate Gaussian distribution. In fact, conditional on the data, $\Lambda=\Upsilon.$ Consequently, we can write 
\begin{equation*}
\widetilde{\mathsf{T}}_1=\mathbf{G}^\top \mathsf{H}, \  \mathsf{H}:=\Lambda^{1/2} \widehat{\Pi}^{-1} \bm{b}.
\end{equation*}
Conditional on the data, according to (\ref{eq_consistencyconvergency}) and (\ref{eq_finalcovclose}), using the definition of $h(t,x)$ and triangle inequality,  we see that 
\begin{equation*}
\left\| \mathsf{H}^\top \mathsf{H}-h^2(t,x) \right\|=\OO\left( \zeta \left[ \Psi(m)+p\left( \frac{\xi^2_c}{\sqrt{n}}+\frac{\xi^2_c n^{\frac{2}{\tau+1}}}{n}\right)  \right] \right).
\end{equation*}
Consequently, under the assumption of (\ref{eq_boothstrappingextraassumption}), since $\mathbf{G}$ is a standard multivariate Gaussian vector, we have shown that the result holds for $\widetilde{\mathsf{T}}_1.$ 

Then we prove for $\widehat{\mathsf{T}}_1$ based on $\widetilde{\mathsf{T}}_1.$ Conditional on the data, by construction, analogous to (\ref{eq_phigaussianrepression}), we can write
\begin{equation}\label{eq_gaussionrepapprox}
\Xi=\widehat{\Lambda}^{1/2} \mathbf{G}. 
\end{equation} 
Accordingly, we can write
\begin{equation*}
\widehat{\mathsf{T}}_1=\mathbf{G}^\top \widehat{\mathsf{H}},  \ \widehat{\mathsf{H}}= \widehat{\Lambda}^{1/2} \widehat{\Pi}^{-1} \bm{b}. 
\end{equation*}
Note that condition on the data, we have that
\begin{equation}\label{eq_close}
\Lambda=\Upsilon, \ \widehat{\Lambda}=\widehat{\Upsilon}. 
\end{equation}
By (4) of Lemma \ref{lem_collectionprobineq}, we find that
\begin{equation}\label{eq_norm}
\mathbb{E} \| \mathbf{G} \|^2_2=\OO(p).
\end{equation}
By triangle inequality and Lemma \ref{lem_residualclosepreparation}, we see that 
\begin{equation*}
\| \widehat{\mathsf{T}}_1-\widetilde{\mathsf{T}}_1\|=\OO\left( \sqrt{p} \frac{d \zeta^3}{\sqrt{n}} \left[\xi_c \zeta \sqrt{\frac{p}{n}}+c^{-\mathsf{m}_1}+d^{-\mathsf{m}_2} \right] \right). 
\end{equation*}
This concludes our proof under the assumption of (\ref{eq_boothstrappingextraassumption}). 
\end{proof}

\begin{proof}[\bf Proof of Theorem \ref{thm_boostrapping2}] The proof is similar to that of Theorem \ref{thm_boostrapping} and we only list the key points. First, we prove that the result holds for 
\begin{equation*}
\widetilde{\mathsf{T}}_2:=\Phi^\top  \widehat{\mathsf{W}} \Phi.
\end{equation*} 
Using (\ref{eq_phigaussianrepression}), we can rewrite $\widetilde{\mathsf{T}}_2$ as 
\begin{equation*}
\widetilde{\mathsf{T}}_2=\mathbf{G}^\top \mathbf{M} \mathbf{G}, \ \mathbf{M}=\Lambda^{1/2} \widehat{\mathsf{W}} \Lambda^{1/2}.
\end{equation*}
Denote $r=\operatorname{rank}(\mathbf{M}).$ Recall (\ref{eq_defnhatsfw}). By Lemma \ref{lem_stepthree}, Assumption \ref{assum_updc} and (\ref{eq_boundone}), we find that
\begin{equation*}
\lambda_{\min}(\mathbf{M})=\OO(1).
\end{equation*}
Consequently, $r=p.$ Denote the eigenvalues of $\mathbf{M}$ as $\lambda_1 \geq \lambda_2 \geq \cdots \geq  \lambda_r>0.$ It is easy to see that $\lambda_i=\OO(1), \ 1 \leq i \leq r.$ Then by Lindeberg's central limit theorem, when conditional on the data, we have that 
\begin{equation}\label{eq_tt221}
\frac{\mathbf{G}^\top \mathbf{M} \mathbf{G}-\sum_{i=1}^r \lambda_i}{\left( \sum_{i=1}^r \lambda_i^2 \right)^{1/2}} \simeq \mathcal{N}(0,2). 
\end{equation}
 In view of (\ref{eq_limiting}), it suffices to compare $(\sum_{i=1}^r \lambda_i^k)^k$ with $\mathfrak{m}_k, k=1,2.$ Denote the eigenvalues of $\Omega^{1/2} \mathsf{W} \Omega^{1/2}$ as $\mu_1 \geq \mu_2 \geq \cdots \geq \mu_r>0.$  Denote the set $\mathcal{A} \equiv \mathcal{A}_n$ as 
 \begin{equation*}
 \mathcal{A} \equiv \mathcal{A}_n:=\left\{ \left| \sum_{i=1}^r(\lambda_i-\mu_i) \right| \leq \delta_{1n} \sqrt{p}, \ \left| \sum_{i=1}^r (\lambda_i^2-\mu_i^2)  \right| \leq \delta_{2n} \sqrt{p} \right\},
 \end{equation*}
 where $\delta_{kn}, k=1,2,$ are some sequences such that $\delta_{kn}=\oo(1).$ When restricted on the event $\mathcal{A},$ we have that
 \begin{align*}
 \frac{\mathbf{G}^\top\mathbf{M} \mathbf{G}-\mathfrak{m}_1}{\mathfrak{m}_2} & =\frac{\mathbf{G}^\top\mathbf{M} \mathbf{G}-\sum_{i=1}^r \lambda_r+\sum_{i=1}^r \lambda_i-\mathfrak{m}_1}{(\sum_{i=1}^r \lambda_i^2)^{1/2}} \left( \frac{(\sum_{i=1}^r \lambda_i^2)^{1/2}}{\mathfrak{m}_2} \right) \\
 &=\frac{\mathbf{G}^\top \mathbf{M} \mathbf{G}-\sum_{i=1}^r \lambda_i}{(\sum_{i=1}^r \lambda_i^2)^{1/2}}+\oo(1).
 \end{align*}
Together with (\ref{eq_tt221}), we have shown that the results hold for $\widetilde{\mathsf{T}}_2$ when  restricted to the event $\mathcal{A}.$ Recall (\ref{eq_defnw}) and (\ref{eq_defnhatsfw}). By (\ref{eq_consistencyconvergency}) and (\ref{eq_finalcovclose}), we have that
\begin{equation*}
\max_i | \lambda_i-\mu_i |=\OO_{\mathbb{P}}\left(\Psi(m)+p\left( \frac{\xi^2_c}{\sqrt{n}}+\frac{\xi^2_c n^{\frac{2}{\tau+1}}}{n}\right) \right).
\end{equation*}
Consequently, under the assumption of (\ref{eq_boostrappingparamterassumption}), we have that
\begin{equation*}
\mathbb{P}(\mathcal{A})=1-\oo(1). 
\end{equation*}
This shows that the result holds for $\widetilde{\mathsf{T}}_2.$ 

Second, we prove the result for $\widehat{\mathsf{T}}_2$ based on $\widetilde{\mathsf{T}}_2.$  According to (\ref{eq_gaussionrepapprox}), we can write
\begin{equation*}
\widehat{\mathsf{T}}_2=\mathbf{G}^\top \widehat{\mathbf{M}} \mathbf{G}, \ \widehat{\mathbf{M}}=\widehat{\Lambda}^{1/2} \widehat{\mathsf{W}} \widehat{\Lambda}^{1/2}. 
\end{equation*}
By (\ref{eq_close}), (\ref{eq_norm}) and  Lemma \ref{lem_residualclosepreparation}, we see that 
\begin{equation*}
\| \widehat{\mathsf{T}}_2-\widetilde{\mathsf{T}}_2\|=\OO\left( p \frac{d \zeta^2}{\sqrt{n}} \left[\xi_c \zeta \sqrt{\frac{p}{n}}+c^{-\mathsf{m}_1}+d^{-\mathsf{m}_2} \right] \right). 
\end{equation*}
This concludes our proof under the assumption of (\ref{eq_boostrappingparamterassumption}). 

\end{proof}

\section{Some auxiliary lemmas}\label{sec_auxililarylemma}
In this section, we collect some useful auxiliary lemmas. The first lemma, Lemma \ref{lem_mvnapp} will be used in the proof of Gaussian approximation, i.e., Theorem \ref{thm_gaussianapproximationcase}. 
\begin{lemma}[Multivariate Gaussian approximation]\label{lem_mvnapp}
Denote $[n]:=\{1,\cdots, n\}$ and $X_N:=\sum_{i \in N} X_i$ for some index set $N.$ Let $X_i \in \mathbb{R}^d$ and $W=\sum_{i=1}^n X_i.$ Moreover, we assume that 
\begin{equation*}
\mathbb{E} X_i=0, \ \operatorname{Cov}(W)=\Sigma.
\end{equation*}
For a standard $d$-dimensional Gaussian random vector $Z$, denote 
\begin{equation*}
\mathtt{d}_c(\mathcal{L}(W), \mathcal{L}(\Sigma^{1/2}Z)):=\sup_{A \in \mathcal{A}}\left| \mathbb{P}(W \in A)-\mathbb{P}(\Sigma^{1/2}Z \in A) \right|,
\end{equation*}
where $\mathcal{A}$ denotes the collection of all the convex sets in $\mathbb{R}^d.$  Suppose that $W$ can be decomposed as follows
\begin{itemize}
\item $\forall i \in [n], \exists i \in N_i \subset [n] $ such that $W-X_{N_i}$ is independent of $X_i;$ 
\item $\forall i \in [n], j \in N_i, \exists N_i \subset N_{ij} \subset [n]$ such that   $W-X_{N_{ij}}$ is independent of $\{X_i, X_j\};$
\item $\forall i \in [n], j \in  N_i, k \in N_{ij}, \exists N_{ij} \subset N_{ijk} \subset [n]$ such that $W-X_{N_{ijk}}$ is independent of $\{X_i,X_j, X_k\}.$
\end{itemize}
Suppose further that for each $i \in [n], j \in N_i$ and $k \in N_{ij},$
\begin{equation*}
\|X_i\|_2 \leq \beta, \ |N_i| \leq n_1, \ |N_{ij}| \leq n_2, \ |N_{ijk}| \leq n_3. 
\end{equation*}
Then there exists a universal constant $C>0$ such that 
\begin{equation*}
\mathtt{d}_c(\mathcal{L}(W), \mathcal{L}(\Sigma^{1/2}Z)) \leq C d^{1/4} n \| \Sigma^{-1/2} \|^3  \beta^3 n_1 \left( n_2+\frac{n_3}{d} \right). 
\end{equation*}
\end{lemma}
\begin{proof}
See Theorem 2.1 and Remark 2.2 of \cite{MR3571252}.
\end{proof}

In the following two lemmas, we consider that $x_i=G_i(\mathcal{F}_i),$ where $G_i(\cdot)$ is some measurable function and $\mathcal{F}_i=(\cdots, \eta_{i-1}, \eta_i)$ and $\eta_i$ are i.i.d. random variables. Suppose that $\mathbb{E} x_i=0$ and $\max_i \mathbb{E} |x_i|^q<\infty$ for some $q>1.$ For any integer $k>0,$ denote $\theta_{k,q}=\max_{1 \leq i \leq n}\| G_i(\mathcal{F}_i)-G_i(\mathcal{F}_{i,i-k}) \|_q,$ where $\mathcal{F}_{i,i-k}=(\cdots, \eta_{i-k-1}, \eta'_{i-k}, \eta_{i-k+1}, \eta_i)$ and $\{\eta_i'\}$ are i.i.d. copies of $\{\eta_i\}.$ Let $\Theta_{k,q}=\sum_{i=k}^{\infty} \theta_{i,q}$ and $\Lambda_{k,q}=\sum_{i=0}^k \theta_{i,q},$ where we use the convention that $\theta_{i,q}=0, i<0.$ 

\begin{lemma}[Concentration inequalities for locally stationary time series]\label{lem_concentration} Let $S_n=\sum_{i=1}^n x_i$ and $q'=\min(2,q).$ Then: \\
(1). We have that for some constant $C>0$ 
\begin{equation*}
\|S_n\|_q^{q'}  \leq C \sum_{i=-n}^{\infty} (\Lambda_{i+n}-\Lambda_{i,q})^{q'}. 
\end{equation*}
Moreover, we have that
\begin{equation*}
\| \max_{1 \leq i \leq n} |S_i| \|_q \leq C n^{1/{q'}} \Theta_{0,q}. 
\end{equation*}
(2). Suppose $x_i=G(i/n, \mathcal{F}_i)$ and $\Theta_{k,q}=\OO(k^{-\gamma}), \gamma>0,$ then we have that
\begin{equation*}
\sum_{i=1}^n \left|\mathbb{E} x_i^2-\sigma\left(\frac{i}{n}\right)\right|=\OO\left(n^{1-\frac{\gamma}{2+\gamma}}\right), 
\end{equation*}
where $\sigma(\cdot)$ is the long run covariance matrix defined as 
\begin{equation*}
\sigma(t)=\sum_{k=-\infty}^{\infty} \operatorname{Cov}\left( G(t, \mathcal{F}_0), G(t, \mathcal{F}_k) \right). 
\end{equation*}
(3). For some constant $C>0,$ we have that 
\begin{equation*}
|\operatorname{Cov}(x_i, x_j)| \leq  C \theta_{|i-j|,q}.
\end{equation*}
\end{lemma}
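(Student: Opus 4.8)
\textbf{Proof plan for Lemma~\ref{lem_concentration}.} The three parts are of a standard nature in the physical‑dependence framework, and I would handle them largely independently, invoking the martingale decomposition that underlies all such bounds. For part~(1), the plan is to use the projection operator $\mathcal{P}_k(\cdot)=\mathbb{E}(\cdot\mid\mathcal{F}_k)-\mathbb{E}(\cdot\mid\mathcal{F}_{k-1})$ and write $S_n=\sum_{k=-\infty}^{\infty}\mathcal{P}_k S_n$. Since for fixed $k$ the summands $\mathcal{P}_k x_i$ form a martingale difference sequence in $k$, Burkholder's inequality (for $q\geq 2$) or the Rosenthal/von~Bahr--Esseen inequality (for $1<q<2$) gives $\|S_n\|_q^{q'}\leq C\sum_k\|\mathcal{P}_k S_n\|_q^{q'}$ with $q'=\min(2,q)$; bounding $\|\mathcal{P}_k x_i\|_q\leq\theta_{i-k,q}$ and summing over the window $i\in\{1,\dots,n\}$ produces the telescoping quantity $\Lambda_{i+n,q}-\Lambda_{i,q}$, which is exactly the stated bound. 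The maximal inequality then follows by combining this with Doob's maximal inequality applied to each martingale in $k$, summing the resulting $\Theta_{0,q}$ contributions; this is essentially Theorem~1(ii) of Wu (2005) and Proposition~2 of \cite{ZW}, so I would cite those and only sketch the decomposition.

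For part~(2), the key observation is that $\mathbb{E}x_i^2 = \sum_{k}\operatorname{Cov}(x_i,x_{i-k})$ only if $x_i$ were stationary; the actual target $\sigma(i/n)$ is the long‑run variance of the \emph{stationary approximation} $G(i/n,\mathcal{F}_j)$. So the plan is a two‑step comparison. First, a truncation at lag $\ell$: both $\mathbb{E}x_i^2$ and $\sigma(i/n)$ differ from their lag‑$\ell$‑truncated versions by $\OO(\sum_{k>\ell}\theta_{k,q'})=\OO(\ell^{-\gamma+1})$ (or $\OO(\ell^{-\gamma})$, depending on whether one bounds covariances by $\theta_k$ or by $\sum_{j\geq k}\theta_j$ — I would use $|\operatorname{Cov}(x_i,x_j)|\leq C\theta_{|i-j|,q}$ from part~(3)). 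Second, for each fixed lag $k\leq\ell$, the stochastic Lipschitz continuity~\eqref{eq_slc} gives $|\operatorname{Cov}(x_i,x_{i-k}) - \operatorname{Cov}(G(i/n,\mathcal{F}_0),G(i/n,\mathcal{F}_k))|=\OO(1/n)$ uniformly, so summing over $i$ and over $|k|\leq\ell$ contributes $\OO(\ell)$. Balancing $n\cdot\ell^{-\gamma}$ against... wait — more carefully, the tail term summed over $i$ is $\OO(n\ell^{-\gamma})$ only if we are careless; actually after summing over $i$ the Lipschitz term is $\OO(\ell)$ and the tail term is $\OO(n\ell^{1-\gamma})$ — hmm, let me just say: optimizing the split $\ell\asymp n^{1/(\gamma+2)}$ (using the sharper per‑$i$ tail bound that exploits $\sum_i$ gives geometric‑type cancellation at the boundary, as in Lemma~5.4 of Zhou--Wu) yields the exponent $1-\gamma/(\gamma+2)$. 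I will present the split and the optimization but defer the routine arithmetic.

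For part~(3), the plan is the shortest: write $x_j = G_j(\mathcal{F}_j)$ and let $x_j^{(i)}=G_j(\mathcal{F}_{j,i})$ be the coupled version in which the innovation $\eta_i$ (at time $i\leq j$) is replaced by an i.i.d.\ copy, so that $\|x_j-x_j^{(i)}\|_q\leq\theta_{j-i,q}$. Since $x_j^{(i)}$ depends only on $(\ldots,\eta_{i-1},\eta_i',\eta_{i+1},\ldots,\eta_j)$ and hence is independent of $x_i=G_i(\mathcal{F}_i)$ when the relevant innovation indices are disjoint — more precisely, $\mathbb{E}(x_i x_j^{(i)})=\mathbb{E}(x_i)\mathbb{E}(x_j^{(i)})=0$ by a conditioning argument — we get $|\operatorname{Cov}(x_i,x_j)|=|\mathbb{E}(x_i(x_j-x_j^{(i)}))|\leq\|x_i\|_{q^*}\|x_j-x_j^{(i)}\|_q\leq C\theta_{|i-j|,q}$ by Hölder, absorbing $\|x_i\|_{q^*}$ into the constant using $\max_i\mathbb{E}|x_i|^q<\infty$.

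The main obstacle is part~(2): getting the exponent $1-\gamma/(\gamma+2)$ rather than a cruder $1-\gamma/(\gamma+1)$ or similar requires being slightly clever about how the lag‑truncation error accumulates over $i$ — one cannot simply bound it by $n$ times a per‑$i$ error, but must use that the boundary contributions in the martingale decomposition telescope (the $\Lambda_{i+n,q}-\Lambda_{i,q}$ structure from part~(1) reappears here). The cleanest route is to reduce~(2) to a direct citation of \cite[Theorem~3]{ZW} or Lemma~5.4 of Zhou--Wu (2009), which states exactly this rate under the stochastic Lipschitz condition, and then only verify that our $\Theta_{k,q}=\OO(k^{-\gamma})$ assumption matches their hypotheses. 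I would write the proof in that spirit: full detail for~(1) and~(3), and a short reduction to the literature for~(2).
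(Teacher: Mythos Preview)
Your plan for parts~(1) and~(2) is sound and matches the paper's approach, which simply cites the literature: part~(1) is Theorem~1 of \cite{WUAOP} (precisely the martingale decomposition via the projections $\mathcal{P}_k$ that you sketch), and part~(2) is Corollary~2 of \cite{MR2827528}. Your meandering on the exponent in~(2) is unnecessary once you commit to citing the result, but the balancing heuristic you describe is the right intuition.

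Part~(3), however, has a genuine gap. Your coupled variable $x_j^{(i)}=G_j(\mathcal{F}_{j,i})$, obtained by replacing only the single innovation $\eta_i$ with $\eta_i'$, is \emph{not} independent of $x_i=G_i(\mathcal{F}_i)$: both still depend on the entire past $(\ldots,\eta_{i-2},\eta_{i-1})$, so the innovation index sets are not disjoint. The ``conditioning argument'' you allude to does not rescue this: conditioning on $\mathcal{F}_{i-1}$ does make $x_i$ and $x_j^{(i)}$ conditionally independent, but $\mathbb{E}(x_i\mid\mathcal{F}_{i-1})$ is not zero in general (nothing in the setup forces $x_i$ to be a martingale difference), so $\mathbb{E}(x_i x_j^{(i)})\neq 0$. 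If instead you replace the \emph{entire} past $(\ldots,\eta_{i-1},\eta_i)$ by an independent copy to form $\tilde x_j$, then $\tilde x_j\perp x_i$ genuinely holds, but telescoping gives only $\|x_j-\tilde x_j\|_q\leq\sum_{l\geq j-i}\theta_{l,q}=\Theta_{|i-j|,q}$, which is weaker than the claimed $\theta_{|i-j|,q}$. The correct route to the sharper bound is the same projection decomposition you used in part~(1): write $\operatorname{Cov}(x_i,x_j)=\sum_{k\leq i}\mathbb{E}[(\mathcal{P}_k x_i)(\mathcal{P}_k x_j)]$, apply Cauchy--Schwarz termwise with $\|\mathcal{P}_k x_l\|_2\leq\theta_{l-k,2}$, and obtain $|\operatorname{Cov}(x_i,x_j)|\leq\sum_{m\geq 0}\theta_{m,2}\,\theta_{m+|i-j|,2}\leq\Theta_{0,2}\,\theta_{|i-j|,2}$, the last step using that $\theta_{\cdot,q}$ may be taken nonincreasing without loss. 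The paper simply cites Lemma~6 of \cite{MR3161462} and Lemma~2.6 of \cite{DZ} for this.
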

\begin{proof}
The first part can be found in Theorem 1 of \cite{WUAOP}; also see Lemma 6 of \cite{MR3174655}; the second part of the proof follows directly from Corollary 2 of \cite{MR2827528}; see the last equation in the end of the proof of Corollary 2 therein; the third part can be found in Lemma 6 of \cite{MR3161462} or Lemma 2.6 of \cite{DZ}. 
\end{proof}

\begin{lemma}[$m$-dependent approximation]\label{lem_mdependent} Let $S_n'=\sum_{i=1}^n x_i'$, where $x_i'=\mathbb{E}(x_i| \mathcal{F}_m(i)), \mathcal{F}_m(i)=\sigma(\eta_{i-m}, \cdots, \eta_i)$ is the sigma-algebra generated by  $(\eta_{i-m}, \cdots, \eta_i).$ Here $m \geq 0.$ Define $R_n=S_n-S_n'$ and $R_n^*=\max_{1 \leq i \leq n} |R_i|.$  Then for some constant $C>0,$ we have
\begin{equation*}
\| R_n \|_q^{q'} \leq C n \Theta_{m,q}^{q'}, 
\end{equation*}
and
\begin{equation*}
\| R_n^* \|_q^{q'} \leq C 
\begin{cases}
n \Theta_{m,q}^2, & q>2;\\
n (\log n)^q   \Theta_{m,q}^2, & 1 <q \leq 2. 
\end{cases}
\end{equation*}
\end{lemma}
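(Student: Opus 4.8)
The plan is to prove both bounds by a backward (reverse–martingale) projection decomposition tailored to the fact that $x_i'=\mathbb{E}(x_i\mid\mathcal F_m(i))$ is measurable with respect to $\mathcal F_m(i)=\sigma(\eta_{i-m},\dots,\eta_i)$. First I would introduce the reverse filtration $\mathcal F_j^{+}:=\sigma(\eta_i:i\ge j)$ and the reverse projection $\mathcal Q_j Z:=\mathbb{E}(Z\mid\mathcal F_j^{+})-\mathbb{E}(Z\mid\mathcal F_{j+1}^{+})$. Since $x_k$ depends only on $(\dots,\eta_{k-1},\eta_k)$ it is independent of $\eta_i$ for $i>k$, so $\mathbb{E}(x_k\mid\mathcal F_j^{+})=0$ for $j>k$, the reverse martingale $\{\mathbb{E}(x_k\mid\mathcal F_j^{+})\}_j$ converges in $L^q$ to $x_k$ as $j\to-\infty$, and $\mathbb{E}(x_k\mid\mathcal F_{k-m}^{+})=\mathbb{E}(x_k\mid\eta_{k-m},\dots,\eta_k)=x_k'$. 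Telescoping $\mathcal Q_j x_k$ across $j\le k-m-1$ gives the key identity $x_k-x_k'=\sum_{j\le k-m-1}\mathcal Q_j x_k$, and hence
\begin{equation*}
R_n=\sum_{k=1}^n(x_k-x_k')=\sum_{j\le n-m-1}M_{n,j},\qquad M_{n,j}:=\sum_{k=(j+m+1)\vee 1}^{n}\mathcal Q_j x_k .
\end{equation*}
Because $\mathbb{E}(\mathcal Q_j x_k\mid\mathcal F_{j+1}^{+})=0$, the family $\{M_{n,j}\}_j$ is a sequence of reverse–martingale differences with respect to the filtration $\{\mathcal F_j^{+}\}$ (increasing as $j$ decreases).

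Next I would bound the blocks. The standard projection inequality $\|\mathcal Q_j Z\|_q\le\|Z-Z^{(j)}\|_q$, where $Z^{(j)}$ replaces $\eta_j$ by an i.i.d. copy, gives $\|\mathcal Q_j x_k\|_q\le\theta_{k-j,q}$ for $j\le k$ (and $\mathcal Q_j x_k=0$ otherwise). Summing over the admissible $k$ yields, for $j\ge-m$, $\|M_{n,j}\|_q\le\sum_{k\ge j+m+1}\theta_{k-j,q}\le\Theta_{m+1,q}\le\Theta_{m,q}$ uniformly in $j$, while for $j<-m$ one even gets $\|M_{n,j}\|_q\le\Theta_{|j|+1,q}$; so the roughly $n$ blocks with $j\ge-m$ dominate, giving $\sum_j\|M_{n,j}\|_q^{q'}\le Cn\,\Theta_{m,q}^{q'}+C\sum_{r>m}\Theta_{r,q}^{q'}$, the remainder being convergent and of order $m\,\Theta_{m,q}^{q'}$ under Assumption \ref{assum_physical}, hence absorbed into $Cn\,\Theta_{m,q}^{q'}$ in the regimes used here ($m=\oo(n)$). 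Then the classical moment inequality for (reverse-)martingale differences — Burkholder's inequality when $q\ge 2$ (so $q'=2$) and the von Bahr–Esseen/Rio inequality $\|\sum_j M_{n,j}\|_q^{q}\le C\sum_j\|M_{n,j}\|_q^{q}$ when $1<q\le 2$ (so $q'=q$) — turns this into $\|R_n\|_q^{q'}\le Cn\,\Theta_{m,q}^{q'}$, which is the first claim.

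For the maximal bound I would note that the same decomposition applies to every partial sum, $R_i=\sum_{j\le i-m-1}M_{i,j}$ with $\|M_{i,j}\|_q\le\Theta_{m,q}$ uniformly. When $q\ge 2$, combining Burkholder's inequality with Doob's maximal inequality along the martingale index $j$ gives $\|R_n^{*}\|_q^{2}\le Cn\,\Theta_{m,q}^{2}$ with no extra factor. When $1<q\le 2$, Doob-type control is unavailable, and one instead runs a Móricz-type dyadic maximal inequality over $\{1,\dots,n\}$ (as in the proof of Theorem 1 of \cite{WUAOP}): block the index range into $\OO(\log n)$ dyadic scales, apply the first-part bound on each scale, and pay exactly the $(\log n)^q$ factor recorded in the statement, giving $\|R_n^{*}\|_q^{q}\le Cn(\log n)^{q}\Theta_{m,q}^{2}$.

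The hard part will be the bookkeeping in the second step: one must use the precise measurability $x_k'\in\sigma(\mathcal F_m(k))$ so that each block $M_{n,j}$ is assembled only from the deep-past projections $\mathcal Q_j x_k$ with $k-j>m$ — this is exactly what replaces the trivial per-block bound $\Theta_{0,q}$ by the tail $\Theta_{m,q}$ while keeping the blocks uniformly bounded — and then one must verify that the infinitely many blocks with $j<-m$ have aggregate $L^q$-contribution subsumed by $Cn\,\Theta_{m,q}^{q'}$, which is where the decay in Assumption \ref{assum_physical} enters. The remaining ingredients (the reverse-projection norm bound, the martingale moment inequalities, and the dyadic maximal inequality) are standard and can be quoted from \cite{WUAOP, MR2827528}.
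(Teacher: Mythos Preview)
The paper does not give its own proof of this lemma; it simply cites Lemma~A.1 of \cite{MR2485027}. Your reverse-martingale projection decomposition is the standard route and is essentially what underlies that reference, so the overall strategy is correct.

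There is one bookkeeping point worth noting. By grouping the double sum over $(k,j)$ by the projection index $j$, you create an infinite tail $\sum_{r>m}\Theta_{r,q}^{q'}$ which you then absorb using Assumption~\ref{assum_physical}; this makes your argument depend on polynomial decay, whereas the lemma as stated holds under the bare hypothesis $\Theta_{0,q}<\infty$. The cleaner organisation is to group by lag $l=k-j$ instead: write
\[
R_n=\sum_{l>m}D_{n,l},\qquad D_{n,l}:=\sum_{k=1}^n \mathcal{Q}_{k-l}x_k .
\]
For each fixed $l$ the summands $\{\mathcal{Q}_{k-l}x_k\}_{k}$ are martingale differences with respect to the reverse filtration $\{\mathcal{F}_{k-l}^{+}\}$, so Burkholder's inequality (for $q\ge 2$) or the von Bahr--Esseen inequality (for $1<q\le 2$) gives $\|D_{n,l}\|_q\le Cn^{1/q'}\theta_{l,q}$. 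The triangle inequality in $l$ then yields $\|R_n\|_q\le Cn^{1/q'}\Theta_{m+1,q}$ directly, with no extraneous tail term and no appeal to Assumption~\ref{assum_physical}. The maximal bound follows in the same way, combined with Doob's inequality for $q>2$ or the dyadic M\'oricz argument for $1<q\le 2$, exactly as you outlined.
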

\begin{proof}
See Lemma A.1 \cite{MR2485027}. 
\end{proof}
In the next lemma, we provide a control for the summation of $\chi_1^2$ random variables. 

\begin{lemma}\label{lem_Gaussianquadratic}
Let $a_1 \geq a_2 \geq \cdots a_p \geq 0$ such that $\sum_{i=1}^p a_i^2=1.$ For i.i.d. $\chi^2_1$ random variables $\eta_i, 1 \leq i \leq p,$ we have that 
\begin{equation*}
\sup_{x \in \mathbb{R}} \mathbb{P} \left( x \leq \sum_{k=1}^p a_k \eta_k  \leq x+h \right) \leq \sqrt{\frac{4 h}{\pi}}.  
\end{equation*}   
\end{lemma}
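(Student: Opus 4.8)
\textbf{Proof plan for Lemma \ref{lem_Gaussianquadratic}.}

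The plan is to reduce the anti-concentration bound for the weighted sum $\sum_{k=1}^p a_k \eta_k$ of i.i.d. $\chi^2_1$ random variables to the anti-concentration of a single $\chi^2_1$ variable, exploiting the fact that $a_1 = \max_k a_k \geq p^{-1/2}$ is bounded below because of the normalization $\sum_k a_k^2 = 1$ --- wait, more carefully, $a_1 \geq a_k$ for all $k$ forces $a_1^2 \geq \frac{1}{p}\sum_k a_k^2 = \frac 1p$, but what we actually need is that $a_1$ is not too small relative to $h$. The cleanest route is conditioning: write $S = a_1 \eta_1 + S'$ where $S' = \sum_{k=2}^p a_k \eta_k$ is independent of $\eta_1$. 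Then for any $x$,
\begin{equation*}
\mathbb{P}(x \leq S \leq x+h) = \mathbb{E}\big[\, \mathbb{P}(x - S' \leq a_1 \eta_1 \leq x + h - S' \mid S')\,\big],
\end{equation*}
so it suffices to bound $\sup_{y \in \mathbb{R}} \mathbb{P}(y \leq a_1 \eta_1 \leq y+h) = \sup_{y}\mathbb{P}(y/a_1 \leq \eta_1 \leq (y+h)/a_1)$ uniformly, and then take the expectation.

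Next I would bound the single-variable quantity. Since $\eta_1 = Z^2$ for a standard normal $Z$, its density is $f(u) = \frac{1}{\sqrt{2\pi u}} e^{-u/2}$ for $u > 0$, which is decreasing in $u$. Hence for any interval $[b, b+\ell]$ with $\ell = h/a_1 > 0$ and $b \geq 0$, the probability $\int_b^{b+\ell} f(u)\,du$ is maximized at $b = 0$, giving
\begin{equation*}
\mathbb{P}(b \leq \eta_1 \leq b+\ell) \leq \int_0^{\ell} \frac{1}{\sqrt{2\pi u}}\,du = \sqrt{\frac{2\ell}{\pi}} = \sqrt{\frac{2h}{\pi a_1}}.
\end{equation*}
Using $a_1^2 \geq 1/p$ would give a $p$-dependent bound, which is not what the lemma claims; so instead I would use the other normalization fact. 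The statement has $\sum a_i^2 = 1$ and additionally (implicitly, from the way it is invoked via $\mathfrak{m}_1,\mathfrak{m}_2$ with eigenvalues $\asymp 1$) one should read it so that $a_1 \geq c$ for an absolute constant --- but as literally stated, the bound $\sqrt{4h/\pi}$ follows if $a_1 \geq 1/2$. Let me reconsider: actually the honest reading is that one only needs $a_1 \geq 1/2$ is false in general, so the correct argument must be that since $a_1 \ge a_2 \ge \cdots$, we have $a_1 \ge \big(\sum_i a_i^2\big)^{1/2}/\sqrt{p}$ is too weak, whereas $a_1 \ge a_1 \cdot \big(\sum a_i^2\big)^{1/2} \ge ?$. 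The cleanest valid claim: $a_1^2 \ge a_1^2 \sum_i a_i^2 / \big(\sum_i a_i^2\big)$ --- trivial. The point that makes the constant work is $1 = \sum_i a_i^2 \le a_1 \sum_i a_i = a_1 \|a\|_1$, which still involves $\|a\|_1$.

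\textbf{The main obstacle} is therefore pinning down exactly why the constant is $\sqrt{4/\pi}$ independent of $p$. I expect the resolution is the observation that $1 = \sum_i a_i^2 \le a_1 \cdot a_1 + \sum_{i\ge 2} a_i^2 \le a_1^2 + (1 - a_1^2)$, which is vacuous, so instead one keeps \emph{two} largest coordinates: condition on $\eta_1$ and $\eta_2$ being separated, or rather, use that the density of $a_1\eta_1 + a_2\eta_2$ is bounded by $\frac{1}{2\sqrt{a_1 a_2}} \cdot (\text{const})$ via the convolution of two $u^{-1/2}$-type singularities, and $a_1 a_2 \ge$ (something). If $a_1 \le 1/2$ then $a_2$ cannot be too small either since $\sum_{i\ge 2} a_i^2 = 1 - a_1^2 \ge 3/4$ and $a_2 \ge$ that spread over $\le p-1$ terms --- still $p$-dependent. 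Given the lemma as stated with the clean bound, I would present the argument via: WLOG $a_1 \ge 1/\sqrt 2$ (if not, the sum has at least two comparable terms and one convolves two densities, each contributing a $\sqrt{\cdot}$ factor, and the worst case across both sub-cases yields $\sqrt{4h/\pi}$); in the case $a_1 \ge 1/\sqrt2$ the displayed single-variable bound gives $\sqrt{2h/(\pi a_1)} \le \sqrt{2\sqrt2\, h/\pi} \le \sqrt{4h/\pi}$. I would carry out the conditioning reduction first, then the monotone-density estimate for the single $\chi^2_1$, then handle the small-$a_1$ case by a two-term convolution bound, and finally combine. The two-term convolution estimate is the step I expect to require the most care, since it needs the elementary inequality $\int\int_{\{x \le a_1 u + a_2 v \le x+h\}} \frac{du\,dv}{\sqrt{uv}} \le C\sqrt{h/(a_1 a_2)}$ together with a lower bound on $a_1 a_2$ in the regime $a_1 < 1/\sqrt 2$.
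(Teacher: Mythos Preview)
The paper does not actually prove this lemma; it simply cites Lemma~S.2 of \cite{XZW}. So there is no in-paper argument to compare against, and your proposal is an attempt at a self-contained proof where the paper offers none.

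That said, your approach has a genuine gap that you yourself flag but do not close. The conditioning step is fine and gives $Q(S,h)\le Q(a_1\eta_1,h)\le\sqrt{2h/(\pi a_1)}$, but this bound is \emph{not} uniform in $p$: take $a_1=\cdots=a_p=1/\sqrt{p}$, so that $a_1=p^{-1/2}$ and the bound becomes $\sqrt{2h\sqrt{p}/\pi}\to\infty$. Your proposed rescue via a two-term convolution fails for the same example: the density bound for $a_1\eta_1+a_2\eta_2$ is $\tfrac{h}{2\sqrt{a_1a_2}}$ (a direct Beta-integral computation, not $C\sqrt{h/(a_1a_2)}$ as you wrote), and with $a_1a_2=1/p$ this again blows up. No finite-coordinate conditioning can work, because in the equal-weights case each block of $k$ coordinates contributes $p^{-1/2}\chi^2_k$, whose density scales like $\sqrt{p}$ for fixed $k$.

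What does work, and is presumably behind the cited reference, is a characteristic-function route. Writing $\phi_S(t)=\prod_k(1-2ia_kt)^{-1/2}$, one has
\[
|\phi_S(t)|^4=\prod_{k=1}^p(1+4a_k^2t^2)^{-1}\le\Bigl(1+4t^2\sum_k a_k^2\Bigr)^{-1}=(1+4t^2)^{-1},
\]
using $\prod(1+x_k)\ge 1+\sum x_k$; hence $|\phi_S(t)|\le(1+4t^2)^{-1/4}=|\phi_{\chi^2_1}(t)|$ uniformly in $p$ and in the weights. Feeding this into an Esseen-type concentration inequality (or arguing as in Carbery--Wright for degree-two Gaussian polynomials, noting $\operatorname{Var}(S)=2\sum a_k^2=2$) yields $Q(S,h)\le C\sqrt{h}$ with an absolute constant. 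The normalization $\sum a_k^2=1$ enters exactly once, in the product inequality above, and that is why the final constant is dimension-free; your coordinate-wise conditioning never exploits the full constraint in this aggregated way.
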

\begin{proof}
See Lemma S.2 of \cite{XZW}.
\end{proof}

The next lemma provides a deterministic control for the Riemann summation. 
\begin{lemma}\label{lem_intergralappoximation}
Suppose that $f$ is twice differentiable and $f^{''}$ is bounded and almost everywhere continuous on a compact set $[a,b].$ Let $\Delta: a=s_0 \leq s_1 \leq \cdots \leq s_{n-1} \leq s_n=b$ and $s_{i-1} \leq \xi_i \leq s_i.$ Denote the Riemann sum as
\begin{equation*}
\mathcal{R} \equiv \mathcal{R}(f; \Delta, \xi_i):=\sum_{i=1}^n (s_i-s_{i-1}) f(\xi_i).
\end{equation*}
Then we have that for $s_i=a+i(b-a)/n$
\begin{equation*}
\int_a^b f(x) \dd x =\mathcal{R}+\OO(n^{-2}). 
\end{equation*}
\end{lemma}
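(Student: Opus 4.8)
~\textbf{Proof proposal for Lemma~\ref{lem_intergralappoximation}.}
The plan is to reduce the claim to the standard error analysis of the midpoint/Riemann rule on a uniform partition. First I would normalize: with $s_i = a + i(b-a)/n$ the mesh width is $\Delta x := (b-a)/n$ and $\mathcal{R} = \Delta x \sum_{i=1}^n f(\xi_i)$ with $\xi_i \in [s_{i-1},s_i]$. Split the global error into a sum of local errors,
\begin{equation*}
\int_a^b f(x)\,\dd x - \mathcal{R} = \sum_{i=1}^n \left( \int_{s_{i-1}}^{s_i} f(x)\,\dd x - \Delta x\, f(\xi_i) \right),
\end{equation*}
so it suffices to bound each summand by $\OO(n^{-3})$ uniformly in $i$ and then sum the $n$ of them.

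For the local bound I would Taylor-expand $f$ around $\xi_i$ to first order with integral (or Lagrange) remainder: for $x \in [s_{i-1},s_i]$,
\begin{equation*}
f(x) = f(\xi_i) + f'(\xi_i)(x-\xi_i) + \tfrac12 f''(\theta_{x})(x-\xi_i)^2
\end{equation*}
for some $\theta_x$ between $x$ and $\xi_i$; this is legitimate since $f$ is twice differentiable. Integrating over $[s_{i-1},s_i]$, the constant term contributes $\Delta x\, f(\xi_i)$ which cancels against $\mathcal{R}$'s $i$th term, the linear term contributes $f'(\xi_i)\int_{s_{i-1}}^{s_i}(x-\xi_i)\,\dd x$ which is $\OO(\Delta x^2) = \OO(n^{-2})$ in general (it would vanish only at the midpoint), and the quadratic remainder is bounded in absolute value by $\tfrac12 \|f''\|_\infty \int_{s_{i-1}}^{s_i}(x-\xi_i)^2\,\dd x = \OO(n^{-3})$. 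Naively this only gives a global error $\OO(n^{-1})$ from the linear terms, so the key step is to handle the linear terms more carefully: write $f'(\xi_i) = f'(s_{i-1}) + \OO(n^{-1})$ (using boundedness of $f''$) and observe that the resulting telescoping-type sum $\sum_i f'(s_{i-1}) \int_{s_{i-1}}^{s_i}(x - \xi_i)\,\dd x$ can be compared against $\int_a^b f'(x) \cdot (\text{sawtooth})\,\dd x$; alternatively, and more cleanly, I would integrate by parts once, $\int_a^b f = [x f]_a^b - \int_a^b x f'(x)\,\dd x$, and recognize $\mathcal{R}$ as a Riemann sum for $\int x f'$ plus boundary terms, reducing the order-two estimate for $f$ to an order-one (standard) Riemann-sum estimate for the continuous, almost-everywhere-$C^1$ integrand $xf'(x)$. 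Either route turns the apparent $\OO(n^{-1})$ into the claimed $\OO(n^{-2})$.

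The main obstacle is exactly this gap between the crude per-interval bound and the advertised rate: a single application of the first-order Taylor expansion at an arbitrary evaluation point $\xi_i$ does not by itself beat $\OO(n^{-1})$, and one must exploit cancellation across intervals (or equivalently the summation-by-parts / second Euler–Maclaurin correction) together with the hypothesis that $f''$ is bounded and almost-everywhere continuous — the a.e.-continuity being what licenses replacing the Riemann sum of $f''$-type quantities by the corresponding integral up to $\oo(1)$, hence the remainder sums by $\OO(n^{-2})$ rather than merely $\OO(n^{-1})$. Once that cancellation is set up, the remaining estimates are the routine bounds $|\int_{s_{i-1}}^{s_i}(x-\xi_i)^k\,\dd x| \le \Delta x^{k+1}$ summed over $i$, which I would not spell out in detail.
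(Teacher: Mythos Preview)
The paper does not prove this lemma; it simply cites Theorem~1.1 of Tasaki (2009). So there is no argument in the paper to compare against. What matters is whether your own argument is sound.

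You correctly identify the crux: the linear Taylor term contributes $\sum_i f'(\xi_i)\int_{s_{i-1}}^{s_i}(x-\xi_i)\,\dd x$, and each summand is $\OO(n^{-2})$, so the naive bound is only $\OO(n^{-1})$ globally. Your proposed repairs, however, do not work. The ``telescoping-type'' route fails because $\int_{s_{i-1}}^{s_i}(x-\xi_i)\,\dd x = \tfrac12\bigl[(s_i-\xi_i)^2-(s_{i-1}-\xi_i)^2\bigr]$ depends on where $\xi_i$ sits in the subinterval and has no telescoping structure across $i$; for left endpoints it equals $(\Delta x)^2/2$ with a fixed sign, so $\sum_i f'(s_{i-1})\cdot(\Delta x)^2/2$ is genuinely $\Theta(n^{-1})$ whenever $\int f' \neq 0$. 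The integration-by-parts route is also a non-starter: $\mathcal{R}=\Delta x\sum_i f(\xi_i)$ is not a Riemann sum for $\int x f'(x)\,\dd x$ in any useful sense, and rewriting $\int f$ that way does nothing to $\mathcal{R}$.

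In fact the lemma as stated---with \emph{arbitrary} $\xi_i\in[s_{i-1},s_i]$---is false. Take $f(x)=x$ on $[0,1]$ with left endpoints $\xi_i=(i-1)/n$: then $\mathcal{R}=\tfrac12-\tfrac{1}{2n}$ while $\int_0^1 x\,\dd x=\tfrac12$, so the error is exactly $1/(2n)=\Theta(n^{-1})$. The $\OO(n^{-2})$ rate holds for the midpoint rule (where your linear term vanishes identically) or for the trapezoidal rule, not for a generic Riemann sum. Either the cited reference imposes such a choice, or the lemma is imprecisely transcribed; in any case, the paper only invokes it at the grid $t_k=k/n$ (endpoints), and in those applications the weaker $\OO(n^{-1})$ bound---which your argument does establish cleanly---is already dominated by the other error terms, so nothing downstream is affected.
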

\begin{proof}
See Theorem 1.1 of \cite{TASAKI2009477}. 
\end{proof}

Then we collect some elementary probability inequalities. 
\begin{lemma}\label{lem_collectionprobineq}
(1). If $\mathbb{E}|X|^k<\infty,$ then for $0<j<k,$ $\mathbb{E}|X|^j<\infty$ and
\begin{equation*}
\mathbb{E}|X|^j \leq (\mathbb{E}|X|^k)^{j/k}. 
\end{equation*}
(2). (Holder's inequality) For $p,q \in [1, \infty)$ with $1/p+1/q=1,$
\begin{equation*}
\mathbb{E}|XY| \leq \| X\|_p \| Y\|_q. 
\end{equation*}
(3). (Chebyshev’s inequality) If $g(x)$ is a  monotonically increasing nonnegative function for the nonnegative reals, then we have that 
\begin{equation*}
\mathbb{P}(|X|>a) \leq \frac{E(g(|X|))}{g(a)}. 
\end{equation*}
(4). (Bernstein’s concentration inequality) Let $\{x_i\}$ be i.i.d. standard Gaussian random variables. Then for every $0<t<1,$ we have that
\begin{equation*}
\mathbb{P}\left(\left|\frac{1}{n} \sum_{k=1}^n x_k^2-1 \right| \geq t \right) \leq 2 e^{-nt^2/8}. 
\end{equation*}
\end{lemma}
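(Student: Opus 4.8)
The plan is to observe that all four statements are classical and to assemble them from short, standard arguments rather than prove anything new; the work is bookkeeping, not mathematics.

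For part (1) I would invoke Jensen's inequality with the convex function $\phi(u)=u^{k/j}$ on $[0,\infty)$: setting $Z=|X|^j$, one gets $(\mathbb{E}|X|^j)^{k/j}=\phi(\mathbb{E}Z)\le \mathbb{E}\phi(Z)=\mathbb{E}|X|^k$, and raising both sides to the power $j/k$ gives $\mathbb{E}|X|^j\le (\mathbb{E}|X|^k)^{j/k}$; finiteness of $\mathbb{E}|X|^j$ is immediate from the pointwise bound $|X|^j\le 1+|X|^k$. This is Lyapunov's inequality. Part (2) is Hölder's inequality in its standard form, which I would simply cite (e.g. \cite{probbook}). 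For part (3), monotonicity and nonnegativity of $g$ on the nonnegative reals give the set inclusion $\{|X|>a\}\subseteq\{g(|X|)\ge g(a)\}$, so Markov's inequality applied to the nonnegative random variable $g(|X|)$ yields $\mathbb{P}(|X|>a)\le \mathbb{P}(g(|X|)\ge g(a))\le \mathbb{E}[g(|X|)]/g(a)$.

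For part (4) I would write $\frac1n\sum_{k=1}^n x_k^2-1=\frac1n\sum_{k=1}^n(x_k^2-1)$ as a normalized sum of i.i.d. centered random variables; each $x_k^2-1$ is sub-exponential with an explicit Orlicz norm, so the Bernstein inequality for sums of sub-exponential variables gives a tail bound of the form $2\exp(-c\,n\min\{t^2,t\})$, and for $0<t<1$ the quadratic term dominates, which after tracking constants yields the stated $2e^{-nt^2/8}$. Standard references (the concentration toolbox of Vershynin, or the Bernstein-type bounds used in \cite{MR3571252}) cover this essentially verbatim.

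The only step requiring any genuine attention is part (4): one must check that the sub-exponential parameter of $x_1^2-1$ is small enough for the constant $8$ in the exponent to be legitimate uniformly over $0<t<1$, i.e. that the restriction $t<1$ is exactly what lets the quadratic regime of the Bernstein bound be used with that constant. Parts (1)--(3) are textbook and can each be given in a line, so I anticipate no obstacle there.
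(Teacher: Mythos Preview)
Your proposal is correct and matches the paper's approach: the paper treats all four parts as textbook facts and simply cites Durrett's \emph{Probability} \cite{probbook} for (1)--(3) and Wainwright \cite{MR3967104} for (4), while you supply the one-line standard arguments that those references contain. There is no mathematical difference; your sketches are exactly the proofs one finds at the cited locations, and your care about the constant in part (4) is appropriate but not an issue since the result is quoted verbatim from the reference.
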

\begin{proof}
(1) follows from Exercise 1.6.11 of \cite{probbook}; (2) is Theorem 1.6.3 of \cite{probbook}; (3) follows from Theorem 1.6.4 of \cite{probbook}; (4) is in Example 2.11 of \cite{MR3967104}.  
\end{proof}

The following lemma provides a deterministic control  for the norm of a symmetric matrix. 
\begin{lemma}[Gershgorin circle theorem]\label{lem_circle}
 Let  $A=(a_{ij})$ be a complex $ n\times n$ matrix. For  $1 \leq i \leq n,$ let  $R_{i}=\sum _{{j\neq {i}}}\left|a_{{ij}}\right| $ be the sum of the absolute values of the non-diagonal entries in the  $i$-th row. Let  $ D(a_{ii},R_{i})\subseteq \mathbb {C} $ be a closed disc centered at $a_{ii}$ with radius  $R_{i}$. Such a disc is called a \emph{Gershgorin disc.} The every eigenvalue of $ A=(a_{ij})$ lies within at least one of the Gershgorin discs  $D(a_{ii},R_{i})$, where $R_i=\sum_{j\ne i}|a_{ij}|$.
\end{lemma}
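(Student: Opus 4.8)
\textbf{Proof proposal for Lemma~\ref{lem_circle} (Gershgorin circle theorem).}
The plan is to prove the containment of the spectrum in the union of the Gershgorin discs by the classical eigenvector-coordinate argument, which is short and elementary. First I would fix an eigenvalue $\lambda$ of $A$ with a corresponding nonzero eigenvector $v=(v_1,\dots,v_n)^\top$, and pick an index $i$ at which $|v_i|$ is maximal, so that $|v_i|>0$. The normalization step is to divide through by $v_i$, i.e.\ work with $w=v/v_i$, which has $w_i=1$ and $|w_j|\le 1$ for all $j$; this is the key reduction that turns the problem into a one-line estimate.

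Next I would write out the $i$-th component of the eigenvalue equation $Av=\lambda v$, namely $\sum_{j=1}^n a_{ij}v_j=\lambda v_i$. Dividing by $v_i$ and isolating the diagonal term gives
\begin{equation*}
\lambda-a_{ii}=\sum_{j\ne i} a_{ij}w_j.
\end{equation*}
Taking absolute values and applying the triangle inequality together with $|w_j|\le1$ yields
\begin{equation*}
|\lambda-a_{ii}|\le\sum_{j\ne i}|a_{ij}|\,|w_j|\le\sum_{j\ne i}|a_{ij}|=R_i,
\end{equation*}
which is exactly the statement that $\lambda\in D(a_{ii},R_i)$. Since $\lambda$ was an arbitrary eigenvalue, every eigenvalue lies in at least one Gershgorin disc, and since the matrix and its transpose have the same spectrum, one could equally phrase the bound in terms of column sums if desired.

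I do not anticipate a genuine obstacle here: the only point requiring a moment of care is ensuring $v_i\ne0$, which follows because $v\ne0$ forces $\max_j|v_j|>0$ and we chose $i$ to attain that maximum. Everything else is the triangle inequality. In the context of the paper, this lemma is invoked only to pass from entrywise $\OO(\cdot)$ (or $\OO_{\mathbb{P}}(\cdot)$) bounds on the entries of a symmetric matrix to an operator-norm bound, picking up at most a factor of the dimension (here $p$ or $d$); that usage is immediate from the disc containment since $\|A\|_{\op}\le\max_i(|a_{ii}|+R_i)$ for symmetric $A$.
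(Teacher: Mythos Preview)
Your proof is correct and is the standard eigenvector-coordinate argument for Gershgorin's theorem. The paper itself does not give a proof but simply cites Theorem~7.2.1 of Golub and Van Loan's \emph{Matrix Computations}, so your argument is essentially what one would find upon following that reference.
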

\begin{proof}
See Theorem 7.2.1 of \cite{MR3024913}. 
\end{proof}
Finally, we collect some approximation formulas for the tail probabilities of the maxima of a two-dimensional Gaussian random field in the setting of simutaneous confidence bands. Assuming that 
$Y_i=f(x_i)+\epsilon_i, \ 1 \leq i \leq n, \ x_i \in \mathbb{R}^2$ and $\epsilon_i$ are i.i.d. $\mathcal{N}(0, \sigma^2)$ random variables. Let $\widehat{f}(x)$ be an unbiased estimator of $f(x)$ such that
\begin{equation*}
\widehat{f}(x)=l(x)^\top Y, \ l(x)=(l_1(x), \cdots, l_n(x))^\top, \ Y=(Y_1, \cdots, Y_n)^\top. 
\end{equation*}  
For some constant $c_{\alpha}>0,$ the simultaneous coverage probability of the confidence bands is 
\begin{equation*}
1-\alpha=\mathbb{P}\left(\left|f(x)-\widehat{f}(x) \right| \leq c_{\alpha} \sqrt{\operatorname{Var}(\widehat{f}(x))}, \ x \in \mathcal{X} \right). 
\end{equation*} 
Moreover, according to equation (1.4) of \cite{MR1311978}, we have that
\begin{equation}\label{eq_alphatheortrep}
\alpha=\p \left( \sup_{x \in \mathcal{X}} \left| l(x)^\top \epsilon /\sqrt{\operatorname{Var}(\widehat{f}(x))} \right| \geq c_{\alpha} \sigma \right). 
\end{equation}
\begin{lemma}\label{lem_volumeoftube} Suppose $\mathcal{X}$ is a rectangle in $\mathbb{R}^2.$ Assume the manifold $\mathcal{M}:=\left\{l(x)/\sqrt{\operatorname{Var}(\widehat{f}(x))}: x \in \mathcal{X} \right\}$ is $C^3$ with a positive radius. Let $\kappa_0$ be the area of $\mathcal{M}$ and $\zeta_0$ be the length of the boundary of $\mathcal{M},$ then  (\ref{eq_expansionformula}) holds true. 
\end{lemma}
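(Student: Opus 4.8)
\textbf{Proof plan for Lemma \ref{lem_volumeoftube}.}

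The statement is a classical tube-formula / volume-of-tubes result, so the plan is to reduce the maximum-deviation probability in \eqref{eq_alphatheortrep} to the supremum of a standard Gaussian field indexed by the manifold $\mathcal{M}$ and then invoke Weyl's tube formula in the version of Sun and Loader (\cite{MR1311978, MR1056331}). First I would write $\epsilon = \sigma \widetilde{\epsilon}$ with $\widetilde{\epsilon}\sim\mathcal{N}(0,\mathbf{I}_n)$, so that the quantity inside the supremum in \eqref{eq_alphatheortrep} becomes $Z(x):=u(x)^\top\widetilde{\epsilon}$, where $u(x):=l(x)/\sqrt{\operatorname{Var}(\widehat f(x))}$ is a unit vector (since $\operatorname{Var}(\widehat f(x))=\sigma^2\|l(x)\|_2^2$, the normalization forces $\|u(x)\|_2=1$). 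Hence $\{Z(x):x\in\mathcal{X}\}$ is a mean-zero, unit-variance Gaussian field whose index set, after the embedding $x\mapsto u(x)$, is exactly the manifold $\mathcal{M}\subset S^{n-1}$; the event in \eqref{eq_alphatheortrep} is $\{\sup_{x}|Z(x)|\ge c_\alpha\}$.

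Next I would apply the tube formula. For a Gaussian field of the above form, $\mathbb{P}(\sup_x |Z(x)|\ge c) $ equals the normalized volume of the tube of radius $\arccos(c/\sqrt{\cdots})$ — more precisely, by the Hotelling--Weyl--Naiman expansion for two-dimensional index manifolds (Theorem/Eq.~(1.4) and (3.2) of \cite{MR1311978}),
\begin{equation*}
\alpha = \frac{\kappa_0}{2\pi}\cdot\frac{c_\alpha}{\sqrt{2\pi}}\,e^{-c_\alpha^2/2} + \frac{\zeta_0}{2\pi}\,e^{-c_\alpha^2/2} + 2\bigl(1-\Phi(c_\alpha)\bigr) + \oo\!\left(e^{-c_\alpha^2/2}\right),
\end{equation*}
where $\kappa_0$ is the $2$-dimensional volume (area) of $\mathcal{M}$ in the induced metric and $\zeta_0$ the $1$-dimensional volume (length) of $\partial\mathcal{M}$; the $C^3$-regularity and positive-reach (positive critical radius) hypotheses on $\mathcal{M}$ are exactly what is needed for the remainder to be of smaller order than the displayed exponential terms. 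Rearranging the constant $\kappa_0/(2\pi\sqrt{2\pi}) = \kappa_0/(\sqrt{2}\,\pi^{3/2})$ gives precisely \eqref{eq_expansionformula}. The two-sided supremum accounts for the factor $2$ in front of $1-\Phi(c_\alpha)$ and doubles the leading geometric terms relative to the one-sided tube (which is absorbed into the definitions of $\kappa_0,\zeta_0$ here, matching the convention of \cite{MR1311978}).

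The main obstacle — and the only genuinely nontrivial point — is verifying that the hypotheses of the Sun--Loader tube formula actually hold in our setting: that $\mathcal{M}$ is an embedded $C^3$ submanifold of the sphere with a strictly positive critical radius (reach), so that the tubular neighborhood is well-defined and the asymptotic expansion's error term is uniformly $\oo(e^{-c_\alpha^2/2})$. This is where the smoothness of the sieve basis functions (Fourier/Jacobi/wavelet, whose derivatives are controlled by $\xi_c,\zeta$ and their analogues) and Assumption \ref{assum_updc} (which keeps $\Pi,\Omega$ uniformly bounded away from singularity, hence keeps $u(x)$ and its first three derivatives bounded and the induced metric non-degenerate) enter; I would cite these together with the explicit derivative computations carried out in the proof of \eqref{eq_quantitiesbound} above. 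Everything else is a direct quotation of \cite[Theorem 2.1]{MR1311978} (or \cite{MR1056331}) once the problem has been cast in the Gaussian-field-on-a-manifold form. $\diamondsuit$
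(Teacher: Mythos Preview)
Your approach is essentially the same as the paper's: both reduce to a direct citation of the tube formula in \cite{MR1311978} (the paper cites Proposition~2 there, with the minor substitution of $\sqrt{\operatorname{Var}(\widehat f(x))}$ for $\|l(x)\|$). Your write-up simply spells out the reduction to a unit-variance Gaussian field on $\mathcal{M}\subset S^{n-1}$ in more detail than the paper does, which is fine. One small overreach: your final paragraph about verifying the $C^3$/positive-reach hypotheses via the sieve basis and Assumption~\ref{assum_updc} is unnecessary here, since those regularity conditions are \emph{hypotheses} of the lemma, not conclusions; the sieve-specific verification belongs (and appears) in the proof of Theorem~\ref{thm_asymptoticdistribution}, not in this auxiliary lemma.
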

\begin{proof}
The proof follows from Proposition 2 of \cite{MR1311978} by replacing $\|l(x) \|$ with $\sqrt{\operatorname{Var}(\widehat{f}(x))}.$ 
\end{proof}


%

\section{A brief summary of sieve spaces approximation theory}\label{sec_sieves}
In this section, we give a brief overview on the sieve approximation theory \cite{CXH} for compact domain and the mapped sieve approximation theory for the unbounded domain \cite{MR1874071,MR2486453,unboundeddomain}.  We start with introducing some commonly used sieve basis functions.

\begin{example}[Trigonometric and mapped trigonometric polynomials]\label{examplebasis_fourier} For $x \in [0,1],$ we consider the following trigonometric polynomials 
\begin{equation*}
\left\{1, \sqrt{2} \cos(2 k \pi x), \sqrt{2} \sin (2k\pi x), \cdots \right\}, \ k \in \mathbb{N}.
\end{equation*}
The above basis functions form an orthonormal basis 
of $L_2([0,1]).$ Note that the classic trigonometric basis function is well suited for approximating periodic functions on $[0,1].$ 

When $x \in \mathbb{R},$ let $u(x): \mathbb{R} \rightarrow [0,1]$ be constructed following Example \ref{example_mappings}. Since $u(x)$ in Example \ref{example_mappings} satisfies that $u'(x) \in L_2(\mathbb{R})$, the following mapped trigonometric polynomials form an orthonomal basis for $L_2(\mathbb{R})$ 
\begin{equation*}
\left\{ \sqrt{u'(x)}, \ \sqrt{2 u'(x) } \cos(2 k \pi u(x)), \ \sqrt{2 u'(x)} \sin(2 k \pi u(x)),\cdots \right\}, \ k \in \mathbb{N}, 
\end{equation*}
where we used inverse function theorem. 
\end{example}

\begin{example}[Jacobi and mapped Jacobi polynomials]\label{exam_jacobibasis} For $I=(-1,1)$ and some constants $\alpha, \beta>-1,$ denote the Jacobi weight function as
\begin{equation*}
\omega^{\alpha,\beta}(y)=(1-y)^{\alpha}(1+y)^{\beta},
\end{equation*}
and associated weighted $\mathcal{L}^2$ space as $\mathcal{L}^2_{\omega^{\alpha,\beta}}(I).$ With the above notations, the Jacobi polynomials are the orthogonal polynomials $\{J_n^{\alpha,\beta}(y)\}$ such that 
\begin{equation*}
\int_I J_n^{\alpha, \beta}(y) J_m^{\alpha,\beta}(y) \omega^{\alpha,\beta}(y) \dd y=\gamma_n^{\alpha,\beta} \delta_{n,m}, 
\end{equation*}
where $\delta_{n,m}$ is the Kronecker function, and 
\begin{equation*}
\gamma_n^{\alpha,\beta}=\frac{2^{\alpha+\beta+1} \Gamma(n+\alpha+1) \Gamma(n+\beta+1)}{(2n+\alpha+\beta+1) \Gamma(n+1) \Gamma(n+\alpha+\beta+1)}.
\end{equation*}
More explicitly, the Jacobi polynomials can be characterized using the  Rodrigues' formula \cite{MR0000077} 
\begin{equation*}
J_n^{\alpha,\beta}(x)=\frac{(-1)^n }{2^n n!}(1-x)^{-\alpha}(1+x)^{-\beta} \frac{\dd^n }{\dd x^n} \left( (1-x)^\alpha(1+x)^\beta(1-x^2)^n \right).
\end{equation*}
Then for $x \in [0,1],$ the following sequence forms an orthonormal basis for $L_2([0,1])$ 
\begin{equation*}
\left\{ \mathsf{J}^{\alpha,\beta}_k(x):= \sqrt{\frac{2 \omega^{\alpha, \beta}(2x-1)}{\gamma_k^{\alpha,\beta}} } J_k^{\alpha,\beta}(2x-1)  \right\}, \ k \in \mathbb{N}. 
\end{equation*}
We mention that when $\alpha=\beta=0,$ the Jacobi polynomial reduces to the Legendre polynomial and when $\alpha=\beta=-0.5,$ the Jacobi polynomial reduces to the Chebyshev polynomial of the first kind. 

When $x \in \mathbb{R},$ let $u(x)$ be the mapping as in Example \ref{examplebasis_fourier}. Then the following sequence provides an orthonormal basis for $L_2(\mathbb{R})$ (see \cite[Section 2.3]{unboundeddomain}) 
\begin{equation*}
\left\{ \sqrt{u'(x)}\mathsf{J}_k^{\alpha,\beta}(u(x)) \right\}.
\end{equation*}


\end{example}

%

\begin{example}[Wavelet and mapped Wavelet basis]\label{examplebasis_wave}
For $N \in \mathbb{N},$ a Daubechies (mother) wavelet of class $D-N$ is a function $\psi \in L_2(\mathbb{R})$ defined by 
\begin{equation*}
\psi(x):=\sqrt{2} \sum_{k=1}^{2N-1} (-1)^k h_{2N-1-k} \varphi(2x-k),
\end{equation*}
where $h_0,h_1,\cdots,h_{2N-1} \in \mathbb{R}$ are the constant (high pass) filter coefficients satisfying the conditions
$\sum_{k=0}^{N-1} h_{2k}=\frac{1}{\sqrt{2}}=\sum_{k=0}^{N-1} h_{2k+1},$
as well as, for $l=0,1,\cdots,N-1$
\begin{equation*}
\sum_{k=2l}^{2N-1+2l} h_k h_{k-2l}=
\begin{cases}
1, & l =0 ,\\
0, & l \neq 0.
\end{cases}
\end{equation*} 
And $\varphi(x)$ is the scaling (father) wavelet function is supported on $[0,2N-1)$ and satisfies the recursion equation
$\varphi(x)=\sqrt{2} \sum_{k=0}^{2N-1} h_k \varphi(2x-k),$ as well as the normalization $\int_{\mathbb{R}} \varphi(x) dx=1$ and
$ \int_{\mathbb{R}} \varphi(2x-k) \varphi(2x-l)dx=0, \ k \neq l.$
Note that the filter coefficients can be efficiently computed as listed in \cite{ID92}.  The order $N$, on the one hand, decides the support of our wavelet; on the other hand, provides the regularity condition in the sense that
\begin{equation*}
\int_{\mathbb{R}} x^j \psi(x)dx=0, \ j=0,\cdots,N, \  \text{where} \ N \geq d. 
\end{equation*}
We will employ Daubechies wavelet with a sufficiently high order when forecasting in our simulations and data analysis. In the present paper, to construct a sequence of orthogonal wavelet, we will follow the dyadic construction of \cite{ID98}. For a given $J_n$ and $J_0,$ we will consider the following periodized wavelets on $[0,1]$
\begin{equation}\label{eq_constructone}
\Big\{ \varphi_{J_0 k}(x), \ 0 \leq k \leq 2^{J_0}-1; 
\psi_{jk}(x), \ J_0 \leq j \leq J_n-1, 0 \leq k \leq 2^{j}-1  \Big\},\ \mbox{ where}
\end{equation} 
\begin{equation*}
\varphi_{J_0 k}(x)=2^{J_0/2} \sum_{l \in \mathbb{Z}} \varphi(2^{J_0}x+2^{J_0}l-k) , \
\psi_{j k}(x)=2^{j/2} \sum_{l \in \mathbb{Z}} \psi(2^{j}x+2^{j}l-k),
\end{equation*}
or, equivalently \cite{MR1085487}
\begin{equation}\label{eq_meyerorthogonal}
\Big\{ \varphi_{J_n k}(x), \  0 \leq k \leq  2^{J_n-1} \Big\}.
\end{equation}

When $x \in \mathbb{R},$ let $u(x)$ be the mapping as in Example \ref{examplebasis_fourier}. Then using (\ref{eq_meyerorthogonal}), the following sequence provides an orthonormal basis for $L_2(\mathbb{R})$ 
\begin{equation*}
\left\{ \sqrt{u'(x)}\varphi_{J_n k}(u(x)) \right\}.
\end{equation*}
Similarly, we can construct the mapped wavelet functions using  (\ref{eq_constructone}). 

\end{example}


Next, we collect some important properties of the above basis functions and some useful results on the approximation errors. 

\begin{lemma}\label{lem_deterministicapproximation}
Suppose Assumption \ref{assum_smoothnessasumption} holds. Then for any fixed $x \in \mathbb{R},$  denote 
\begin{equation*}
m_{j,c}(t,x)=\sum_{j=1}^c a_j \phi_j(t), \ 1 \leq j \leq r, 
\end{equation*}
where we assume that $m_j(t,x)=\sum_{j=1}^{\infty} a_j \phi_j(t), \ a_j \equiv a_j(x).$ Then for the basis functions in Examples \ref{examplebasis_fourier}--\ref{examplebasis_wave},  we have that
\begin{equation*}
\sup_t |m_j(t,x)-m_{j,c}(t,x) |=\OO(c^{-\mathsf{m}_1}).
\end{equation*}
\end{lemma}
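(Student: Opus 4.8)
The statement to prove, Lemma~\ref{lem_deterministicapproximation}, is a one-dimensional sieve approximation bound for a function $m_j(\cdot,x)\in\mathtt{C}^{\mathsf{m}_1}([0,1])$ in the variable $t$, uniformly in $t$, for each fixed $x$. The plan is to reduce the claim to classical univariate approximation-theory estimates that are catalogued in the references \cite{CXH, MR1874071, unboundeddomain} for each of the three families (trigonometric/Fourier, Jacobi, wavelet) introduced in Examples~\ref{examplebasis_fourier}--\ref{examplebasis_wave}. The key observation is that fixing $x$ turns $m_j(t,x)$ into a single function $f(t):=m_j(t,x)$ that lies in $\mathtt{C}^{\mathsf{m}_1}([0,1])$ by Assumption~\ref{assum_smoothnessasumption}, so there is genuinely nothing two-dimensional left to do; the truncation $m_{j,c}(t,x)=\sum_{j=1}^c a_j(x)\phi_j(t)$ is just the $L_2([0,1])$-orthogonal projection of $f$ onto $\operatorname{span}\{\phi_1,\dots,\phi_c\}$.

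First I would handle the Fourier case. Since $\{\phi_j\}$ is the orthonormal trigonometric system, $f\mapsto m_{j,c}(\cdot,x)$ is the partial Fourier sum $S_cf$. For $f\in\mathtt{C}^{\mathsf{m}_1}$ one has the standard estimate $\|f-S_cf\|_\infty = \OO(c^{-\mathsf{m}_1})$ (obtained by integrating by parts $\mathsf{m}_1$ times in the Fourier coefficients to get $|a_j|=\OO(j^{-\mathsf{m}_1-1}\|f^{(\mathsf{m}_1)}\|_1)$, then summing the tail $\sum_{j>c} j^{-\mathsf{m}_1-1}$ and using Bernstein/Jackson-type bounds, or simply citing the Jackson theorem combined with the Lebesgue constant growth $\log c$, which is absorbed since $\mathsf{m}_1\ge 1$; cleaner still, cite \cite[Section 2.3.1]{CXH} directly). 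Next, for Jacobi polynomials one invokes the analogous Jackson-type theorem for Jacobi expansions: for $f\in\mathtt{C}^{\mathsf{m}_1}$ the best weighted-$L_2$ polynomial approximation of degree $c$ decays like $c^{-\mathsf{m}_1}$, and the sup-norm version follows as recorded in \cite[Section 2.3]{unboundeddomain} and \cite{MR1874071}. For the wavelet case, using the periodized Daubechies basis of order $N\ge \mathsf{m}_1$ (the regularity/vanishing-moment condition stated in Example~\ref{examplebasis_wave}), the projection onto resolution level $J_n$ with $2^{J_n}\asymp c$ satisfies $\|f-P_{J_n}f\|_\infty=\OO(2^{-J_n\mathsf{m}_1})=\OO(c^{-\mathsf{m}_1})$ by the standard Jackson estimate for wavelet projections, again citing \cite{CXH}.

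The only mild subtlety — and the one place I would be careful — is the passage from the $L_2$-projection coefficients $a_j(x)$ (which is how $m_{j,c}$ is literally defined in the statement, via $m_j(t,x)=\sum_j a_j(x)\phi_j(t)$) to the sup-norm rate, since for Fourier and Jacobi series the truncated orthogonal projection is not the same as the best sup-norm approximant and differs from it by a factor equal to the Lebesgue constant, which grows like $\log c$ for Fourier and like $c^{1/2}$ (or $\log c$, depending on the normalization) for Jacobi. The resolution is exactly as in the proof of Theorem~\ref{thm_approximation} already given: under Assumption~\ref{assum_smoothnessasumption} with $\mathsf{m}_1$ large enough (and indeed the paper elsewhere imposes $\mathsf{w}_1,\mathsf{w}_2\ge 3$), the extra logarithmic or polynomial Lebesgue-constant factor is dominated, or one simply works with a quasi-interpolant / de la Vallée-Poussin type operator that reproduces polynomials and has bounded sup-norm, giving the clean $\OO(c^{-\mathsf{m}_1})$; this is precisely the point made in the proof of Theorem~\ref{thm_approximation} by appealing to \cite[Section 6.2]{MR1176949}. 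For wavelets there is no such issue because $P_{J_n}$ is uniformly bounded on $L_\infty$. So the proof I would write is short: fix $x$, set $f=m_j(\cdot,x)\in\mathtt{C}^{\mathsf{m}_1}([0,1])$, note $m_{j,c}(\cdot,x)$ is the degree-$c$ orthogonal truncation, and invoke the cited one-dimensional Jackson-type approximation estimates for each of the three bases, handling the Lebesgue-constant discrepancy exactly as in the proof of Theorem~\ref{thm_approximation}. I expect no real obstacle here — the lemma is a packaging of textbook facts, and the main ``work'' is citing the right statements and confirming the rate is uniform in $t$ (which it is, since all these estimates are sup-norm estimates).
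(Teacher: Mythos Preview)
Your proposal is correct and takes essentially the same approach as the paper: the paper's own proof is simply the one-line citation ``See Section 2.3.1 of \cite{CXH},'' so both you and the paper reduce the claim to standard one-dimensional Jackson-type approximation results catalogued in the sieve literature. Your write-up in fact supplies considerably more detail than the paper does (the case-by-case discussion and the Lebesgue-constant remark), but the underlying argument is the same.
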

\begin{proof}
See Section 2.3.1 of \cite{CXH}. 
\end{proof}

\begin{lemma}\label{lem_deterministicapproximation2}
Suppose Assumption \ref{assum_smoothnessasumption} holds. Then for any fixed $t \in [0,1],$  denote 
\begin{equation*}
m_{j,d}(t,x)=\sum_{j=1}^d b_j \varphi_j(x),
\end{equation*}
where we assume that $m_j(t,x)=\sum_{j=1}^{\infty} b_j \varphi_j(x), \ b_j \equiv b_j(t).$ Then for the mapped basis functions in Examples \ref{examplebasis_fourier}--\ref{examplebasis_wave},  we have that
\begin{equation*}
\sup_{x \in \mathbb{R}} |m_{j}(t,x)-m_{j,d}(t,x) |=\OO(d^{-\mathsf{m}_2}).
\end{equation*}
\end{lemma}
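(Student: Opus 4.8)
Lemma \ref{lem_deterministicapproximation2} asks for the one-dimensional approximation rate in the variable $x$ over the unbounded domain $\mathbb{R}$, using the mapped sieve basis functions.

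\textbf{Proof plan.} The plan is to reduce the statement on $\mathbb{R}$ to the known approximation theory on the compact interval $[0,1]$ via the coordinate change $\sfy$ introduced in (\ref{eq_defnmappedbasis}). First I would fix $t \in [0,1]$ and write $f(y) := \widetilde m_j(t, y)$ for the transformed function of (\ref{eq_transformedmjty}); by Assumption \ref{assum_smoothnessasumption} we have $f \in \mathtt{C}^{\mathsf{m}_2}([0,1])$. Recall that the mapped basis $\{\varphi_i\}$ on $\mathbb{R}$ is obtained by orthonormalizing $\{\widetilde\phi_i\}$ where $\widetilde\phi_i(x) = \phi_i(\sfy(x))$ and $\sfy: \mathbb{R} \to [0,1]$ is the (rescaled) inverse mapping; equivalently, the $L_2(\mathbb{R})$ expansion of $m_j(t,\cdot)$ in $\{\varphi_i\}$ corresponds, under $x = g(2y-1;s)$, to the $L_2([0,1])$ expansion of $f$ in $\{\phi_i\}$ (up to the Jacobian factor $\sqrt{u'}$ absorbed into the orthonormalization, as in Examples \ref{examplebasis_fourier}--\ref{examplebasis_wave}). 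Thus the coefficients $b_j$ in the statement coincide with the Fourier/Jacobi/wavelet coefficients of $f$ on $[0,1]$, and the truncated approximant $m_{j,d}(t,\cdot)$ pulls back to the truncated expansion $f_d$ of $f$.

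Next I would invoke the classical univariate approximation bound on $[0,1]$: since $f \in \mathtt{C}^{\mathsf{m}_2}([0,1])$, Section 2.3.1 of \cite{CXH} (the same result cited in Lemma \ref{lem_deterministicapproximation}) gives $\sup_{y \in [0,1]} |f(y) - f_d(y)| = \OO(d^{-\mathsf{m}_2})$ for Fourier series, Jacobi polynomials, and orthogonal wavelets. Finally, because the mapping $g(2\,\cdot\,-1;s): [0,1] \to \mathbb{R}$ is a bijection, taking the supremum over $x \in \mathbb{R}$ is the same as taking the supremum over $y \in [0,1]$:
\begin{equation*}
\sup_{x \in \mathbb{R}} |m_j(t,x) - m_{j,d}(t,x)| = \sup_{y \in [0,1]} |f(y) - f_d(y)| = \OO(d^{-\mathsf{m}_2}),
\end{equation*}
and the bound is uniform in $t$ because the constant in the $[0,1]$-bound depends only on the $\mathtt{C}^{\mathsf{m}_2}$-norm, which is controlled uniformly in $t$ under Assumption \ref{assum_smoothnessasumption}. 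This is exactly the analogue, for the second argument, of Lemma \ref{lem_deterministicapproximation}, and indeed it is one of the two ingredients feeding into the proof of Theorem \ref{thm_approximation}.

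\textbf{Main obstacle.} The only delicate point is the bookkeeping around the orthonormalization step: one must check that the Gram–Schmidt / rescaling procedure turning $\{\widetilde\phi_i\}$ into $\{\varphi_i\}$ does not spoil the identification of coefficients, i.e.\ that expanding $m_j(t,\cdot)$ in $\{\varphi_i\}$ on $\mathbb{R}$ and expanding $\widetilde m_j(t,\cdot)$ in $\{\phi_i\}$ on $[0,1]$ truncate to the same partial sums under the change of variables. For the concrete bases in Examples \ref{examplebasis_fourier}--\ref{examplebasis_wave} the Jacobian $\sqrt{u'(x)}$ is incorporated explicitly into $\varphi_i(x) = \sqrt{u'(x)}\,\phi_i(\sfy(x))$, so $\int_{\mathbb{R}} m_j(t,x)\varphi_i(x)\,dx = \int_0^1 \widetilde m_j(t,y)\phi_i(y)\,dy$ by substitution, and the identification is exact; one then just records this and applies the $[0,1]$ result as a black box. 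I would state this substitution identity explicitly as the first display of the proof and cite \cite[Section 2.3]{unboundeddomain} for the orthonormality of the mapped system, then conclude as above.
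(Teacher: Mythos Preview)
Your proposal is correct and follows essentially the same route as the paper: reduce the supremum over $x\in\mathbb{R}$ to a supremum over $y\in[0,1]$ via the bijective mapping $\sfy$, then invoke the classical univariate approximation rate on $[0,1]$ from Section 2.3.1 of \cite{CXH}. The paper's proof is in fact considerably terser than yours---it simply records the identity $\sup_{x\in\mathbb{R}}|m_j-m_{j,d}|=\sup_{y\in[0,1]}|\widetilde m_j-\widetilde m_{j,d}|$ and cites \cite{CXH} and \cite{MR1176949}---so your added discussion of the Jacobian/orthonormalization bookkeeping is extra care rather than a deviation.
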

\begin{proof}
Recall (\ref{eq_transformedmjty}). Since 
\begin{equation*}
\sup_{x \in \mathbb{R}} |m_{j}(t,x)-m_{j,d}(t,x) |=\sup_{y \in [0,1]}|\widetilde{m}_{j}(t,x)-\widetilde{m}_{j,d}(t,x)|,
\end{equation*}
the proof follows from Section 2.3.1 of \cite{CXH}. Or see Section 6.2 of \cite{MR1176949}. 
\end{proof}

In the following lemma, we provide some controls for the quantities $\xi_c$ and $\zeta$ defined in (\ref{eq_defnxic}).  
\begin{lemma}\label{lem_basisl2norm}
For the basis functions in Example \ref{examplebasis_fourier}, we have that $\xi_c=\OO(1)$ and $\zeta=\OO(\sqrt{p});$ for the basis functions in Example \ref{exam_jacobibasis}, we have that  $\xi_c=\OO(1)$ and $\zeta=\OO(p);$ finally, for the basis functions in Example \ref{examplebasis_wave}, we have that  $\xi_c=\OO(\sqrt{c})$ and $\zeta=\OO(\sqrt{p}).$ 
\end{lemma}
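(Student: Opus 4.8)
# Proof Proposal for Lemma \ref{lem_basisl2norm}

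The plan is to compute the two quantities $\xi_c = \sup_{1\le i\le c}\sup_{t\in[0,1]}|\phi_i(t)|$ and $\zeta = \sup_{t,x}\|\bm{b}\|_2$ separately for each of the three families of basis functions in Examples \ref{examplebasis_fourier}--\ref{examplebasis_wave}, using the explicit formulas given there together with the fact that $\bm{b}$ consists of the $cd$ tensor products $\{\phi_i(t)\varphi_j(x)\}$, so that $\|\bm{b}\|_2^2 = \big(\sum_{i=1}^c \phi_i(t)^2\big)\big(\sum_{j=1}^d \varphi_j(x)^2\big)$. Thus $\zeta = \big(\sup_t \sum_{i=1}^c\phi_i(t)^2\big)^{1/2}\big(\sup_x\sum_{j=1}^d\varphi_j(x)^2\big)^{1/2}$, and the whole problem reduces to bounding, for each family, the pointwise sup-norm of a single basis element (this gives $\xi_c$ and, by the same argument in the $x$-variable, the analogue $\xi_d$) and the pointwise "Christoffel-type" sum $\sum_{i\le c}\phi_i(\cdot)^2$.

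First, for the trigonometric polynomials of Example \ref{examplebasis_fourier}: each $\phi_i$ is $1$ or $\sqrt2\cos(2k\pi t)$ or $\sqrt2\sin(2k\pi t)$, all bounded by $\sqrt2$, so $\xi_c = \OO(1)$; moreover $\sum_{i=1}^c\phi_i(t)^2 \le 2c$ uniformly, so the $t$-sum is $\OO(c)$ and likewise the $x$-sum (for the mapped trigonometric basis the factor $\sqrt{u'(x)}$ must be incorporated, but $\{\varphi_j\}$ is still orthonormal with the same uniform boundedness structure after normalization on the compact image, so one again gets $\OO(d)$). Hence $\zeta = \OO(\sqrt{cd}) = \OO(\sqrt p)$ since $p=rcd$ with $r$ fixed. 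Second, for Jacobi (in particular Legendre/Chebyshev) polynomials of Example \ref{exam_jacobibasis}: the normalized Jacobi polynomials $\mathsf{J}_k^{\alpha,\beta}$ can grow like $\OO(\sqrt k)$ at the endpoints (this is the standard bound $\|\mathsf{J}_k^{\alpha,\beta}\|_\infty = \OO(\sqrt{k})$ obtained from the known asymptotics of Jacobi polynomials near $\pm1$; for Chebyshev it is $\OO(1)$ but for general $\alpha,\beta\ge-1/2$ it is $\OO(\sqrt k)$, and one takes the worst case), so $\xi_c = \OO(\sqrt c) \cdot \OO(1)$—wait, more carefully: $\sup_i\sup_t|\phi_i(t)|$ over $i\le c$ is $\OO(\sqrt c)$, hence $\sum_{i=1}^c\phi_i(t)^2 \le c\cdot\OO(c) = \OO(c^2)$, giving the $t$-sum $\OO(c^2)$ and similarly the $x$-sum $\OO(d^2)$, so $\zeta = \OO(cd) = \OO(p)$. (One should double-check whether the statement intends $\xi_c=\OO(1)$ for Jacobi — indeed for the specifically relevant normalized families one has the cruder but sufficient bound and the stated $\zeta = \OO(p)$ follows from $\sum_{i\le c}\phi_i^2 = \OO(c^2)$ uniformly, which is what is needed.) Third, for Daubechies wavelets of Example \ref{examplebasis_wave}: using the equivalent single-scale representation $\{\varphi_{J_nk}(x),\ 0\le k\le 2^{J_n}-1\}$ with $c\asymp 2^{J_n}$, each $\varphi_{J_nk}(t) = 2^{J_n/2}\sum_l\varphi(2^{J_n}t+2^{J_n}l-k)$ has sup-norm $\OO(2^{J_n/2}) = \OO(\sqrt c)$, hence $\xi_c = \OO(\sqrt c)$; but because $\varphi$ is compactly supported, at any fixed $t$ only $\OO(1)$ of the $c$ functions $\{\varphi_{J_nk}(t)\}_{k}$ are nonzero, so $\sum_{k}\varphi_{J_nk}(t)^2 = \OO(2^{J_n}) = \OO(c)$ uniformly in $t$; the same holds for the mapped wavelets in the $x$-variable, so $\zeta = \OO(\sqrt{cd}) = \OO(\sqrt p)$.

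The main obstacle — and the only place real care is needed — is the wavelet case, specifically the claim that $\sum_{k}\varphi_{J_nk}(t)^2 = \OO(2^{J_n})$ uniformly in $t$. This is where compact support of the Daubechies scaling function is essential: it guarantees that the periodization sum over $l$ has only finitely many nonzero terms and that at each $t$ only a bounded number (depending on the wavelet order $N$, hence a constant) of translates $\varphi_{J_nk}$ overlap, which prevents the naive bound $c\cdot\OO(c)=\OO(c^2)$ and yields $\OO(c)$ instead. I would make this rigorous by citing the support property $\operatorname{supp}\varphi\subset[0,2N-1)$ from Example \ref{examplebasis_wave} and counting overlapping indices. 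For the Jacobi case the needed input is the classical endpoint bound on normalized Jacobi polynomials (e.g. from Szegő's book), which I would invoke as a standard fact. Everything else is a direct substitution into $\|\bm b\|_2^2 = \big(\sum_i\phi_i^2\big)\big(\sum_j\varphi_j^2\big)$ and taking suprema, together with $p = rcd$ and $r = \OO(1)$.
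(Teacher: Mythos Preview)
The paper does not actually prove this lemma: its entire proof reads ``See \cite{DZ} or Section 3 of \cite{MR3343791}.'' Your proposal, by contrast, carries out the explicit computation, which is precisely the content of those references. The factorization $\|\bm b\|_2^2=\big(\sum_{i\le c}\phi_i(t)^2\big)\big(\sum_{j\le d}\varphi_j(x)^2\big)$ is the right starting point, and your three case analyses are correct: uniform boundedness for Fourier gives $\sum\phi_i^2=\OO(c)$; compact support plus bounded overlap for Daubechies wavelets gives the same $\OO(c)$ despite each $\varphi_{J_nk}$ having sup-norm $\OO(\sqrt c)$; and for Jacobi the crude bound $\sum\phi_i^2\le c\cdot\OO(c)=\OO(c^2)$ yields $\zeta=\OO(p)$.

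Your hesitation in the Jacobi case is well founded and worth stating plainly rather than hedging: the lemma's claim $\xi_c=\OO(1)$ for normalized Jacobi polynomials is almost certainly a typo. For Legendre ($\alpha=\beta=0$) one has $\mathsf J_k(x)=\sqrt{2k+1}\,P_k(2x-1)$ with $P_k(1)=1$, so $\xi_c\asymp\sqrt c$, and the same order holds for general $\alpha,\beta>-1$ by the standard endpoint asymptotics (Szeg\H{o}). This is also the bound stated in \cite{MR3343791}, and it is what is needed to make $\zeta=\OO(p)$ consistent with your factorization: if $\xi_c$ were genuinely $\OO(1)$ one would get $\zeta=\OO(\sqrt p)$, not $\OO(p)$. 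So your computation is correct and the discrepancy lies in the statement, not your argument.
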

\begin{proof}
See \cite{DZ} or Section 3 of  \cite{MR3343791}. 
\end{proof}

%

\bibliographystyle{abbrv}
\bibliography{lag}

\begin{thebibliography}{}

\bibitem[Belloni et~al., 2015]{MR3343791}
Belloni, A., Chernozhukov, V., Chetverikov, D., and Kato, K. (2015).
\newblock Some new asymptotic theory for least squares series: pointwise and
  uniform results.
\newblock {\em J. Econometrics}, 186(2):345--366.

\bibitem[Bierens and Pott-Buter, 1990]{MR1085835}
Bierens, H.~J. and Pott-Buter, H.~A. (1990).
\newblock Specification of household {E}ngel curves by nonparametric
  regression.
\newblock {\em Econometric Rev.}, 9(2):123--210.

\bibitem[Bishop, 2013]{bishop2013pattern}
Bishop, C. (2013).
\newblock {\em Pattern Recognition and Machine Learning}.
\newblock Information science and statistics. Springer.

\bibitem[Blundell et~al., 2017]{10.1162/REST_a_00636}
Blundell, R., Horowitz, J., and Parey, M. (2017).
\newblock {Nonparametric Estimation of a Nonseparable Demand Function under the
  Slutsky Inequality Restriction}.
\newblock {\em The Review of Economics and Statistics}, 99(2):291--304.

\bibitem[Bollerslev et~al., 2013]{BOLLERSLEV2013409}
Bollerslev, T., Osterrieder, D., Sizova, N., and Tauchen, G. (2013).
\newblock Risk and return: Long-run relations, fractional cointegration, and
  return predictability.
\newblock {\em Journal of Financial Economics}, 108(2):409--424.

\bibitem[Boyd, 2001]{MR1874071}
Boyd, J.~P. (2001).
\newblock {\em Chebyshev and {F}ourier spectral methods}.
\newblock Dover Publications, Inc., Mineola, NY, second edition.

\bibitem[Boyd, 2009]{MR2486453}
Boyd, J.~P. (2009).
\newblock Large-degree asymptotics and exponential asymptotics for {F}ourier,
  {C}hebyshev and {H}ermite coefficients and {F}ourier transforms.
\newblock {\em J. Engrg. Math.}, 63(2-4):355--399.

\bibitem[Brandt and Wang, 2010]{DUKE}
Brandt, M.~W. and Wang, L. (2010).
\newblock Measuring the time-varying risk-return relation from the
  cross-section of equity returns.
\newblock {\em Manuscript, Duke University}.

\bibitem[Cai et~al., 2016]{MR3476606}
Cai, T.~T., Liu, W., and Zhou, H.~H. (2016).
\newblock Estimating sparse precision matrix: optimal rates of convergence and
  adaptive estimation.
\newblock {\em Ann. Statist.}, 44(2):455--488.

\bibitem[Cai, 2007]{CAI2007163}
Cai, Z. (2007).
\newblock Trending time-varying coefficient time series models with serially
  correlated errors.
\newblock {\em Journal of Econometrics}, 136(1):163--188.

\bibitem[Chaieb et~al., 2021]{CHAIEB2021669}
Chaieb, I., Langlois, H., and Scaillet, O. (2021).
\newblock Factors and risk premia in individual international stock returns.
\newblock {\em Journal of Financial Economics}, 141(2):669--692.

\bibitem[Chen et~al., 2021]{CSW}
Chen, L., Smetanina, E., and Wu, W.~B. (2021).
\newblock {Estimation of nonstationary nonparametric regression model with
  multiplicative structure}.
\newblock {\em The Econometrics Journal}.

\bibitem[Chen, 2007]{CXH}
Chen, X. (2007).
\newblock {\em Large Sample Sieve Estimation of Semi-nonparametric Models}.
\newblock Chapter 76 in Handbook of Econometrics, Vol. 6B, James J. Heckman and
  Edward E. Leamer.

\bibitem[Chen and Christensen, 2015]{CHEN2015447}
Chen, X. and Christensen, T.~M. (2015).
\newblock Optimal uniform convergence rates and asymptotic normality for series
  estimators under weak dependence and weak conditions.
\newblock {\em Journal of Econometrics}, 188(2):447--465.

\bibitem[Chen et~al., 2013]{MR3161455}
Chen, X., Xu, M., and Wu, W.~B. (2013).
\newblock Covariance and precision matrix estimation for high-dimensional time
  series.
\newblock {\em Ann. Statist.}, 41(6):2994--3021.

\bibitem[Chernozhukov et~al., 2013]{10.1214/13-AOS1161}
Chernozhukov, V., Chetverikov, D., and Kato, K. (2013).
\newblock {Gaussian approximations and multiplier bootstrap for maxima of sums
  of high-dimensional random vectors}.
\newblock {\em The Annals of Statistics}, 41(6):2786 -- 2819.

\bibitem[Chetty, 2006]{10.2307/30034997}
Chetty, R. (2006).
\newblock A new method of estimating risk aversion.
\newblock {\em The American Economic Review}, 96(5):1821--1834.

\bibitem[Corsi, 2009]{10.1093/jjfinec/nbp001}
Corsi, F. (2009).
\newblock {A Simple Approximate Long-Memory Model of Realized Volatility}.
\newblock {\em Journal of Financial Econometrics}, 7(2):174--196.

\bibitem[Dahlhaus et~al., 2019]{DRW}
Dahlhaus, R., Richter, S., and Wu, W.~B. (2019).
\newblock {Towards a general theory for nonlinear locally stationary
  processes}.
\newblock {\em Bernoulli}, 25(2):1013 -- 1044.

\bibitem[Daubechies, 1988]{ID98}
Daubechies, I. (1988).
\newblock Orthonormal bases of compactly supported wavelets.
\newblock {\em Commun. Pure Appl. Math.}, 41:909--996.

\bibitem[Daubechies, 1992]{ID92}
Daubechies, I. (1992).
\newblock {\em Ten Lectures on Wavelets}.
\newblock SIAM series: CBMS-NSF Regional Conference Series in Applied
  Mathematics.

\bibitem[Ding and Zhou, 2020]{DZ}
Ding, X. and Zhou, Z. (2020).
\newblock {Estimation and inference for precision matrices of nonstationary
  time series}.
\newblock {\em The Annals of Statistics}, 48(4):2455 -- 2477.

\bibitem[Ding and Zhou, 2021]{DZ2}
Ding, X. and Zhou, Z. (2021).
\newblock {Auto-Regressive approximations to non-stationary time series, with
  inference and applications}.
\newblock {\em Preprint}.

\bibitem[Durrett, 2019]{probbook}
Durrett, R. (2019).
\newblock {\em Probability---theory and examples}, volume~49 of {\em Cambridge
  Series in Statistical and Probabilistic Mathematics}.
\newblock Cambridge University Press, Cambridge, fifth edition.

\bibitem[Fan, 2005]{10.2307/20061188}
Fan, J. (2005).
\newblock A selective overview of nonparametric methods in financial
  econometrics.
\newblock {\em Statistical Science}, 20(4):317--337.

\bibitem[Fan and Yao, 2003]{MR1964455}
Fan, J. and Yao, Q. (2003).
\newblock {\em Nonlinear time series: Nonparametric and parametric methods}.
\newblock Springer Series in Statistics. Springer-Verlag, New York.

\bibitem[Fan and Zhang, 2000]{MR1804172}
Fan, J. and Zhang, W. (2000).
\newblock Simultaneous confidence bands and hypothesis testing in
  varying-coefficient models.
\newblock {\em Scand. J. Statist.}, 27(4):715--731.

\bibitem[Fang, 2016]{MR3571252}
Fang, X. (2016).
\newblock A multivariate {CLT} for bounded decomposable random vectors with the
  best known rate.
\newblock {\em J. Theoret. Probab.}, 29(4):1510--1523.

\bibitem[Funaro, 1992]{MR1176949}
Funaro, D. (1992).
\newblock {\em Polynomial approximation of differential equations}, volume~8 of
  {\em Lecture Notes in Physics. New Series m: Monographs}.
\newblock Springer-Verlag, Berlin.

\bibitem[Ghysels et~al., 2014]{GHYSELS2014118}
Ghysels, E., Guérin, P., and Marcellino, M. (2014).
\newblock Regime switches in the risk–return trade-off.
\newblock {\em Journal of Empirical Finance}, 28:118--138.

\bibitem[Golub and Van~Loan, 2013]{MR3024913}
Golub, G.~H. and Van~Loan, C.~F. (2013).
\newblock {\em Matrix computations}.
\newblock Johns Hopkins Studies in the Mathematical Sciences. Johns Hopkins
  University Press, Baltimore, MD, fourth edition.

\bibitem[Gu et~al., 2020]{10.1093/rfs/hhaa009}
Gu, S., Kelly, B., and Xiu, D. (2020).
\newblock {Empirical Asset Pricing via Machine Learning}.
\newblock {\em The Review of Financial Studies}, 33(5):2223--2273.

\bibitem[Hansen, 2014]{MR3306927}
Hansen, B.~E. (2014).
\newblock Nonparametric sieve regression: least squares, averaging least
  squares, and cross-validation.
\newblock In {\em The {O}xford handbook of applied nonparametric and
  semiparametric econometrics and statistics}, pages 215--248. Oxford Univ.
  Press, Oxford.

\bibitem[Hu et~al., 2019]{HHY}
Hu, L., Huang, T., and You, J. (2019).
\newblock Estimation and identification of a varying-coefficient additive model
  for locally stationary processes.
\newblock {\em Journal of the American Statistical Association},
  114(527):1191--1204.

\bibitem[Huang et~al., 2002]{10.2307/4140562}
Huang, J.~Z., Wu, C.~O., and Zhou, L. (2002).
\newblock Varying-coefficient models and basis function approximations for the
  analysis of repeated measurements.
\newblock {\em Biometrika}, 89(1):111--128.

\bibitem[Johansen and Johnstone, 1990]{MR1056331}
Johansen, S. and Johnstone, I.~M. (1990).
\newblock Hotelling's theorem on the volume of tubes: some illustrations in
  simultaneous inference and data analysis.
\newblock {\em Ann. Statist.}, 18(2):652--684.

\bibitem[Karmakar et~al., 2021]{KARMAKAR2021}
Karmakar, S., Richter, S., and Wu, W.~B. (2021).
\newblock Simultaneous inference for time-varying models.
\newblock {\em Journal of Econometrics}.

\bibitem[Knowles and Siegmund, 1989]{KS}
Knowles, M. and Siegmund, D. (1989).
\newblock On {H}otelling's approach to testing for a nonlinear parameter in
  regression.
\newblock {\em International Statistical Review / Revue Internationale de
  Statistique}, 57(3):205--220.

\bibitem[Koo et~al., 2021]{KOO2021562}
Koo, B., {La Vecchia}, D., and Linton, O. (2021).
\newblock Estimation of a nonparametric model for bond prices from
  cross-section and time series information.
\newblock {\em Journal of Econometrics}, 220(2):562--588.

\bibitem[Li and Racine, 2006]{RePEc:pup:pbooks:8355}
Li, Q. and Racine, J.~S. (2006).
\newblock {\em {Nonparametric Econometrics: Theory and Practice}}.
\newblock Economics Books. Princeton University Press.

\bibitem[Linton and Wang, 2016]{linton_wang_2016}
Linton, O. and Wang, Q. (2016).
\newblock Nonparametric transformation regression with nonstationary data.
\newblock {\em Econometric Theory}, 32(1):1–29.

\bibitem[Liu and Lin, 2009]{MR2485027}
Liu, W. and Lin, Z. (2009).
\newblock Strong approximation for a class of stationary processes.
\newblock {\em Stochastic Process. Appl.}, 119(1):249--280.

\bibitem[Liu et~al., 2013]{MR3114713}
Liu, W., Xiao, H., and Wu, W.~B. (2013).
\newblock Probability and moment inequalities under dependence.
\newblock {\em Statist. Sinica}, 23(3):1257--1272.

\bibitem[Merton, 1973]{MR441271}
Merton, R.~C. (1973).
\newblock An intertemporal capital asset pricing model.
\newblock {\em Econometrica}, 41:867--887.

\bibitem[Meyer, 1990]{MR1085487}
Meyer, Y. (1990).
\newblock {\em Ondelettes et op\'{e}rateurs. {I}}.
\newblock Actualit\'{e}s Math\'{e}matiques. Hermann, Paris.
\newblock Ondelettes.

\bibitem[Pei et~al., 2018]{PEI2018286}
Pei, Y., Huang, T., and You, J. (2018).
\newblock Nonparametric fixed effects model for panel data with locally
  stationary regressors.
\newblock {\em Journal of Econometrics}, 202(2):286--305.

\bibitem[Politis et~al., 1999]{politis1999subsampling}
Politis, D., Wolf, D., Romano, J., Wolf, M., Bickel, P., Diggle, P., and
  Fienberg, S. (1999).
\newblock {\em Subsampling}.
\newblock Springer Series in Statistics. Springer New York.

\bibitem[Ramsay and Silverman, 2005]{MR2168993}
Ramsay, J.~O. and Silverman, B.~W. (2005).
\newblock {\em Functional data analysis}.
\newblock Springer Series in Statistics. Springer, New York, second edition.

\bibitem[Richter and Dahlhaus, 2019]{10.1214/18-AOS1743}
Richter, S. and Dahlhaus, R. (2019).
\newblock {Cross validation for locally stationary processes}.
\newblock {\em The Annals of Statistics}, 47(4):2145 -- 2173.

\bibitem[Rueda et~al., 2013]{social}
Rueda, M., Sánchez-Borrego, I., and Arcos, A. (2013).
\newblock {Using nonparametric methods in social surveys: an empirical study}.
\newblock {\em Quality \& Quantity: International Journal of Methodology},
  47(3):1781--1792.

\bibitem[Shen and Wang, 2009]{unboundeddomain}
Shen, J. and Wang, L.-L. (2009).
\newblock Some recent advances on spectral methods for unbounded domains.
\newblock {\em Commun. Comput. Phys.}, 5(2-4):195--241.

\bibitem[Stone, 1982]{MR673642}
Stone, C.~J. (1982).
\newblock Optimal global rates of convergence for nonparametric regression.
\newblock {\em Ann. Statist.}, 10(4):1040--1053.

\bibitem[Sun, 1993]{MR1207215}
Sun, J. (1993).
\newblock Tail probabilities of the maxima of {G}aussian random fields.
\newblock {\em Ann. Probab.}, 21(1):34--71.

\bibitem[Sun and Loader, 1994]{MR1311978}
Sun, J. and Loader, C.~R. (1994).
\newblock Simultaneous confidence bands for linear regression and smoothing.
\newblock {\em Ann. Statist.}, 22(3):1328--1345.

\bibitem[Sun et~al., 2021]{MR4244697}
Sun, Y., Hong, Y., Lee, T.-H., Wang, S., and Zhang, X. (2021).
\newblock Time-varying model averaging.
\newblock {\em J. Econometrics}, 222(2):974--992.

\bibitem[Szeg\"{o}, 1939]{MR0000077}
Szeg\"{o}, G. (1939).
\newblock {\em Orthogonal {P}olynomials}.
\newblock American Mathematical Society Colloquium Publications, Vol. 23.
  American Mathematical Society, New York.

\bibitem[Tasaki, 2009]{TASAKI2009477}
Tasaki, H. (2009).
\newblock Convergence rates of approximate sums of {R}iemann integrals.
\newblock {\em Journal of Approximation Theory}, 161(2):477--490.

\bibitem[Timan, 1994]{MR1262128}
Timan, A.~F. (1994).
\newblock {\em Theory of approximation of functions of a real variable}.
\newblock Dover Publications, Inc., New York.
\newblock Translated from the Russian by J. Berry, Translation edited and with
  a preface by J. Cossar, Reprint of the 1963 English translation.

\bibitem[Vogt, 2012]{MV}
Vogt, M. (2012).
\newblock {Nonparametric regression for locally stationary time series}.
\newblock {\em The Annals of Statistics}, 40(5):2601 -- 2633.

\bibitem[Vogt, 2015]{MV1}
Vogt, M. (2015).
\newblock Testing for structural change in time-varying nonparametric
  regression models.
\newblock {\em Econometric Theory}, 31(4):811--859.

\bibitem[Wainwright, 2019]{MR3967104}
Wainwright, M.~J. (2019).
\newblock {\em High-dimensional statistics: A non-asymptotic viewpoint},
  volume~48 of {\em Cambridge Series in Statistical and Probabilistic
  Mathematics}.
\newblock Cambridge University Press, Cambridge.

\bibitem[Wittkowski and Song, 2010]{Wittkowski2010NonparametricMF}
Wittkowski, K. and Song, T. (2010).
\newblock Nonparametric methods for molecular biology.
\newblock {\em Methods in molecular biology}, 620:105--53.

\bibitem[Wu, 2005]{MR2172215}
Wu, W.~B. (2005).
\newblock Nonlinear system theory: another look at dependence.
\newblock {\em Proc. Natl. Acad. Sci. USA}, 102(40):14150--14154.

\bibitem[Wu, 2007]{WUAOP}
Wu, W.~B. (2007).
\newblock {Strong invariance principles for dependent random variables}.
\newblock {\em The Annals of Probability}, 35(6):2294 -- 2320.

\bibitem[Wu and Zhou, 2011]{MR2827528}
Wu, W.~B. and Zhou, Z. (2011).
\newblock Gaussian approximations for non-stationary multiple time series.
\newblock {\em Statist. Sinica}, 21(3):1397--1413.

\bibitem[Xu et~al., 2019]{XZW}
Xu, M., Zhang, D., and Wu, W.~B. (2019).
\newblock {Pearson’s chi-squared statistics: approximation theory and
  beyond}.
\newblock {\em Biometrika}, 106(3):716--723.

\bibitem[Yang and Zhou, 2020]{YZ}
Yang, J. and Zhou, Z. (2020).
\newblock Spectral inference under complex temporal dynamics.
\newblock {\em Journal of the American Statistical Association (in press)}.

\bibitem[Yuan, 2010]{MR2719856}
Yuan, M. (2010).
\newblock High dimensional inverse covariance matrix estimation via linear
  programming.
\newblock {\em J. Mach. Learn. Res.}, 11:2261--2286.

\bibitem[Zhang and Wu, 2015]{ZWW}
Zhang, T. and Wu, W.~B. (2015).
\newblock {Time-varying nonlinear regression models: Nonparametric estimation
  and model selection}.
\newblock {\em The Annals of Statistics}, 43(2):741 -- 768.

\bibitem[Zhang and Wang, 2015]{10.2307/43305634}
Zhang, X. and Wang, J.-L. (2015).
\newblock Varying-coefficient additive models for functional data.
\newblock {\em Biometrika}, 102(1):15--32.

\bibitem[Zhao and Wu, 2008]{10.1214/07-AOS533}
Zhao, Z. and Wu, W.~B. (2008).
\newblock {Confidence bands in nonparametric time series regression}.
\newblock {\em The Annals of Statistics}, 36(4):1854 -- 1878.

\bibitem[Zhou, 2013]{MR3174655}
Zhou, Z. (2013).
\newblock Heteroscedasticity and autocorrelation robust structural change
  detection.
\newblock {\em J. Amer. Statist. Assoc.}, 108(502):726--740.

\bibitem[Zhou, 2014a]{MR3161462}
Zhou, Z. (2014a).
\newblock Inference of weighted {$V$}-statistics for nonstationary time series
  and its applications.
\newblock {\em Ann. Statist.}, 42(1):87--114.

\bibitem[Zhou, 2014b]{MR3160574}
Zhou, Z. (2014b).
\newblock Nonparametric specification for non-stationary time series
  regression.
\newblock {\em Bernoulli}, 20(1):78--108.

\bibitem[Zhou, 2015]{MR3310530}
Zhou, Z. (2015).
\newblock Inference for non-stationary time series regression with or without
  inequality constraints.
\newblock {\em J. R. Stat. Soc. Ser. B. Stat. Methodol.}, 77(2):349--371.

\bibitem[Zhou and Wu, 2009]{ZW}
Zhou, Z. and Wu, W.~B. (2009).
\newblock {Local linear quantile estimation for nonstationary time series}.
\newblock {\em The Annals of Statistics}, 37(5B):2696 -- 2729.

\bibitem[Zhou and Wu, 2010]{MR2758526}
Zhou, Z. and Wu, W.~B. (2010).
\newblock Simultaneous inference of linear models with time varying
  coefficients.
\newblock {\em J. R. Stat. Soc. Ser. B Stat. Methodol.}, 72(4):513--531.

\end{thebibliography}

\end{document}